\documentclass[11pt]{article}

\usepackage{amsmath}
\usepackage{amssymb}
\usepackage{amsthm}
\usepackage{mathtools,nccmath}
\usepackage{tikz}
\usetikzlibrary{arrows.meta}

\usepackage{graphicx}

\usepackage[normalem]{ulem}

\usepackage[colorlinks,citecolor=blue,urlcolor=blue]{hyperref}

\usepackage{fullpage}

\usepackage{natbib}
\usepackage{algorithm}
\usepackage{algpseudocode}
\allowdisplaybreaks
\makeatother
\algblock{Input}{EndInput}
\algnotext{EndInput}
\algblock{Output}{EndOutput}
\algnotext{EndOutput}

\algnewcommand{\Initialize}[1]{%
  \State \textbf{Initialize:}
  \Statex \hspace*{\algorithmicindent}\parbox[t]{.8\linewidth}{\raggedright #1}
}

\usepackage{bm}
\usepackage{listings}
\usepackage{xcolor}

\newtheorem{lemma}{Lemma}
\newtheorem{theorem}{Theorem}

\newtheorem{definition}{Definition}
\newtheorem{proposition}{Proposition}

\newtheorem{corollary}{Corollary}
\newtheorem{assumption}{Assumption}

\usepackage[capitalize,noabbrev]{cleveref}
\title{Minimax Wasserstein Testing under Differential Privacy}
\author{
Bonwoo Lee \qquad Ilmun Kim \qquad Cheolwoo Park\\[0.5em]
Department of Mathematical Sciences\\
KAIST\\[0.7em]
\texttt{righthim@kaist.ac.kr},\ 
\texttt{ilmunk@kaist.ac.kr},\ 
\texttt{parkcw2021@kaist.ac.kr}
}
\date{}
\begin{document}

\maketitle
\begin{abstract}
We study Wasserstein-$p$ goodness-of-fit and two-sample testing under differential privacy. For general compact metric spaces, we establish upper and lower bounds on the minimax separation radii in terms of covering numbers, matching up to logarithmic factors. On the hypercube $[0,1]^d$, these bounds match up to constant factors for two-sample testing in all dimensions and for goodness-of-fit testing whenever $d\neq 4p$, thereby closing logarithmic gaps in existing non-private results. At the critical case $(d,p)=(4,1)$, we further show that a logarithmic factor is unavoidable by establishing matching minimax bounds for non-private Wasserstein-$1$ goodness-of-fit testing under a bounded-density assumption. Finally, for unbounded domains, we establish minimax-optimal separation rates up to logarithmic factors under an exponential tail condition.
\end{abstract}
\section{Introduction}
Testing whether observations are drawn from a prescribed distribution, or whether two samples are drawn from the same distribution, is a fundamental problem across many scientific disciplines. These problems are known, respectively, as goodness-of-fit testing and two-sample testing in the statistical literature. However, as concerns about data privacy grow, classical testing procedures may be unsuitable for applications involving sensitive individual-level data, such as clinical and demographic data. Accordingly, a growing body of work has developed differentially private hypothesis tests \citep[e.g.,][]{pmlr-v70-cai17a,NEURIPS2020_72b32a1f,NEURIPS2018_7de32147,Kim25032026}. This line of work has shown that privacy constraints impose fundamental statistical limits, making testing intrinsically more difficult than in the non-private setting.

Most existing minimax results for differentially private testing, however, concern alternatives separated in $\ell_1$ or $\ell_2$ distance. These distances compare probability masses coordinatewise without accounting for the geometry of the underlying data space. In particular, they do not distinguish between transporting probability mass to a nearby point and transporting it to a distant point. The Wasserstein distance provides a natural alternative by explicitly incorporating the cost of transporting probability mass across the data space, and has consequently been widely used in statistics, machine learning, and computer science  \citep[e.g.,][]{panaretos2019statistical,word_moving_distance,improved_WGANs}. Hypothesis tests based on the Wasserstein distance have been studied in the non-private setting \citep{delBarrio2000Contributions,ba2011sublinear,Ramdas2017,Wasserstein_CI}, but research on Wasserstein testing under privacy constraints remains limited. While \citet{AwanWang2025} proposed differentially private tests based on one-dimensional Wasserstein statistics, their applicability is inherently limited to one-dimensional data. Many applications of the Wasserstein distance involve multivariate, structured, or non-Euclidean data, for which a general metric-space formulation is more natural and broadly applicable. Beyond this broader applicability, such a formulation is also theoretically informative, as it allows us to identify the geometric quantities that govern the difficulty of Wasserstein testing. At the same time, because privacy constraints fundamentally alter the statistical difficulty of hypothesis testing, it is important to quantify the smallest separation that can be reliably detected under a given privacy level. This motivates a minimax analysis of differentially private Wasserstein testing, which characterizes the fundamental statistical price of privacy and its dependence on the geometry of the underlying space.

\subsection{Our contributions} In this work, we study differentially private goodness-of-fit and two-sample testing under Wasserstein separation on general compact metric spaces. To the best of our knowledge, this provides the first minimax analysis of differentially private Wasserstein testing beyond one-dimensional domains. We begin by showing that, over unbounded domains, Wasserstein testing is infeasible without additional structural assumptions on the underlying distributions. For the upper bounds, we develop differentially private Wasserstein tests based on multiresolution partitions of the metric space. At each resolution, the problem is reduced to finite-domain $\epsilon$-differentially private (DP) $\ell_1$ testing, with the resulting tests combined through a Bonferroni procedure. A key distinction from existing multiresolution approaches \citep[e.g.,][]{ba2011sublinear,deng2017wasserstein} is that we allocate the privacy and test error budgets nonuniformly across resolutions according to the difficulty of the corresponding $\ell_1$ testing problems. This more efficient allocation leads to sharper separation rates. For the lower bounds, we embed hard finite-domain testing instances into well-separated subsets of the metric space. Together, these arguments identify the covering numbers of the underlying space as the geometric quantities governing the difficulty of Wasserstein testing, and yield nearly matching upper and lower bounds for the minimax separation radii up to logarithmic factors.

We then specialize our results to $[0,1]^d$ equipped with the $\ell_1$ metric, where the benefit of the nonuniform budget allocation becomes explicit. For goodness-of-fit testing, we remove the logarithmic factors from the minimax separation radius except in the critical case $d=4p$, thereby closing the logarithmic gap in the non-private Wasserstein goodness-of-fit bounds of \citet{deng2017wasserstein}. Additionally, we show that the logarithmic factor is inevitable in the case $(d,p)=(4,1)$. Under the additional assumption that the data-generating distribution has a uniformly bounded density, we establish the minimax separation radius up to a constant factor. In particular, our lower bound captures the critical logarithmic factor that is absent from previous minimax lower bounds for Wasserstein goodness-of-fit testing. The proof combines a multiscale construction with a sharp analytical lower bound on the Wasserstein-1 distance in terms of negative-order Sobolev quantities, allowing discrepancies across multiple resolution levels to accumulate simultaneously. This establishes that the logarithmic factor in the critical case is intrinsic rather than an artifact of the upper-bound analysis and yields a tight minimax characterization up to a constant factor. For two-sample testing, we remove the logarithmic factors entirely, closing the corresponding logarithmic gap in the non-private sample-complexity bounds of \citet{ba2011sublinear}. Thus, in addition to characterizing the cost of differential privacy, our analysis sharpens existing minimax results for non-private Wasserstein testing. Finally, we return to unbounded domains and show that, under an exponential-tail condition on the data distributions, the corresponding separation rates continue to hold up to logarithmic factors.

\subsection{Related work}
\paragraph{Differentially private hypothesis testing.} Early work on differentially private hypothesis testing developed private analogues of classical tests and studied their asymptotic properties \citep{5360513,uhler2013privacy,raj2019differentially}. A subsequent line of work adopted a non-asymptotic perspective, aiming to characterize the trade-off between privacy and statistical power through finite-sample or minimax guarantees. Our work falls into this latter category. \citet{pmlr-v70-cai17a} initiated the study of private goodness-of-fit testing for distributions over a finite domain, providing sample-complexity guarantees under $\ell_1$ separation. \citet{NEURIPS2018_7de32147} subsequently established the optimal sample complexity of private goodness-of-fit testing across all parameter regimes and obtained matching upper and lower bounds for private two-sample testing in the sparse-sample regime. Concurrently, \citet{pmlr-v80-aliakbarpour18a} developed related private identity and two-sample tests, with particular emphasis on the sublinear-sample regime. \citet{zhang2021statistical} subsequently completed the characterization of private two-sample testing over finite domains, establishing upper and lower bounds that match up to constant factors across all parameter regimes. For structured high-dimensional families, \citet{NEURIPS2020_72b32a1f} and \citet{narayanan22a} studied private goodness-of-fit testing for Boolean product distributions and multivariate Gaussian distributions with known covariance. Differentially private two-sample testing for continuous distributions on the real line was studied by \citet{NEURIPS2024_04cc90ec}. In related work, \citet{kim2026sharp} developed differentially private two-sample tests using permutation calibration. Their tests achieve minimax-optimal rates for H\"older-smooth densities and distributions over finite domains. Much of this literature, however, focuses on separation in classical distances such as $\ell_1$ or $\ell_2$, often on specific domains or within restricted distribution classes. Several works have also developed general private testing frameworks. \citet{10.1145/3313276.3316336} characterized the optimal sample complexity of private simple hypothesis testing over general sample spaces. More recently, \citet{Kim25032026} developed differentially private permutation tests and applied them to obtain minimax-optimal private kernel two-sample and independence tests over general domains. Most directly related to our use of the Wasserstein distance, \citet{AwanWang2025} proposed a private goodness-of-fit test based on a one-dimensional Wasserstein statistic. Their work focuses on the validity and empirical power of the proposed test and does not characterize minimax separation rates.

\paragraph{Wasserstein hypothesis testing.}
Given the widespread use of the Wasserstein distance in statistics, machine learning, and computer science, a substantial literature has developed hypothesis tests based on this distance. Early work studied procedures for comparing distributions using trimmed Wasserstein distances and established their asymptotic properties \citep{10.1111/1467-9868.00121,FREITAG2007697,Alvarez-Esteban01062008,91101a87-ac1a-38eb-b389-231b14fc1446}. Wasserstein goodness-of-fit tests and their asymptotic behavior were investigated by \citet{10.1214/aos/1017938923,delBarrio2000Contributions,10.3150/bj/1110228245}. More recently, projection-based variants have been introduced to improve the performance and computational tractability of Wasserstein tests in high dimensions; for example, \citet{10.1093/biomet/asaf001} proposed a two-sample test based on the sparse max-sliced Wasserstein distance.

Non-asymptotic minimax guarantees for Wasserstein-based testing remain comparatively scarce. \citet{ba2011sublinear} derived sample-complexity upper and lower bounds for two-sample testing on subsets of $[0,\Delta]^d$ equipped with the $\ell_1$ metric based on Wasserstein-1 distance. The bounds match up to logarithmic factors for $d\leq 2$ and $d\geq 6$, while leaving a gap for $d=3,4,5$. The gap was closed by \cite{Ferreira2024Distribution} (see Lemma 3.12). \cite{deng2017wasserstein} also studied Wasserstein goodness-of-fit testing using Wasserstein-1 distance for a general compact metric space. They showed matching upper and lower bounds for goodness-of-fit testing on general compact metric spaces, up to logarithmic factors. None of these works considers privacy constraints and all leave logarithmic factors in the bounds for Wasserstein testing on $[0,1]^d$. In contrast, we explore testing under differential privacy constraints using Wasserstein-$p$ distance for general $p\geq1$, and remove the logarithmic gaps in most dimensions through a careful allocation of testing budgets. We further show that a logarithmic factor is unavoidable in at least one critical case, namely $(d,p)=(4,1)$. To the best of our knowledge, this is the first minimax lower bound exhibiting an unavoidable logarithmic factor for Wasserstein goodness-of-fit testing. Additionally, \citet{Wasserstein_CI} characterized the minimax separation rate for conditional independence testing under Wasserstein separation up to logarithmic factors. 

\subsection{Notation} For $a,b \in \mathbb{R}$, $a\wedge b$ and $a\vee b$ denote the minimum and maximum of $\{a,b\}$, respectively. We write $a\lesssim b$ if there exists a universal constant $c>0$ such that $a\leq cb$. If both $a\lesssim b$ and $b\lesssim a$ hold, we write $a\asymp b$. Let $(\mathcal{Y},d)$ be a metric space $\mathcal{Y}$ equipped with a metric $d$. We use $\mathcal{B}(\mathcal{Y})$ to denote the Borel $\sigma$-algebra of $\mathcal{Y}$ and $\mathcal{P}(\mathcal{Y})$ to denote the collection of all probability measures on the measurable space $(\mathcal{Y},\mathcal{B}(\mathcal{Y}))$. For a subset $S\subset \mathcal{Y}$, we write $d(S,x)\coloneqq \inf_{s\in S}d(s,x)$. For a set $A$, we denote its cardinality by $|A|$. The total variation distance between probability distributions $P$ and $Q$ is given by $\lVert P-Q\rVert_{\mathrm{TV}}\coloneqq \sup_{A}|P(A)-Q(A)|$. Finally, we write $[k]\coloneqq \{1,\ldots,k\}$ for the set of positive integers not exceeding $k$.

\subsection{Organization} The remainder of this paper is organized as follows. Section~\ref{sec:preliminaries} introduces the basic definitions and the minimax formulations of private goodness-of-fit and two-sample testing. Section~\ref{sec:main} establishes minimax bounds on compact metric spaces in terms of covering numbers, together with an impossibility result for unrestricted distributions on unbounded spaces. Section~\ref{sec:hypercube} derives explicit rates on $([0,1]^d,\ell_1)$ and establishes sharp nonprivate bounds under a bounded density assumption, demonstrating the necessity of a logarithmic factor when $(d,p)=(4,1)$. Section~\ref{sec:unbounded} extends the analysis to distributions with exponentially decaying tails, and Section~\ref{sec:discussion} discusses open questions. All technical proofs omitted from the main text are provided in the appendix.

\section{Preliminaries} \label{sec:preliminaries}

In this section, we introduce the basic concepts and definitions used throughout the paper, including the metric space, Wasserstein distance, differential privacy, and the minimax separation radius framework for goodness-of-fit and two-sample testing.
\subsection{Metric spaces and covering numbers}
We assume that observations take values in a Polish metric space $(\mathcal{X},d)$, that is, a complete and separable metric space. Our private Wasserstein tests, introduced later, combine multiple tests over partitions of $\mathcal{X}$ at different resolutions. Therefore, their performance depends on the size of the partition at each resolution, which can be quantified by the covering number of $\mathcal{X}$.

\begin{definition}[Covering number]
For $\delta>0$, the $\delta$-covering number of $\mathcal{X}$, denoted by $\mathcal{N}(\mathcal{X},\delta)$, is the smallest number of balls of diameter at most $\delta$ needed to cover $\mathcal{X}$. If no such finite collection exists, we set $\mathcal{N}(\mathcal{X},\delta)=\infty$.
\end{definition}
The covering number quantifies the metric complexity of $\mathcal{X}$ at resolution $\delta$. In our multiresolution testing procedure, it controls the number of cells required at each resolution and, consequently, the difficulty of the induced finite-domain testing problem.
\subsection{Wasserstein distance}
A coupling of two probability measures $P$ and $Q$ on $\mathcal{X}$ is a probability measure on $\mathcal{X}\times\mathcal{X}$ with marginals $P$ and $Q$, respectively.
\begin{definition}[Wasserstein-$p$ distance]\label{def:Wasserstein}
Let $p\geq1$, and let $P$ and $Q$ be probability measures on $(\mathcal{X},\mathcal{B}(\mathcal{X}))$. Let $\Pi(P,Q)$ denote the set of all couplings of $P$ and $Q$. The Wasserstein-$p$ distance between $P$ and $Q$ is defined by
\begin{align*}
W_p(P,Q)
\coloneqq 
\inf_{\pi\in\Pi(P,Q)}
\left(
\int_{\mathcal{X}\times\mathcal{X}} d^p(x,y) d\pi(x,y)
\right)^{1/p}.
\end{align*}
The value $W_p(P,Q)$ is allowed to be infinite.
\end{definition}

For discrete $\mathcal{X}$, the Wasserstein distance has a natural interpretation in terms of mass transport. A coupling $\pi\in\Pi(P,Q)$ specifies how much probability mass is transported from each point $x\in\mathcal{X}$ under $P$ to each point $y\in\mathcal{X}$ under $Q$. The quantity $d^p(x,y)$ is the cost of transporting one unit of mass from $x$ to $y$. Consequently, $W_p^p(P,Q)$ is the minimum total transportation cost over all couplings, and $W_p(P,Q)$ is its $p$th root.

\subsection{Differential privacy}
For $N\geq 1$, two datasets $D=(x_1,\ldots,x_N)$ and $D'=(x'_1,\ldots,x'_N)$ in $\mathcal{X}^N$ are said to be neighboring if they differ in exactly one coordinate; that is, there exists $i\in\{1,\ldots,N\}$ such that $x_j=x'_j$ for every $j\neq i$ and $x_i\neq x'_i$.

\begin{definition}[{$\epsilon$-differential privacy; \citealp{dwork2006calibrating}}]\label{def:DP}
Let $\epsilon\geq0$. For a measurable space $(\mathcal{Z},\Omega)$, let $\mathcal{A}:\mathcal{X}^N\rightarrow\mathcal{P}(\mathcal{Z})$ be a randomized algorithm. The algorithm $\mathcal{A}$ is $\epsilon$-differentially private or $\epsilon$-DP if
\begin{align}
\mathbb{P}(\mathcal{A}(D)\in S)
\leq
e^\epsilon \mathbb{P}(\mathcal{A}(D')\in S)\label{eq:pure_dp}
\end{align}
for every measurable set $S\in\Omega$ and every pair of neighboring datasets $D,D'\in\mathcal{X}^N$.
\end{definition}
An algorithm is differentially private if replacing a single observation changes the distribution of its output by at most a controlled multiplicative factor $e^\epsilon$. Smaller values of $\epsilon$ correspond to stronger privacy guarantees, whereas larger values permit greater dependence of the output distribution on any single observation.

One useful property of differential privacy is its preservation under adaptive composition.
\begin{proposition}[{Composition theorem; \citealp{4389483}}]\label{prop:comp}
    For measurable spaces $(\mathcal{Z}_1,\Omega_1)$ and $(\mathcal{Z}_2,\Omega_2)$, let $\mathcal{A}_1:\mathcal{X}^N\rightarrow\mathcal{P}(\mathcal{Z}_1)$ be an $\epsilon_1$-DP algorithm. Also, let $\mathcal{A}_2:\mathcal{X}^N\times\mathcal{Z}_1\rightarrow\mathcal{P}(\mathcal{Z}_2)$ be a randomized algorithm such that, for every fixed $z_1\in\mathcal{Z}_1$, $\mathcal{A}_2(\cdot,z_1)$ is $\epsilon_2$-DP. Define the composed algorithm $\mathcal{A}_{\mathrm{comp}}:\mathcal{X}^N\rightarrow\mathcal{P}(\mathcal{Z}_2)$ by
    \begin{align*}
        \mathcal{A}_{\mathrm{comp}}(D)\coloneqq \mathcal{A}_2(D,\mathcal{A}_1(D)),\qquad D\in\mathcal{X}^N.
    \end{align*}
    Then $\mathcal{A}_{\mathrm{comp}}$ is $(\epsilon_1+\epsilon_2)$-DP.
\end{proposition}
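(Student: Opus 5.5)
The plan is to work directly from Definition~\ref{def:DP}. Fix neighboring datasets $D,D'\in\mathcal{X}^N$ and a measurable set $S\in\Omega_2$. By the definition of the composed algorithm, $Z_1\sim\mathcal{A}_1(D)$ is drawn first, and then $Z_2\sim\mathcal{A}_2(D,Z_1)$. Hence
\begin{align*}
\mathbb{P}(\mathcal{A}_{\mathrm{comp}}(D)\in S)=\int_{\mathcal{Z}_1} \mathbb{P}(\mathcal{A}_2(D,z_1)\in S)\, d\mu_D(z_1),
\end{align*}
where $\mu_D$ is the law of $\mathcal{A}_1(D)$. For this integral to make sense, the map $z_1\mapsto \mathbb{P}(\mathcal{A}_2(D,z_1)\in S)$ must be measurable. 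I will take this as part of the meaning of a randomized algorithm $\mathcal{A}_2:\mathcal{X}^N\times\mathcal{Z}_1\to\mathcal{P}(\mathcal{Z}_2)$, namely that it is a Markov kernel in $z_1$ for each fixed dataset.

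The argument has two steps.

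\textbf{Step 1 (inner mechanism).} Apply the $\epsilon_2$-DP property of $\mathcal{A}_2(\cdot,z_1)$ pointwise in $z_1$. Writing $f(z_1)\coloneqq\mathbb{P}(\mathcal{A}_2(D,z_1)\in S)$ and $g(z_1)\coloneqq\mathbb{P}(\mathcal{A}_2(D',z_1)\in S)$, this gives $f\le e^{\epsilon_2} g$ for every $z_1$. Therefore
\begin{align*}
\int f\,d\mu_D\le e^{\epsilon_2}\int g\,d\mu_D .
\end{align*}

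\textbf{Step 2 (outer mechanism).} Compare $\int g\,d\mu_D$ with $\int g\,d\mu_{D'}$. The $\epsilon_1$-DP property of $\mathcal{A}_1$ gives $\mu_D(A)\le e^{\epsilon_1}\mu_{D'}(A)$ for all measurable $A$, which is a statement about indicator functions. To extend it to $g$, use that $g$ is measurable with values in $[0,1]$ and write it via the layer-cake formula:
\begin{align*}
\int g\,d\mu_D=\int_0^1 \mu_D(\{g>t\})\,dt\le e^{\epsilon_1}\int_0^1\mu_{D'}(\{g>t\})\,dt=e^{\epsilon_1}\int g\,d\mu_{D'}.
\end{align*}
Alternatively, approximate $g$ monotonically by nonnegative simple functions and invoke monotone convergence.

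Combining the two steps yields
\begin{align*}
\mathbb{P}(\mathcal{A}_{\mathrm{comp}}(D)\in S)\le e^{\epsilon_1+\epsilon_2}\,\mathbb{P}(\mathcal{A}_{\mathrm{comp}}(D')\in S).
\end{align*}
Since $D,D'$ and $S$ were arbitrary, $\mathcal{A}_{\mathrm{comp}}$ is $(\epsilon_1+\epsilon_2)$-DP.

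The only delicate point is Step 2, the passage from the set-wise inequality to integrals of bounded measurable functions, together with the measurability of $g$ that it relies on. Everything else is direct substitution.
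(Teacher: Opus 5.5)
Your proof is correct. The paper gives no proof of this proposition; it imports the result as a black box from McSherry and Talwar, so there is no in-paper argument to compare against.

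What you wrote is the standard self-contained argument. You disintegrate over the first output, apply the $\epsilon_2$ bound pointwise in $z_1$, and then transfer the set-wise $\epsilon_1$ bound on $\mathcal{A}_1$ to the $[0,1]$-valued function $g$ via the layer-cake formula. This handles general measurable output spaces, whereas the usual textbook version assumes discrete outputs and simply bounds a sum term by term.

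The one hypothesis you add is that $z_1\mapsto\mathbb{P}(\mathcal{A}_2(D,z_1)\in S)$ is measurable, i.e., that $\mathcal{A}_2(D,\cdot)$ is a Markov kernel using randomness independent of $\mathcal{A}_1$. That is exactly what is needed for $\mathcal{A}_2(D,\mathcal{A}_1(D))$ to be well defined at all. It is therefore implicit in the statement and costs nothing.
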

Repeated application of Proposition~\ref{prop:comp} allows us to combine tests across resolutions while controlling the total privacy budget.

\subsection{Goodness-of-fit testing}

Suppose that a null distribution $P_0$ is given and that $X_1,\ldots,X_n$ are drawn independently from an unknown distribution $P\in\mathcal{P}(\mathcal{X})$. The goodness-of-fit testing problem aims to decide
\begin{align}\label{eq:ht_gof}
H_0:P=P_0
\quad\textrm{vs}\quad
H_1:P\neq P_0.
\end{align}
We use the minimax separation radius framework \citep[e.g.,][]{ingster1993asymptotically} to assess the optimality of goodness-of-fit testing algorithms. For $p\geq 1$, a null distribution $P_0$, and a separation parameter $\rho>0$, we define the class of alternatives separated from $P_0$ according to the $p$th power of the Wasserstein-$p$ distance by
\begin{align*}
\mathcal{P}_1^{\mathrm{gof}}(P_0,\rho,p) \coloneqq \left\{P\in\mathcal{P}(\mathcal{X}):W_p^p(P,P_0)\geq \rho\right\}.
\end{align*}

We assume the null distribution $P_0$ is chosen from $\mathcal{P}_0^{\mathrm{gof}}\coloneqq \mathcal{P}(\mathcal{X})$.

\begin{definition}[DP minimax separation radius for goodness-of-fit testing]\label{def:minimax_sep_gof}
Let $\phi_{P_0}:\mathcal{X}^n\rightarrow\{0,1\}$ be a goodness-of-fit testing procedure for \eqref{eq:ht_gof}. Given Type I and Type II error levels $\alpha,\beta\in(0,1/3]$, $p\geq 1$, and a sample size $n$, the separation radius of $\phi_{P_0}$ is defined as
\begin{align*}
    \rho_{n}^{\mathrm{gof}}(\phi_{P_0},\mathcal{X},p,\alpha,\beta)\coloneqq \inf\left\{\rho>0:\mathbb{E}_{P_0}[\phi_{P_0}]\leq\alpha,\sup_{P\in\mathcal{P}_{1}^{\mathrm{gof}}(P_0,\rho,p)}\mathbb{E}_P[1-\phi_{P_0}]\leq\beta\right\}.
\end{align*}
For the privacy parameter $\epsilon>0$, the minimax separation radius for $\epsilon$-DP goodness-of-fit testing is then defined as
\begin{align*} \rho_{n,\epsilon}^{\mathrm{gof}}(\mathcal{X},p,\alpha,\beta)\coloneqq \sup_{P_0\in\mathcal{P}_0^{\mathrm{gof}}}\inf_{\phi_{P_0}:\epsilon\textrm{-DP}}\rho_{n}^{\mathrm{gof}}(\phi_{P_0},\mathcal{X},p,\alpha,\beta).\end{align*}
\end{definition}

Intuitively, $\rho_{n,\epsilon}^{\mathrm{gof}}$ is the smallest separation, uniformly over all null distributions $P_0$, above which an $\epsilon$-DP goodness-of-fit test can distinguish $P_0$ from alternatives separated in Wasserstein-$p$ distance with Type I error at most $\alpha$ and Type II error at most $\beta$.

\subsection{Two-sample testing}
In two-sample testing, we observe two independent samples $X_1,\ldots,X_n\overset{\textrm{i.i.d.}}{\sim} P$ and $Y_1,\ldots,Y_m\overset{\textrm{i.i.d.}}{\sim} Q$ and aim to determine whether
\begin{align}\label{eq:ht_ts}
    H_0:P=Q\textrm{ vs }H_1:P\not=Q.
\end{align}
Unlike in goodness-of-fit testing, neither distribution is known in advance. This additional uncertainty can make two-sample testing statistically more demanding than goodness-of-fit testing.

For $p\geq1$ and $\rho>0$, define the null and alternative classes by
\begin{align*}
    \mathcal{P}_{0}^{\mathrm{ts}}&\coloneqq \{(P,Q):P=Q\in\mathcal{P}(\mathcal{X})\}\\[6pt]
    \mathcal{P}_{1}^{\mathrm{ts}}(\rho,p)&\coloneqq \{(P,Q):W_p^p(P,Q)\geq \rho,P,Q\in\mathcal{P}(\mathcal{X})\}.
\end{align*}
We write $\mathbb{E}_{P,Q}$ for expectation over the independent samples drawn from $P$ and $Q$ and any internal randomness of the test. We next define the minimax separation radius under differential privacy. 
\begin{definition}[DP minimax separation radius for two-sample testing]
Let $\phi:\mathcal{X}^{n+m}\rightarrow\{0,1\}$ be a two-sample test for \eqref{eq:ht_ts}. For given Type I and Type II error levels $\alpha,\beta\in(0,1/3]$, $p\geq 1$, and sample sizes $n$ and $m$, the separation radius of $\phi$ is defined as
    \begin{align*}
        \rho_{n,m}^{\mathrm{ts}}(\phi,\mathcal{X},p,\alpha,\beta)\coloneqq \inf\left\{\rho>0:\sup_{(P,Q)\in\mathcal{P}_{0}^{\mathrm{ts}}}\mathbb{E}_{P,Q}[\phi]\leq\alpha,\sup_{(P,Q)\in\mathcal{P}_{1}^{\mathrm{ts}}(\rho,p)}\mathbb{E}_{P,Q}[1-\phi]\leq\beta\right\}.
    \end{align*}
For the privacy parameter $\epsilon>0$, the minimax separation radius for $\epsilon$-DP two-sample testing is then defined as
    \begin{align*}
        \rho_{n,m,\epsilon}^{\mathrm{ts}}(\mathcal{X},p,\alpha,\beta)\coloneqq \inf_{\phi:\epsilon\textrm{-DP}}\rho_{n,m}^{\mathrm{ts}}(\phi,\mathcal{X},p,\alpha,\beta).
    \end{align*}
\end{definition}
For simplicity, throughout the main theoretical analysis, we consider the balanced case $n=m$ and simply denote $\rho_{n,\epsilon}^{\mathrm{ts}}(\mathcal{X},p,\alpha,\beta)\coloneqq \rho_{n,n,\epsilon}^{\mathrm{ts}}(\mathcal{X},p,\alpha,\beta)$. This does not change the separation rates up to constant factors, provided that $n\asymp m$. In such a case, one may subsample the larger sample to obtain two samples of size $\min\{n,m\}$.
\section{Minimax bounds on compact metric spaces}\label{sec:main}
In this section, we establish upper and lower bounds on the minimax separation radii for goodness-of-fit and two-sample testing over a given Polish metric space $(\mathcal{X},d)$. For this purpose, we impose the following boundedness assumption.
\begin{assumption}\label{assump:compact}
The metric space $(\mathcal{X},d)$ is compact. Equivalently, since $\mathcal{X}$ is complete, $\mathrm{diam}(\mathcal{X})\coloneqq \sup_{x,y\in \mathcal{X}}d(x,y)<\infty$ and $\mathcal{N}(\mathcal{X},\delta)<\infty$ for every $\delta>0$.
\end{assumption}
To justify this assumption, we show that, without further restrictions on the distributions, differentially private goodness-of-fit testing under Wasserstein separation is infeasible on an unbounded metric space.
\begin{proposition}\label{prop:unbounded_impossibility}
Suppose that $\mathrm{diam}(\mathcal{X})=\infty$. For every $n\in\mathbb{N}$, $p\geq1$, $\epsilon>0$, and $\rho>0$, there exist $P\in\mathcal{P}(\mathcal{X})$ and $Q\in\mathcal{P}_1^{\mathrm{gof}}(P,\rho,p)$ such that no test can have both Type I and Type II errors at most $1/3$.
\end{proposition}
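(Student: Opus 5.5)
The idea is to exhibit a pair $(P,Q)$ that are very close in total variation yet far apart in $W_p$. Closeness in total variation then forces every test, private or not, to have large total error. The construction puts a tiny amount of mass very far away; this is exactly what unboundedness of $\mathcal{X}$ permits.

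Fix $n,p,\epsilon,\rho$. Pick any point $x_0\in\mathcal{X}$ and let $P=\delta_{x_0}$. Choose a small mixing weight $\eta\in(0,1)$ with $1-(1-\eta)^n\le 1/4$, e.g.\ $\eta=1/(4n)$. Since $\mathrm{diam}(\mathcal{X})=\infty$, for every $R>0$ some pair of points is at distance exceeding $2R$. By the triangle inequality, one of them is at distance more than $R$ from $x_0$, so $\sup_{x}d(x_0,x)=\infty$. Choose $x_1$ with $d(x_0,x_1)^p\ge \rho/\eta$, and set $Q=(1-\eta)\delta_{x_0}+\eta\delta_{x_1}$.

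Next I verify the separation. Every coupling of $\delta_{x_0}$ and $Q$ is the product coupling. Hence
\[
W_p^p(P,Q)=\eta\, d(x_0,x_1)^p\ge\rho,
\]
so $Q\in\mathcal{P}_1^{\mathrm{gof}}(P,\rho,p)$.

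Then I bound the errors of an arbitrary test $\phi$, which may be randomized and need not be private. Realize the samples by a coupling in which $X_i=x_0$ under $P^{\otimes n}$, and under $Q^{\otimes n}$ each $X_i$ equals $x_0$ independently with probability $1-\eta$. The two product laws then satisfy
\[
\|P^{\otimes n}-Q^{\otimes n}\|_{\mathrm{TV}}\le 1-(1-\eta)^n\le 1/4 .
\]
Consequently
\[
\mathbb{E}_P[\phi]+\mathbb{E}_Q[1-\phi]\ge 1-\|P^{\otimes n}-Q^{\otimes n}\|_{\mathrm{TV}}\ge 3/4 .
\]
If both errors were at most $1/3$, their sum would be at most $2/3<3/4$, a contradiction. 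This argument works for every test, so in particular for every $\epsilon$-DP test.

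There is no real obstacle here. The only points needing care are the existence of a point arbitrarily far from the fixed $x_0$, which follows from infinite diameter via the triangle inequality, and the measurability of singletons, which holds in a metric space. Privacy plays no role: the impossibility holds even without DP.
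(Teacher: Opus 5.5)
Your proposal is correct and follows essentially the same argument as the paper: a point mass versus the mixture $(1-\eta)\delta_{x_0}+\eta\delta_{x_1}$, with $W_p^p=\eta\,d^p(x_0,x_1)\ge\rho$ forced by the product coupling, and $\|P^{\otimes n}-Q^{\otimes n}\|_{\mathrm{TV}}\le n\eta$ contradicting the error constraints for any test (the paper takes $\theta=(1-\alpha-\beta)/(2n)$ where you take $\eta=1/(4n)$). Your triangle-inequality step, showing that infinite diameter yields points arbitrarily far from the fixed $x_0$, fills in a detail the paper leaves implicit.
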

The Type I and Type II error bounds $1/3$ can be replaced by an arbitrary pair of numbers $(\alpha,\beta)$ as long as $\alpha>0,\beta>0$, and $\alpha+\beta<1$. The proof exploits the fact that, unlike metrics such as total variation distance, the Wasserstein distance depends on the geometry of the underlying metric space. A small amount of probability mass can be placed arbitrarily far from the support of the null distribution, producing an arbitrarily large Wasserstein separation while keeping the resulting distributions statistically difficult to distinguish.
\begin{proof}[Proof of Proposition~\ref{prop:unbounded_impossibility}]
Fix any $x_\ast\in\mathcal{X}$ and let $P=\delta_{x_\ast}$. Set $\theta\coloneqq (1-\alpha-\beta)/(2n)$.
Since $\mathrm{diam}(\mathcal{X})=\infty$, there exists $x_0\in\mathcal{X}$ such that $d^p(x_\ast,x_0)\geq\rho/\theta$.
Define
\begin{align*}
Q\coloneqq (1-\theta)\delta_{x_\ast}+\theta\delta_{x_0}.
\end{align*}
Because the first marginal of any coupling between $P$ and $Q$ is concentrated at $x_\ast$,
\begin{align*}
W_p^p(P,Q)=\theta d^p(x_\ast,x_0)\geq\rho.
\end{align*}
Thus, $Q\in\mathcal{P}_1^{\mathrm{gof}}(P,\rho,p)$.

Now consider testing $P^n$ against $Q^n$. Since $P^n$ is concentrated on $(x_\ast,\ldots,x_\ast)$, while $Q^n$ assigns probability $(1-\theta)^n$ to this point,
\begin{align*}
\lVert P^n-Q^n\rVert_{\mathrm{TV}}=1-(1-\theta)^n\leq n\theta=\frac{1-\alpha-\beta}{2}<1-\alpha-\beta.
\end{align*}
On the other hand, if a test $\phi$ had Type I error at most $\alpha$ and Type II error at most $\beta$, then
\begin{align*}
\lVert P^n-Q^n\rVert_{\mathrm{TV}}\geq\mathbb{P}_{Q^n}(\phi=1)-\mathbb{P}_{P^n}(\phi=1)\geq1-\alpha-\beta,
\end{align*}
which is a contradiction.
\end{proof}
The same conclusion holds for two-sample testing. Indeed, suppose there exists a two-sample test that achieves the prescribed Type I and Type II error bounds uniformly over the corresponding null and alternative classes. Then one could use it for goodness-of-fit testing by generating i.i.d. samples from the null distribution. This would yield a goodness-of-fit test with the same error guarantees, contradicting Proposition~\ref{prop:unbounded_impossibility}.
\subsection{Goodness-of-fit testing}
In this subsection, we characterize the minimax separation radius $\rho_{n,\epsilon}^{\mathrm{gof}}$ for goodness-of-fit testing over a general compact metric space. For the upper bound, we construct an $\epsilon$-DP Wasserstein test by combining multiple $\epsilon$-DP $\ell_1$ testers across resolution scales and allocating the privacy and test error budgets non-uniformly according to their testing difficulty. We then derive a minimax lower bound that matches the upper bound up to a logarithmic factor. 

\subsubsection{Upper bound}
To construct the Wasserstein goodness-of-fit test, we use a multiscale partition of the underlying metric space. We first define a dyadic partition of $\mathcal{X}$. A collection of pairwise disjoint Borel sets is called a partition of $\mathcal{X}$ if their union is $\mathcal{X}$.
\begin{definition}\label{def:dyadic_partition}
For a given $\delta\in(0,1)$ and a positive integer $J$, let $\{\mathcal{R}^j\}_{j=1}^J$ be a sequence of partitions of $\mathcal{X}$. We say that $\{\mathcal{R}^j\}_{j=1}^J$ is a dyadic partition of $\mathcal{X}$ with parameter $\delta$ if it satisfies the following properties:
   \begin{itemize}
       \item $R\in \mathcal{R}^j$ implies $\mathrm{diam}(R)\leq\mathrm{diam}(\mathcal{X})\delta^j$
       \item $R^{j+1}\in\mathcal{R}^{j+1}$ and $R^j\in\mathcal{R}^j$ imply $R^{j+1}\subset R^j$ or $R^{j+1}\cap R^j=\emptyset$.
   \end{itemize}
\end{definition}
Thus, each successive partition refines the preceding one, as illustrated in Figure~\ref{fig:dyadic_partition}.
\begin{figure}
    \centering
    \begingroup
    \pgfmathsetlengthmacro{\dyadicunit}{0.24*\linewidth}
    \begin{tikzpicture}[
        x=\dyadicunit, y=\dyadicunit,
        line cap=butt, line join=miter,
        partition fine/.style={draw=black!50, line width=0.3pt},
        partition middle/.style={draw=black!65, line width=0.4pt},
        partition coarse/.style={draw=black!80, line width=0.5pt},
        partition border/.style={draw=black!85, line width=0.65pt},
        refinement arrow/.style={-{Stealth[length=4.5pt,width=3.5pt]}, draw=black!65, line width=0.55pt},
        every node/.style={inner sep=0pt, outer sep=0pt}
    ]
        \foreach \level/\offset/\cellcount in {1/0/4, 2/1.5/16, 3/3/64} {
            \begin{scope}[shift={(\offset,0)}]
                \fill[black!12] (0.5,0.5) rectangle (1,1);
                \ifnum\level>2
                    \foreach \position in {0.125,0.375,0.625,0.875} {
                        \draw[partition fine] (\position,0) -- (\position,1);
                        \draw[partition fine] (0,\position) -- (1,\position);
                    }
                \fi
                \ifnum\level>1
                    \foreach \position in {0.25,0.75} {
                        \draw[partition middle] (\position,0) -- (\position,1);
                        \draw[partition middle] (0,\position) -- (1,\position);
                    }
                \fi
                \draw[partition coarse] (0.5,0) -- (0.5,1);
                \draw[partition coarse] (0,0.5) -- (1,0.5);
                \draw[partition border] (0,0) rectangle (1,1);
                \node[anchor=south, yshift=8pt, font=\normalsize]
                    at (0.5,1) {$\mathcal{R}^{\level}$};
                \node[anchor=north, yshift=-7pt, font=\footnotesize]
                    at (0.5,0) {$\cellcount$ cells};
            \end{scope}
        }
        \foreach \offset in {0,1.5} {
            \draw[refinement arrow]
                (\offset+1.12,0.5) -- (\offset+1.38,0.5);
        }
    \end{tikzpicture}
    \endgroup
    \caption{Dyadic partition of $[0,1]^2$ with parameter $\delta=1/2$. Each partition $\mathcal{R}^j$ refines $\mathcal{R}^{j-1}$, thereby providing a finer resolution. Shading tracks the same cell of $\mathcal{R}^1$ across refinements.}
    \label{fig:dyadic_partition}
\end{figure}
The cardinality of each partition can be controlled by the covering number of $\mathcal{X}$ at a slightly finer scale if $\gamma\in(0,1/3]$. For the remainder of the paper, we assume $\gamma\in(0,1/3]$.
\begin{lemma}\label{lem:dyadic}
For any $\gamma\in(0,1/3]$ and positive integer $J$, there exists a dyadic partition $\{\mathcal{R}^j\}_{j=1}^J$ of $\mathcal{X}$ with parameter $\gamma$ such that $|\mathcal{R}^j|\leq\mathcal{N}(\mathcal{X},\mathrm{diam}(\mathcal{X})\gamma^j/3)$ for every $j=1,\ldots,J$.
\end{lemma}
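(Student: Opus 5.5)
We build the partitions top-down by induction on $j$, using at each level a Voronoi-type partition generated by a maximal separated set, and then intersect with the previous level to force nestedness. Write $D\coloneqq\mathrm{diam}(\mathcal{X})$ and $r_j\coloneqq D\gamma^j/3$.

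\textbf{Step 1: net at scale $j$.} For each $j$, choose a maximal set $S_j=\{s_1,\ldots,s_{k_j}\}\subset\mathcal{X}$ whose points are pairwise more than $r_j$ apart. Any ball of diameter at most $r_j$ contains at most one point of $S_j$. Hence $k_j\le \mathcal{N}(\mathcal{X},r_j)$, and $k_j$ is finite by Assumption~\ref{assump:compact}. By maximality, every $x$ satisfies $d(x,S_j)\le r_j$.

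\textbf{Step 2: top-down nested construction.} Intersecting independent Voronoi partitions across levels would multiply the cell counts, so we do not do that. Instead, we make each cell at level $j$ a union of Voronoi cells at level $j$, grouped by which parent they fall into. Concretely, set $\mathcal{R}^0=\{\mathcal{X}\}$. Suppose $\mathcal{R}^{j-1}$ has been built with each cell $R$ \emph{indexed} by a center. Order $S_j$ and assign each $x$ to the first $s_i\in S_j$ with $d(x,s_i)\le r_j$; call the resulting Borel cells $V_i$. These form a partition of $\mathcal{X}$ with $\mathrm{diam}(V_i)\le 2r_j$.

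Intersecting $V_i$ with the parents can split $V_i$ into several pieces, which would increase the count. To avoid this, we build bottom-up rather than top-down. Fix $J$ and start with $\mathcal{R}^J=\{V^J_i\}$, the Voronoi cells at the finest level. For $j<J$, attach each cell of $\mathcal{R}^{j+1}$ with center $c$ to the first $s\in S_j$ satisfying $d(c,s)\le r_j$. Then define the cells of $\mathcal{R}^j$ as the unions of all $\mathcal{R}^{j+1}$-cells attached to the same $s$. Nestedness is then automatic, and $|\mathcal{R}^j|\le|S_j|\le\mathcal{N}(\mathcal{X},r_j)$. Measurability holds because each cell is a finite union of Borel sets.

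\textbf{Step 3: diameter bound (main obstacle).} We show inductively from the bottom that every point of a cell of $\mathcal{R}^j$ with center $s$ lies within $\rho_j$ of $s$. At the finest level, $\rho_J=r_J$. For $j<J$, a point $x$ in a child cell with center $c$ satisfies
\[
d(x,s)\le d(x,c)+d(c,s)\le \rho_{j+1}+r_j .
\]
This gives the recursion $\rho_j=r_j+\rho_{j+1}$, so
\[
\rho_j\le \sum_{i\ge j} r_i=\frac{r_j}{1-\gamma}\le \frac{3}{2}r_j,
\]
using $\gamma\le 1/3$. Therefore
\[
\mathrm{diam}(R)\le 2\rho_j\le 3r_j=D\gamma^j,
\]
which is exactly the required bound. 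This geometric-series step is the one where the constants $1/3$ (in the covering scale) and $\gamma\le1/3$ must fit together, and it is the point to verify carefully. If a slack is needed, we use the inequality $1/(1-\gamma)\le 3/2$.
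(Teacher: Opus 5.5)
Your proof is correct and is essentially the paper's bottom-up construction: start from a finest partition with at most $\mathcal{N}(\mathcal{X},r_J)$ cells, then form each coarser cell as a union of finer cells grouped under one of at most $\mathcal{N}(\mathcal{X},r_j)$ anchors, with $\gamma\le 1/3$ closing the diameter estimate. The differences are only in bookkeeping. You anchor with a maximal $r_j$-separated net, attach each child via its center, and sum the radii geometrically to get $\mathrm{diam}\le 2r_j/(1-\gamma)\le D\gamma^j$. The paper instead uses the covering balls directly, groups each finer cell with the first ball it meets, and propagates $\mathrm{diam}\le 2D\gamma^j+D\gamma^{j-1}/3\le D\gamma^{j-1}$. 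You should delete your opening sentence and the abandoned top-down paragraph in Step~2, since they describe an intersection-based construction that is not the one you carry out.
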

Lemma~\ref{lem:dyadic} follows from the argument used in the proof of \citet[][Proposition~3]{weed2019sharp}. For completeness, we include a proof in Appendix~\ref{appendix:dyadic}.

We can bound the Wasserstein distance by a weighted sum of the $\ell_1$ distances between the distributions induced by these partitions.
\begin{proposition}[{\citealp[Proposition~1]{weed2019sharp}}]\label{prop:Wp_decomp}
Let $P$ and $Q$ be two Borel probability measures on $\mathcal{X}$, and let $\{\mathcal{R}^j\}_{j=1}^J$ be a dyadic partition of $\mathcal{X}$ with parameter $\gamma$. Denote by $P^j$ and $Q^j$ the probability distributions on $\mathcal{R}^j$ defined by
\begin{align*}
    P^j(R)\coloneqq P(R)\textrm{ and }Q^j(R)=Q(R).
\end{align*}
Then the following inequality holds:
\begin{align}\label{eq:Wp_decomp}
W_p^p(P,Q)\leq\mathrm{diam}^p(\mathcal{X}) \left(\gamma^{Jp}+\sum_{j=1}^{J}\gamma^{(j-1)p}\lVert P^j-Q^j\rVert_1\right).
\end{align}
\end{proposition}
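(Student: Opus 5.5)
We construct an explicit coupling of $P$ and $Q$ by matching mass hierarchically along the dyadic tree, from the coarsest to the finest level. Its cost is then bounded level by level. Write $D\coloneqq\mathrm{diam}(\mathcal{X})$ and set $\mathcal{R}^0\coloneqq\{\mathcal{X}\}$. Note that $\gamma^{0}=1$ matches the fact that any two points are at distance at most $D$.

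The key object is, for each level $j$, the common mass that can be matched inside cells of $\mathcal{R}^j$. We build a sequence of sub-probability measures $\pi_J,\pi_{J-1},\ldots,\pi_0$ on $\mathcal{X}\times\mathcal{X}$, each supported on pairs lying in a common cell of the indicated level, as follows.
\begin{itemize}
\item[(a)] At level $J$, inside each cell $R\in\mathcal{R}^J$, couple $\min\{P(R),Q(R)\}$ of mass. For example, use the product of normalized restrictions $P|_R$ and $Q|_R$, scaled to total mass $\min\{P(R),Q(R)\}$.
\item[(b)] The unmatched residual masses are $(P(R)-Q(R))_+$ and $(Q(R)-P(R))_+$ per cell. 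Pass them up to the parent cell at level $J-1$.
\item[(c)] Inside each $R'\in\mathcal{R}^{J-1}$, match as much residual $P$-mass against residual $Q$-mass as possible, again by a product coupling.
\item[(d)] Continue upward to level $0$, where everything remaining is matched, since total masses agree.
\end{itemize}
The refinement property of Definition~\ref{def:dyadic_partition} ensures each level-$j$ cell is a disjoint union of level-$(j+1)$ cells. Hence the residuals are well defined, and the resulting $\pi$ has marginals exactly $P$ and $Q$.

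Next comes the accounting. Mass matched at level $J$ travels distance at most $D\gamma^J$ and has total at most $1$, contributing at most $D^p\gamma^{Jp}$. For $j\le J-1$, mass matched at level $j$ but not at level $j+1$ lies within a common cell of $\mathcal{R}^{j}$, so its cost per unit is at most $D^p\gamma^{jp}$. Its total amount is at most the residual left unmatched after level $j+1$. I claim this residual is at most $\frac12\sum_{R\in\mathcal{R}^{j+1}}|P(R)-Q(R)| = \frac12\lVert P^{j+1}-Q^{j+1}\rVert_1$. The reason is that the residual $P$-mass in cell $R\in\mathcal{R}^{j+1}$ after matching within $R$ and all its subcells equals exactly $(P(R)-Q(R))_+$, by induction on the tree. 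Summing gives
\begin{align*}
W_p^p(P,Q)\le D^p\Big(\gamma^{Jp}+\sum_{j=0}^{J-1}\gamma^{jp}\lVert P^{j+1}-Q^{j+1}\rVert_1\Big),
\end{align*}
which after reindexing is \eqref{eq:Wp_decomp}. In fact this carries an extra factor $1/2$ that we may drop.

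The main obstacle is the inductive bookkeeping that the residual after finishing level $j+1$ is exactly $\sum_{R}(P(R)-Q(R))_+$. I would prove it by a separate induction. Suppose within $R\in\mathcal{R}^{j}$ the children carry residuals $(P(C)-Q(C))_+$ and $(Q(C)-P(C))_+$. Maximal matching inside $R$ then leaves $\big(\sum_C (P(C)-Q(C))\big)_+ = (P(R)-Q(R))_+$. Measurability of the glued coupling is routine, since finitely many product measures are summed.
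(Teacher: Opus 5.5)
Your argument is correct. The paper gives no proof of its own for this proposition; it takes the bound directly from \citet[Proposition~1]{weed2019sharp}. Your bottom-up hierarchical matching is therefore a self-contained substitute for that citation rather than a competitor to an argument in the text.

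The bookkeeping you flag as the main obstacle works as you describe. Inside a cell $R\in\mathcal{R}^j$, the residual masses $a_R=\sum_{C\subset R}(P(C)-Q(C))_+$ and $b_R=\sum_{C\subset R}(Q(C)-P(C))_+$ satisfy $a_R-b_R=P(R)-Q(R)$, by refinement and countable additivity. Hence $a_R-\min\{a_R,b_R\}=(P(R)-Q(R))_+$. This has three consequences:
\begin{itemize}
\item The residual vanishes at $\mathcal{R}^0=\{\mathcal{X}\}$, which gives exact marginals $P$ and $Q$.
\item The mass matched at level $j$ lies in $R\times R$ with $\mathrm{diam}(R)\leq\mathrm{diam}(\mathcal{X})\gamma^j$.
\item The mass matched at level $j$ is at most $\sum_{C\in\mathcal{R}^{j+1}}(P(C)-Q(C))_+=\tfrac12\lVert P^{j+1}-Q^{j+1}\rVert_1$.
\end{itemize}
Your route makes the coupling explicit and yields the sharper constant $\tfrac12$ in front of the sum. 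Neither the cited statement nor its use in Theorem~\ref{thm:goodness_of_fit_minimax_upper} needs that constant.

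Two small remarks. First, ``finitely many product measures'' assumes finite partitions, which is what Lemma~\ref{lem:dyadic} provides. For countable partitions the same construction works with countable sums of measures. Countability is in any case implicit in the statement, since otherwise $P^j$ need not be a probability vector on $\mathcal{R}^j$. Second, when $P(R)=0$ or $Q(R)=0$, or a residual total is zero, take the normalized product to be the zero measure. This is harmless because the matched mass is then zero.
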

Given $\tau_1,\ldots,\tau_J>0$, if $\lVert P^j-Q^j\rVert_1\leq\tau_j$ for every $j=1,\ldots,J$, then Proposition~\ref{prop:Wp_decomp} implies
\begin{align*}
    W_p^p(P,Q)\leq \mathrm{diam}^p(\mathcal{X})\left(\gamma^{Jp}+\sum_{j=1}^J\gamma^{(j-1)p}\tau_j\right).
\end{align*}
Consequently, if $W_p^p(P,Q)$ is sufficiently large, then at least one resolution level must satisfy $\lVert P^j-Q^j\rVert_1>\tau_j$, provided that the weighted sum of the thresholds $\tau_j$ is appropriately controlled. This observation suggests applying an $\ell_1$ goodness-of-fit test at each resolution level. For the $\ell_1$ tester, we use the minimax-optimal $\epsilon$-DP $\ell_1$ goodness-of-fit test for discrete distributions developed by \cite{NEURIPS2018_7de32147}. The following proposition states the corresponding minimax separation bounds, which we use to analyze our Wasserstein test. Its proof is deferred to Appendix~\ref{appendix:tv_minimax}.
\begin{proposition}[{\citealp{NEURIPS2018_7de32147}}]\label{prop:gof_tv_minimax}
Let $k\geq2$ and $n\geq1$ be integers, and let $\epsilon>0$. Let $u_0\in\mathcal{P}([k])$ be a known distribution, and suppose that we observe $n$ i.i.d. samples from an unknown distribution $u\in\mathcal{P}([k])$. For given Type I and Type II error levels $\alpha,\beta\in(0,1/3]$, there exist a universal constant $C_1>0$ and a constant $C_2\in(0,1/2)$ depending only on $\alpha$ and $\beta$ with the following properties.
\begin{enumerate}
\renewcommand{\labelenumi}{(\roman{enumi})}
\item There exists an $\epsilon$-DP tester for
\begin{align*}
H_0:u=u_0
\quad\textrm{vs}\quad
H_1:\lVert u-u_0\rVert_1\geq \tau
\end{align*}
with Type I error at most $\alpha$ and Type II error at most $\beta$ if
\begin{align*}
\tau\geq C_1\max\left\{\frac{k^{1/4}}{\sqrt{n}},\frac{k^{1/2}}{n\sqrt{\epsilon}},\frac{k^{1/4}}{n^{3/4}\sqrt{\epsilon}},\frac{1}{n\epsilon}\right\}\log\frac{1}{\alpha\wedge\beta}.
\end{align*}
\item Conversely, when $u_0=\mathrm{Unif}([k])$, any $\epsilon$-DP tester for this testing problem with Type I and Type II errors at most $\alpha$ and $\beta$, respectively, satisfies
\begin{align*} 
    \tau\geq C_2\left(\max\left\{\frac{k^{1/4}}{\sqrt{n}},\frac{k^{1/2}}{n\sqrt{\epsilon}},\frac{k^{1/4}}{n^{3/4}\sqrt{\epsilon}},\frac{1}{n\epsilon}\right\}\wedge1\right).
\end{align*}
\end{enumerate}
\end{proposition}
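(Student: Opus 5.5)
The plan is to derive Proposition~\ref{prop:gof_tv_minimax} from the results of \citet{NEURIPS2018_7de32147}. That paper states its bounds in sample-complexity form: for constant error probability (say $1/3$ or $1/10$), the optimal $\epsilon$-DP identity tester for distance $\tau$ needs
\begin{align*}
n^\ast(k,\tau,\epsilon)\asymp \frac{\sqrt{k}}{\tau^2}+\frac{\sqrt{k}}{\tau\sqrt{\epsilon}}+\frac{k^{1/3}}{\tau^{4/3}\epsilon^{2/3}}+\frac{1}{\tau\epsilon}
\end{align*}
samples. We would first recall this statement, with $\ell_1$ in place of total variation, which changes $\tau$ only by a factor $2$. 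We would then check that it holds for an arbitrary known $u_0$ in the upper bound, and for $u_0=\mathrm{Unif}([k])$ in the lower bound.

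The core step is inverting this sample complexity into a separation radius. For each of the four terms we solve $n\asymp(\text{term})$ for $\tau$:
\begin{itemize}
\item $n\asymp\sqrt{k}/\tau^2$ gives $\tau\asymp k^{1/4}/\sqrt{n}$;
\item $n\asymp\sqrt{k}/(\tau\sqrt{\epsilon})$ gives $\tau\asymp k^{1/2}/(n\sqrt{\epsilon})$;
\item $n\asymp k^{1/3}/(\tau^{4/3}\epsilon^{2/3})$ gives $\tau^{4/3}\asymp k^{1/3}/(n\epsilon^{2/3})$, that is, $\tau\asymp k^{1/4}/(n^{3/4}\sqrt{\epsilon})$;
\item $n\asymp1/(\tau\epsilon)$ gives $\tau\asymp1/(n\epsilon)$.
\end{itemize}
Each term is decreasing in $\tau$. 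Hence $n\geq c\cdot n^\ast$ holds as soon as $\tau$ exceeds a constant multiple of the maximum of the four inverted quantities. Conversely, $n\geq n^\ast$ forces $\tau$ to be at least a constant times each of them. The lower bound only makes sense for $\tau\leq 2$, since $\ell_1$ distances are at most $2$, which explains the truncation $\wedge1$ in part (ii).

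For part (i), the constant-error tester has error at most $1/3$. We would amplify it to errors $\alpha,\beta$ by the standard median trick: split the sample into $r\asymp\log\frac{1}{\alpha\wedge\beta}$ disjoint batches, run the tester on each batch, and take the majority vote. Each individual sits in exactly one batch, so by parallel composition (or post-processing of disjoint-data mechanisms) the overall test remains $\epsilon$-DP. Hoeffding's inequality gives the error bound. Each batch has $n/r$ samples, and every term of the maximum scales at most like $(n/r)^{-1}\leq r\cdot(\text{term at }n)$. This yields the multiplicative factor $\log\frac{1}{\alpha\wedge\beta}$ with a universal $C_1$.

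For part (ii), we would use the lower bound of \citet{NEURIPS2018_7de32147}, which is stated for constant error. If a tester has Type I error at most $\alpha\leq1/3$ and Type II error at most $\beta\leq1/3$, then it already has both errors at most $1/3$. If the cited bound is stated for a smaller constant such as $1/10$, we would instead apply its coupling or TV-based argument, which actually gives a lower bound whenever $\alpha+\beta<1-c$. Either way, the constant $C_2$ depends only on $(\alpha,\beta)$.

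The main obstacle is bookkeeping at the boundary regimes. We must ensure the four-term maximum genuinely covers all regimes of the original piecewise characterization; in particular, the $k^{1/3}$ term matters only in an intermediate range of $\epsilon$. We must also handle $\tau$ near the trivial range, where the lower-bound construction requires $\tau\lesssim1$. Finally, the cited lower bound uses $u_0$ uniform with $k$ even; for odd $k$ we would embed into $k-1$ coordinates at the cost of constants.
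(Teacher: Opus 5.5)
Your part (i) is the paper's argument: take the constant-error tester of \citet{NEURIPS2018_7de32147}, split the sample into $L\asymp\log\frac{1}{\alpha\wedge\beta}$ disjoint batches, take a majority vote, use parallel composition for privacy and Hoeffding for the errors, and absorb the loss from $\lfloor n/L\rfloor\geq n/(2L)$. You leave one piece of bookkeeping implicit: the regime where the batches would be too small. The paper handles $n<2L$ by noting that the right-hand side is then $\gtrsim 1$. For $C_1$ large the alternative is therefore empty.

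Part (ii) is sound in outline, but you close it differently from the paper. As the paper uses it, the cited lower bound is stated for testers with both errors at most $0.1$. So your first branch is not available: a bound for $0.1$-error testers says nothing directly about testers whose errors may be as large as $1/3$. Your second branch would require reopening two arguments at general error levels: the chi-square argument behind the $k^{1/4}/\sqrt{n}$ term, and the coupling argument behind the private terms.

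The paper avoids this by running your part-(i) amplification in reverse.
\begin{itemize}
\item Take the hypothetical $\epsilon$-DP tester with errors $\alpha,\beta$ on $n$ samples.
\item Apply it to $L_0=\lceil 2(1/2-(\alpha\vee\beta))^{-2}\log 10\rceil$ disjoint batches of size $n$ and take a majority vote.
\item By parallel composition and Hoeffding, this is an $\epsilon$-DP tester on $nL_0$ samples with both errors at most $0.1$.
\item The cited bound then applies as a black box at sample size $nL_0$.
\item Replacing $n$ by $nL_0$ shrinks each of the four terms by at most a factor $L_0$, so (ii) follows with $C_2$ depending only on $\alpha,\beta$.
\end{itemize}
This black-box reduction is the cleaner way to finish and needs nothing about the internals of the cited proofs.
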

By combining the private $\ell_1$ testers across the resolution levels, we construct the $\epsilon$-DP Wasserstein test described in Algorithm~\ref{alg:gof_tester_Wp}. The use of $\ell_1$ goodness-of-fit testers as subroutines is standard in non-private Wasserstein testing; see, e.g., \cite{ba2011sublinear,deng2017wasserstein}. Such a multiscale construction requires an allocation of the test error budget, since the overall Type I error is controlled by the sum of the Type I errors of the individual testers. Similarly, by the basic composition property of differential privacy, the overall privacy parameter is controlled by the sum of the privacy parameters assigned to the individual testers, so the privacy budget must also be allocated across resolution levels. Existing multiscale Wasserstein tests allocate the test error budget uniformly across resolution levels. In contrast, our construction introduces allocation parameters $\nu_1,\ldots,\nu_J$ to distribute the Type I error and privacy budgets non-uniformly across scales. We calibrate $\nu_1,\ldots,\nu_J$ according to the difficulty of the corresponding $\ell_1$ testing problems, assigning relatively larger budgets to harder-to-test resolution levels. This calibrated allocation leads to a smaller upper bound on the separation radius of the resulting Wasserstein test.
\begin{algorithm}
\caption{$W_p$-based goodness-of-fit tester}\label{alg:gof_tester_Wp}
\begin{algorithmic}
\Input:\textrm{ }Null distribution $P_0$, data $X_1,\ldots,X_n$, Wasserstein order $p$, partition depth $J$, separation parameters $\tau_1,\ldots,\tau_J$, Type I and Type II error levels $\alpha$ and $\beta$, privacy parameter $\epsilon$, and allocation parameters $\nu_1,\ldots,\nu_J>0$ satisfying $\sum_{j=1}^J\nu_j\leq1$.
\EndInput
\begin{enumerate}
\item Generate a dyadic partition $\{\mathcal{R}^j\}_{j=1}^J$ with parameter $\gamma=1/3$ such that $|\mathcal{R}^j|\leq\mathcal{N}(\mathcal{X},\mathrm{diam}(\mathcal{X})\gamma^j/3)$.
\item \textbf{For }$j=1,\ldots,J$ \textbf{ do}
\begin{enumerate}
\item Define $Z_i^j\coloneqq R$ if $X_i\in R$ for $R\in\mathcal{R}^j$, and define the induced null distribution $P_0^j$.
\item Apply the minimax-optimal $\nu_j\epsilon$-DP $\ell_1$-based tester $\phi_j$ from Proposition~\ref{prop:gof_tv_minimax} to a sample $Z_1^j,\ldots,Z_n^j$, using $P_0^j$ as the null distribution, $\tau_j$ as the separation parameter, and $\nu_j\alpha$ and $\nu_j\beta$ as the Type I and Type II error levels, respectively.
\item \Return \textbf{Reject} if $\phi_j$ rejects; otherwise, proceed to the next level.
\end{enumerate}
\end{enumerate}
\Return \textbf{Accept}.
\end{algorithmic}
\end{algorithm}

To state the resulting upper bound concisely, we introduce the following quantities. For each $j\in\mathbb{N}$, let $N_j\coloneqq \mathcal{N}(\mathcal{X},\mathrm{diam}(\mathcal{X})\gamma^{j}/3)$. For $p>0$, $q\geq0$, $r\geq1$, and a positive integer $J$, define 
\begin{align*}
\Sigma(p,q,r,J)&\coloneqq \left(\sum_{j=1}^J(N_j^{q}\gamma^{jp})^{\frac{1}{r}}\right)^r, \qquad w_j \coloneqq \frac{(N_j^q\gamma^{jp})^{1/r}}{\sum_{k=1}^{J}(N_k^q\gamma^{kp})^{1/r}},\quad j=1,\ldots,J.
\end{align*}
Let $H(p,q,r,J)$ denote the entropy of the weights $w_1,\ldots,w_J$ and define $\Lambda(p,q,r,J)$ by
\begin{align*}
    H(p,q,r,J)&\coloneqq -\sum_{j=1}^J w_j\log w_j, \qquad \Lambda(p,q,r,J)\coloneqq \Sigma(p,q,r,J)\bigl(1+H(p,q,r,J)\bigr).
\end{align*}
As shown in Lemma~\ref{lem:dyadic}, $N_j$ provides an upper bound on the cardinality of $\mathcal{R}^j$. The quantity $\Sigma(p,q,r,J)$ captures the main aggregate contribution of the separation radii of the private $\ell_1$ testers across resolution levels, while $H(p,q,r,J)$ quantifies the additional contribution arising from the calibrated allocation of the error and privacy budgets. Here, $p$ denotes the Wasserstein distance parameter, and $J$ is the number of resolution levels used in the test. The parameter $q$ represents the dependence of the underlying $\ell_1$ testing difficulty on the domain size, while $r$ determines its dependence on the allocated privacy budget. The logarithmic effect of the allocated error budget is captured by $H(p,q,r,J)$. For comparison, under uniform allocation, the contribution corresponding to $H(p,q,r,J)$ becomes $J^{r-1}\log J$, whereas the calibrated allocation yields $H(p,q,r,J)$. Since $H(p,q,r,J)\leq\log J$ and $r\geq1$, the contribution under the calibrated allocation is always no larger. Combining these two quantities yields $\Lambda(p,q,r,J)$, which will be used to express the resulting upper bound on the separation radius of the Wasserstein test.

In the following theorem, $\alpha$ and $\beta$ are treated as fixed constants, and their dependence is absorbed into the constants.
\begin{theorem}\label{thm:goodness_of_fit_minimax_upper}
The minimax separation radius of $\epsilon$-DP goodness-of-fit testing satisfies
\begin{align*}
\rho_{n,\epsilon}^{\mathrm{gof}}\leq C_{\mathrm{upper}}^{\mathrm{gof}} \mathrm{diam}^p(\mathcal{X})R_{\mathrm{upper}}^{\mathrm{gof}},
\end{align*}
where $C_{\mathrm{upper}}^{\mathrm{gof}}>\gamma^{-p}/2$ is a constant depending only on $p$, $\alpha$, and $\beta$, and
\begin{align*}
    R_{\mathrm{upper}}^{\mathrm{gof}}&\coloneqq \inf\Bigg\{R\in(0,2\gamma^{p}]:R\geq \Psi^{\mathrm{gof}}_{\mathrm{upper}}(R)\Bigg\},\\[6pt]
    \Psi^{\mathrm{gof}}_{\mathrm{upper}}(R)&\coloneqq \frac{\Lambda(p,1/4,1,J_{\mathrm{upper}}(R))}{\sqrt{n}}+\frac{\Lambda(p,1/4,3/2,J_{\mathrm{upper}}(R))}{n^{3/4}\sqrt{\epsilon}}+\frac{\Lambda(p,1/2,3/2,J_{\mathrm{upper}}(R))}{n\sqrt{\epsilon}}\\
    &~+\frac{\Lambda(p,0,2,J_{\mathrm{upper}}(R))}{n\epsilon},\\[6pt]
    J_{\mathrm{upper}}(R)&\coloneqq \left\lceil\frac{1}{p}\frac{\log(2/R)}{\log(1/\gamma)}\right\rceil.
\end{align*}
If the set in the above infimum is empty, we set $R_{\mathrm{upper}}^{\mathrm{gof}}=2\gamma^p$.
\end{theorem}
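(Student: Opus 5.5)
We analyze Algorithm~\ref{alg:gof_tester_Wp} with $\gamma=1/3$, depth $J=J_{\mathrm{upper}}(R)$, and per-level thresholds $\tau_j$ and allocations $\nu_j$ chosen as follows.

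\textbf{Privacy and Type I error.} Privacy follows from Proposition~\ref{prop:comp}: level $j$ is $\nu_j\epsilon$-DP, the sequential early-stopping structure is an adaptive composition, and $\sum_j\nu_j\le1$. Type I error follows from a union bound: under $H_0$ every induced $P^j=P_0^j$, so the error is at most $\sum_j\nu_j\alpha\le\alpha$.

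\textbf{Type II error.} Suppose $W_p^p(P,P_0)\ge C\,\mathrm{diam}^p(\mathcal{X})R$. Since $J\ge \log(2/R)/(p\log 3)$, we have $\gamma^{Jp}\le R/2$. By Proposition~\ref{prop:Wp_decomp}, if $\lVert P^j-P_0^j\rVert_1<\tau_j$ for all $j$, then
\[
W_p^p\le \mathrm{diam}^p(\mathcal{X})\Bigl(R/2+\gamma^{-p}\sum_j\gamma^{jp}\tau_j\Bigr).
\]
It therefore suffices to arrange
\[
\gamma^{-p}\sum_j\gamma^{jp}\tau_j\le (C-1/2)R.
\]
Then some level $j^*$ has separation at least $\tau_{j^*}$, and tester $\phi_{j^*}$ rejects with probability at least $1-\nu_{j^*}\beta\ge1-\beta$. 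Here $|\mathcal{R}^j|\le N_j$ by Lemma~\ref{lem:dyadic}, and the tester is applicable once $\tau_j$ exceeds the bound of Proposition~\ref{prop:gof_tv_minimax}(i) with $k=N_j$, privacy $\nu_j\epsilon$, and error level $\nu_j(\alpha\wedge\beta)$. We take
\[
\tau_j=C_1\log\frac{1}{\nu_j(\alpha\wedge\beta)}\Bigl(\frac{N_j^{1/4}}{\sqrt n}+\frac{N_j^{1/2}}{n\sqrt{\nu_j\epsilon}}+\frac{N_j^{1/4}}{n^{3/4}\sqrt{\nu_j\epsilon}}+\frac{1}{n\nu_j\epsilon}\Bigr),
\]
replacing the max by a sum.

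\textbf{Choosing $\nu_j$ (the main obstacle).} We must bound $\sum_j\gamma^{jp}\tau_j$ by $\Psi^{\mathrm{gof}}_{\mathrm{upper}}(R)$. Each of the four terms has the form
\[
\sum_j \gamma^{jp}N_j^q\,\nu_j^{-(r-1)}\log\frac{1}{\nu_j}
\]
for the appropriate $(q,r)$: $r=1$ for the non-private term, $r=3/2$ for the $\sqrt{\nu_j}$ terms, $r=2$ for the $1/\nu_j$ term. For a single pair $(q,r)$ the natural choice is $\nu_j=w_j(p,q,r,J)$, which gives
\[
\sum_j \gamma^{jp}N_j^q w_j^{1-r}\log\frac1{w_j}.
\]
Writing $a_j=(N_j^q\gamma^{jp})^{1/r}$ and $S=\sum_k a_k$, we have $\gamma^{jp}N_j^qw_j^{1-r}=a_jS^{r-1}=S^r w_j$. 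The sum therefore equals $S^r\sum_jw_j\log(1/w_j)=\Sigma H$, and adding the $\log(1/(\alpha\wedge\beta))$ part gives $\lesssim\Lambda$.

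With four terms we cannot use a single weight. Instead we take the mixture
\[
\nu_j=\tfrac14\sum_{t=1}^4 w_j^{(t)}.
\]
This satisfies $\sum_j\nu_j\le1$ and $\nu_j\ge w_j^{(t)}/4$ for every $t$. Since $\nu^{-(r-1)}\log(1/\nu)$ is decreasing for small $\nu$, each term is bounded by its single-weight version times a constant depending on $r$. This uses $\log(4/w)\le \log 4+\log(1/w)$, which is absorbed into the factor $1+H$. The constants are absorbed into $C^{\mathrm{gof}}_{\mathrm{upper}}$, which depends on $p,\alpha,\beta$.

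\textbf{Conclusion.} For any $R\in(0,2\gamma^p]$ with $R\ge\Psi^{\mathrm{gof}}_{\mathrm{upper}}(R)$, the test succeeds at separation $C\,\mathrm{diam}^p(\mathcal{X})R$. Taking the infimum over such $R$ gives the stated bound. The remaining points are routine:
\begin{itemize}
\item Proposition~\ref{prop:gof_tv_minimax} requires $k\ge2$; if $N_j=1$, that level is trivial and can be skipped.
\item If the set in the infimum is empty, $R=2\gamma^p$ and the claim is trivial, since $W_p^p\le\mathrm{diam}^p(\mathcal{X})<C\,\mathrm{diam}^p(\mathcal{X})\cdot2\gamma^p$ because $C>\gamma^{-p}/2$. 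The alternative class is then empty.
\end{itemize}
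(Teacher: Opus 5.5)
Your proposal is correct and follows the paper's proof. It uses the same multiscale tester, the same reduction via Proposition~\ref{prop:Wp_decomp} with $\gamma^{Jp}\le R/2$, and the same bound $\sum_j N_j^q\gamma^{jp}\nu_j^{-(r-1)}\log(1/\nu_j)\lesssim\Sigma(p,q,r,J)\bigl(\log 4+H(p,q,r,J)\bigr)$ for each of the four $(q,r)$ pairs. The remaining differences are cosmetic: you average the four normalized weight sequences where the paper takes their maximum (both give $\nu_j\ge w_j^{(t)}/4$ and $\sum_j\nu_j\le 1$), and you prove the bound for every feasible $R$ before taking the infimum instead of plugging in $R^{\mathrm{gof}}_{\mathrm{upper}}$ directly, which sidesteps the attainment question.
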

We note that the infimum is attained whenever the set is nonempty. Indeed, $\Psi_{\mathrm{upper}}^{\mathrm{gof}}(R)$ is nonincreasing while $R$ is increasing. Also, $\Psi_{\mathrm{upper}}^{\mathrm{gof}}(R)-R$ is right-continuous. Thus, the infimum is attained. We establish the monotonicity of $\Psi_{\mathrm{upper}}^{\mathrm{gof}}(R)$ in Appendix~\ref{appendix:monotonicity}.
\begin{proof}[Proof of Theorem~\ref{thm:goodness_of_fit_minimax_upper}]
We divide the proof into two cases. First, suppose that the set in the definition of $R_{\mathrm{upper}}^{\mathrm{gof}}$ is nonempty. Given $p,\alpha,$ and $\beta$, we choose the remaining parameters $\tau_1,\ldots,\tau_J$ and $\nu_1,\ldots,\nu_J$ of Algorithm~\ref{alg:gof_tester_Wp} as follows. Set $J\coloneqq J_{\mathrm{upper}}(R_{\mathrm{upper}}^{\mathrm{gof}})$. Then $J\geq 1$ since $\frac{1}{p}\frac{\log(2/R_{\mathrm{upper}}^{\mathrm{gof}})}{\log(1/\gamma)}\geq1$. For each $1\leq j\leq J$, set
\begin{align*}
    \nu_j&\coloneqq \frac{N_j^{1/4}\gamma^{jp}}{4\sum_{j=1}^JN_j^{1/4}\gamma^{jp}}\vee\frac{(N_j^{1/2}\gamma^{jp})^{\frac{2}{3}}}{4\sum_{j=1}^J(N_j^{1/2}\gamma^{jp})^{\frac{2}{3}}}\vee\frac{(N_j^{1/4}\gamma^{jp})^{\frac{2}{3}}}{4\sum_{j=1}^J(N_j^{1/4}\gamma^{jp})^{\frac{2}{3}}}\vee\frac{\gamma^{\frac{jp}{2}}}{4\sum_{j=1}^J\gamma^{\frac{jp}{2}}}.
\end{align*}
Also, let $\tau_j$ be the separation radius of the tester in Step 2(b) of Algorithm~\ref{alg:gof_tester_Wp}.

With these parameter choices, we show that, for any $P_0\in\mathcal{P}(\mathcal{X})$, Algorithm~\ref{alg:gof_tester_Wp} is an $\epsilon$-DP test with Type I and Type II errors at most $\alpha$ and $\beta$, respectively, for the following Wasserstein goodness-of-fit testing problem.
\begin{align*}
    H_0:P=P_0\textrm{ vs }H_1:W_p^p(P,P_0)\geq C_{\mathrm{upper}}^{\mathrm{gof}}\mathrm{diam}^p(\mathcal{X})R_{\mathrm{upper}}^{\mathrm{gof}},
\end{align*}
where $C_{\mathrm{upper}}^{\mathrm{gof}}>0$ is a sufficiently large constant. The privacy guarantee and the Type I error bound follow directly. Note that $\sum_{j=1}^J\nu_j\leq 1$ since each of the four sequences appearing in this maximum sums to $1/4$. The algorithm is a composition of $\nu_j\epsilon$-DP algorithms for $j=1,\ldots,J$. By Proposition~\ref{prop:comp}, Algorithm~\ref{alg:gof_tester_Wp} is $\sum_{j=1}^J\nu_j\epsilon$-DP. Since $\sum_{j=1}^J\nu_j\epsilon\leq\epsilon$, the algorithm is $\epsilon$-DP. Similarly, since the algorithm rejects whenever at least one of the $J$ finite-domain testers rejects, a union bound gives that the Type I error of Algorithm~\ref{alg:gof_tester_Wp} is at most $\sum_{j=1}^J\nu_j\alpha\leq \alpha$.

We next show that the Type II error of Algorithm~\ref{alg:gof_tester_Wp} is at most $\beta$. By the multiresolution upper bound \eqref{eq:Wp_decomp}, it suffices to establish
\begin{align*}
\mathrm{diam}^p(\mathcal{X})\left(\gamma^{Jp}+\sum_{j=1}^J\gamma^{(j-1)p}\tau_j\right)\leq C_{\mathrm{upper}}^{\mathrm{gof}}\mathrm{diam}^p(\mathcal{X})R_{\mathrm{upper}}^{\mathrm{gof}},
\end{align*}
or, equivalently,
\begin{align}\label{eq:proof_gof_minimax_upper_goal}
    \gamma^{Jp}+\sum_{j=1}^J\gamma^{(j-1)p}\tau_j\leq C_{\mathrm{upper}}^{\mathrm{gof}}R_{\mathrm{upper}}^{\mathrm{gof}}.
\end{align}
Suppose inequality \eqref{eq:proof_gof_minimax_upper_goal} holds. Under the alternative hypothesis, upper bound \eqref{eq:Wp_decomp} then implies
\begin{align*}
\mathrm{diam}^p(\mathcal{X})\left(\gamma^{Jp}+\sum_{j=1}^J\gamma^{(j-1)p}\lVert P^j-P_0^j\rVert_1\right)&\geq  W_p^p(P,P_0)\\
&\geq  C_{\mathrm{upper}}^{\mathrm{gof}}\mathrm{diam}^p(\mathcal{X})R_{\mathrm{upper}}^{\mathrm{gof}}\\
&\geq  \mathrm{diam}^p(\mathcal{X})\left(\gamma^{Jp}+\sum_{j=1}^J\gamma^{(j-1)p}\tau_j\right).
\end{align*}
Thus, there exists $1\leq j_0\leq J$ such that $\lVert P^{j_0}-P^{j_0}_0\rVert_1\geq \tau_{j_0}$. Since Algorithm~\ref{alg:gof_tester_Wp} accepts the null hypothesis if every private $\ell_1$ tester accepts, its Type II error is at most the Type II error of the private $\ell_1$ tester $\phi_{j_0}$. By the choice of $\tau_{j_0}$, this error is at most $\nu_{j_0}\beta\leq\beta$.

It remains to verify inequality \eqref{eq:proof_gof_minimax_upper_goal}. By the definition of $J$, we have $\gamma^{Jp}\leq\frac{1}{2}R_{\mathrm{upper}}^{\mathrm{gof}}$. For $1\leq j\leq J$, since $|\mathcal{R}^j|\leq \mathcal{N}(\mathcal{X},\mathrm{diam}(\mathcal{X})\gamma^j/3)$, Proposition~\ref{prop:gof_tv_minimax} implies that $\tau_j$ with $\epsilon_j = \nu_j \epsilon$ satisfies
\begin{align*}
    \tau_j&\lesssim \Bigg(\frac{N_j^{1/4}}{\sqrt{n}}+\frac{N_j^{1/4}}{n^{3/4}\sqrt{\epsilon_j}}+\frac{N_j^{1/2}}{n\sqrt{\epsilon_j}}+\frac{1}{n\epsilon_j}\Bigg) \log\frac{1}{(\nu_j\alpha)\wedge(\nu_j\beta)}\notag\\
    &\lesssim \Bigg(\frac{N_j^{1/4}}{\sqrt{n}}+\frac{N_j^{1/4}\nu_j^{-1/2}}{n^{3/4}\sqrt{\epsilon}}+\frac{N_j^{1/2}\nu_j^{-1/2}}{n\sqrt{\epsilon}}+\frac{\nu_j^{-1}}{n\epsilon}\Bigg)\log\frac{1}{\nu_j},
\end{align*}
up to a constant factor depending only on $\alpha$ and $\beta$. By the definition of $\nu_j$,
\begin{align*}
    \sum_{j=1}^JN_j^{1/4}\gamma^{jp}\log\frac{1}{\nu_j}&\leq
    \sum_{j=1}^JN_j^{1/4}\gamma^{jp}\log\left(\frac{4\sum_{j=1}^JN_j^{1/4}\gamma^{jp}}{N_j^{1/4}\gamma^{jp}}\right)\\
    &= \Sigma(p,1/4,1,J)\left(\log 4+H(p,1/4,1,J)\right)\\
    &\lesssim \Sigma(p,1/4,1,J)\bigl(1+H(p,1/4,1,J)\bigr)\\
    &= \Lambda(p,1/4,1,J).
\end{align*}
Applying the same argument to the remaining three allocation sequences yields
\begin{align*}
\sum_{j=1}^JN_j^{1/4}\gamma^{jp}\nu_j^{-1/2}\log\frac{1}{\nu_j}&\lesssim \Lambda(p,1/4,3/2,J),\\
\sum_{j=1}^JN_j^{1/2}\gamma^{jp}\nu_j^{-1/2}\log\frac{1}{\nu_j}&\lesssim \Lambda(p,1/2,3/2,J),\\
\sum_{j=1}^J\gamma^{jp}\nu_j^{-1}\log\frac{1}{\nu_j}&\lesssim \Lambda(p,0,2,J).
\end{align*}
Therefore,
\begin{align*}
    \sum_{j=1}^J\gamma^{(j-1)p}\tau_j&\lesssim \frac{\Lambda(p,1/4,1,J)}{\sqrt{n}}+\frac{\Lambda(p,1/4,3/2,J)}{n^{3/4}\sqrt{\epsilon}}+\frac{\Lambda(p,1/2,3/2,J)}{n\sqrt{\epsilon}}+\frac{\Lambda(p,0,2,J)}{n\epsilon}\\
    &\leq R_{\mathrm{upper}}^{\mathrm{gof}}
\end{align*}
up to a constant factor depending only on $\alpha$ and $\beta$, where the last inequality follows because $R_{\mathrm{upper}}^{\mathrm{gof}}$ belongs to the set in its definition. Combining this bound with $\gamma^{Jp}\leq R_{\mathrm{upper}}^{\mathrm{gof}}/2$ shows that \eqref{eq:proof_gof_minimax_upper_goal} holds for a sufficiently large constant $C_{\mathrm{upper}}^{\mathrm{gof}}$ depending only on $p,\alpha,\beta$.

It remains to consider the case in which the set defining $R_{\mathrm{upper}}^{\mathrm{gof}}$ is empty. By definition, $R_{\mathrm{upper}}^{\mathrm{gof}}=2\gamma^p$. Our choice $C_{\mathrm{upper}}^{\mathrm{gof}}>\gamma^{-p}/2$ ensures that $C_{\mathrm{upper}}^{\mathrm{gof}}\mathrm{diam}^p(\mathcal{X})R_{\mathrm{upper}}^{\mathrm{gof}}>\mathrm{diam}^p(\mathcal{X})$. However, no distribution $P$ satisfies $W_p^p(P,P_0)> \mathrm{diam}^p(\mathcal{X})$ since
\begin{align*}
    W_p^p(P,P_0)&= \inf_{\pi\in\Pi(P,P_0)}\int_{\mathcal{X}\times \mathcal{X}}d^p(x,y)d\pi(x,y)\leq\inf_{\pi\in\Pi(P,P_0)}\int_{\mathcal{X}\times \mathcal{X}} \mathrm{diam}^p(\mathcal{X})d\pi(x,y)\leq \mathrm{diam}^p(\mathcal{X}).
\end{align*}
Thus, the alternative hypothesis is empty. The test that always accepts the null hypothesis therefore has zero Type I error, while the Type II error bound holds vacuously.
\end{proof}
\subsubsection{Lower bound}
As in the previous subsection, we derive a lower bound on the minimax separation radius by reducing Wasserstein goodness-of-fit testing to $\ell_1$-based testing for distributions on a finite domain. The following lemma shows that a collection of well-separated points induces a mapping from discrete distributions to $\mathcal{P}(\mathcal{X})$ that converts total variation separation into Wasserstein separation.
\begin{lemma}\label{lem:grid}
Let $k$ be a positive integer, and let $x_1,\ldots,x_k\in\mathcal{X}$ satisfy $d(x_i,x_{i^\prime})\geq h$, for $i\not=i^\prime$. For $u\in\mathcal{P}([k])$, define
\begin{align*}
    T_hu\coloneqq \sum_{i=1}^ku_i\delta_{x_i}.
\end{align*}
Then for all $u,v\in\mathcal{P}([k])$,
\begin{align*}
    W_p^p(T_hu,T_hv)\geq \frac{1}{2}h^p\lVert u-v\rVert_1.
\end{align*}
\end{lemma}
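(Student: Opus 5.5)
The plan is to work directly with an arbitrary coupling of the two discrete measures and bound its cost from below by the amount of mass it moves off the diagonal. Both $T_hu$ and $T_hv$ are supported on the finite set $\{x_1,\ldots,x_k\}$. Hence any coupling $\pi\in\Pi(T_hu,T_hv)$ is supported on $\{x_1,\ldots,x_k\}^2$. It can be identified with a nonnegative matrix $(\pi_{ii'})_{i,i'\in[k]}$ with row sums $u_i$ and column sums $v_{i'}$. If some $x_i$ coincide this identification breaks down, but the hypothesis $d(x_i,x_{i'})\geq h>0$ rules that out; for $h=0$ the claim is trivial.

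\textbf{Step 1: reduce the cost to off-diagonal mass.} By the separation assumption, $d^p(x_i,x_{i'})\geq h^p$ whenever $i\neq i'$. Therefore
\begin{align*}
\int d^p\,d\pi=\sum_{i\neq i'}\pi_{ii'}d^p(x_i,x_{i'})\geq h^p\sum_{i\neq i'}\pi_{ii'}=h^p\Bigl(1-\sum_{i}\pi_{ii}\Bigr).
\end{align*}

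\textbf{Step 2: bound the diagonal mass.} The row and column constraints give $\pi_{ii}\leq u_i\wedge v_i$. Using the identity $u_i\wedge v_i=\frac{1}{2}(u_i+v_i-|u_i-v_i|)$ and summing over $i$,
\begin{align*}
\sum_i\pi_{ii}\leq\sum_i u_i\wedge v_i=1-\frac{1}{2}\lVert u-v\rVert_1.
\end{align*}
Substituting this into Step 1 gives $\int d^p\,d\pi\geq\frac{1}{2}h^p\lVert u-v\rVert_1$.

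\textbf{Step 3: conclude.} The bound holds for every coupling, so taking the infimum over $\pi$ yields the stated inequality for $W_p^p(T_hu,T_hv)$.

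There is no genuine obstacle here. The only points requiring care are the identification of couplings with matrices, which relies on the points being distinct and the supports being finite, and the elementary identity for $u_i\wedge v_i$.
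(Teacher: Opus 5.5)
Your proof is correct and follows essentially the same route as the paper: bound the cost of any coupling below by $h^p$ times its off-diagonal mass, then use $\pi_{ii}\leq u_i\wedge v_i$ to show that this mass is at least $\frac{1}{2}\lVert u-v\rVert_1$. The paper reaches that bound by averaging the estimates $\sum_i(u_i-v_i)_+$ and $\sum_i(v_i-u_i)_+$, whereas you apply the identity $\sum_i u_i\wedge v_i=1-\frac{1}{2}\lVert u-v\rVert_1$ directly, which is the same computation.
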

\begin{proof}[Proof of Lemma~\ref{lem:grid}]
We use the fact that if $\pi\in\Pi(u,v)$ then $\sum_{i\not=j}\pi(i,j)\geq \frac{1}{2}\lVert u-v\rVert_1$. This can be shown as follows:
\begin{align*}
    \sum_{i\not=j}\pi(i,j)=1-\sum_{i=1}^k\pi(i,i)\geq1-\sum_{i=1}^k\min\{u_i,v_i\}=\sum_{i=1}^k\max\{u_i-v_i,0\},
\end{align*}
where the inequality holds since $\pi(i,i)\leq \sum_{j=1}^k\pi(i,j)=u_i$ and $\pi(i,i)\leq \sum_{j=1}^k\pi(j,i)=v_i$. Similarly, we obtain $\sum_{i\not=j}\pi(i,j)\geq \sum_{i=1}^k\max\{v_i-u_i,0\}$. Thus,
\begin{align*}
    2\sum_{i\not=j}\pi(i,j)\geq \sum_{i=1}^k\max\{u_i-v_i,0\}+\sum_{i=1}^k\max\{v_i-u_i,0\}=\sum_{i=1}^k|u_i-v_i|=\lVert u-v\rVert_1.
\end{align*}
Using this bound and the separation condition $d(x_i,x_j)\geq h$ for $i\neq j$, we obtain
\begin{align*}
W_p^p(T_hu,T_hv)&= \inf_{\pi\in\Pi(T_hu,T_hv)}\int_{\mathcal{X}\times\mathcal{X}}d^p(x,y)d\pi(x,y)\\
        &= \inf_{\pi\in\Pi(u,v)}\sum_{i,j}d^p(x_i,x_j)\pi(i,j)\\
        &\geq \inf_{\pi\in\Pi(u,v)}h^p\sum_{i\not=j}\pi(i,j)\\
        &\geq \frac{1}{2}h^p\lVert u-v\rVert_1.
\end{align*}
This proves the claim.
\end{proof}
We also use the following standard relationship between covering and packing numbers.
\begin{lemma}[{\citealp[Lemma~5.5]{wainwright2019high}}]\label{lem:sep_cover}
For a given $h>0$, there exist $\mathcal{N}(\mathcal{X},h)$ points $\{x_i\}_{i=1}^{\mathcal{N}(\mathcal{X},h)}$ such that
    \begin{align*}
        d(x_i,x_{i^\prime})\geq h/2,
    \end{align*}
    for every $i\not=i^\prime$.
\end{lemma}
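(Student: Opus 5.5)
Throughout, $\mathcal{N}(\mathcal{X},h)$ denotes the minimal number of sets of diameter at most $h$ needed to cover $\mathcal{X}$; no ball structure is used below. The plan is to take a maximal $h/2$-separated subset of $\mathcal{X}$ and show that it has at least $\mathcal{N}(\mathcal{X},h)$ points, by proving that such a set yields a cover by sets of diameter at most $h$. Any $\mathcal{N}(\mathcal{X},h)$ points of it then satisfy the claim.

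\textbf{Step 1: existence of a maximal separated set.} A family of pairwise $h/2$-separated points is partially ordered by inclusion. If some such family is infinite, it contains $\mathcal{N}(\mathcal{X},h)$ points whenever that number is finite. If instead $\mathcal{N}(\mathcal{X},h)=\infty$, an infinite separated family must be produced, and I address this case below. So assume all separated families are finite. Then a maximal one $\{x_1,\ldots,x_M\}$ exists: start from a single point and add points one at a time, which must stop.

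\textbf{Step 2: maximality gives a cover.} By maximality, every $x\in\mathcal{X}$ satisfies $d(x,x_i)<h/2$ for some $i$. Otherwise $x$ could be added to the family. Hence the open balls $B(x_i,h/2)$ cover $\mathcal{X}$. Each such ball has diameter at most $h$ by the triangle inequality: for $y,z\in B(x_i,h/2)$, $d(y,z)<h$. This gives a cover by $M$ sets of diameter at most $h$, so $M\geq\mathcal{N}(\mathcal{X},h)$. Choosing any $\mathcal{N}(\mathcal{X},h)$ of the $x_i$ proves the lemma.

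\textbf{Step 3: the infinite case.} If $\mathcal{N}(\mathcal{X},h)=\infty$, the greedy construction never terminates. At each stage the current finite family's balls $B(x_i,h/2)$ form a finite cover by sets of diameter at most $h$. Such a cover cannot exist, so some point lies outside all the balls and can be added. This produces an infinite $h/2$-separated sequence, which settles the case. The only subtle point is exactly this: making sure the argument does not assume maximal families exist in general. The greedy argument avoids Zorn's lemma, which would otherwise be needed. In the paper's setting Assumption~\ref{assump:compact} makes $\mathcal{N}(\mathcal{X},h)$ finite, so this case does not arise there anyway.
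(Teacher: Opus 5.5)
Your argument is correct. It is the standard ``a maximal packing is a covering'' argument behind the cited Lemma~5.5 of Wainwright (the inequality $N(\delta)\le M(\delta)$ at $\delta=h/2$, using that a ball of radius $h/2$ has diameter at most $h$), which is all the paper relies on; your extra care about the existence of a maximal family and the infinite case is fine, and although you phrase $\mathcal{N}$ via arbitrary sets, the cover you build consists of the balls $B(x_i,h/2)$, so $M\ge\mathcal{N}(\mathcal{X},h)$ also holds for the paper's ball-based definition.
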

For $p>0$, $q\geq0$, and a positive integer $J$, define
\begin{align*}
    M(p,q,J)&\coloneqq \max_{1\leq j\leq J}N_j^q\gamma^{jp}.
\end{align*}
We now state the lower bound for
$\rho_{n,\epsilon}^{\mathrm{gof}}$.
\begin{theorem}\label{thm:goodness_of_fit_minimax_lower}
The minimax separation radius of $\epsilon$-DP goodness-of-fit testing satisfies
\begin{align*}
\rho_{n,\epsilon}^{\mathrm{gof}}\geq C_{\mathrm{lower}}^{\mathrm{gof}}\mathrm{diam}^p(\mathcal{X})R_{\mathrm{lower}}^{\mathrm{gof}},
\end{align*}
where $C_{\mathrm{lower}}^{\mathrm{gof}}>0$ is a constant depending only on $p,\alpha$, and $\beta$, and
\begin{align*}
   R_{\mathrm{lower}}^{\mathrm{gof}}&\coloneqq \sup\left\{R\in(0,2\gamma^p]:R\leq \Psi^{\mathrm{gof}}_{\mathrm{lower}}(R)\right\},\\[6pt]
    \Psi^{\mathrm{gof}}_{\mathrm{lower}}(R)&\coloneqq \frac{M(p,1/4,J_{\mathrm{lower}}(R))}{\sqrt{n}}\vee\frac{M(p,1/2,J_{\mathrm{lower}}(R))}{n\sqrt{\epsilon}}\vee \frac{M(p,1/4,J_{\mathrm{lower}}(R))}{n^{3/4}\sqrt{\epsilon}}\\
    & ~ \vee\frac{M(p,0,J_{\mathrm{lower}}(R))}{n\epsilon},\\[6pt]
    J_{\mathrm{lower}}(R)&\coloneqq \left\lfloor\frac{1}{p}\frac{\log(2/R)}{\log(1/\gamma)}\right\rfloor.
\end{align*}
\end{theorem}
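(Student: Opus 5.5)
The plan is to reduce to the finite-domain lower bound in Proposition~\ref{prop:gof_tv_minimax}(ii) at a single well-chosen resolution level, and transfer it to $\mathcal{X}$ through Lemma~\ref{lem:grid} and Lemma~\ref{lem:sep_cover}.

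Fix $R$ in the set defining $R_{\mathrm{lower}}^{\mathrm{gof}}$, so $R\leq\Psi^{\mathrm{gof}}_{\mathrm{lower}}(R)$, and put $J=J_{\mathrm{lower}}(R)$. The case $J=0$ (i.e.\ $R>2\gamma^p$) is excluded; if $J=0$ still arises at the boundary, we treat $R\asymp\gamma^p$ with a trivial two-point construction. Otherwise $\gamma^{Jp}\geq R/2$, since $J\leq \frac{1}{p}\log(2/R)/\log(1/\gamma)$. Choose the level $j\in[J]$ and the term among the four in $\Psi^{\mathrm{gof}}_{\mathrm{lower}}$ attaining the maximum. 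Write $k=N_j$ and assume $k\geq2$; if $k=1$, only the $1/(n\epsilon)$ term can be relevant, and we handle it with two points at distance $\gtrsim\mathrm{diam}(\mathcal{X})\gamma^j$.

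At the chosen level, Lemma~\ref{lem:sep_cover} gives $k$ points with pairwise distances at least $h\coloneqq\mathrm{diam}(\mathcal{X})\gamma^j/6$. Let $P_0=T_h\mathrm{Unif}([k])$. For any $u\in\mathcal{P}([k])$, Lemma~\ref{lem:grid} gives
\[
W_p^p(T_hu,P_0)\geq \tfrac12 h^p\lVert u-\mathrm{Unif}([k])\rVert_1 .
\]
A test on $\mathcal{X}^n$ applied to $T_h$-images of samples from $u$ is a test on $[k]^n$. The map $T_h$ is applied coordinatewise and is injective, so neighbouring datasets map to neighbouring datasets and $\epsilon$-DP is preserved. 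Hence any $\epsilon$-DP Wasserstein test at radius $\rho$ yields an $\epsilon$-DP $\ell_1$ tester at radius $\tau=2\rho/h^p$. Proposition~\ref{prop:gof_tv_minimax}(ii) then forces
\[
\rho\geq \tfrac{C_2}{2}h^p\bigl(\max\{\cdots\}\wedge1\bigr).
\]
Since $h^p\asymp_p \mathrm{diam}^p(\mathcal{X})\gamma^{jp}$, the maximum taken at $k=N_j$ is at least the selected term. For instance, for the first term we obtain $\rho\gtrsim \mathrm{diam}^p(\mathcal{X})\,N_j^{1/4}\gamma^{jp}/\sqrt{n}$, which equals $\mathrm{diam}^p(\mathcal{X})\Psi^{\mathrm{gof}}_{\mathrm{lower}}(R)\geq \mathrm{diam}^p(\mathcal{X})R$. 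The other three terms are handled identically.

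The main obstacle is the truncation $\wedge1$, the step I expect to need the most care. If the selected max is at least $1$, the bound only gives $\rho\gtrsim\mathrm{diam}^p(\mathcal{X})\gamma^{jp}$. I would handle this using $\gamma^{jp}\geq\gamma^{Jp}\geq R/2$, so that in either case
\[
\rho\gtrsim\mathrm{diam}^p(\mathcal{X})\min\{\Psi^{\mathrm{gof}}_{\mathrm{lower}}(R),\,R/2\}\gtrsim\mathrm{diam}^p(\mathcal{X})R .
\]
This is exactly why $J_{\mathrm{lower}}$ uses the floor, as it guarantees $\gamma^{Jp}\gtrsim R$. Taking the supremum over admissible $R$ (up to a factor $2$ if the supremum is not attained) then yields the theorem, with $C_{\mathrm{lower}}^{\mathrm{gof}}$ depending on $p$ through $6^{-p}$ and on $\alpha,\beta$ through $C_2$. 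Finally, the supremum over $P_0$ in Definition~\ref{def:minimax_sep_gof} is handled by using this specific $P_0$.
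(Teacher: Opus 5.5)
Your proposal is correct and is essentially the paper's proof. It uses the same single-level embedding of the uniform-null $\ell_1$ hard instance through Lemma~\ref{lem:sep_cover} and Lemma~\ref{lem:grid}, and absorbs the truncation $\wedge 1$ via $\gamma^{jp}\geq\gamma^{Jp}\geq R/2$. The one difference is cosmetic: you fix an arbitrary admissible $R$ and then pass to the supremum, whereas the paper sets $J=J_{\mathrm{lower}}(R_{\mathrm{lower}}^{\mathrm{gof}})$ and relies on the supremum being attained. Your side cases ($J=0$, $N_j=1$, and the extra factor $2$) are unnecessary, because $J_{\mathrm{lower}}(R)\geq1$ on $(0,2\gamma^p]$, $N_j\geq2$ whenever $\mathrm{diam}(\mathcal{X})>0$, and a bound valid for every admissible $R$ passes directly to the supremum.
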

We note that the set in the definition of $R_{\mathrm{lower}}^{\mathrm{gof}}$ is nonempty and that its supremum is attained. Indeed, since $M(p,0,J)=\gamma^p$ for every positive integer $J$, $\Psi^{\mathrm{gof}}_{\mathrm{lower}}(R)\geq\frac{\gamma^p}{n\epsilon}$. Hence, every sufficiently small positive $R$ belongs to the set. Moreover, $R\mapsto J_{\mathrm{lower}}(R)$ is left-continuous, and therefore so is $R\mapsto\Psi^{\mathrm{gof}}_{\mathrm{lower}}(R)$. It follows that the supremum also belongs to the set.

Also, the stated bounds match up to a logarithmic factor. Since $C_{\mathrm{upper}}^{\mathrm{gof}}\mathrm{diam}^p(\mathcal{X})R_{\mathrm{upper}}^{\mathrm{gof}}\geq C_{\mathrm{lower}}^{\mathrm{gof}}\mathrm{diam}^p(\mathcal{X})R_{\mathrm{lower}}^{\mathrm{gof}}\gtrsim \frac{1}{\sqrt{n}}$, the following holds:
\begin{align*}
    \Sigma(p,q,r,J)&\leq J^rM(p,q,J)\lesssim \log^r nM(p,q,J),\\[6pt]
    H(p,q,r,J)&\leq\log J\lesssim \log\log n.
\end{align*}
Therefore, $R_{\mathrm{upper}}^{\mathrm{gof}}\lesssim \log^{3/2} n\log\log nR_{\mathrm{lower}}^{\mathrm{gof}}$ up to a constant factor depending only on $p$ and $\mathcal{X}$. A detailed derivation is given in Appendix~\ref{appendix:logfactor}.
\begin{proof}[Proof of Theorem~\ref{thm:goodness_of_fit_minimax_lower}]
For a positive integer $j$, denote $h_j\coloneqq \mathrm{diam}(\mathcal{X})\gamma^j/6$. We choose an $h_j$-separated set $\{x_i^{(j)}\}_{i=1}^{N_j}$ of size $N_j$. Since $2h_j=\mathrm{diam}(\mathcal{X})\gamma^j/3$, such a set exists by Lemma~\ref{lem:sep_cover}. That is,
\begin{align*}
    d(x_{i}^{(j)},x_{i^\prime}^{(j)})\geq h_j
\end{align*}
holds for every $i\not=i^\prime$. Then, by Lemma~\ref{lem:grid}, the map $T_{h_j}$ embeds discrete distributions on $[N_j]$ to $\mathcal{P}(\mathcal{X})$ and satisfies
\begin{align*}
    W_p^p(T_{h_j}u,T_{h_j}v)\geq\frac{1}{2}h_j^p\lVert u-v\rVert_1.
\end{align*}
Now, let $u_j$ denote the uniform distribution on $[N_j]$. For $R>0$, suppose there exists an $\epsilon$-DP goodness-of-fit test $\phi$ with Type I and Type II errors at most $\alpha$ and $\beta$, respectively, for the following Wasserstein goodness-of-fit testing problem:
\begin{align*}
    H_0:P=T_{h_j}u_0\textrm{ vs }H_1:W_p^p(T_{h_j}u,T_{h_j}u_j)\geq \mathrm{diam}^p(\mathcal{X})R.
\end{align*}
Next, let $A_1,\ldots,A_n\overset{\textrm{i.i.d.}}{\sim}u$ for some $u\in\mathcal{P}([N_j])$ and consider the following $\epsilon$-DP goodness-of-fit testing problem on $[N_j]$.
\begin{align}\label{eq:reduced_l1_gof_test}
    H_0:u=u_j\textrm{ vs }H_1:\lVert u-u_j\rVert_1\geq 2h_j^{-p}\mathrm{diam}^p(\mathcal{X})R.
\end{align}
Consider a test $\phi(x_{A_1}^{(j)},\ldots,x_{A_n}^{(j)})$ induced from $\phi$ for \eqref{eq:reduced_l1_gof_test}. The test is $\epsilon$-DP since the mapping $A_i\mapsto x_{A_i}^{(j)}$ is deterministic and preserves adjacency. Moreover, Lemma~\ref{lem:grid} implies that its Type I and Type II errors are at most $\alpha$ and $\beta$, respectively. Using the fact that $2h_j^{-p}\mathrm{diam}^p(\mathcal{X})R$ cannot be smaller than the minimax separation radius for testing on $[N_j]$, we can show a lower bound for $R$.

We now derive a necessary lower bound on $R$. Set $J=J_{\mathrm{lower}}(R_{\mathrm{lower}}^{\mathrm{gof}})\geq1$. Then $R_{\mathrm{lower}}^{\mathrm{gof}}\leq 2\gamma^{Jp}$. Since the induced test solves \eqref{eq:reduced_l1_gof_test} at separation $2h_j^{-p}\mathrm{diam}^p(\mathcal X)R$, this separation must be at least the minimax separation radius of the corresponding finite-domain problem.
\begin{align*}
   2h_j^{-p}\mathrm{diam}^p(\mathcal{X})R&\geq C_2\left(\left(\frac{N_j^{1/4}}{\sqrt{n}}\vee\frac{N_j^{1/4}}{n^{3/4}\sqrt{\epsilon}}\vee \frac{N_j^{1/2}}{n\sqrt{\epsilon}}\vee\frac{1}{n\epsilon} \right)\wedge1\right),
\end{align*}
or, equivalently,
\begin{align*}
    R&\geq  2^{-1}6^{-p}\gamma^{jp}C_2\left(\left(\frac{N_j^{1/4}}{\sqrt{n}}\vee\frac{N_j^{1/4}}{n^{3/4}\sqrt{\epsilon}}\vee \frac{N_j^{1/2}}{n\sqrt{\epsilon}}\vee\frac{1}{n\epsilon} \right)\wedge 1\right)\\
    &= 2^{-1}6^{-p}\gamma^{jp}C_2\left(\frac{N_j^{1/4}}{\sqrt{n}}\vee\frac{N_j^{1/4}}{n^{3/4}\sqrt{\epsilon}}\vee \frac{N_j^{1/2}}{n\sqrt{\epsilon}}\vee\frac{1}{n\epsilon} \right)\wedge 2^{-1}6^{-p}C_2\gamma^{jp}\\
    &\geq2^{-1}6^{-p}\gamma^{jp}C_2\left(\frac{N_j^{1/4}}{\sqrt{n}}\vee\frac{N_j^{1/4}}{n^{3/4}\sqrt{\epsilon}}\vee \frac{N_j^{1/2}}{n\sqrt{\epsilon}}\vee\frac{1}{n\epsilon} \right)\wedge 2^{-1}6^{-p}C_2\gamma^{Jp}\\
    &\geq2^{-1}6^{-p}\gamma^{jp}C_2\left(\frac{N_j^{1/4}}{\sqrt{n}}\vee\frac{N_j^{1/4}}{n^{3/4}\sqrt{\epsilon}}\vee \frac{N_j^{1/2}}{n\sqrt{\epsilon}}\vee\frac{1}{n\epsilon} \right)\wedge 2^{-2}6^{-p}C_2R_{\mathrm{lower}}^{\mathrm{gof}}.
\end{align*}
Taking the maximum over $j=1,\ldots,J$, we get
\begin{align*}
    R&\geq  2^{-1}6^{-p}C_2\left(\frac{M(p,1/4,J)}{\sqrt{n}}\vee\frac{M(p,1/2,J)}{n\sqrt{\epsilon}}\vee \frac{M(p,1/4,J)}{n^{3/4}\sqrt{\epsilon}}\vee\frac{M(p,0,J)}{n\epsilon}\right)\wedge (2^{-2}6^{-p}C_2R_{\mathrm{lower}}^{\mathrm{gof}})\\
    &\geq 2^{-2}6^{-p}C_2R_{\mathrm{lower}}^{\mathrm{gof}},
\end{align*}
where the last inequality holds by the definition of $R_{\mathrm{lower}}^{\mathrm{gof}}$. Therefore, for any $R>0$, there exists a tester for $W_p^p(P,T_hu_0)\geq \mathrm{diam}^p(\mathcal{X})R$ only if $R\geq  2^{-2}6^{-p}C_2R_{\mathrm{lower}}^{\mathrm{gof}}$.

\end{proof}
\subsection{Two-sample testing}
The arguments developed in the previous section can also be applied to the two-sample testing problem. In this section, we characterize the minimax separation radius $\rho_{n,\epsilon}^{\mathrm{ts}}$ up to a logarithmic factor. As in goodness-of-fit testing, we construct a Wasserstein tester by combining minimax-optimal differentially private finite-domain $\ell_1$ two-sample testers with a calibrated privacy and test error budget allocation. The resulting procedure is given in Algorithm~\ref{alg:ts_tester_Wp}, and we subsequently derive upper and lower bounds on $\rho_{n,\epsilon}^{\mathrm{ts}}$.

\begin{proposition}\label{prop:ts_tv_minimax}
Let $k\geq2$ and $n\geq1$ be integers, and let $\epsilon>0$. Suppose that we observe $n$ i.i.d. samples from each of two unknown distributions $u,v\in\mathcal{P}([k])$, with the two samples independent. For given Type I and Type II error levels $\alpha,\beta\in(0,1/3]$, there exist a universal constant $C_3>0$ and a constant $C_4\in(0,1/2)$ depending only on $\alpha$ and $\beta$ with the following properties.
\begin{enumerate}
\renewcommand{\labelenumi}{(\roman{enumi})}
\item There exists an $\epsilon$-DP tester for
\begin{align*}
H_0:u=v
\quad\textrm{vs}\quad
H_1:\lVert u-v\rVert_1\geq \tau
\end{align*}
with Type I error at most $\alpha$ and Type II error at most $\beta$ if
\begin{align*}
\tau\geq C_3\max\left\{\frac{k^{1/4}}{\sqrt{n}},\frac{k^{1/2}}{n^{3/4}},\frac{k^{1/2}}{n\sqrt{\epsilon}},\frac{k^{1/4}}{n^{3/4}\sqrt{\epsilon}},\frac{1}{n\epsilon}\right\}\log\frac{1}{\alpha\wedge\beta}.
\end{align*}
\item Conversely, any $\epsilon$-DP tester with Type I and Type II errors at most $\alpha$ and $\beta$, respectively, satisfies
\begin{align*}
\tau\geq C_4\left(\max\left\{\frac{k^{1/4}}{\sqrt{n}},\frac{k^{1/2}}{n^{3/4}},\frac{k^{1/2}}{n\sqrt{\epsilon}},\frac{k^{1/4}}{n^{3/4}\sqrt{\epsilon}},\frac{1}{n\epsilon}\right\}\wedge1\right).
\end{align*}
\end{enumerate}
\end{proposition}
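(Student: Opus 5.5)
The plan is to treat the statement as a repackaging of the known finite-domain results on private closeness testing, chiefly \citet{zhang2021statistical}, which characterizes the sample complexity of $\epsilon$-DP $\ell_1$ two-sample testing up to constants, together with the non-private closeness lower bounds of \citet{ba2011sublinear}-type constructions (Chan--Diakonikolas--Valiant--Valiant). The work then has two parts: convert the sample-complexity statement into the separation form used here, and handle the dependence on $(\alpha,\beta)$.

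For part (i), I will start from the sample-complexity upper bound. It states that, at constant error probability $1/3$, an $\epsilon$-DP closeness tester succeeds once
\begin{align*}
n\gtrsim \frac{k^{2/3}}{\tau^{4/3}}+\frac{k^{1/2}}{\tau^{2}}+\frac{k^{1/2}}{\tau\sqrt{\epsilon}}+\frac{k^{1/3}}{\tau^{4/3}\epsilon^{2/3}}+\frac{1}{\tau\epsilon}.
\end{align*}
I will invert each term separately. The requirement $n\gtrsim k^{2/3}\tau^{-4/3}$ is equivalent to $\tau\gtrsim k^{1/2}/n^{3/4}$, and $n\gtrsim k^{1/3}\tau^{-4/3}\epsilon^{-2/3}$ is equivalent to $\tau\gtrsim k^{1/4}/(n^{3/4}\sqrt{\epsilon})$. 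The remaining three terms invert in the same way to $k^{1/4}/\sqrt{n}$, $k^{1/2}/(n\sqrt{\epsilon})$ and $1/(n\epsilon)$. Hence the five-term maximum suffices at error level $1/3$.

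To reach levels $\alpha,\beta$ I will amplify. Set $L\asymp\log\frac{1}{\alpha\wedge\beta}$, split both samples into $L$ disjoint blocks of size $n/L$, run the base tester on each block, and take a majority vote. A Chernoff bound drives both error probabilities below $\alpha\wedge\beta$. The procedure stays $\epsilon$-DP because the blocks are disjoint: changing one record affects only one block's output (parallel composition), and post-processing preserves privacy. Each of the five rate terms scales as $n^{-s}$ with $s\in[1/2,1]$, so replacing $n$ by $n/L$ inflates the threshold by at most a factor $L$. This yields exactly the multiplicative factor $\log\frac{1}{\alpha\wedge\beta}$, with $C_3$ universal.

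For part (ii), I will combine three lower bounds.
\begin{enumerate}
\renewcommand{\labelenumi}{(\alph{enumi})}
\item \emph{Private terms.} These are $k^{1/4}/\sqrt{n}$, $k^{1/2}/(n\sqrt{\epsilon})$, $k^{1/4}/(n^{3/4}\sqrt{\epsilon})$ and $1/(n\epsilon)$. I will obtain them by reduction to Proposition~\ref{prop:gof_tv_minimax}(ii) with $u_0=\mathrm{Unif}([k])$. Any two-sample tester yields a goodness-of-fit tester by generating the second sample from $u_0$ with fresh randomness. This is still $\epsilon$-DP in the first sample and has the same errors.
\item \emph{The term $k^{1/2}/n^{3/4}$.} This term is specific to closeness testing. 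I will import the non-private lower bound $n\gtrsim k^{2/3}\tau^{-4/3}$ (valid for $\tau\gtrsim k^{-1/4}$), which holds a fortiori under privacy. Inverting it gives $\tau\gtrsim k^{1/2}/n^{3/4}$ whenever this quantity is at most a constant.
\item \emph{Large radii.} Since $\lVert u-v\rVert_1\le 2$, the $\wedge 1$ handles the regime where any rate term exceeds a constant, and choosing $C_4<1/2$ small keeps everything consistent.
\end{enumerate}

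The constants obtained for $\alpha,\beta\le 1/3$ from the cited bounds depend only on $(\alpha,\beta)$, as required. The main obstacle is the bookkeeping of regimes in the inversion. The cited sample-complexity results are stated with case distinctions, for instance the $k^{2/3}\tau^{-4/3}$ term dominates only when $n\lesssim k$. I must check that taking the maximum of the inverted terms reproduces the sample complexity up to constants in every regime, and that the lower-bound constructions remain valid, possibly after capping $\tau$ by a constant. If some regime fails, I would fall back on the fact that $\max$ versus sum of the terms differ by at most a factor of five, and verify the crossover points directly.
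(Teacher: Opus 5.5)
Your proposal is correct. Part (i) is the paper's argument: the constant-error sample complexity from \citet{zhang2021statistical}, termwise inversion, and a majority vote over $L\asymp\log\frac{1}{\alpha\wedge\beta}$ disjoint blocks, with privacy from parallel composition and error control from Hoeffding. You should add the trivial case $n<2L$; there the right-hand side exceeds $2\geq\lVert u-v\rVert_1$ once $C_3$ is large, so the alternative is empty.

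Part (ii) takes a different route. The paper reuses its identity-testing argument. It boosts a hypothetical $(\alpha,\beta)$-tester to error $0.1$ by a majority vote over $L_0$ copies, costing a factor $L_0$ in sample size. It then invokes the full five-term private closeness lower bound of \citet[Theorem~7]{zhang2021statistical}.

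You instead split the maximum. The four terms shared with identity testing come from Proposition~\ref{prop:gof_tv_minimax}(ii), by simulating the second sample from $\mathrm{Unif}([k])$; this is the same device the paper uses after Proposition~\ref{prop:unbounded_impossibility}. The closeness-specific term $k^{1/2}/n^{3/4}$ comes from the non-private Chan--Diakonikolas--Valiant--Valiant bound. This is legitimate, because a lower bound by a maximum follows from a lower bound by each term separately. It also shows that the lower half of Proposition~\ref{prop:ts_tv_minimax} needs no private-closeness-specific construction.

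Your route also handles $(\alpha,\beta)$ more cleanly. The private terms inherit the general form of Proposition~\ref{prop:gof_tv_minimax}(ii). A tester with $\alpha,\beta\le 1/3$ is already a constant-error tester, so the non-private bound applies directly. If you instead use a version stated at error $0.1$, you need the paper's boosting step, which your remark about constants skips.

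The regime issue you flag is harmless. If $\tau\lesssim k^{-1/4}$, then $n\gtrsim k^{1/2}\tau^{-2}$ from your item (a) gives $k^{1/2}/n^{3/4}\lesssim k^{1/8}\tau^{3/2}\lesssim\tau$, so the $k^{1/2}/n^{3/4}$ bound holds there as well.

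Finally, the relevant $\ell_1$ closeness lower bound is Chan et al., not \citet{ba2011sublinear}, which concerns earth mover's distance.
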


\begin{algorithm}
    \caption{$W_p$-based two-sample tester}\label{alg:ts_tester_Wp}
\begin{algorithmic}
  \Input:\textrm{  }Given data $X_1,\ldots,X_n$ and $Y_1,\ldots,Y_n$, Wasserstein order $p$, partition depth $J$, separation parameters $\tau_1,\ldots,\tau_J$, Type I and Type II error levels $\alpha$ and $\beta$, privacy parameter $\epsilon$, and allocation parameters $\nu_1,\ldots,\nu_J>0$ satisfying $\sum_{j=1}^J\nu_j\leq1$.
  \EndInput
\begin{enumerate}
    \item Generate a dyadic partition $\{\mathcal{R}^j\}_{j=1}^J$ with parameter $\gamma = 1/3$ such that $|\mathcal{R}^j|\leq\mathcal{N}(\mathcal{X},\mathrm{diam}(\mathcal{X})\gamma^j/3)$.
    \item \textbf{For }$j=1,\ldots,J$ \textbf{do}
    \begin{enumerate}
        \item Define $Z_i^j\coloneqq R$ if $X_i\in R$ and $R\in\mathcal{R}^j$.
        \item Define $W_i^j\coloneqq R$ if $Y_i\in R$ and $R\in\mathcal{R}^j$.
\item Apply the minimax-optimal $\nu_j\epsilon$-DP $\ell_1$-based tester $\phi_j$ from Proposition~\ref{prop:ts_tv_minimax} to two samples $Z_1^j,\ldots,Z^j_n$ and $W_1^j,\ldots,W^j_n$, using $\tau_j$ as the separation parameter, and $\nu_j\alpha$ and $\nu_j\beta$ as the Type I and Type II error levels, respectively.
\item \Return \textbf{Reject} if $\phi_j$ rejects; otherwise, proceed to the next level.
    \end{enumerate}
\end{enumerate}
 \Return \textbf{Accept}.
\end{algorithmic}
\end{algorithm}

We provide the upper and lower bounds on $\rho_{n,\epsilon}^{\mathrm{ts}}$ below.
\begin{theorem}\label{thm:two_sample_minimax_upper}
The minimax separation radius of $\epsilon$-DP two-sample testing satisfies
\begin{align*}
\rho_{n,\epsilon}^{\mathrm{ts}}\leq C_{\mathrm{upper}}^{\mathrm{ts}} \mathrm{diam}^p(\mathcal{X})R_{\mathrm{upper}}^{\mathrm{ts}},
\end{align*}
where $C_{\mathrm{upper}}^{\mathrm{ts}}>\gamma^{-p}/2$ is a constant depending only on $\alpha,\beta$, and $p$, and
\begin{align*}
    R_{\mathrm{upper}}^{\mathrm{ts}}&\coloneqq \inf\Bigg\{R\in(0,2\gamma^{p}]:R\geq \Psi^{\mathrm{ts}}_{\mathrm{upper}}(R)\Bigg\}\\
    \Psi^{\mathrm{ts}}_{\mathrm{upper}}(R)&\coloneqq \frac{\Lambda(p,1/4,1,J_{\mathrm{upper}}(R))}{\sqrt{n}}+\frac{\Lambda(p,1/2,1,J_{\mathrm{upper}}(R))}{n^{3/4}}\\
    &+\frac{\Lambda(p,1/4,3/2,J_{\mathrm{upper}}(R))}{n^{3/4}\sqrt{\epsilon}}+\frac{\Lambda(p,1/2,3/2,J_{\mathrm{upper}}(R))}{n\sqrt{\epsilon}}+\frac{\Lambda(p,0,2,J_{\mathrm{upper}}(R))}{n\epsilon}.
\end{align*}
If $R_{\mathrm{upper}}^{\mathrm{ts}}$ does not exist, we set $R_{\mathrm{upper}}^{\mathrm{ts}}=2\gamma^{p}$.
\end{theorem}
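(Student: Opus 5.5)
The plan mirrors the proof of Theorem~\ref{thm:goodness_of_fit_minimax_upper}, with Algorithm~\ref{alg:ts_tester_Wp} replacing Algorithm~\ref{alg:gof_tester_Wp} and Proposition~\ref{prop:ts_tv_minimax}(i) replacing Proposition~\ref{prop:gof_tv_minimax}(i). Split into two cases according to whether the set defining $R_{\mathrm{upper}}^{\mathrm{ts}}$ is nonempty. If it is empty, then $R_{\mathrm{upper}}^{\mathrm{ts}}=2\gamma^p$ and $C_{\mathrm{upper}}^{\mathrm{ts}}>\gamma^{-p}/2$ give a threshold exceeding $\mathrm{diam}^p(\mathcal{X})$. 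Since $W_p^p(P,Q)\le\mathrm{diam}^p(\mathcal{X})$ always holds, the alternative class is empty and the trivial always-accept test works.

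In the nonempty case, set $J\coloneqq J_{\mathrm{upper}}(R_{\mathrm{upper}}^{\mathrm{ts}})\ge1$, so that $\gamma^{Jp}\le R_{\mathrm{upper}}^{\mathrm{ts}}/2$. Choose $\nu_j$ as the maximum of \emph{five} normalized sequences, each scaled by $1/5$ so that $\sum_j\nu_j\le 1$. Four are those of the goodness-of-fit proof:
\begin{align*}
N_j^{1/4}\gamma^{jp},\qquad (N_j^{1/4}\gamma^{jp})^{2/3},\qquad (N_j^{1/2}\gamma^{jp})^{2/3},\qquad \gamma^{jp/2}.
\end{align*}
The fifth is $N_j^{1/2}\gamma^{jp}$, with exponent $1/r=1$, matching the new term $k^{1/2}/n^{3/4}$, which carries no $\epsilon$ dependence.

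With this choice, privacy follows from Proposition~\ref{prop:comp}, since the total budget is $\sum_j\nu_j\epsilon\le\epsilon$. The Type I error is at most $\sum_j\nu_j\alpha\le\alpha$ by a union bound over levels, and this holds uniformly over $P=Q$ because each $\phi_j$ is valid for every null pair of induced distributions $P^j=Q^j$. For the Type II error, let $\tau_j$ be the separation guaranteed by Proposition~\ref{prop:ts_tv_minimax}(i) with $k\le N_j$ (Lemma~\ref{lem:dyadic}), privacy level $\nu_j\epsilon$, and error levels $\nu_j\alpha,\nu_j\beta$. This gives
\begin{align*}
\tau_j\lesssim\Bigl(\tfrac{N_j^{1/4}}{\sqrt n}+\tfrac{N_j^{1/2}}{n^{3/4}}+\tfrac{N_j^{1/4}\nu_j^{-1/2}}{n^{3/4}\sqrt\epsilon}+\tfrac{N_j^{1/2}\nu_j^{-1/2}}{n\sqrt\epsilon}+\tfrac{\nu_j^{-1}}{n\epsilon}\Bigr)\log\tfrac1{\nu_j}.
\end{align*}
Multiply by $\gamma^{(j-1)p}$ and sum over $j$. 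For each term, use the lower bound on $\nu_j$ supplied by the matching sequence. Then $\sum_j N_j^q\gamma^{jp}\nu_j^{-(r-1)}\log(1/\nu_j)$ becomes a constant times $\Sigma(p,q,r,J)\bigl(\log 5+H(p,q,r,J)\bigr)\lesssim\Lambda(p,q,r,J)$. The new $(q,r)=(1/2,1)$ term is handled exactly like the $(1/4,1)$ term.

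Summing the five contributions gives $\sum_j\gamma^{(j-1)p}\tau_j\lesssim\Psi^{\mathrm{ts}}_{\mathrm{upper}}(R^{\mathrm{ts}}_{\mathrm{upper}})\le R^{\mathrm{ts}}_{\mathrm{upper}}$, where the last inequality holds because the infimum is attained (the monotonicity argument of Appendix~\ref{appendix:monotonicity} applies verbatim). Adding $\gamma^{Jp}\le R^{\mathrm{ts}}_{\mathrm{upper}}/2$ and applying Proposition~\ref{prop:Wp_decomp} to $(P,Q)$ shows the following. If $W_p^p(P,Q)\ge C^{\mathrm{ts}}_{\mathrm{upper}}\mathrm{diam}^p(\mathcal{X})R^{\mathrm{ts}}_{\mathrm{upper}}$, then some level $j_0$ has $\lVert P^{j_0}-Q^{j_0}\rVert_1\ge\tau_{j_0}$. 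Since the algorithm rejects whenever any $\phi_j$ rejects, the Type II error is at most $\nu_{j_0}\beta\le\beta$.

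The only genuinely new step is the key bookkeeping: the $r=3/2$ and $r=2$ terms need the weight lower bound $\nu_j\ge w_j/5$ to turn $\nu_j^{-(r-1)}$ into $\Sigma^{(r-1)/r}\cdot(N_j^q\gamma^{jp})^{-(r-1)/r}$, which is where the exponents in $\Sigma$ come from. I expect this to go through as in the goodness-of-fit proof, and the remainder is routine.
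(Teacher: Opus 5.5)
Your proposal is correct and follows essentially the same route as the paper's proof in Appendix~\ref{appendix:two_sample_minimax_upper}. The steps coincide: the same two-case split and the same five allocation sequences scaled by $1/5$. Privacy comes from composition and the Type I error from a union bound. Proposition~\ref{prop:Wp_decomp} is combined with the lower bound $\nu_j\ge w_j/5$ from the matching sequence, which turns each summed term into $\Lambda(p,q,r,J)$.

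One small addition on your side: you explicitly note that the infimum defining $R_{\mathrm{upper}}^{\mathrm{ts}}$ is attained, by monotonicity and right-continuity of $\Psi^{\mathrm{ts}}_{\mathrm{upper}}$. The paper uses this step without stating it.
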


\begin{theorem}\label{thm:two_sample_minimax_lower}
    The minimax separation radius of $\epsilon$-DP two-sample testing satisfies
\begin{align*}
\rho_{n,\epsilon}^{\mathrm{ts}}\geq C_{\mathrm{lower}}^{\mathrm{ts}} \mathrm{diam}^p(\mathcal{X})R_{\mathrm{lower}}^{\mathrm{ts}},
\end{align*}
where $C_{\mathrm{lower}}^{\mathrm{ts}}>0$ is a constant depending only on $p,\alpha$, and $\beta$, and
\begin{align*}
   R_{\mathrm{lower}}^{\mathrm{ts}}&\coloneqq \sup\left\{R\in(0,2\gamma^p]:R\leq \Psi^{\mathrm{ts}}_{\mathrm{lower}}(R)\right\},\\
    \Psi^{\mathrm{ts}}_{\mathrm{lower}}(R)&\coloneqq \frac{M(p,1/4,J_{\mathrm{lower}}(R))}{\sqrt{n}}\vee\frac{M(p,1/2,J_{\mathrm{lower}}(R))}{n^{3/4}}\\
    &\vee\frac{M(p,1/2,J_{\mathrm{lower}}(R))}{n\sqrt{\epsilon}}\vee \frac{M(p,1/4,J_{\mathrm{lower}}(R))}{n^{3/4}\sqrt{\epsilon}}\vee\frac{M(p,0,J_{\mathrm{lower}}(R))}{n\epsilon}.
\end{align*}
\end{theorem}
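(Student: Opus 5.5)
The plan is to mirror the proof of Theorem~\ref{thm:goodness_of_fit_minimax_lower}, replacing the goodness-of-fit reduction by a two-sample reduction and Proposition~\ref{prop:gof_tv_minimax}(ii) by Proposition~\ref{prop:ts_tv_minimax}(ii).

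\textbf{Embedding.} For each level $j$, set $h_j\coloneqq \mathrm{diam}(\mathcal{X})\gamma^j/6$. By Lemma~\ref{lem:sep_cover}, choose an $h_j$-separated set $\{x_i^{(j)}\}_{i=1}^{N_j}$, and let $T_{h_j}$ be the embedding of Lemma~\ref{lem:grid}.

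\textbf{Reduction.} Fix $R>0$ and suppose $\phi$ is an $\epsilon$-DP two-sample test with Type I and Type II errors at most $\alpha$ and $\beta$ against all pairs with $W_p^p(P,Q)\geq \mathrm{diam}^p(\mathcal{X})R$. Given samples $A_1,\ldots,A_n\sim u$ and $B_1,\ldots,B_n\sim v$ on $[N_j]$, I apply $\phi$ to $(x^{(j)}_{A_1},\ldots,x^{(j)}_{A_n},x^{(j)}_{B_1},\ldots,x^{(j)}_{B_n})$.
\begin{itemize}
\item \emph{Privacy.} The map $A_i\mapsto x^{(j)}_{A_i}$ is deterministic and coordinatewise, so neighboring datasets map to neighboring or identical datasets. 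The induced test is therefore $\epsilon$-DP.
\item \emph{Type I error.} If $u=v$, then $T_{h_j}u=T_{h_j}v$, so the error is at most $\alpha$. This holds uniformly over all null pairs, which is exactly what Proposition~\ref{prop:ts_tv_minimax}(ii) requires; no fixed null $u_0$ is needed here.
\item \emph{Type II error.} If $\lVert u-v\rVert_1\geq 2h_j^{-p}\mathrm{diam}^p(\mathcal{X})R$, then Lemma~\ref{lem:grid} gives $W_p^p(T_{h_j}u,T_{h_j}v)\geq \mathrm{diam}^p(\mathcal{X})R$, so the error is at most $\beta$.
\end{itemize}
Consequently, Proposition~\ref{prop:ts_tv_minimax}(ii) forces
\begin{align*}
2h_j^{-p}\mathrm{diam}^p(\mathcal{X})R\geq C_4\left(\left(\frac{N_j^{1/4}}{\sqrt{n}}\vee\frac{N_j^{1/2}}{n^{3/4}}\vee\frac{N_j^{1/2}}{n\sqrt{\epsilon}}\vee\frac{N_j^{1/4}}{n^{3/4}\sqrt{\epsilon}}\vee\frac{1}{n\epsilon}\right)\wedge1\right)
\end{align*}
for every $j$. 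The case $N_j=1$, where $k\geq2$ fails, is handled in two ways: at $j=1$ I use two points at distance $\mathrm{diam}(\mathcal{X})/2$, and for larger $j$ with $N_j=1$ the $M$-terms are dominated by the $j=1$ term anyway.

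\textbf{Bounding $R$.} Multiplying through gives $R\geq 2^{-1}6^{-p}C_4\gamma^{jp}(\cdots\wedge1)$. Set $J=J_{\mathrm{lower}}(R_{\mathrm{lower}}^{\mathrm{ts}})$. For $j\leq J$ we have $\gamma^{jp}\geq\gamma^{Jp}\geq R_{\mathrm{lower}}^{\mathrm{ts}}/2$, so the truncation term is at least $2^{-2}6^{-p}C_4R_{\mathrm{lower}}^{\mathrm{ts}}$. Taking the maximum over $j\leq J$ turns each term into the corresponding $M(p,q,J)$ and yields
\begin{align*}
R\geq 2^{-1}6^{-p}C_4\,\Psi^{\mathrm{ts}}_{\mathrm{lower}}(R^{\mathrm{ts}}_{\mathrm{lower}})\wedge 2^{-2}6^{-p}C_4R^{\mathrm{ts}}_{\mathrm{lower}}\geq 2^{-2}6^{-p}C_4R^{\mathrm{ts}}_{\mathrm{lower}}.
\end{align*}
The last step uses that the supremum is attained, i.e.\ $R^{\mathrm{ts}}_{\mathrm{lower}}\leq\Psi^{\mathrm{ts}}_{\mathrm{lower}}(R^{\mathrm{ts}}_{\mathrm{lower}})$. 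This follows, as in the goodness-of-fit case, from nonemptiness (since $\Psi^{\mathrm{ts}}_{\mathrm{lower}}\geq\gamma^p/(n\epsilon)$) together with left-continuity of $J_{\mathrm{lower}}$. It also gives $J\geq1$, because $R\leq 2\gamma^p$.

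\textbf{Main obstacle.} There is no real obstacle. The only extra care is in checking that the reduction respects the two-sample minimax formulation, which takes a supremum over all null pairs, and in the bookkeeping of the truncation at $1$ against $\gamma^{Jp}$. Both are routine and go exactly as in Theorem~\ref{thm:goodness_of_fit_minimax_lower}.
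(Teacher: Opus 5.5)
Your proposal is correct and follows the paper's proof essentially line for line. It uses the same $h_j$-separated embedding via Lemmas~\ref{lem:sep_cover} and~\ref{lem:grid}, the same reduction to finite-domain $\ell_1$ two-sample testing through Proposition~\ref{prop:ts_tv_minimax}(ii), and the same truncation bookkeeping against $\gamma^{Jp}\geq R_{\mathrm{lower}}^{\mathrm{ts}}/2$. Your separate treatment of $N_j=1$ is harmless but unnecessary: a single ball of diameter at most $\mathrm{diam}(\mathcal{X})\gamma^j/3<\mathrm{diam}(\mathcal{X})$ cannot cover $\mathcal{X}$, so $N_j\geq 2$ whenever $\mathrm{diam}(\mathcal{X})>0$.
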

As in the case of goodness-of-fit testing, the established lower and upper bounds match up to a logarithmic factor. In contrast to goodness-of-fit testing, the minimax separation radius of two-sample testing contains an additional term. This indicates that two-sample testing is harder than goodness-of-fit testing.

Since the proofs are analogous to those for goodness-of-fit testing, we defer the proofs to Appendices~\ref{appendix:two_sample_minimax_upper} and~\ref{appendix:two_sample_minimax_lower}.
\section{$W_p$-testing on $([0,1]^d,\ell_1)$} \label{sec:hypercube}
In this section, we specialize the preceding results to the case where $\mathcal{X}=[0,1]^d$ is equipped with the $\ell_1$ metric. In the previous sections, we established upper and lower bounds that match up to logarithmic factors in $n$. In the setting, these logarithmic gaps can be eliminated except for goodness-of-fit testing in the critical case $d=4p$.

\subsection{Explicit minimax separation rates}
Since $\mathcal{N}([0,1]^d,\ell_1,\delta)\asymp\delta^{-d}$ up to a constant factor depending only on $d$, the quantities appearing in the minimax separation bounds have the following orders of magnitude:
\begingroup
\setlength{\arraycolsep}{12pt}
\renewcommand{\arraystretch}{1.3}
\[
\begin{array}{c|ccc}
 & qd>p & qd=p & qd<p \\ \hline
\Sigma(p,q,r,J)
 & \gamma^{-(qd-p)J} & J^r & 1 \\
H(p,q,r,J)
 & 1 & \log J & 1 \\
\Lambda(p,q,r,J)
 & \gamma^{-(qd-p)J} & J^r(1+\log J) & 1 \\
M(p,q,J)
 & \gamma^{-(qd-p)J} & 1 & 1
\end{array}
\]
\endgroup
Each entry gives the order of the corresponding quantity in the sense of $\asymp$ for $J\geq2$, with comparison constants depending only on $d,p,q,r$ and $\gamma=1/3$ fixed.
Substituting these estimates into Theorems~\ref{thm:goodness_of_fit_minimax_upper}--\ref{thm:two_sample_minimax_lower} yields the following result. A detailed proof is deferred to Appendix~\ref{appendix:special_case}.
\begin{corollary}\label{cor:special_case} For $n\geq e^e$, the minimax separation radius of $\epsilon$-DP goodness-of-fit testing $\rho_{n,\epsilon}^{\mathrm{gof}}([0,1]^d,p)$ is characterized as
\begin{align*}
\rho_{n,\epsilon}^{\mathrm{gof}}([0,1]^d,p,\alpha,\beta)\lesssim\begin{cases}
\left(n^{-2p/d}\vee(n\sqrt{\epsilon})^{-\left(1\wedge \frac{2p}{d}\right)}\vee(n^{3/4}\sqrt{\epsilon})^{-\left(1\wedge \frac{4p}{d}\right)}\vee(n\epsilon)^{-1}\right)\wedge1, &d>4p,\\[6pt]
\left(n^{-1/2}\log n\log\log n \vee(n\sqrt{\epsilon})^{-1/2}\vee(n\epsilon)^{-1}\right)\wedge1,&d=4p,\\[6pt]
\left(n^{-1/2}\vee(n\sqrt{\epsilon})^{-\left(1\wedge \frac{2p}{d}\right)}\vee(n^{3/4}\sqrt{\epsilon})^{-\left(1\wedge \frac{4p}{d}\right)}\vee(n\epsilon)^{-1}\right)\wedge1,&d<4p,
\end{cases}
\end{align*}
and
\begin{align*}
\rho_{n,\epsilon}^{\mathrm{gof}}([0,1]^d,p,\alpha,\beta)\gtrsim\left(
n^{-\left(\frac{1}{2}\wedge\frac{2p}{d}\right)}\vee (n\sqrt{\epsilon})^{-\left(1\wedge \frac{2p}{d}\right)}\vee(n^{3/4}\sqrt{\epsilon})^{-\left(1\wedge \frac{4p}{d}\right)}\vee(n\epsilon)^{-1}\right)\wedge1,
\end{align*}
up to constant factors depending only on $d,p,\alpha$, and $\beta$. In contrast, the minimax separation radius of $\epsilon$-DP two-sample testing satisfies
\begin{align*}
\rho_{n,\epsilon}^{\mathrm{ts}}([0,1]^d,p)\asymp\left(
n^{-\left(\frac{1}{2}\wedge\frac{3p}{2d}\right)}\vee (n\sqrt{\epsilon})^{-\left(1\wedge \frac{2p}{d}\right)}\vee(n^{3/4}\sqrt{\epsilon})^{-\left(1\wedge \frac{4p}{d}\right)}\vee(n\epsilon)^{-1}\right)\wedge1,
\end{align*}
up to constant factors depending only on $d,p,\alpha$, and $\beta$.
\end{corollary}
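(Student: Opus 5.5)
The plan is to substitute the tabulated orders of $\Sigma,H,\Lambda,M$ for $N_j\asymp\gamma^{-jd}$ into the fixed-point definitions of $R_{\mathrm{upper}}$ and $R_{\mathrm{lower}}$ in Theorems~\ref{thm:goodness_of_fit_minimax_upper}--\ref{thm:two_sample_minimax_lower}, and then solve each fixed point term by term. Every $\Psi$ is a sum, or a maximum, of a bounded number of terms of the form $A\cdot f(J(R))$, where $A$ is one of $n^{-1/2},n^{-3/4},n^{-3/4}\epsilon^{-1/2},n^{-1}\epsilon^{-1/2},(n\epsilon)^{-1}$. Up to constants, $\inf\{R:R\geq\sum_i\Psi_i(R)\}$ is the maximum of the individual fixed points $\inf\{R:R\geq\Psi_i(R)\}$. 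The same holds for the supremum with maxima in the lower bound. So it suffices to solve $R\asymp A\,f(J)$ with $\gamma^{-J}\asymp R^{-1/p}$, which holds because $J\asymp\frac{1}{p}\log_{1/\gamma}(2/R)$ for both $J_{\mathrm{upper}}$ and $J_{\mathrm{lower}}$. The cap $\wedge1$ comes from the restriction $R\leq2\gamma^p$, combined with the trivial bound $W_p^p\leq\mathrm{diam}^p$.

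For each term we split according to the sign of $qd-p$.

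\textbf{Case $qd>p$.} Here $\Lambda\asymp M\asymp\gamma^{-(qd-p)J}\asymp R^{-(qd-p)/p}$. The equation $R=A R^{-(qd-p)/p}$ gives $R=A^{p/(qd)}$, and upper and lower bounds match up to constants.

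\textbf{Case $qd<p$.} Here $\Lambda\asymp M\asymp1$, so $R\asymp A$.

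For the pairs $(q,r)=(1/4,1)$ with $A=n^{-1/2}$, $(1/2,3/2)$ with $A=(n\sqrt\epsilon)^{-1}$, $(1/4,3/2)$ with $A=(n^{3/4}\sqrt\epsilon)^{-1}$, $(0,2)$ with $A=(n\epsilon)^{-1}$, and $(1/2,1)$ with $A=n^{-3/4}$ (two-sample only), this yields respectively $n^{-2p/d}$ versus $n^{-1/2}$, then $(n\sqrt\epsilon)^{-(1\wedge 2p/d)}$, $(n^{3/4}\sqrt\epsilon)^{-(1\wedge4p/d)}$, $(n\epsilon)^{-1}$, and $n^{-(3/4)(1\wedge 2p/d)}$. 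Taking the maximum of $n^{-1/2}$ and $n^{-3p/(2d)}$ gives the two-sample exponent $\frac12\wedge\frac{3p}{2d}$.

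\textbf{Critical case $qd=p$.} Here $M\asymp1$ but $\Lambda\asymp J^r(1+\log J)$ with $J\asymp\log(1/R)$, so the lower bound gives $R\asymp A$ while the upper bound gives $R\asymp A\log^r(1/A)\log\log(1/A)$. The main step is to show that this logarithmic loss is absorbed by another term in every critical case except the $n^{-1/2}$ goodness-of-fit term at $d=4p$.

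\begin{itemize}
\item At $d=4p$, the $(1/4,3/2)$ term loses at most polylog factors of $n^{3/4}\sqrt\epsilon$. This is dominated by the $(1/2,3/2)$ term $(n\sqrt\epsilon)^{-1/2}$, which lies in the $qd>p$ regime. This needs the comparison $(n^{3/4}\sqrt\epsilon)^{-1}\mathrm{polylog}\lesssim(n\sqrt\epsilon)^{-1/2}\vee(n\epsilon)^{-1}\vee n^{-1/2}$, which I will check by splitting on whether $\epsilon\gtrless n^{-1/2}$.
\item At $d=2p$, the $(1/2,\cdot)$ terms are critical. Their logs are dominated by the $(1/4,\cdot)$ terms, which sit in the $qd<p$ regime and give $n^{-1/2}$ and $(n^{3/4}\sqrt\epsilon)^{-1}$, together with $(n\epsilon)^{-1}$. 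Again this comes down to comparing powers of $n$, $\epsilon$ against log factors.
\item The $(0,2)$ term is never critical since $p>0$.
\end{itemize}

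For two-sample testing, the critical points $d=4p$ and $d=2p$ must likewise be dominated. At $d=4p$, $n^{-1/2}\log n\log\log n$ is absorbed by $n^{-3/4\cdot 2p/d}=n^{-3/8}$, which is why no log survives there. At $d=2p$, $n^{-3/4}\log n$ is absorbed by $n^{-1/2}$.

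The remaining $d=4p$ goodness-of-fit entry is then just the upper bound with $J\asymp\log n$, giving $n^{-1/2}\log n\log\log n$; the assumption $n\geq e^e$ makes $\log\log n\geq1$. The main obstacle is this bookkeeping of polylog absorption in the critical cases, including the regime where $\epsilon$ is very small and the $(n\epsilon)^{-1}$ term or the cap at $1$ takes over. I will handle it by a case split on $\epsilon$ relative to powers of $n$.
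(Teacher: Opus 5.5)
Your proposal is correct and is essentially the paper's argument. The paper plugs an explicit candidate rate $g$ into $\Psi^{\mathrm{gof}}_{\mathrm{upper}}$ and $\Psi^{\mathrm{gof}}_{\mathrm{lower}}$ (and their two-sample analogues) and checks $\Psi_{\mathrm{upper}}(g)\leq g$ and $\Psi_{\mathrm{lower}}(g)\geq g$ term by term; this is equivalent to your max-of-individual-fixed-points reduction, because each term is monotone in $R$.

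One correction to the bookkeeping for the $(n^{3/4}\sqrt{\epsilon})^{-1}$ term at $d=4p$: split at $\epsilon\gtrless n^{-2/3}$, not $n^{-1/2}$. Equivalently, use $(n\sqrt{\epsilon})^{-1/2}\vee(n\epsilon)^{-1}\geq n^{-2/3}\epsilon^{-1/2}$, which gives a uniform $n^{-1/12}$ margin. Comparing only with $(n\epsilon)^{-1}$ just below $\epsilon=n^{-1/2}$ leaves no room for the logarithm. Also evaluate $J$ at the candidate rate, which is $\gtrsim n^{-1/2}$, so that every logarithm is $\lesssim\log n$ uniformly in $\epsilon$.
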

We make several remarks on Corollary~\ref{cor:special_case}. First, our upper and lower bounds are formulated in terms of the covering numbers of the underlying data space. Therefore, the same conclusions extend beyond $([0,1]^d,\ell_1)$ to metric spaces whose covering numbers exhibit polynomial growth of order $d$, namely, $\mathcal{N}(\mathcal{X},\delta)\asymp\delta^{-d}$. Ahlfors regularity provides a useful sufficient condition for such polynomial growth. A Borel measure $\mu$ on $\mathcal{X}$ is said to be Ahlfors $d$-regular if there exists a constant $C>0$ such that
\begin{align*}
C^{-1}r^d\leq \mu(B(x,r))\leq Cr^d
\end{align*}
for all $x\in \mathcal{X}$ and $0<r\leq\mathrm{diam}(\mathcal{X})$  \citep{heinonen1998quasiconformal}. If $\mathcal{X}$ admits such a measure, then $\mathcal{N}(\mathcal{X},\delta)\asymp\delta^{-d}$. This condition is satisfied by many standard data spaces, including Euclidean cubes and balls, as well as compact Riemannian manifolds.

Next, we compare our results with existing work on Wasserstein testing over $[0,1]^d$ \citep{ba2011sublinear,Ferreira2024Distribution,deng2017wasserstein}. The settings considered in these works fall within the framework of Corollary~\ref{cor:special_case}, but are limited to the $p=1$ setting. Although the underlying metric may be different from $\ell_1$ (e.g., $\ell_s$ with $s>1$), the theorem continues to apply because these metrics induce covering numbers of the same order, up to constant factors, on $[0,1]^d$. Moreover, the bounds in Corollary~\ref{cor:special_case} match up to constant factors except in the critical case $d=4p$, whereas the corresponding bounds in the existing literature contain additional logarithmic factors across the different values of $d$. This improvement comes from the carefully calibrated allocation parameters $\nu_1,\ldots,\nu_J$. In the previous work, the parameters were chosen uniformly, i.e., $\nu_1=\cdots=\nu_J=1/J$, which yields the additional logarithmic factors. However, the allocation parameters used in the proof of Theorem~\ref{thm:goodness_of_fit_minimax_upper} are designed to assign larger budgets to resolution scales at which the testing problem is more difficult. Indeed, the quantities of the form $(N_j^q\gamma^{jp})^{1/r}$ appearing in the allocation reflect the minimax separation radius of the testing problem at resolution $j$. Consequently, the proposed allocation assigns relatively larger values of $\nu_j$ to harder-to-test resolution scales, rather than distributing the budget uniformly across all scales. As a result, logarithmic factors remain only in the critical regimes $d=2p$ and $d=4p$. However, except for goodness-of-fit testing when $d=4p$, every term involving a logarithmic factor is dominated by another term in the corresponding bound and therefore does not affect the final rate.

\subsection{Necessity of a logarithmic factor}
For goodness-of-fit testing at the critical dimension $d=4p$, a logarithmic factor remains in our upper bound. We show that a logarithmic factor is unavoidable, at least in the case $(d,p)=(4,1)$. In fact, we show a stronger result. For $B>1$, denote
\begin{align*}
    \mathcal{P}_B([0,1]^d)&\coloneqq\left\{P\in\mathcal{P}([0,1]^d):\frac{dP}{d\lambda}\leq B\right\},\\
    \mathcal{P}_{1}^{\mathrm{gof}}(P_0,[0,1]^d,B,\rho)&\coloneqq\mathcal{P}_B([0,1]^d)\cap \{P\in\mathcal{P}([0,1]^d):W_1(P,P_0)\geq \rho\},
\end{align*}
where $\lambda$ is the Lebesgue measure on $\mathbb{R}^d$. Define the minimax separation radius of Wasserstein goodness-of-fit testing for distributions with bounded density on $[0,1]^d$ as follows.
\begin{definition}
    Let $\phi_{P_0}:([0,1]^d)^n\rightarrow\{0,1\}$ be a goodness-of-fit testing procedure for \eqref{eq:ht_gof}. Given Type I and Type II error levels $\alpha,\beta\in(0,1/3]$, density upper bound $B$, and sample size $n$, the separation radius of $\phi_{P_0}$ is defined as
\begin{align*}
    \rho_{n}^{\mathrm{gof}}(\phi_{P_0},[0,1]^d,B,\alpha,\beta)\coloneqq \inf\left\{\rho>0:\mathbb{E}_{P_0}[\phi_{P_0}]\leq\alpha,\sup_{P\in\mathcal{P}_{1}^{\mathrm{gof}}(P_0,[0,1]^d,B,\rho)}\mathbb{E}_P[1-\phi_{P_0}]\leq\beta\right\},
\end{align*}
The minimax separation radius for goodness-of-fit testing is then defined as
\begin{align*} \rho_{n}^{\mathrm{gof}}([0,1]^d,B,\alpha,\beta)\coloneqq \sup_{P_0\in\mathcal{P}_B([0,1]^d)}\inf_{\phi_{P_0}}\rho_{n}^{\mathrm{gof}}(\phi_{P_0},[0,1]^d,B,\alpha,\beta).\end{align*}
\end{definition}
We establish the minimax separation radius $\rho_{n}^{\mathrm{gof}}([0,1]^d,B,\alpha,\beta)$ below. The proof can be found in Appendix~\ref{appendix:unavoidable_log}.
\begin{theorem}\label{thm:unavoidable_log}
For $n\geq e$, the minimax separation radius of Wasserstein-1 goodness-of-fit testing for distributions with bounded densities on $[0,1]^d$ is characterized as
    \begin{align*}
\rho_{n}^{\mathrm{gof}}([0,1]^d,B,\alpha,\beta)\asymp \begin{cases}
        n^{-1/2}, & d<4,\\
        n^{-1/2}\log^{1/4}n , & d=4,\\
        n^{-2/d}, & d>4,
    \end{cases}
    \end{align*}
up to constant factors depending only on $d,B,\alpha$, and $\beta$.
\end{theorem}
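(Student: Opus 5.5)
We prove matching upper and lower bounds separately in the three regimes $d<4$, $d=4$ and $d>4$.

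\textbf{Upper bounds.} For $d\neq 4$ (with $p=1$) the bound follows from the non-private limit $\epsilon\to\infty$ of Corollary~\ref{cor:special_case}; alternatively, run Algorithm~\ref{alg:gof_tester_Wp} with the privacy terms dropped. The bounded-density restriction only shrinks the alternative class, so the upper bound transfers directly.

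For $d=4$ the corollary only gives $n^{-1/2}\log n\log\log n$, so a sharper analysis is needed, and here the density bound is used. Use the chi-square-type $\ell_2$ statistic at each dyadic level $j$ and aggregate the levels with weights $2^{-j}$. That is, test with the single statistic
\begin{align*}
T\coloneqq\sum_{j\le J}2^{-2j}\sum_{R\in\mathcal{R}^j}\frac{\widehat{(P-P_0)}(R)^2}{P_0(R)\vee 2^{-4j}},
\end{align*}
a negative-Sobolev ($H^{-1}$-type) norm estimate, rather than a Bonferroni combination. Under a density bound $B$, $W_1(P,P_0)$ is controlled by $\|P-P_0\|_{\dot H^{-1}}$ through the standard Peyre-type inequality $W_1\lesssim\sqrt{B}\,\|P-P_0\|_{\dot H^{-1}}$. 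Equivalently, by Cauchy--Schwarz across levels, $\sum_j 2^{-j}\|P^j-P_0^j\|_1\lesssim\sum_j 2^{-j}2^{2j}\|P^j-P_0^j\|_2$. The variance of the $U$-statistic version of $T$ is about $n^{-2}\sum_j 2^{-4j}\cdot 2^{4j}\cdot(\text{levels})\asymp J/n^2$. Taking $J\asymp\log n$ gives detectability once $W_1^2\gtrsim \sqrt{\log n}/n$, that is, $W_1\gtrsim n^{-1/2}\log^{1/4}n$. The truncation at $J\asymp\log n$ contributes bias $2^{-J}\le n^{-1/2}$. The point is that aggregating variances across levels ($\sqrt{J}$) is cheaper than a union bound over levels.

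\textbf{Lower bounds.} For $d<4$, use a two-point Le Cam argument with $P_0=\mathrm{Unif}$ and $P=(1+c n^{-1/2}g)\,d\lambda$ for a fixed smooth $g$ with mean zero. For $d>4$, use Ingster's construction with $P_0$ uniform and random sign perturbations $\sum_{i\le h^{-d}}\sigma_i h^{?}\psi((x-x_i)/h)$, choosing $h\asymp n^{-1/d}$ and amplitude of order one so the density stays bounded. This gives $W_1\gtrsim h$ while the $\chi^2$ between the mixture and $P_0^n$ stays bounded. This is essentially Lemma~\ref{lem:grid} in a bounded-density version.

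\textbf{Main obstacle: the lower bound at $d=4$.} Here I take a multiscale mixture. For every level $j\le J\asymp c\log n$, place independent Rademacher bumps at scale $2^{-j}$ with amplitude $a_j$, so that the density is $1+\sum_j\sum_i\sigma_{ji}a_j\psi_{ji}$. The $\chi^2$-divergence of the mixture is bounded by $\exp\bigl(n^2\sum_j a_j^4 2^{-4j}\cdot 2^{4j}\cdot 2^{-4j}\bigr)-1$, up to normalization. With $L^2$-normalized wavelets of amplitude $a_j\asymp (n^2 J)^{-1/4}$, this stays $O(1)$ provided $\sum_j n^2 a_j^4\lesssim1$.

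The crux is a lower bound on $W_1$ of the perturbed measure that adds up across levels rather than taking the maximum. I would use Kantorovich duality with the test function $f=\sum_j\sum_i\sigma_{ji}2^{-j}\tilde\psi_{ji}$, where the $\tilde\psi_{ji}$ are localized bumps with disjoint, or at least orthogonal-gradient, supports at each level. The aim is to show $\|\nabla f\|_\infty\lesssim 1$ for typical sign patterns, or, since random signs make $\|\nabla f\|_\infty$ grow like $\sqrt{J}$, to replace it by a sharp lower bound $W_1\gtrsim\|\mu\|_{\dot H^{-1}}^2/\|\mu\|_{\dot H^{-1}\text{-dual}}$-type quantity. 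This is the ``sharp analytical lower bound on $W_1$ in terms of negative-order Sobolev quantities'' alluded to in the introduction. I expect it to give $W_1\gtrsim \sum_j 2^{-j}a_j\cdot(\text{normalization})/\sqrt{J}\asymp\sqrt{J}\,(n^2J)^{-1/4}\asymp n^{-1/2}J^{1/4}$, which yields $\log^{1/4}n$. I expect controlling the cross-level interaction in this duality step to be the hardest part; I would first try a greedy sign-matched construction of $f$ with per-level truncation, so that $\|\nabla f\|_\infty\lesssim\sqrt{J}$ with high probability.
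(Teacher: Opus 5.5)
Your $d\neq4$ upper bounds (via Corollary~\ref{cor:special_case} with $\epsilon\to\infty$) and your $d<4$ lower bound are fine. Both halves of the $d=4$ case, which is the real content of the theorem, have genuine gaps.

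\emph{Upper bound.} The inequality $W_1\lesssim\lVert P-P_0\rVert_{\dot H^{-1}}$ is true; it needs no density bound, and Peyre's inequality concerns $W_2$. But your dyadic statistic does not estimate $\lVert P-P_0\rVert_{\dot H^{-1}}^2$. For $P_0$ uniform, its population value is $T_{\mathrm{pop}}\asymp\sum_{j\le J}4^{j}\lVert P^j-P_0^j\rVert_2^2\asymp\sum_{j}4^{-j}\lVert D_j(p-p_0)\rVert_{L^2}^2$, with $D_j$ the Haar detail projection. The link to $W_1$ you actually have is Proposition~\ref{prop:Wp_decomp} plus $\lVert\cdot\rVert_1\le4^j\lVert\cdot\rVert_2$. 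It reads $W_1\lesssim2^{-J}+\sum_j2^j\lVert P^j-P_0^j\rVert_2\le2^{-J}+\sqrt{J\,T_{\mathrm{pop}}}$, so Cauchy--Schwarz across levels costs a factor $J$ in $W_1^2$. Against null fluctuations of size $\sqrt J/n$ you therefore only get $n^{-1/2}\log^{3/4}n$.

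This loss is intrinsic to Haar cells, which do not characterize $\dot H^{1}$: a Lipschitz function has Haar details of size $2^{-j}$ at every level. Take $g(x)=\sin(2\pi x_1)/(2\pi)$ and $p=1+\eta\sum_{j<J}4^jD_jg$, a density bounded by $2$ once $\eta2^J$ is small. Then $W_1\ge\langle g,p-1\rangle\asymp J\eta$ while $T_{\mathrm{pop}}\asymp J\eta^2$. With $2^J\asymp\sqrt n$ and $W_1\asymp n^{-1/2}\log^{1/4}n$, the signal is $\asymp(n\sqrt{\log n})^{-1}$, far below the noise $\sqrt{\log n}/n$, so your test is powerless there.

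Your instinct to aggregate variances across scales rather than union-bound is right, but the weights must sit on a smooth basis. The paper estimates the Gaussian-smoothed $\dot W^{-1,2}$ norm by a Fourier $U$-statistic with weights $w_m=e^{-2\lVert m\rVert_2^2t}/\lVert m\rVert_2^2$. It links this to $W_1$ through $\widetilde W_1(P,Q)\le\lVert P_t-Q_t\rVert_{\dot W^{-1,2}}+2\sqrt{2dt}$ and uses $\sum_mw_m^2\lesssim\log(e/t)$ in $d=4$. The density bound enters only in the variance, via $\mathrm{Var}_P(f)\le B\,\mathrm{Var}_{\mathrm{Unif}}(f)$.

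\emph{Lower bound.} The decisive estimate $W_1(P_v,P_0)\gtrsim n^{-1/2}J^{1/4}$ for your multiscale mixture is only an expectation, and the duality route you sketch does not deliver it. The sign-matched $f=\sum_j\sum_i\sigma_{ji}2^{-j}\tilde\psi_{ji}$ has correlation $\asymp J^{3/4}n^{-1/2}$ with $p_v-p_0$. However, for a fixed sign pattern, $\lVert\nabla f\rVert_\infty$ is with high probability of order $J$, not $\sqrt J$. The level contributions add like a random walk at a typical point, but the supremum runs over $\approx2^{4J}$ finest cells and finds chains along which the signs align at most levels. Plain duality thus gives only $J^{-1/4}n^{-1/2}$, and level-by-level truncation of $f$ does not obviously keep it Lipschitz while retaining most of the correlation.

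The missing idea is to truncate in measure rather than pointwise. The paper takes the potential $h$ with $-\Delta h=p_v-p_0$ and replaces it by an $\alpha$-Lipschitz $u_\alpha$ agreeing with $h$ outside a set of measure $\lesssim\alpha^{-2}\lVert\nabla h\rVert_{L^2}^2$ (Lemma~\ref{lem:Lusin_approx}). It bounds the error on that set by H\"older with $\lVert\nabla h\rVert_{L^4}$ and optimizes $\alpha$, giving Lemma~\ref{lem:W1_lb}: $\widetilde W_1\gtrsim\lVert p_v-p_0\rVert_{\dot W^{-1,2}}^3/\lVert p_v-p_0\rVert_{\dot W^{-1,4}}^2$. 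The $\dot W^{-1,4}$ norm is then bounded for \emph{every} sign pattern by the smooth-wavelet square-function characterization (Proposition~\ref{prop:hom_L_ps}), which is blind to the signs. This yields $\lVert p_v-p_0\rVert_{\dot W^{-1,2}}\gtrsim J^{1/4}n^{-1/2}\gtrsim\lVert p_v-p_0\rVert_{\dot W^{-1,4}}$. So the denominator in your heuristic must be this $L^4$-type negative Sobolev norm, not a dual norm of $\dot H^{-1}$.

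Separately, your $d>4$ construction is at the wrong scale. With order-one amplitude and bump width $s$, the $\chi^2$ exponent is $\asymp n^2s^d$, which is $\asymp n$ for your $s\asymp n^{-1/d}$. Moreover, the claimed $W_1\gtrsim n^{-1/d}$ would contradict the $n^{-2/d}$ upper bound. You need $s\asymp n^{-2/d}$, i.e.\ about $n^2$ bumps.
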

Since $\mathcal{P}_B([0,1]^4)\subset\mathcal{P}([0,1]^4)$ and private testing is harder than non-private testing, Theorem~\ref{thm:unavoidable_log} yields the following corollary.
\begin{corollary}\label{cor:unavoidable_log}
For $n\geq e$, the minimax separation radius of $\epsilon$-DP Wasserstein-1 goodness-of-fit testing on $[0,1]^4$ satisfies
    \begin{align*}
        \rho_{n,\epsilon}^{\mathrm{gof}}([0,1]^4,1,\alpha,\beta)\gtrsim n^{-1/2}\log^{1/4}n,
    \end{align*}
up to a constant factor depending only on $\alpha$ and $\beta$.
\end{corollary}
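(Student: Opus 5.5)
The corollary is a monotonicity reduction to the lower bound of Theorem~\ref{thm:unavoidable_log} with $d=4$. Fix any $B>1$, say $B=2$, and take the null $P_0=\mathrm{Unif}([0,1]^4)$, which lies in $\mathcal{P}_B([0,1]^4)$ (or whichever null in $\mathcal{P}_B$ nearly attains the supremum in the bounded-density minimax radius). Let $\phi_{P_0}$ be any $\epsilon$-DP test whose separation radius over the class $\mathcal{P}_1^{\mathrm{gof}}(P_0,\rho,1)$ is at most $\rho$. I would verify the following three points in order.

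\textbf{First}, forgetting privacy, $\phi_{P_0}$ is simply a test, and the class of all tests contains all $\epsilon$-DP tests. Hence an infimum over $\epsilon$-DP tests is at least the infimum over all tests.

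\textbf{Second}, with $p=1$, $W_1^p=W_1$. Therefore the bounded-density alternative class $\mathcal{P}_1^{\mathrm{gof}}(P_0,[0,1]^4,B,\rho)$ is contained in $\mathcal{P}_1^{\mathrm{gof}}(P_0,\rho,1)$. The supremum of the Type II error over the smaller class is no larger, so
\[
\rho_n^{\mathrm{gof}}(\phi_{P_0},[0,1]^4,B,\alpha,\beta)\leq \rho_n^{\mathrm{gof}}(\phi_{P_0},[0,1]^4,1,\alpha,\beta).
\]
The Type I constraint is identical in the two definitions.

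\textbf{Third}, the null class for the unrestricted problem is $\mathcal{P}([0,1]^4)\supset\mathcal{P}_B([0,1]^4)$. So the outer supremum over nulls can only increase. Chaining the three comparisons gives
\[
\rho_{n,\epsilon}^{\mathrm{gof}}([0,1]^4,1,\alpha,\beta)\geq \rho_n^{\mathrm{gof}}([0,1]^4,B,\alpha,\beta)\gtrsim n^{-1/2}\log^{1/4}n
\]
for $n\geq e$, by the $d=4$ case of Theorem~\ref{thm:unavoidable_log}.

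The constant depends on $B$ as well as on $\alpha$ and $\beta$. Fixing $B=2$ once and for all makes it depend only on $\alpha$ and $\beta$, as stated.

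The only delicate point is the bookkeeping of the sup–inf order. Because the supremum over nulls sits outside the infimum over tests in both definitions, the comparison must be made null by null: for each $P_0\in\mathcal{P}_B$, compare the two infima over tests, then take suprema. This order causes no difficulty, so there is no real obstacle beyond invoking Theorem~\ref{thm:unavoidable_log}.
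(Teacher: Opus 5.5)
Your proposal is correct and is the same argument the paper gives in one line. The inclusions $\mathcal{P}_B([0,1]^4)\subset\mathcal{P}([0,1]^4)$ for both the null and the alternative classes (with $p=1$, so $W_1^p=W_1$), together with the fact that $\epsilon$-DP tests form a subclass of all tests, reduce the claim to the $d=4$ case of Theorem~\ref{thm:unavoidable_log} with $B$ fixed; your null-by-null handling of the sup--inf order is the right bookkeeping, and the one implicit point, which the paper shares, is that the non-private infimum must range over randomized tests (nonconstant pure-DP tests are necessarily randomized), which the total-variation argument behind Theorem~\ref{thm:unavoidable_log} does cover.
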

In comparison, \citet{deng2017wasserstein} study the corresponding non-private problem and obtain a lower bound without a logarithmic factor. Their lower-bound argument reduces the Wasserstein testing problem at a single resolution to an $\ell_1$ identity testing problem, and therefore cannot capture the accumulation of discrepancies across resolution levels. Our lower bound is genuinely multiscale: a sharp analytical lower bound on the Wasserstein-1 distance in terms of negative-order Sobolev quantities allows perturbations at multiple resolutions to contribute simultaneously. At the critical dimension, these contributions accumulate to produce the logarithmic factor. Combining Corollary~\ref{cor:special_case} with Corollary~\ref{cor:unavoidable_log} therefore gives a sharper picture of the logarithmic factors in our bounds: they can be removed when $d\neq 4p$, whereas at the critical dimension they can be unavoidable, as demonstrated by the case $d=4p=4$. To the best of our knowledge, this is the first instance in Wasserstein goodness-of-fit testing where such a logarithmic factor is shown to be necessary by a minimax lower bound. Similar logarithmic gaps between upper and lower minimax bounds arise elsewhere in Wasserstein estimation \citep[e.g.,][]{singh2018minimax,niles2022minimax}. In these problems as well, existing lower bounds are based on perturbations at a single resolution and do not capture possible accumulation across scales. Our result suggests that sharp multiscale lower-bound arguments using negative-order Sobolev norms to aggregate contributions across resolutions may help determine whether these logarithmic gaps are intrinsic.

\section{$W_p$-testing on unbounded domains} \label{sec:unbounded}
As shown at the beginning of Section~\ref{sec:main}, Wasserstein testing is infeasible on unbounded domains. However, when the distributions are known to have sufficiently light tails, such as exponentially decaying tails, it is possible to design tests at the cost of logarithmic factors. We now consider $\mathcal{X}=\mathbb{R}^d$ equipped with the $\ell_1$ metric. For $c_1\geq1$ and $c_2>0$, define a class of light-tailed distributions over $\mathcal{X}$ as
\begin{align*}
    \mathcal{P}(\mathcal{X},c_1,c_2)\coloneqq \{P\in\mathcal{P}(\mathcal{X}):P(B(r)^c)\leq c_1e^{-c_2r}\textrm{ for all }r\geq0\},
\end{align*}
where $B(r)$ denotes the closed $\ell_1$ ball of radius $r$ centered at the origin.

We next define the minimax separation radii of $\epsilon$-DP goodness-of-fit and two-sample testing under this exponential tail condition. Denote
\begin{align*}
    \mathcal{P}_0^{\mathrm{gof}}(c_1,c_2)&\coloneqq \mathcal{P}(\mathcal{X},c_1,c_2)\\[6pt]
    \mathcal{P}_1^{\mathrm{gof}}(P_0,\rho,p,c_1,c_2)&\coloneqq \{P\in\mathcal{P}(\mathcal{X},c_1,c_2):W_p^p(P,P_0)\geq\rho\}\\[6pt]
    \mathcal{P}_0^{\mathrm{ts}}(c_1,c_2)&\coloneqq \{(P,Q):P=Q\in\mathcal{P}(\mathcal{X},c_1,c_2)\}\\[6pt]
    \mathcal{P}_1^{\mathrm{ts}}(\rho,p,c_1,c_2)&\coloneqq \{(P,Q):W_p^p(P,Q)\geq\rho,P,Q\in\mathcal{P}(\mathcal{X},c_1,c_2)\}.
\end{align*}
\begin{definition}
For given Type I and Type II error levels $\alpha,\beta\in(0,1/3]$,
$p\geq 1$, sample size $n$, privacy parameter $\epsilon>0$,
and tail decay parameters $c_1\geq 1$ and $c_2>0$, the minimax separation radius of $\epsilon$-DP goodness-of-fit testing for exponentially decaying distributions is defined as
\begin{align*}
\rho_{n,\epsilon}^{\mathrm{gof}}
(\mathcal{X},p,c_1,c_2,\alpha,\beta)
\coloneqq
\inf_{\phi_{P_0}:\,\epsilon\text{-DP}}
\inf\Bigg\{\rho>0 &:\
\sup_{P_0\in\mathcal{P}_0^{\mathrm{gof}}(c_1,c_2)}
\mathbb{E}_{P_0}[\phi_{P_0}]\leq\alpha,\\
&\
\sup_{P_0\in\mathcal{P}_0^{\mathrm{gof}}(c_1,c_2)}
\sup_{P\in\mathcal{P}_1^{\mathrm{gof}}(P_0,\rho,p,c_1,c_2)}
\mathbb{E}_{P}[1-\phi_{P_0}]\leq\beta
\Bigg\}.
\end{align*}
Similarly, the minimax separation radius of $\epsilon$-DP two-sample testing for exponentially decaying distributions is defined as
\begin{align*}
\rho_{n,\epsilon}^{\mathrm{ts}}
(\mathcal{X},p,c_1,c_2,\alpha,\beta)
\coloneqq
\inf_{\phi:\,\epsilon\text{-DP}}
\inf\Bigg\{\rho>0 &:\
\sup_{(P,Q)\in\mathcal{P}_0^{\mathrm{ts}}(c_1,c_2)}
\mathbb{E}_{P,Q}[\phi]\leq\alpha,\\
&\
\sup_{(P,Q)\in\mathcal{P}_1^{\mathrm{ts}}(\rho,p,c_1,c_2)}
\mathbb{E}_{P,Q}[1-\phi]\leq\beta
\Bigg\}.
\end{align*}
\end{definition}
The minimax separation radii can be bounded using Theorems~\ref{thm:goodness_of_fit_minimax_upper}-\ref{thm:two_sample_minimax_lower} by reducing testing on the unbounded domain to testing on a bounded domain. For $L>0$, define the clipping function by
\begin{align*}
    \mathrm{clip}(x,L)\coloneqq 
    \begin{cases}
        x, & \lVert x\rVert_1\leq L,\\[6pt]
        Lx/\lVert x\rVert_1, & \lVert x\rVert_1>L.
    \end{cases}
\end{align*}
For a distribution $P$ satisfying $P(B(L))>0$, let $P_L\coloneqq P(\,\cdot\mid B(L))$ denote its truncation to $B(L)$. Applying the $\epsilon$-DP testers developed above to the clipped observations yields $\epsilon$-DP goodness-of-fit and two-sample tests for exponentially decaying distributions. To control their errors, we use the following lemma, which shows that truncation does not substantially reduce the Wasserstein distance when $L$ is sufficiently large.
\begin{lemma}\label{lem:trunc_Wp}
Let $P,Q\in\mathcal{P}(\mathcal{X},c_1,c_2)$ and $L>0$, and denote by $P_L$ and $Q_L$ their truncated distributions on $B(L)$. If $P(B(L)),Q(B(L))>0$ and $3^pe^{-c_2L}<1$, then
\begin{align*}
  W_p^p(P,Q)\leq P(B(L))Q(B(L))W_p^p(P_L,Q_L)+\frac{4c_1(6L)^pe^{-c_2L}}{1-3^pe^{-c_2L}}.
\end{align*}
\end{lemma}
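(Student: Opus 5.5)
We will prove the lemma by building one explicit coupling of $P$ and $Q$ and bounding its cost. Write $a\coloneqq P(B(L))$ and $b\coloneqq Q(B(L))$, and decompose
\begin{align*}
P=aP_L+(1-a)P',\qquad Q=bQ_L+(1-b)Q',
\end{align*}
where $P'$ and $Q'$ are the conditional laws on $B(L)^c$. Let $\pi_L$ be an optimal coupling of $(P_L,Q_L)$. The coupling we use is
\begin{align*}
\pi\coloneqq ab\,\pi_L+\pi_{\mathrm{rest}}.
\end{align*}
The remainder $\pi_{\mathrm{rest}}$ must have marginals $P-abP_L$ and $Q-abQ_L$. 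Each of these is a nonnegative measure, because $a(1-b)P_L\geq0$, and each has mass $1-ab$. We take $\pi_{\mathrm{rest}}$ to be the normalized product coupling of these two measures. The main term then contributes exactly $ab\,W_p^p(P_L,Q_L)$, which is the first term in the statement.

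It remains to bound the cost of $\pi_{\mathrm{rest}}$. We use $\lVert x-y\rVert_1^p\leq 2^{p-1}(\lVert x\rVert_1^p+\lVert y\rVert_1^p)$. Since $\pi_{\mathrm{rest}}$ is a product coupling, its cost is at most
\begin{align*}
2^{p-1}\int\lVert x\rVert_1^p\,d(P-abP_L)+2^{p-1}\int\lVert y\rVert_1^p\,d(Q-abQ_L).
\end{align*}
Each integral splits into two pieces. The first is the part inside $B(L)$. This part has mass $a(1-b)\leq 1-b\leq c_1e^{-c_2L}$ and integrand at most $L^p$. The second is the tail part $\int_{B(L)^c}\lVert x\rVert_1^p\,dP$.

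We control the tail integral by a geometric shell decomposition. Take the shells $\{3^kL<\lVert x\rVert_1\leq 3^{k+1}L\}$ for $k\geq0$. The problem is that the tail bound at radius $3^kL$, namely $c_1e^{-c_2 3^kL}$, does not directly reproduce the geometric factor $3^pe^{-c_2L}$ that appears in the statement. The denominator $1-3^pe^{-c_2L}$ suggests using shells of additive width $L$ instead: $\{kL<\lVert x\rVert_1\leq (k+1)L\}$ for $k\geq1$. On the $k$th shell the integrand is at most $((k+1)L)^p$ and the mass is at most $c_1e^{-c_2kL}$. We then use $(k+1)^p\leq 2^p\cdot 3^{(k-1)p}$, which holds for $k\geq1$ because $(k+1)/2\leq 3^{k-1}$. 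We also use $e^{-c_2kL}\leq e^{-c_2L}(e^{-c_2L})^{k-1}$. Summing the resulting series gives
\begin{align*}
\int_{B(L)^c}\lVert x\rVert_1^p\,dP\leq \frac{c_1(2L)^pe^{-c_2L}}{1-3^pe^{-c_2L}},
\end{align*}
and the series converges exactly under the hypothesis $3^pe^{-c_2L}<1$.

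Finally we collect constants. The inside-ball pieces contribute at most $2\cdot 2^{p-1}L^pc_1e^{-c_2L}$. The tail pieces contribute at most $2\cdot 2^{p-1}(2L)^pc_1e^{-c_2L}/(1-3^pe^{-c_2L})$. Both are bounded by $4c_1(6L)^pe^{-c_2L}/(1-3^pe^{-c_2L})$, using $c_1\geq1$ and $1-3^pe^{-c_2L}\leq1$; the factor $6^p$ leaves room to absorb the prefactors. The main obstacle is choosing the shell decomposition so that the geometric series has ratio at most $3^pe^{-c_2L}$. Everything else is bookkeeping of constants.
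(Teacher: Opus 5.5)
Your proof is correct and follows essentially the same route as the paper. Both glue $ab\,\pi_L$ with a coupling of the leftover marginals $P-abP_L=(1-b)P|_{B(L)}+P|_{B(L)^c}$ and $Q-abQ_L$, and both control the leftover cost with shells of additive width $L$ summed as a geometric series with ratio $3^pe^{-c_2L}$; the paper couples the leftovers by $P\otimes Q$ restricted to the complement of $B(L)\times B(L)$, while you use their normalized product, and the two have the same marginals.

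The only real difference is how the leftover cost is bounded. You bound it through the $p$th moments of its marginals using $\lVert x-y\rVert_1^p\le 2^{p-1}(\lVert x\rVert_1^p+\lVert y\rVert_1^p)$, whereas the paper works region by region on $B(L)\times B(L)^c$ and $B(L)^c\times B(L)^c$. Your version is a bit simpler and gives the sharper total $2c_1(4L)^pe^{-c_2L}/(1-3^pe^{-c_2L})$. Note that it is the sum of your inside-ball and tail contributions, not each one separately, that must be compared with $4c_1(6L)^pe^{-c_2L}/(1-3^pe^{-c_2L})$, and that $c_1\geq1$ is never used.
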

The proof of Lemma~\ref{lem:trunc_Wp} is deferred to Appendix~\ref{appendix:trunc_Wp}. Let $\mathcal{E}_L$ denote the event that all observations lie in $B(L)$. Conditional on $\mathcal{E}_L$, the observations are independent samples from the corresponding truncated distributions $P_L$ and $Q_L$, and clipping leaves them unchanged. Moreover, the exponential tail condition and a union bound give
\begin{align*}
    \mathbb{P}(\mathcal{E}_L^c)\leq 2nc_1e^{-c_2L}
\end{align*}
for the two-sample problem, with the analogous bound $nc_1e^{-c_2L}$ for goodness-of-fit testing. Thus, choosing $L=O(\log n)$, the additional testing error due to $\mathcal{E}_L^c$ becomes negligible. Lemma~\ref{lem:trunc_Wp} then ensures that the Wasserstein separation is preserved up to a negligible truncation error. Applying Theorems~\ref{thm:goodness_of_fit_minimax_upper} and~\ref{thm:two_sample_minimax_upper} on $B(L)$ yields the upper bounds of $\rho_{n,\epsilon}^{\mathrm{gof}}(\mathbb{R}^d,p,c_1,c_2,\alpha,\beta)$ and $\rho_{n,\epsilon}^{\mathrm{ts}}(\mathbb{R}^d,p,c_1,c_2,\alpha,\beta)$; the factor $(\log n)^p$ arises from the diameter of $B(L)$. The lower bounds follow from the same reduction of Wasserstein testing to $\ell_1$-distance testing used in the bounded case. The proof details are deferred to Appendix~\ref{appendix:minimax_unbounded}.

\begin{theorem}\label{thm:minimax_unbounded}
For $n\geq e$, the minimax separation radius of $\epsilon$-DP goodness-of-fit testing for exponentially decaying distributions $\rho_{n,\epsilon}^{\mathrm{gof}}(\mathbb{R}^d,p,c_1,c_2,\alpha,\beta)$ satisfies
\begin{align*}
\rho_{n,\epsilon}^{\mathrm{gof}}(\mathbb{R}^d,p,c_1,c_2,\alpha,\beta)&\lesssim (\log n)^p \rho_{n,\epsilon}^{\mathrm{gof}}([0,1]^d,p,\alpha/2,\beta/2),\\[6pt]
\rho_{n,\epsilon}^{\mathrm{gof}}(\mathbb{R}^d,p,c_1,c_2,\alpha,\beta)&\gtrsim \left(n^{-\left(\frac{1}{2}\wedge\frac{2p}{d}\right)}\vee (n\sqrt{\epsilon})^{-\left(1\wedge \frac{2p}{d}\right)}\vee(n^{3/4}\sqrt{\epsilon})^{-\left(1\wedge \frac{4p}{d}\right)}\vee(n\epsilon)^{-1}\right)\wedge1,
\end{align*}
    up to constant factors depending only on $d,p,c_1,c_2,\alpha$, and $\beta$. The minimax separation radius of $\epsilon$-DP two-sample testing for exponentially decaying distributions $\rho_{n,\epsilon}^{\mathrm{ts}}(\mathbb{R}^d,p,c_1,c_2,\alpha,\beta)$ satisfies
    \begin{align*}
        \rho_{n,\epsilon}^{\mathrm{ts}}(\mathbb{R}^d,p,c_1,c_2,\alpha,\beta)&\lesssim (\log n)^p \rho_{n,\epsilon}^{\mathrm{ts}}([0,1]^d,p,\alpha/2,\beta/2),\\[6pt]
        \rho_{n,\epsilon}^{\mathrm{ts}}(\mathbb{R}^d,p,c_1,c_2,\alpha,\beta)&\gtrsim  \left(n^{-\left(\frac{1}{2}\wedge\frac{3p}{2d}\right)}\vee (n\sqrt{\epsilon})^{-\left(1\wedge \frac{2p}{d}\right)}\vee(n^{3/4}\sqrt{\epsilon})^{-\left(1\wedge \frac{4p}{d}\right)}\vee(n\epsilon)^{-1}\right)\wedge1,
    \end{align*}
    up to constant factors depending only on $d,p,c_1,c_2,\alpha$, and $\beta$.
\end{theorem}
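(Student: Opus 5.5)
The plan is to handle the upper and lower bounds separately, reducing both to the bounded case.

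\textbf{Upper bounds.} Fix $L\coloneqq C\log n$, with $C$ large depending on $c_1,c_2,p$, so that three conditions hold: $3^pe^{-c_2L}\le 1/2$, $nc_1e^{-c_2L}\le \alpha/2\wedge\beta/2$, and the truncation error $8c_1(6L)^pe^{-c_2L}$ is at most $n^{-1}$. Given $P_0$ and data, we clip each observation to $B(L)$ and run the $\epsilon$-DP tester from Theorem~\ref{thm:goodness_of_fit_minimax_upper} on $\mathcal{X}_L\coloneqq B(L)$, using null $(P_0)_L$ and error levels $\alpha/2,\beta/2$. Clipping is a deterministic per-record map, so it preserves neighboring datasets and privacy is retained. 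The error analysis proceeds as follows.
\begin{itemize}
\item \emph{Type I error.} Under $H_0$, conditional on $\mathcal{E}_L$ the clipped sample is i.i.d.\ from $(P_0)_L$. The error is therefore at most $\alpha/2+\mathbb{P}(\mathcal{E}_L^c)\le\alpha$.
\item \emph{Type II error.} Under $H_1$ with $W_p^p(P,P_0)\ge\rho$, Lemma~\ref{lem:trunc_Wp} together with $P(B(L)),P_0(B(L))\le1$ gives $W_p^p(P_L,(P_0)_L)\ge \rho-n^{-1}$. Conditional on $\mathcal{E}_L$, the sample is i.i.d.\ from $P_L$, so the bounded-domain tester has power whenever $\rho-n^{-1}\ge C\,\mathrm{diam}^p(B(L))R^{\mathrm{gof}}_{\mathrm{upper}}(B(L))$.
\end{itemize}

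It remains to identify $R_{\mathrm{upper}}$ for $B(L)$ with that for $[0,1]^d$. Covering numbers satisfy $\mathcal{N}(B(L),\mathrm{diam}(B(L))\delta)\asymp \delta^{-d}$, with constants depending only on $d$ and not on $L$, because $B(L)$ is a scaled copy of $B(1)$. Hence $N_j$ has the same order as on $[0,1]^d$, and the quantities $\Lambda$ and $M$, and hence $R_{\mathrm{upper}}$, agree up to $d,p$-constants. By Corollary~\ref{cor:special_case} this gives $\rho\lesssim (2L)^p\rho_{n,\epsilon}^{\mathrm{gof}}([0,1]^d,p,\alpha/2,\beta/2)+n^{-1}$. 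Since the $[0,1]^d$ rate is $\gtrsim n^{-1/2}$, the $n^{-1}$ term is absorbed, yielding the $(\log n)^p$ factor. The two-sample case is identical: use Theorem~\ref{thm:two_sample_minimax_upper}, the bound $\mathbb{P}(\mathcal{E}_L^c)\le 2nc_1e^{-c_2L}$, and the fact that under $H_0$ both truncated samples come from the same $P_L$.

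\textbf{Lower bounds.} Here the reduction goes the other way. Take the hard instances from the proofs of Theorems~\ref{thm:goodness_of_fit_minimax_lower} and~\ref{thm:two_sample_minimax_lower}, built on $[0,1]^d\subset B(d)\subset\mathbb{R}^d$. All measures involved are supported in $B(d)$. With $c_1\ge1$, the tail condition $P(B(r)^c)\le c_1e^{-c_2r}$ is automatic for $r\ge d$, but it can fail for $r<d$ when $c_1e^{-c_2 d}<1$. The fix is to shrink the cube to $[0,s]^d$ with $s\coloneqq (\log c_1)/(c_2 d)\wedge1$, or to any $s>0$ small enough that $c_1e^{-c_2 ds}\ge1$. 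For $c_1=1$ this is problematic, so instead I would place the construction at a scale $s$ where $e^{-c_2 r}$ is close to $1$ for $r\le ds$ and use a null with a point mass at the origin. The key observation is that any measure with $P(B(r)^c)\le 1-\theta$ for $r\le ds$ satisfies the condition once $e^{-c_2ds}\ge 1-\theta$. The lower-bound instances can be arranged to keep a constant mass $1/2$ at the origin, by mixing with $\delta_0$. That mixing changes the $\ell_1$ and $W_p^p$ separations only by a factor of $2$, and it changes the sample complexity only by constants.

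Scaling by $s$ multiplies $W_p^p$ by $s^p$, which is a constant in $c_1,c_2,d$. A test on $\mathbb{R}^d$ over the light-tailed class therefore restricts to a test on this compact set, and the bound from Corollary~\ref{cor:special_case}, or more precisely the lower bounds of Theorems~\ref{thm:goodness_of_fit_minimax_lower} and~\ref{thm:two_sample_minimax_lower} evaluated on the scaled cube, transfers. I expect the main obstacle to be exactly this verification that the hard instances lie in $\mathcal{P}(\mathcal{X},c_1,c_2)$ when $c_1$ is near $1$. The mixing-with-$\delta_0$ device is the first thing I would try. On the upper side the only delicate point is ensuring that the constant comparison of covering numbers for $B(L)$ is uniform in $L$, which holds by scaling.
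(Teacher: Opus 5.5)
\textbf{Gap in the upper bound at $d=4p$.} You show that the multiscale tester of Theorem~\ref{thm:goodness_of_fit_minimax_upper}, run on the clipped data in $B(L)$, succeeds at separation $\lesssim L^pR^{\mathrm{gof}}_{\mathrm{upper}}(B(L))+n^{-1}$. You then invoke Corollary~\ref{cor:special_case} to replace this by $L^p\rho^{\mathrm{gof}}_{n,\epsilon}([0,1]^d,p,\alpha/2,\beta/2)$. That replacement is valid only where the Corollary's upper and lower bounds agree up to constants. This holds for two-sample testing in every dimension and for goodness-of-fit testing with $d\neq 4p$.

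At $d=4p$ the goodness-of-fit upper rate carries an extra factor $\log n\log\log n$ on the $n^{-1/2}$ term. Whether the true radius is that large is open (Section~\ref{sec:discussion}); for $(d,p)=(4,1)$ it is only known to be at least $n^{-1/2}\log^{1/4}n$. So in that case your argument gives $(\log n)^p$ times the Corollary's upper rate. That does not imply the stated bound in terms of the minimax radius $\rho^{\mathrm{gof}}_{n,\epsilon}([0,1]^d,p,\alpha/2,\beta/2)$.

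\textbf{Fix.} The paper's route avoids the specific algorithm and $R_{\mathrm{upper}}$ entirely. By the definition of the minimax radius, for every null $(P_0)_L$ on $B(L)$ there is an $\epsilon$-DP tester at levels $\alpha/2,\beta/2$ with separation $2\rho^{\mathrm{gof}}_{n,\epsilon}(B(L),p,\alpha/2,\beta/2)$. Compose it with clipping exactly as you do. Then
\begin{align*}
\rho^{\mathrm{gof}}_{n,\epsilon}(B(L),p,\alpha/2,\beta/2)\le\rho^{\mathrm{gof}}_{n,\epsilon}([-L,L]^d,p,\alpha/2,\beta/2)=(2L)^p\rho^{\mathrm{gof}}_{n,\epsilon}([0,1]^d,p,\alpha/2,\beta/2).
\end{align*}
The inequality holds because nulls and alternatives supported on $B(L)$ are among those on the cube, with the same $W_p$ distances. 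The equality follows from affine rescaling. This works for every $d$ and makes your covering-number comparison for $B(L)$ unnecessary. Keep your choice of truncation error at most $n^{-1}$, absorbed via $\rho^{\mathrm{gof}}_{n,\epsilon}([0,1]^d,p,\alpha/2,\beta/2)\gtrsim n^{-1/2}$. It is slightly cleaner than the paper's choice of $L$ in terms of the radius on $B(L)$ itself.

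\textbf{Lower bound.} Your plan is essentially the paper's. You mix with $\tfrac12\delta_{\boldsymbol{0}_d}$ and keep the remaining mass within $\ell_1$-radius $r_0=c_2^{-1}\log(2c_1)$, so that $P(B(r)^c)\le\tfrac12\le c_1e^{-c_2r}$ for $r\le r_0$. Make two details explicit.
\begin{enumerate}
\item Transfer the $\ell_1$ lower bound through the randomized map $Y_i=x^{(j)}_{Z_i}$ if $A_i=1$ and $Y_i=\boldsymbol{0}_d$ otherwise, with independent $A_i\sim\mathrm{Bernoulli}(1/2)$. This uses exactly $n$ samples. It is $\epsilon$-DP because, conditional on the $A_i$, neighboring inputs map to neighboring or identical datasets.
\item Lemma~\ref{lem:grid_2} requires the support points to be $h$-separated from the origin. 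Your cube $[0,s]^d$ has the origin as a corner, so either shift it away or use the annulus $B(r_0)\setminus B(r_0/2)$ as the paper does.
\end{enumerate}
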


\section{Discussion} \label{sec:discussion}
Our work leaves several open questions. First, our results show that minimax-optimal differentially private Wasserstein tests can be constructed by combining our multiscale reduction with existing differentially private $\ell_1$ tests for goodness-of-fit and two-sample testing. Can a similar approach be developed for other distribution testing problems, such as independence testing or two-sample testing with unequal sample sizes, or under other privacy models such as local differential privacy? Second, our results provide a general characterization of the difficulty of private Wasserstein testing through the multiscale geometry of the underlying metric space. In particular, the upper and lower bounds are governed by quantities built from the covering numbers of the underlying space $\mathcal{X}$, such as $\Lambda$ and $M$. These quantities characterize the minimax separation radii up to logarithmic factors. An important question is whether the upper- and lower-bound quantities can be further unified to characterize the minimax separation radii up to constant factors. Third, we demonstrate that most of the logarithmic gaps in previous works are proof artifacts, while showing that a logarithmic factor in at least one critical case, $(d,p)=(4,1)$, is unavoidable. Are logarithmic factors in the other critical cases also unavoidable? Even for $(d,p)=(4,1)$, there remains a gap between the logarithmic factors in the lower and upper bounds: the lower bound scales as $n^{-1/2}\log^{1/4} n$, whereas the upper bound scales as $n^{-1/2}\log n\log\log n$. Can this gap be closed? Finally, under the exponential tail condition, our results for unbounded domains are minimax-optimal only up to logarithmic factors. Can these logarithmic factors be removed? Resolving these questions would provide a more complete understanding of the fundamental limits of Wasserstein testing and the inherent trade-offs under different privacy constraints.

\section*{Acknowledgments}

Cheolwoo Park’s work was supported in part by Basic Science Research Program of the National Research Foundation of Korea (NRF) funded by the Ministry of Education (RS-2022-NR068758, RS-2026-25517529, RS-2026-25616677).

\bibliography{reference}

@article{kim2026sharp,
  title={Sharp Minimax Rates for Smooth Two-Sample Testing under Central Differential Privacy},
  author={Kim, Ilmun},
  journal={arXiv preprint arXiv:2607.23974},
  year={2026}
}

@article{heinonen1998quasiconformal,
  author  = {Heinonen, Juha and Koskela, Pekka},
  title   = {Quasiconformal Maps in Metric Spaces with Controlled Geometry},
  journal = {Acta Mathematica},
  volume  = {181},
  number  = {1},
  pages   = {1--61},
  year    = {1998},
  doi     = {10.1007/BF02392747},
  url     = {https://doi.org/10.1007/BF02392747}
}

@article{Bobkov2021simple,
author = {Sergey G. Bobkov and Michel Ledoux},
title = {{A simple Fourier analytic proof of the AKT optimal matching theorem}},
volume = {31},
journal = {The Annals of Applied Probability},
number = {6},
publisher = {Institute of Mathematical Statistics},
pages = {2567 -- 2584},
year = {2021},
doi = {10.1214/20-AAP1656},
URL = {https://doi.org/10.1214/20-AAP1656}
}

@article{Iacobelli2024stability,
author = {Iacobelli, Mikaela and Junné, Jonathan},
title = {Stability estimates for the {Vlasov}--{Poisson} system in $p$-kinetic {Wasserstein} distances},
journal = {Bulletin of the London Mathematical Society},
volume = {56},
number = {7},
pages = {2250-2267},
doi = {https://doi.org/10.1112/blms.13053},
url = {https://londmathsoc.onlinelibrary.wiley.com/doi/abs/10.1112/blms.13053},
eprint = {https://londmathsoc.onlinelibrary.wiley.com/doi/pdf/10.1112/blms.13053},
year = {2024}
}

@article{ambrosio2019pde,
  author  = {Ambrosio, Luigi and Stra, Federico and Trevisan, Dario},
  title   = {A {PDE} approach to a 2-dimensional matching problem},
  journal = {Probability Theory and Related Fields},
  volume  = {173},
  pages   = {433--477},
  year    = {2019},
  doi     = {10.1007/s00440-018-0837-x},
  url     = {https://doi.org/10.1007/s00440-018-0837-x}
}

@article{Daubechies1988Orthonormal,
author = {Daubechies, Ingrid},
title = {Orthonormal bases of compactly supported wavelets},
journal = {Communications on Pure and Applied Mathematics},
volume = {41},
number = {7},
pages = {909-996},
doi = {https://doi.org/10.1002/cpa.3160410705},
url = {https://onlinelibrary.wiley.com/doi/abs/10.1002/cpa.3160410705},
eprint = {https://onlinelibrary.wiley.com/doi/pdf/10.1002/cpa.3160410705},
year = {1988}
}

@book{Triebel1983Theory,
 author = {Triebel, Hans},
 title = {Theory of function spaces},
 fseries = {Monographs in Mathematics},
 series = {Monogr. Math., Basel},
 issn = {1017-0480},
 volume = {78},
 year = {1983},
 publisher = {Birkh{\"a}user, Cham},
 language = {English},
 zbMATH = {3870001},
 Zbl = {0546.46027}
}

@book{evans2015measure,
  title     = {Measure Theory and Fine Properties of Functions},
  author    = {Evans, Lawrence C. and Gariepy, Ronald F.},
  year      = {2015},
  edition   = {Revised},
  publisher = {Chapman and Hall/CRC},
  doi       = {10.1201/b18333}
}

@article{niles2022minimax,
  title={Minimax estimation of smooth densities in {Wasserstein} distance},
  author={Niles-Weed, Jonathan and Berthet, Quentin},
  journal={The Annals of Statistics},
  volume={50},
  number={3},
  pages={1519--1540},
  year={2022},
  publisher={JSTOR}
}

@book{evans2010partial,
  title={Partial differential equations},
  author={Evans, Lawrence C},
  volume={},
  year={2010},
  publisher={American Mathematical Society}
}

@article{singh2018minimax,
  title={Minimax distribution estimation in {Wasserstein} distance},
  author={Singh, Shashank and P{\'o}czos, Barnab{\'a}s},
  journal={arXiv preprint arXiv:1802.08855},
  year={2018}
}

@book{meyer1992wavelets,
  title     = {Wavelets and Operators: Volume 1},
  author    = {Meyer, Yves},
  year      = {1992},
  series    = {Cambridge Studies in Advanced Mathematics},
  publisher = {Cambridge University Press},
  isbn      = {9780521458696}
}

@book{zhang2021statistical,
  title={Statistical inference in the differential privacy model},
  author={Zhang, Huanyu},
  year={2021},
  publisher={Cornell University}
}

@inproceedings{improved_WGANs,
 author = {Gulrajani, Ishaan and Ahmed, Faruk and Arjovsky, Martin and Dumoulin, Vincent and Courville, Aaron},
 booktitle = {Advances in Neural Information Processing Systems},
 pages = {},
 publisher = {Curran Associates, Inc.},
title = {Improved Training of {Wasserstein} {GANs}},
 url = {https://proceedings.neurips.cc/paper_files/paper/2017/file/892c3b1c6dccd52936e27cbd0ff683d6-Paper.pdf},
 volume = {30},
 year = {2017}
}

@InProceedings{narayanan22a,
  title = 	 {Private High-Dimensional Hypothesis Testing},
  author =       {Narayanan, Shyam},
  booktitle = 	 {Proceedings of Thirty Fifth Conference on Learning Theory},
  pages = 	 {3979--4027},
  year = 	 {2022},
  editor = 	 {Loh, Po-Ling and Raginsky, Maxim},
  volume = 	 {178},
  series = 	 {Proceedings of Machine Learning Research},
  month = 	 {02--05 Jul},
  publisher =    {PMLR},
  url = 	 {https://proceedings.mlr.press/v178/narayanan22a.html}
}

@article{ba2011sublinear,
  author  = {Ba, Khanh Do and Nguyen, Huy L. and Nguyen, Huy N. and Rubinfeld, Ronitt},
  title   = {Sublinear Time Algorithms for Earth Mover's Distance},
  journal = {Theory of Computing Systems},
  volume  = {48},
  pages   = {428--442},
  year    = {2011},
  doi     = {10.1007/s00224-010-9265-8},
  url     = {https://doi.org/10.1007/s00224-010-9265-8}
}

@article{10.1214/aos/1017938923,
author = {Juan A. Cuesta-Albertos and Carlos Matr{\'a}n and Jes{\'u}s M. Rodr{\'i}guez-Rodr{\'i}guez and Eustasio del Barrio},
title = {Tests of goodness of fit based on the {$L_2$}-{Wasserstein} distance},
volume = {27},
journal = {The Annals of Statistics},
number = {4},
publisher = {Institute of Mathematical Statistics},
pages = {1230 -- 1239},
year = {1999},
doi = {10.1214/aos/1017938923},
URL = {https://doi.org/10.1214/aos/1017938923}
}

@article{10.1093/biomet/asaf001,
    author = {Hu, Xiaoyu and Lin, Zhenhua},
title = {Two-sample distribution tests in high dimensions via max-sliced {Wasserstein} distance and bootstrapping},
    journal = {Biometrika},
    volume = {112},
    number = {2},
    pages = {asaf001},
    year = {2025},
    month = {04},
    issn = {1464-3510},
    doi = {10.1093/biomet/asaf001},
    url = {https://doi.org/10.1093/biomet/asaf001},
    eprint = {https://academic.oup.com/biomet/article-pdf/112/2/asaf001/61530914/asaf001.pdf},
}

@article{10.3150/bj/1110228245,
author = {del Barrio, Eustasio and Gin{\'e}, Evarist and Utzet, Frederic},
title = {{Asymptotics for $L_2$ functionals of the empirical quantile process, with applications to tests of fit based on weighted Wasserstein distances}},
volume = {11},
journal = {Bernoulli},
number = {1},
publisher = {Bernoulli Society for Mathematical Statistics and Probability},
pages = {131 -- 189},
year = {2005},
doi = {10.3150/bj/1110228245},
URL = {https://doi.org/10.3150/bj/1110228245}
}

@article{91101a87-ac1a-38eb-b389-231b14fc1446,
 ISSN = {13507265},
 URL = {http://www.jstor.org/stable/23238612},
 author={Alvarez-Esteban, Pedro C and Del Barrio, Eustasio and Cuesta-Albertos, Juan A and Matr{\'a}n, Carlos},
 journal = {Bernoulli},
 number = {2},
 pages = {606--634},
 publisher = {International Statistical Institute (ISI) and Bernoulli Society for Mathematical Statistics and Probability},
 title = {Similarity of samples and trimming},
 urldate = {2026-08-18},
 volume = {18},
 year = {2012}
}

@article{Alvarez-Esteban01062008,
author={Alvarez-Esteban, Pedro C{\'e}sar and Del Barrio, Eustasio and Cuesta-Albertos, Juan Antonio and Matr{\'a}n, Carlos},
title = {Trimmed Comparison of Distributions},
journal = {Journal of the American Statistical Association},
volume = {103},
number = {482},
pages = {697--704},
year = {2008},
publisher = {Taylor \& Francis},
doi = {10.1198/016214508000000274},


URL = { 
    
        https://doi.org/10.1198/016214508000000274
    
    

},
eprint = { 
    
        https://doi.org/10.1198/016214508000000274
    
    

}

}

@article{10.1111/1467-9868.00121,
    author = {Munk, Axel and Czado, Claudia},
    title = {Nonparametric Validation of Similar Distributions and Assessment of Goodness of Fit},
    journal = {Journal of the Royal Statistical Society Series B: Statistical Methodology},
    volume = {60},
    number = {1},
    pages = {223-241},
    year = {1998},
    month = {01},
    issn = {1369-7412},
    doi = {10.1111/1467-9868.00121},
    url = {https://doi.org/10.1111/1467-9868.00121},
    eprint = {https://academic.oup.com/jrsssb/article-pdf/60/1/223/49589051/jrsssb_60_1_223.pdf},
}

@article{FREITAG2007697,
title = {A nonparametric test for similarity of marginals—With applications to the assessment of population bioequivalence},
journal = {Journal of Statistical Planning and Inference},
volume = {137},
number = {3},
pages = {697-711},
year = {2007},
note = {Special Issue on Nonparametric Statistics and Related Topics: In honor of M.L. Puri},
issn = {0378-3758},
doi = {https://doi.org/10.1016/j.jspi.2006.06.003},
url = {https://www.sciencedirect.com/science/article/pii/S0378375806001352},
author = {Gudrun Freitag and Claudia Czado and Axel Munk}
}

@inproceedings{NEURIPS2024_04cc90ec,
 author = {Omer, Daniel and Sheffet, Or},
 booktitle = {Advances in Neural Information Processing Systems},
 doi = {10.52202/079017-0082},
 editor = {A. Globerson and L. Mackey and D. Belgrave and A. Fan and U. Paquet and J. Tomczak and C. Zhang},
 pages = {2507--2527},
 publisher = {Curran Associates, Inc.},
 title = {Differentially Private Equivalence Testing for Continuous Distributions and Applications},
 url = {https://proceedings.neurips.cc/paper_files/paper/2024/file/04cc90ec6868b97b7423dc38ced1e35c-Paper-Conference.pdf},
 volume = {37},
 year = {2024}
}

@article{delBarrio2000Contributions,
  author  = {del Barrio, Eustasio and Cuesta-Albertos, Juan A. and Matr{\'a}n, Carlos and Cs{\"o}rg{\"o}, S{\'a}ndor and Cuadras, Carles M. and de Wet, Tertius and Gin{\'e}, Evarist and Lockhart, Richard and Munk, Axel and Stute, Winfried},
  title   = {Contributions of Empirical and Quantile Processes to the Asymptotic Theory of Goodness-of-Fit Tests},
  journal = {Test},
  volume  = {9},
  pages   = {1--96},
  year    = {2000},
  doi     = {10.1007/BF02595852},
  url     = {https://doi.org/10.1007/BF02595852}
}

@Article{Ramdas2017,
AUTHOR = {Ramdas, Aaditya and Garc{\'i}a Trillos, Nicol{\'a}s and Cuturi, Marco},
TITLE = {On {Wasserstein} Two-Sample Testing and Related Families of Nonparametric Tests},
PAGES = {47},
JOURNAL = {Entropy},
VOLUME = {19},
YEAR = {2017},
NUMBER = {2},
ARTICLE-NUMBER = {47},
URL = {https://www.mdpi.com/1099-4300/19/2/47},
ISSN = {1099-4300},
DOI = {10.3390/e19020047}
}

@article{Wasserstein_CI,
    author = {Neykov, Matey and Wasserman, Larry and Kim, Ilmun and Balakrishnan, Sivaraman},
    title = {Nearly minimax optimal {Wasserstein} conditional independence testing},
    journal = {Information and Inference: A Journal of the IMA},
    volume = {13},
    number = {4},
    pages = {iaae033},
    year = {2024},
    month = {12},
    issn = {2049-8772},
    doi = {10.1093/imaiai/iaae033},
    url = {https://doi.org/10.1093/imaiai/iaae033},
    eprint = {https://academic.oup.com/imaiai/article-pdf/13/4/iaae033/60983992/iaae033.pdf},
}

@article{panaretos2019statistical,
  title={Statistical aspects of {Wasserstein} distances},
  author={Panaretos, Victor M and Zemel, Yoav},
  journal={Annual Review of Statistics and Its Application},
  volume={6},
  number={1},
  pages={405--431},
  year={2019},
  publisher={Annual Reviews}
}

@InProceedings{word_moving_distance,
  title = 	 {From Word Embeddings To Document Distances},
  author = 	 {Kusner, Matt and Sun, Yu and Kolkin, Nicholas and Weinberger, Kilian},
  booktitle = 	 {Proceedings of the 32nd International Conference on Machine Learning},
  pages = 	 {957--966},
  year = 	 {2015},
  editor = 	 {Bach, Francis and Blei, David},
  volume = 	 {37},
  series = 	 {Proceedings of Machine Learning Research},
  address = 	 {Lille, France},
  month = 	 {07--09 Jul},
  publisher =    {PMLR},
  url = 	 {https://proceedings.mlr.press/v37/kusnerb15.html}
}

@article{weed2019sharp,
  title={Sharp asymptotic and finite-sample rates of convergence of empirical measures in {Wasserstein} distance},
  author={Weed, Jonathan and Bach, Francis},
  journal={Bernoulli},
  volume={25},
  number={4A},
  pages={2620--2648},
  year={2019},
  publisher={JSTOR}
}

@inproceedings{dwork2006calibrating,
  title={Calibrating noise to sensitivity in private data analysis},
  author={Dwork, Cynthia and McSherry, Frank and Nissim, Kobbi and Smith, Adam},
  booktitle={Theory of Cryptography: Third Theory of Cryptography Conference, TCC 2006, New York, NY, USA, March 4-7, 2006. Proceedings 3},
  pages={265--284},
  year={2006},
  organization={Springer}
}

@inproceedings{raj2019differentially,
  title={A differentially private kernel two-sample test},
  author={Raj, Anant and Law, Ho Chung Leon and Sejdinovic, Dino and Park, Mijung},
  booktitle={Joint European Conference on Machine Learning and Knowledge Discovery in Databases},
  pages={697--724},
  year={2019},
  organization={Springer}
}

@inproceedings{NEURIPS2020_72b32a1f,
 author = {Canonne, Cl\'{e}ment L and Kamath, Gautam and McMillan, Audra and Ullman, Jonathan and Zakynthinou, Lydia},
 booktitle = {Advances in Neural Information Processing Systems},
 editor = {H. Larochelle and M. Ranzato and R. Hadsell and M.F. Balcan and H. Lin},
 pages = {10099--10111},
 publisher = {Curran Associates, Inc.},
 title = {Private Identity Testing for High-Dimensional Distributions},
 url = {https://proceedings.neurips.cc/paper_files/paper/2020/file/72b32a1f754ba1c09b3695e0cb6cde7f-Paper.pdf},
 volume = {33},
 year = {2020}
}

@article{uhler2013privacy,
  author  = {Uhler, Caroline and Slavkovi{\'c}, Aleksandra and Fienberg, Stephen E.},
  title   = {Privacy-Preserving Data Sharing for Genome-Wide Association Studies},
  journal = {Journal of Privacy and Confidentiality},
  volume  = {5},
  number  = {1},
  pages   = {137--166},
  year    = {2013},
  doi     = {10.29012/jpc.v5i1.629},
  url     = {https://doi.org/10.29012/jpc.v5i1.629}
}

@INPROCEEDINGS{5360513,
  author={Vu, Duy and Slavkovic, Aleksandra},
  booktitle={2009 IEEE International Conference on Data Mining Workshops}, 
  title={Differential Privacy for Clinical Trial Data: Preliminary Evaluations}, 
  year={2009},
  volume={},
  number={},
  pages={138-143},
  doi={10.1109/ICDMW.2009.52}}

@inproceedings{NEURIPS2018_7de32147,
 author = {Acharya, Jayadev and Sun, Ziteng and Zhang, Huanyu},
 booktitle = {Advances in Neural Information Processing Systems},
 editor = {S. Bengio and H. Wallach and H. Larochelle and K. Grauman and N. Cesa-Bianchi and R. Garnett},
 pages = {},
 publisher = {Curran Associates, Inc.},
 title = {Differentially Private Testing of Identity and Closeness of Discrete Distributions},
 volume = {31},
 year = {2018}
}

@article{Kim25032026,
author = {Ilmun Kim and Antonin Schrab},
title = {Differentially Private Permutation Tests},
journal = {Journal of the American Statistical Association},
volume = {0},
number = {0},
pages = {1--13},
year = {2026},
publisher = {Taylor \& Francis},
doi = {10.1080/01621459.2025.2610033},


URL = { 
    
        https://doi.org/10.1080/01621459.2025.2610033
    
    

},
eprint = { 
    
        https://doi.org/10.1080/01621459.2025.2610033
    
    

}

}

@inproceedings{10.1145/3313276.3316336,
author = {Canonne, Cl{\'e}ment L. and Kamath, Gautam and McMillan, Audra and Smith, Adam and Ullman, Jonathan},
title = {The structure of optimal private tests for simple hypotheses},
year = {2019},
isbn = {9781450367059},
publisher = {Association for Computing Machinery},
address = {New York, NY, USA},
url = {https://doi.org/10.1145/3313276.3316336},
doi = {10.1145/3313276.3316336},
booktitle = {Proceedings of the 51st Annual ACM SIGACT Symposium on Theory of Computing},
pages = {310–321},
numpages = {12},
location = {Phoenix, AZ, USA},
series = {STOC 2019}
}

@InProceedings{pmlr-v80-aliakbarpour18a,
  title = 	 {Differentially Private Identity and Equivalence Testing of Discrete Distributions},
  author =       {Aliakbarpour, Maryam and Diakonikolas, Ilias and Rubinfeld, Ronitt},
  booktitle = 	 {Proceedings of the 35th International Conference on Machine Learning},
  pages = 	 {169--178},
  year = 	 {2018},
  editor = 	 {Dy, Jennifer and Krause, Andreas},
  volume = 	 {80},
  series = 	 {Proceedings of Machine Learning Research},
  month = 	 {10--15 Jul},
  publisher =    {PMLR},
  url = 	 {https://proceedings.mlr.press/v80/aliakbarpour18a.html}
}

@InProceedings{pmlr-v70-cai17a,
  title = 	 {{P}riv'{IT}: Private and Sample Efficient Identity Testing},
  author =       {Bryan Cai and Constantinos Daskalakis and Gautam Kamath},
  booktitle = 	 {Proceedings of the 34th International Conference on Machine Learning},
  pages = 	 {635--644},
  year = 	 {2017},
  editor = 	 {Precup, Doina and Teh, Yee Whye},
  volume = 	 {70},
  series = 	 {Proceedings of Machine Learning Research},
  month = 	 {06--11 Aug},
  publisher =    {PMLR},
  url = 	 {https://proceedings.mlr.press/v70/cai17a.html}
}

@article{ingster1993asymptotically,
  title={Asymptotically minimax hypothesis testing for nonparametric alternatives. I, II, III},
  author={Ingster, Yuri I},
  journal={Math. Methods Statist},
  volume={2},
  number={2},
  pages={85--114},
  year={1993}
}

@book{wainwright2019high,
  title={High-dimensional statistics: A non-asymptotic viewpoint},
  author={Wainwright, Martin J},
  volume={48},
  year={2019},
  publisher={Cambridge University Press}
}

@article{deng2017wasserstein,
  title={Wasserstein identity testing},
  author={Deng, Shichuan and Li, Wenzheng and Wu, Xuan},
  journal={arXiv preprint arXiv:1710.10457},
  year={2017}
}

@article{Ferreira2024Distribution,
  title={Distribution testing with a confused collector},
  author={Pinto Jr, Renato Ferreira and Harms, Nathaniel},
  journal={arXiv preprint arXiv:2311.14247},
  year={2023}
}

@inproceedings{mcsherry2009privacy,
author = {McSherry, Frank D.},
title = {Privacy integrated queries: an extensible platform for privacy-preserving data analysis},
year = {2009},
isbn = {9781605585512},
publisher = {Association for Computing Machinery},
address = {New York, NY, USA},
url = {https://doi.org/10.1145/1559845.1559850},
doi = {10.1145/1559845.1559850},
booktitle = {Proceedings of the 2009 ACM SIGMOD International Conference on Management of Data},
pages = {19–30},
numpages = {12},
location = {Providence, Rhode Island, USA},
series = {SIGMOD '09}
}

@article{AwanWang2025,
author = {Jordan Awan and Yue Wang},
title = {{Differentially private Kolmogorov-Smirnov-type tests}},
volume = {19},
journal = {Electronic Journal of Statistics},
number = {1},
publisher = {Institute of Mathematical Statistics and Bernoulli Society},
pages = {718 -- 744},
year = {2025},
doi = {10.1214/25-EJS2349},
URL = {https://doi.org/10.1214/25-EJS2349}
}

@INPROCEEDINGS{4389483,
  author={McSherry, Frank and Talwar, Kunal},
  booktitle={48th Annual IEEE Symposium on Foundations of Computer Science (FOCS'07)}, 
  title={Mechanism Design via Differential Privacy}, 
  year={2007},
  volume={},
  number={},
  pages={94-103},
  doi={10.1109/FOCS.2007.66}}

\appendix
\section{Proofs of the main results}
\subsection{Monotonicity of $\Psi_{\mathrm{upper}}^{\mathrm{gof}}(R)$}\label{appendix:monotonicity}
\begin{proof}[Proof of the monotonicity of $\Psi_{\mathrm{upper}}^{\mathrm{gof}}(R)$]
Since $J_{\mathrm{upper}}(R)$ is nonincreasing, it suffices to show that $\Lambda$ is nondecreasing in $J$. For brevity, define
\begin{align*}
    a_j
    \coloneqq \left(
    \mathcal{N}\left(
    \mathcal{X},
    \mathrm{diam}(\mathcal{X})\gamma^j/3
    \right)^q
    \gamma^{jp}
    \right)^{1/r},
    \qquad
    S_J\coloneqq \sum_{j=1}^J a_j.
\end{align*}
Then
\begin{align*}
    \Sigma(p,q,r,J)=S_J^r,
    \qquad
    H(p,q,r,J)
    =-\sum_{j=1}^J\frac{a_j}{S_J}
    \log\left(\frac{a_j}{S_J}\right).
\end{align*}
Let
\begin{align*}
    u\coloneqq \frac{S_J}{S_{J+1}}\in(0,1).
\end{align*}
The first $J$ normalized weights at level $J+1$ are obtained by
multiplying the normalized weights at level $J$ by $u$, while the remaining weight is $1-u$. Therefore,
\begin{align*}
    H(p,q,r,J+1)
    &= \,
    uH(p,q,r,J)
    -u\log u-(1-u)\log(1-u)\\
    &\geq \,uH(p,q,r,J).
\end{align*}
Consequently,
\begin{align*}
    \Lambda(p,q,r,J+1)
    &= \,
    S_{J+1}^r\bigl(1+H(p,q,r,J+1)\bigr)\\
    &\geq \,
    S_{J+1}^r\bigl(1+uH(p,q,r,J)\bigr)\\
    &= \,
    S_{J+1}^r
    +S_JS_{J+1}^{r-1}H(p,q,r,J)\\
    &\geq \,
    S_J^r+S_J^rH(p,q,r,J)\\
    &= \,
    \Lambda(p,q,r,J),
\end{align*}
where the second inequality follows from $S_{J+1}\geq S_J$ and
$r\geq1$. Hence, $\Lambda(p,q,r,J)$ is nondecreasing in $J$.
\end{proof}
\subsection{Proof of Lemma~\ref{lem:dyadic}}\label{appendix:dyadic}
For $j\geq 1$, there exists a set $C^j$ of $\mathcal{N}(\mathcal{X},\mathrm{diam}(\mathcal{X})\gamma^j/3)$ balls with diameter $\mathrm{diam}(\mathcal{X})\gamma^j/3$ that covers $\mathcal{X}$. Denote the $l$th element of $C^j$ as $C^j_l$. We first construct $\mathcal{R}^J$ as follows. For $1<l\leq \mathcal{N}(\mathcal{X},\mathrm{diam}(\mathcal{X})\gamma^J/3)$, define
\begin{align*}
    R^J_l\coloneqq C^J_l\backslash\left(\bigcup_{s<l}C_s^J\right),
\end{align*}
and set $\mathcal{R}^J\coloneqq \{R^J_1,\ldots,R^J_{\mathcal{N}(\mathcal{X},\mathrm{diam}(\mathcal{X})\gamma^J/3)}\}$. We remove empty cells from $\mathcal{R}^J$. Then $\mathcal{R}^J$ is a partition of $\mathcal{X}$, and the diameter of $R^J_l$ is bounded by $\mathrm{diam}(\mathcal{X})\gamma^J/3\leq \mathrm{diam}(\mathcal{X})\gamma^J$. Next, suppose $\mathcal{R}^j$ is a partition of $\mathcal{X}$ such that if $R\in \mathcal{R}^j$ then $\mathrm{diam}(R)\leq \mathrm{diam}(\mathcal{X})\gamma^j$. We construct a partition $\mathcal{R}^{j-1}$ from $\mathcal{R}^j$ such that
\begin{itemize}
    \item if $R\in \mathcal{R}^{j-1}$ then $\mathrm{diam}(R)\leq \mathrm{diam}(\mathcal{X})\gamma^{j-1}$
    \item if $R^j\in \mathcal{R}^j$ and $R^{j-1}\in\mathcal{R}^{j-1}$ then $R^j\subset R^{j-1}$ or $R^j\cap R^{j-1}=\emptyset$.
\end{itemize}
This leads to the construction of $\mathcal{R}^{J-1},\ldots,\mathcal{R}^1$ inductively. Let
\begin{align*}
    R^{j-1}_1\coloneqq \bigcup_{R\in\mathcal{R}^{j},R\cap C_1^{j-1}\not=\emptyset}R.
\end{align*}
For $1<l\leq \mathcal{N}(\mathcal{X},\mathrm{diam}(\mathcal{X})\gamma^{j-1}/3)$, let
\begin{align*}
    R^{j-1}_l\coloneqq \left(\bigcup_{R\in\mathcal{R}^{j},R\cap C_l^{j-1}\not=\emptyset}R\right)\Big\backslash\left(\bigcup_{s<l}R_s^{j-1}\right).
\end{align*}
Now set $\mathcal{R}^{j-1}\coloneqq \{R^{j-1}_1,\ldots,R^{j-1}_{\mathcal{N}(\mathcal{X},\mathrm{diam}(\mathcal{X})\gamma^{j-1}/3)}\}$. We remove empty cells from $\mathcal{R}^{j-1}$. Then $\mathcal{R}^{j-1}$ is a partition of $\mathcal{X}$. Let $x,x^\prime\in R_l^{j-1}$. Then there exist $c,c^\prime\in C_l^{j-1}$ such that $d(x,c),d(x^\prime,c^\prime)\leq \mathrm{diam}(\mathcal{X})\gamma^{j-1}/3$. Then
\begin{align*}
    d(x,x^\prime)&\leq d(x,c)+d(c,c^\prime)+d(c^\prime,x^\prime)\\
    &\leq \mathrm{diam}(\mathcal{X})\gamma^{j}+\mathrm{diam}(\mathcal{X})\gamma^{j-1}/3+\mathrm{diam}(\mathcal{X})\gamma^{j}\\
    &\leq \mathrm{diam}(\mathcal{X})\gamma^{j-1}.
\end{align*}
Therefore, the diameter of $R_l^{j-1}$ is at most $\mathrm{diam}(\mathcal{X})\gamma^{j-1}$. Finally, $R^{j-1}_l$ is a union of some sets in $\mathcal{R}^j$. If $R\in \mathcal{R}^j$ and $R\cap R^{j-1}_l \neq \emptyset$ then $R\subset R^{j-1}_l$ since sets in $\mathcal{R}^j$ are disjoint. Therefore, we are done.
\subsection{Proof of Propositions~\ref{prop:gof_tv_minimax} and~\ref{prop:ts_tv_minimax}}\label{appendix:tv_minimax}
Denote $L=\left\lceil(0.32)^{-1}\log\frac{1}{\alpha\wedge\beta}\right\rceil$. For the first statement, if $n<2L$ then $$\max\left\{\frac{k^{1/4}}{\sqrt{n}},\frac{k^{1/2}}{n\sqrt{\epsilon}},\frac{k^{1/4}}{n^{3/4}\sqrt{\epsilon}},\frac{1}{n\epsilon}\right\}\log\frac{1}{\alpha\wedge\beta}\gtrsim 1$$ up to a universal factor so we are done. In the case where $n\geq 2L$, we construct a test achieving Type I and Type II errors at most $\alpha$ and $\beta$, respectively. \cite{NEURIPS2018_7de32147} showed that, for a given number of samples $m$, there exists an $\epsilon$-DP test $\phi_m$ with Type I and Type II errors at most $0.1$ and separation radius
\begin{align*}
\tau_m\asymp\frac{k^{1/4}}{\sqrt{m}}\vee\frac{k^{1/2}}{m\sqrt{\epsilon}}\vee\frac{k^{1/4}}{m^{3/4}\sqrt{\epsilon}}\vee\frac{1}{m\epsilon}.
\end{align*}
We amplify the error guarantee by a majority-vote procedure. Denote $L=\left\lceil(0.32)^{-1}\log\frac{1}{\alpha\wedge\beta}\right\rceil$ and partition the data into $L$ disjoint subsets $D_1,\ldots,D_L$, each of size $m\coloneqq \lfloor n/L\rfloor$, discarding any remaining observations. Define
\begin{align*}
\varphi(D)\coloneqq \mathbf{1}\left\{\frac{1}{L}\sum_{l=1}^L\phi_m(D_l)\geq\frac{1}{2}\right\}.
\end{align*}
The privacy guarantee of the test $\varphi$ can be shown by the parallel composition theorem.
\begin{proposition}[{\citealp[Theorem~4]{mcsherry2009privacy}}]\label{prop:parallel_composition}
Let $D$ be a dataset whose observations are partitioned into disjoint subsets $D_1,\ldots,D_L$ according to a rule independent of the data values. For each $l=1,\ldots,L$, let $\mathcal{M}_l$ be an $\epsilon$-differentially private mechanism applied only to $D_l$. Then the joint mechanism
\begin{align*}
\mathcal{M}(D)\coloneqq \left(\mathcal{M}_1(D_1),\ldots,\mathcal{M}_L(D_L)\right)
\end{align*}
is $\epsilon$-differentially private. In particular, if every $\mathcal{M}_l$ is $\epsilon$-differentially private, then $\mathcal{M}$ is $\epsilon$-differentially private.
\end{proposition}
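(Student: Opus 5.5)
The plan is to reduce everything to the single block that contains the differing observation, and then use Fubini's theorem to pass from the product structure of the output to arbitrary measurable events. Throughout, I take the mechanisms $\mathcal{M}_1,\ldots,\mathcal{M}_L$ to use mutually independent internal randomness, as is implicit in the statement. Then, for every dataset $D$, the law of $\mathcal{M}(D)$ is the product measure $\mu_1^{D_1}\otimes\cdots\otimes\mu_L^{D_L}$ on $\mathcal{Z}_1\times\cdots\times\mathcal{Z}_L$, where $\mu_l^{D_l}$ denotes the law of $\mathcal{M}_l(D_l)$.

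First, fix neighboring datasets $D,D'$ differing only in coordinate $i$. Because the partition rule does not depend on the data values, $D$ and $D'$ are split using the same index sets. Coordinate $i$ therefore falls into exactly one block $l_0$. It follows that $D_l=D'_l$ for every $l\neq l_0$, while $D_{l_0}$ and $D'_{l_0}$ are neighboring datasets, or identical if index $i$ was discarded, in which case there is nothing to prove. Consequently the factors $\mu_l^{D_l}=\mu_l^{D'_l}$ coincide for all $l\neq l_0$. Write $\mu_{-l_0}$ for the common product law of the remaining coordinates $z_{-l_0}$.

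Second, take an arbitrary measurable set $S$ in the product $\sigma$-algebra. For each $z_{-l_0}$, define its section $S_{z_{-l_0}}\coloneqq\{z_{l_0}:(z_{l_0},z_{-l_0})\in S\}$, which is measurable. By Fubini's theorem,
\begin{align*}
\mathbb{P}(\mathcal{M}(D)\in S)=\int \mu_{l_0}^{D_{l_0}}\bigl(S_{z_{-l_0}}\bigr)\,d\mu_{-l_0}(z_{-l_0}).
\end{align*}
The $\epsilon$-DP property of $\mathcal{M}_{l_0}$ bounds the integrand pointwise by $e^{\epsilon}\mu_{l_0}^{D'_{l_0}}(S_{z_{-l_0}})$. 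Integrating against the same measure $\mu_{-l_0}$ and applying Fubini again in reverse gives $\mathbb{P}(\mathcal{M}(D)\in S)\le e^{\epsilon}\,\mathbb{P}(\mathcal{M}(D')\in S)$, which is the required inequality \eqref{eq:pure_dp}.

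The only delicate point is the measure-theoretic step: justifying the passage from the inequality for a single block to general (non-rectangular) events. I plan to handle it through the section argument above rather than through a monotone-class argument. The section argument needs only the measurability of sections and Fubini's theorem for the product of the $l_0$-th law with $\mu_{-l_0}$, both of which are standard.
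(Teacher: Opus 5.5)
Your proof is correct. The paper gives no argument of its own and simply cites \citet{mcsherry2009privacy}, whose proof rests on the same factorization you use: the output law is a product over blocks, and only the block containing the changed record moves, by a factor at most $e^{\epsilon}$. That argument is usually written for probabilities of individual outcome tuples, so your section/Fubini step is what extends it to the arbitrary measurable events in the product $\sigma$-algebra required by Definition~\ref{def:DP}.

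Both assumptions you make explicit are genuinely needed. The first is independent internal randomness across blocks: shared noise, e.g.\ the same $Z$ added in two blocks, can reveal differences exactly. The second is an index-based partition: under the paper's replace-one neighboring relation, a value-based partition of the domain (as in McSherry's original statement) could move the changed record from one block to another and alter two blocks at once.
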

Since the tests $\phi_m$ are applied to disjoint subsets of the data, $\varphi$ is $\epsilon$-DP.

We next bound its Type I and Type II errors. Under the null hypothesis, $\phi_m(D_1),\ldots,\phi_m(D_L)$ are independent Bernoulli random variables satisfying $\mathbb{P}(\phi_m(D_l)=1\mid H_0)\leq0.1$. We use Hoeffding's inequality to bound the Type I error.
\begin{proposition}[{\citealp[Proposition~2.5]{wainwright2019high}}]\label{prop:hoeffding}
Let $X_1,\ldots,X_L$ be independent random variables such that $a_l\leq X_l\leq b_l$ almost surely for each $l=1,\ldots,L$. Then, for every $t>0$,
\begin{align*}
\mathbb{P}\left(\sum_{l=1}^L(X_l-\mathbb{E}[X_l])\geq t\right)\leq\exp\left(-\frac{2t^2}{\sum_{l=1}^L(b_l-a_l)^2}\right)
\end{align*}
and
\begin{align*}
\mathbb{P}\left(\sum_{l=1}^L(X_l-\mathbb{E}[X_l])\leq-t\right)\leq\exp\left(-\frac{2t^2}{\sum_{l=1}^L(b_l-a_l)^2}\right).
\end{align*}
\end{proposition}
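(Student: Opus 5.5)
The plan is the classical Chernoff (exponential Markov) argument, with Hoeffding's lemma supplying the control of each moment generating function. It suffices to prove the upper-tail bound. The lower-tail bound then follows by applying the upper-tail bound to $-X_1,\ldots,-X_L$, which are independent and satisfy $-b_l\leq -X_l\leq -a_l$, so the ranges $b_l-a_l$ are unchanged.

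Write $Y_l\coloneqq X_l-\mathbb{E}[X_l]$, so that $\mathbb{E}[Y_l]=0$ and $Y_l$ takes values in an interval of length $b_l-a_l$. For any $\lambda>0$, Markov's inequality applied to $e^{\lambda\sum_l Y_l}$ together with independence gives
\begin{align*}
\mathbb{P}\left(\sum_{l=1}^L Y_l\geq t\right)\leq e^{-\lambda t}\prod_{l=1}^L\mathbb{E}\bigl[e^{\lambda Y_l}\bigr].
\end{align*}
The key step is Hoeffding's lemma: if $Y$ is centered and lies in $[a,b]$, then $\mathbb{E}[e^{\lambda Y}]\leq \exp(\lambda^2(b-a)^2/8)$. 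I expect this lemma to be the only nontrivial step.

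To prove the lemma, first use convexity of $y\mapsto e^{\lambda y}$ on $[a,b]$ to bound $e^{\lambda y}$ by the chord:
\begin{align*}
e^{\lambda y}\leq \frac{b-y}{b-a}e^{\lambda a}+\frac{y-a}{b-a}e^{\lambda b}.
\end{align*}
Taking expectations and using $\mathbb{E}[Y]=0$ yields $\mathbb{E}[e^{\lambda Y}]\leq e^{\varphi(u)}$, where $u\coloneqq\lambda(b-a)$, $\theta\coloneqq -a/(b-a)\in[0,1]$, and
\begin{align*}
\varphi(u)\coloneqq -\theta u+\log\bigl(1-\theta+\theta e^{u}\bigr).
\end{align*}
One then checks that $\varphi(0)=\varphi'(0)=0$. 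The second derivative has the form $\varphi''(u)=s(1-s)$ with $s=\theta e^u/(1-\theta+\theta e^u)\in[0,1]$, so $\varphi''(u)\leq 1/4$. Taylor's theorem with remainder then gives $\varphi(u)\leq u^2/8$, which is the lemma.

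Substituting the lemma into the Chernoff bound gives
\begin{align*}
\mathbb{P}\left(\sum_{l=1}^L Y_l\geq t\right)\leq \exp\left(-\lambda t+\frac{\lambda^2}{8}\sum_{l=1}^L(b_l-a_l)^2\right)
\end{align*}
for every $\lambda>0$. Minimizing the exponent over $\lambda$ at $\lambda=4t/\sum_l(b_l-a_l)^2$ yields exactly $\exp\bigl(-2t^2/\sum_l(b_l-a_l)^2\bigr)$.
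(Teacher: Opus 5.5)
Your proof is correct: the Chernoff bound combined with Hoeffding's lemma (chord bound, then $\varphi''\leq 1/4$ and Taylor) is the standard route, and it is essentially the one behind the cited result. The paper gives no proof of its own and simply cites \citet[Proposition~2.5]{wainwright2019high}, where bounded variables are shown to be sub-Gaussian with parameter $(b-a)/2$ before the same tail bound is applied. The one detail to add is the degenerate case $a_l=b_l$: there $\theta$ is undefined, but $Y_l=0$ almost surely, so the lemma holds trivially.
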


\begin{align*}
\mathbb{P}\left(\frac{1}{L}\sum_{l=1}^L\phi_m(D_l)\geq\frac{1}{2}\middle|H_0\right)\leq\mathbb{P}\left(\frac{1}{L}\sum_{l=1}^L\left(\phi_m(D_l)-\mathbb{E}[\phi_m(D_l)\mid H_0]\right)\geq0.4\middle|H_0\right)\leq e^{-0.32L}\leq\alpha\wedge\beta.
\end{align*}
Similarly, under the alternative hypothesis with $\lVert u-u_0\rVert_1\geq\tau_m$, we have $\mathbb{P}(\phi_m(D_l)=1\mid H_1)\geq0.9$, and Hoeffding's inequality yields
\begin{align*}
\mathbb{P}\left(\frac{1}{L}\sum_{l=1}^L\phi_m(D_l)<\frac{1}{2}\middle|H_1\right)\leq e^{-0.32L}\leq\alpha\wedge\beta.
\end{align*}
Thus, $\varphi$ has Type I and Type II errors at most $\alpha$ and $\beta$, respectively. Since $n\geq2L$, we have $m=\lfloor n/L\rfloor\geq n/(2L)$, and hence
\begin{align*}
\tau_m&\lesssim\frac{k^{1/4}L^{1/2}}{\sqrt{n}}\vee\frac{k^{1/2}L}{n\sqrt{\epsilon}}\vee\frac{k^{1/4}L^{3/4}}{n^{3/4}\sqrt{\epsilon}}\vee\frac{L}{n\epsilon}\\
&\lesssim\frac{k^{1/4}L}{\sqrt{n}}\vee\frac{k^{1/2}L}{n\sqrt{\epsilon}}\vee\frac{k^{1/4}L^{3/4}}{n^{3/4}\sqrt{\epsilon}}\vee\frac{L}{n\epsilon}.
\end{align*}
Since $L\lesssim\log\frac{1}{\alpha\wedge\beta}$ and $L\geq1$, it follows that
\begin{align*}
\tau_m\lesssim\max\left\{\frac{k^{1/4}}{\sqrt{n}},\frac{k^{1/2}}{n\sqrt{\epsilon}},\frac{k^{1/4}}{n^{3/4}\sqrt{\epsilon}},\frac{1}{n\epsilon}\right\}\log\frac{1}{\alpha\wedge\beta}.
\end{align*}

For the second statement, suppose that there exists an $\epsilon$-DP test $\varphi_n$ with Type I and Type II errors at most $\alpha$ and $\beta$, respectively, at separation $\tau$. Let
\begin{align*}
L_0\coloneqq \left\lceil2(1/2-(\alpha\vee\beta))^{-2}\log 10\right\rceil.
\end{align*}
Given $m\coloneqq nL_0$ samples, partition them into $L_0$ disjoint subsets of size $n$, apply $\varphi_n$ independently to each subset, and take a majority vote. By parallel composition, the resulting test is $\epsilon$-DP. Moreover, Hoeffding's inequality implies that both its Type I and Type II errors are at most $0.1$.

\cite{NEURIPS2018_7de32147} showed that, when the null distribution is uniform on $[k]$, the separation radius of any $\epsilon$-DP tester based on $m$ samples with Type I and Type II errors at most $0.1$ satisfies
\begin{align*}
\tau\gtrsim\left(\frac{k^{1/4}}{\sqrt{m}}\vee\frac{k^{1/2}}{m\sqrt{\epsilon}}\vee\frac{k^{1/4}}{m^{3/4}\sqrt{\epsilon}}\vee\frac{1}{m\epsilon}\right)\wedge1.
\end{align*}
Since $m=nL_0$ and $L_0$ depends only on $\alpha$ and $\beta$, this yields
\begin{align*}
\tau\geq C_2\left(\max\left\{\frac{k^{1/4}}{\sqrt{n}},\frac{k^{1/2}}{n\sqrt{\epsilon}},\frac{k^{1/4}}{n^{3/4}\sqrt{\epsilon}},\frac{1}{n\epsilon}\right\}\wedge1\right)
\end{align*}
for some constant $C_2>0$ depending only on $\alpha$ and $\beta$. Decreasing $C_2$ if necessary, we may take $C_2\in(0,1/2)$.

For Proposition~\ref{prop:ts_tv_minimax}, the same argument can be applied by invoking the minimax separation result of $\ell_1$ two-sample testing \citep[Theorem 7]{zhang2021statistical}.
\subsection{Proof of matching up to a logarithmic factor}\label{appendix:logfactor}
We show $\log^{3/2}n\log\log nR_{\mathrm{lower}}^{\mathrm{gof}}\gtrsim R_{\mathrm{upper}}^{\mathrm{gof}}$. Denote
\begin{align*}
    A(J)&= \frac{\Lambda(p,1/4,1,J)}{\sqrt{n}}\\
    &+\frac{\Lambda(p,1/4,3/2,J)}{n^{3/4}\sqrt{\epsilon}}+\frac{\Lambda(p,1/2,3/2,J)}{n\sqrt{\epsilon}}\\
    &+\frac{\Lambda(p,0,2,J)}{n\epsilon}\\
    B(J)&= \frac{M(p,1/4,J)}{\sqrt{n}}\vee\frac{M(p,1/2,J)}{n\sqrt{\epsilon}}\vee \frac{M(p,1/4,J)}{n^{3/4}\sqrt{\epsilon}}\vee\frac{M(p,0,J)}{n\epsilon}.
\end{align*}
Recall that
\begin{align*}
    J_{\mathrm{upper}}(R)&= \left\lceil\frac{1}{p}\frac{\log\frac{2}{R}}{\log(1/\gamma)}\right\rceil\\
    J_{\mathrm{lower}}(R)&= \left\lfloor\frac{1}{p}\frac{\log(2/R)}{\log(1/\gamma)}\right\rfloor.
\end{align*}
First, assume $\gamma^{-p}R_{\mathrm{lower}}^{\mathrm{gof}}\leq \gamma^{2p}R_{\mathrm{upper}}^{\mathrm{gof}}$. By definitions of $R_{\mathrm{lower}}^{\mathrm{gof}}$ and $R_{\mathrm{upper}}^{\mathrm{gof}}$,
\begin{align*}
    R\leq A\left(J_{\mathrm{upper}}(R)\right),
\end{align*}
if $R< R_{\mathrm{upper}}^{\mathrm{gof}}$ and
\begin{align*}
    R\geq B\left(J_{\mathrm{lower}}(R)\right)
\end{align*}
if $R> R_{\mathrm{lower}}^{\mathrm{gof}}$. Also, $J^{3/2}\log (eJ) B(J)\gtrsim A(J)$ since $J^{r}\log (eJ) M(p,q,J)\gtrsim \Lambda(p,q,r,J)$ and $\Lambda(p,0,2,J)\lesssim M(p,0,J)$. Note that  $J_{\mathrm{lower}}(\gamma^{p}R)\geq J_{\mathrm{upper}}(R)$ since $\lceil x\rceil\leq \lfloor x+1\rfloor $. Then
\begin{align*}
   &J_{\mathrm{lower}}^{3/2}(\gamma^{-p}R_{\mathrm{lower}}^{\mathrm{gof}})\log( eJ_{\mathrm{lower}}(\gamma^{-p}R_{\mathrm{lower}}^{\mathrm{gof}})) \gamma^{-p}R_{\mathrm{lower}}^{\mathrm{gof}}\\
   &\geq  J_{\mathrm{lower}}^{3/2}(\gamma^{-p}R_{\mathrm{lower}}^{\mathrm{gof}})\log( eJ_{\mathrm{lower}}(\gamma^{-p}R_{\mathrm{lower}}^{\mathrm{gof}}))B\left(J_{\mathrm{lower}}(\gamma^{-p}R_{\mathrm{lower}}^{\mathrm{gof}})\right)\\
   &\gtrsim A(J_{\mathrm{lower}}(\gamma^{-p}R_{\mathrm{lower}}^{\mathrm{gof}}))\\
   &\geq A(J_{\mathrm{lower}}(\gamma^{2p}R_{\mathrm{upper}}^{\mathrm{gof}}))\\
   &\geq A(J_{\mathrm{upper}}(\gamma^p R_{\mathrm{upper}}^{\mathrm{gof}}))\\
   &\geq \gamma^pR_{\mathrm{upper}}^{\mathrm{gof}}
\end{align*}
since $A(J)$ increases with respect to $J$. Also, since $C_{\mathrm{lower}}^{\mathrm{gof}}\mathrm{diam}^p(\mathcal{X})R_{\mathrm{lower}}^{\mathrm{gof}}\gtrsim \frac{1}{\sqrt{n}}$ by Theorem~\ref{thm:goodness_of_fit_minimax_lower}, we have
\begin{align*}
    J_{\mathrm{lower}}(\gamma^{-p}R_{\mathrm{lower}}^{\mathrm{gof}})\lesssim \log n.
\end{align*}
Therefore,
\begin{align*}
    R_{\mathrm{upper}}^{\mathrm{gof}}\lesssim \log^{3/2}n\log\log nR_{\mathrm{lower}}^{\mathrm{gof}}.
\end{align*}
Next, if $\gamma^{-p}R_{\mathrm{lower}}^{\mathrm{gof}}\geq \gamma^{2p}R_{\mathrm{upper}}^{\mathrm{gof}}$ then $\log^{3/2}n\log\log nR_{\mathrm{lower}}^{\mathrm{gof}}\gtrsim R_{\mathrm{upper}}^{\mathrm{gof}}$ already holds.\qed
\subsection{Proof of Theorem~\ref{thm:two_sample_minimax_upper}}\label{appendix:two_sample_minimax_upper}
We divide the proof into two cases. First, suppose that the set in the definition of $R_{\mathrm{upper}}^{\mathrm{ts}}$ is nonempty. Given $p,\alpha,\beta$, we choose the remaining parameters $\tau_1,\ldots,\tau_J$ and $\nu_1,\ldots,\nu_J$ of Algorithm~\ref{alg:ts_tester_Wp} as follows. Set $J\coloneqq J_{\mathrm{upper}}(R_{\mathrm{upper}}^{\mathrm{ts}})$. Then $J\geq 1$ since $\frac{1}{p}\frac{\log(2/R_{\mathrm{upper}}^{\mathrm{ts}})}{\log(1/\gamma)}\geq1$. For each $1\leq j\leq J$, set
\begin{align*}
    \nu_j&\coloneqq \frac{N_j^{1/4}\gamma^{jp}}{5\sum_{j=1}^JN_j^{1/4}\gamma^{jp}}\vee\frac{N_j^{1/2}\gamma^{jp}}{5\sum_{j=1}^JN_j^{1/2}\gamma^{jp}}\vee\frac{(N_j^{1/2}\gamma^{jp})^{\frac{2}{3}}}{5\sum_{j=1}^J(N_j^{1/2}\gamma^{jp})^{\frac{2}{3}}}\vee\frac{(N_j^{1/4}\gamma^{jp})^{\frac{2}{3}}}{5\sum_{j=1}^J(N_j^{1/4}\gamma^{jp})^{\frac{2}{3}}}\vee\frac{\gamma^{\frac{jp}{2}}}{5\sum_{j=1}^J\gamma^{\frac{jp}{2}}}.
\end{align*}
Also, let $\tau_j$ be the separation radius of the tester in Step 2(c) of Algorithm~\ref{alg:ts_tester_Wp}, that is the minimax separation radius of $\nu_j\epsilon$-DP $\ell_1$ two-sample testing on a domain of size $N_j$ with Type I and Type II errors at most $\nu_j\alpha$ and $\nu_j\beta$ respectively.

Under these choices of parameters, we show that Algorithm~\ref{alg:ts_tester_Wp} is an $\epsilon$-DP test with Type I and Type II errors at most $\alpha$ and $\beta$, respectively, for the following Wasserstein two-sample testing problem.
\begin{align*}
    H_0:P=Q\textrm{ vs }H_1:W_p^p(P,Q)\geq C_{\mathrm{upper}}^{\mathrm{ts}}\mathrm{diam}^p(\mathcal{X})R_{\mathrm{upper}}^{\mathrm{ts}},
\end{align*}
where $C_{\mathrm{upper}}^{\mathrm{ts}}>0$ is a sufficiently large constant. The privacy guarantee and the Type I error bound follow directly. Note that $\sum_{j=1}^J\nu_j\leq 1$ since each of the five sequences appearing in this maximum sums to $1/5$. The algorithm is a composition of $\nu_j\epsilon$-DP algorithms for $j=1,\ldots,J$. By Proposition~\ref{prop:comp}, Algorithm~\ref{alg:ts_tester_Wp} is $\sum_{j=1}^J\nu_j\epsilon$-DP. Since $\sum_{j=1}^J\nu_j\epsilon\leq\epsilon$, the algorithm is $\epsilon$-DP. Similarly, since the algorithm rejects whenever at least one of the $J$ finite-domain testers rejects, a union bound gives that the Type I error of Algorithm~\ref{alg:ts_tester_Wp} is at most $\sum_{j=1}^J\nu_j\alpha\leq \alpha$.

We next show that the Type II error of Algorithm~\ref{alg:ts_tester_Wp} is at most $\beta$. By the multiresolution upper bound \eqref{eq:Wp_decomp}, it suffices to establish
\begin{align*}
\mathrm{diam}^p(\mathcal{X})\left(\gamma^{Jp}+\sum_{j=1}^J\gamma^{(j-1)p}\tau_j\right)\leq C_{\mathrm{upper}}^{\mathrm{ts}}\mathrm{diam}^p(\mathcal{X})R_{\mathrm{upper}}^{\mathrm{ts}},
\end{align*}
or, equivalently,
\begin{align}\label{eq:proof_ts_minimax_upper_goal}
    \gamma^{Jp}+\sum_{j=1}^J\gamma^{(j-1)p}\tau_j\leq C_{\mathrm{upper}}^{\mathrm{ts}}R_{\mathrm{upper}}^{\mathrm{ts}}.
\end{align}
Suppose \eqref{eq:proof_ts_minimax_upper_goal} holds. Under the alternative hypothesis, \eqref{eq:Wp_decomp} then implies
\begin{align*}
\mathrm{diam}^p(\mathcal{X})\left(\gamma^{Jp}+\sum_{j=1}^J\gamma^{(j-1)p}\lVert P^j-Q^j\rVert_1\right)&\geq  W_p^p(P,Q)\\
&\geq  C_{\mathrm{upper}}^{\mathrm{ts}}\mathrm{diam}^p(\mathcal{X})R_{\mathrm{upper}}^{\mathrm{ts}}\\
&\geq  \mathrm{diam}^p(\mathcal{X})\left(\gamma^{Jp}+\sum_{j=1}^J\gamma^{(j-1)p}\tau_j\right).
\end{align*}
Thus, there exists $1\leq j_0\leq J$ such that $\lVert P^{j_0}-Q^{j_0}\rVert_1\geq \tau_{j_0}$. Since Algorithm~\ref{alg:ts_tester_Wp} accepts the null hypothesis if every private $\ell_1$ tester accepts, its Type II error is at most the Type II error of the private $\ell_1$ tester $\phi_{j_0}$. By the choice of $\tau_{j_0}$, this error is at most $\nu_{j_0}\beta\leq\beta$.

It remains to verify \eqref{eq:proof_ts_minimax_upper_goal}. By the definition of $J$, we have $\gamma^{Jp}\leq\frac{1}{2}R_{\mathrm{upper}}^{\mathrm{ts}}$. For $1\leq j\leq J$, since $|\mathcal{R}^j|\leq \mathcal{N}(\mathcal{X},\mathrm{diam}(\mathcal{X})\gamma^j/3)$, Proposition~\ref{prop:ts_tv_minimax} implies that $\tau_j$ with $\epsilon_j = \nu_j \epsilon$ satisfies
\begin{align*}
    \tau_j&\lesssim \Bigg(\frac{N_j^{1/4}}{\sqrt{n}}+\frac{N_j^{1/2}}{n^{3/4}}+\frac{N_j^{1/4}}{n^{3/4}\sqrt{\epsilon_j}}+\frac{N_j^{1/2}}{n\sqrt{\epsilon_j}}+\frac{1}{n\epsilon_j}\Bigg) \log\frac{1}{(\nu_j\alpha)\wedge(\nu_j\beta)}\notag\\
    &\lesssim \Bigg(\frac{N_j^{1/4}}{\sqrt{n}}+\frac{N_j^{1/2}}{n^{3/4}}+\frac{N_j^{1/4}\nu_j^{-1/2}}{n^{3/4}\sqrt{\epsilon}}+\frac{N_j^{1/2}\nu_j^{-1/2}}{n\sqrt{\epsilon}}+\frac{\nu_j^{-1}}{n\epsilon}\Bigg)\log\frac{1}{\nu_j},
\end{align*}
up to a constant factor depending only on $\alpha$ and $\beta$. By the definition of $\nu_j$, we have
\begin{align*}
\sum_{j=1}^JN_j^{1/4}\gamma^{jp}\log\frac{1}{\nu_j}&\lesssim \Lambda(p,1/4,1,J),\\
\sum_{j=1}^JN_j^{1/2}\gamma^{jp}\log\frac{1}{\nu_j}&\lesssim \Lambda(p,1/2,1,J),\\
\sum_{j=1}^JN_j^{1/4}\gamma^{jp}\nu_j^{-1/2}\log\frac{1}{\nu_j}&\lesssim \Lambda(p,1/4,3/2,J),\\
\sum_{j=1}^JN_j^{1/2}\gamma^{jp}\nu_j^{-1/2}\log\frac{1}{\nu_j}&\lesssim \Lambda(p,1/2,3/2,J),\\
\sum_{j=1}^J\gamma^{jp}\nu_j^{-1}\log\frac{1}{\nu_j}&\lesssim \Lambda(p,0,2,J).
\end{align*}
Therefore,
\begin{align*}
    \sum_{j=1}^J\gamma^{(j-1)p}\tau_j&\lesssim \frac{\Lambda(p,1/4,1,J)}{\sqrt{n}}+\frac{\Lambda(p,1/2,1,J)}{n^{3/4}}+\frac{\Lambda(p,1/4,3/2,J)}{n^{3/4}\sqrt{\epsilon}}+\frac{\Lambda(p,1/2,3/2,J)}{n\sqrt{\epsilon}}+\frac{\Lambda(p,0,2,J)}{n\epsilon}\\
    &\leq R_{\mathrm{upper}}^{\mathrm{ts}}
\end{align*}
up to a constant factor depending only on $\alpha$ and $\beta$, where the last inequality follows because $R_{\mathrm{upper}}^{\mathrm{ts}}$ belongs to the set in its definition. Combining this bound with $\gamma^{Jp}\leq R_{\mathrm{upper}}^{\mathrm{ts}}/2$ shows that \eqref{eq:proof_ts_minimax_upper_goal} holds for a sufficiently large $C_{\mathrm{upper}}^{\mathrm{ts}}>\gamma^{-p}/2$.

It remains to consider the case in which the set defining $R_{\mathrm{upper}}^{\mathrm{ts}}$ is empty. By definition, $R_{\mathrm{upper}}^{\mathrm{ts}}=2\gamma^p$. Our choice $C_{\mathrm{upper}}^{\mathrm{ts}}>\gamma^{-p}/2$ ensures that $C_{\mathrm{upper}}^{\mathrm{ts}}\mathrm{diam}^p(\mathcal{X})R_{\mathrm{upper}}^{\mathrm{ts}}>\mathrm{diam}^p(\mathcal{X})$. However, no distributions $P,Q\in\mathcal{P}(\mathcal{X})$ satisfy $W_p^p(P,Q)> \mathrm{diam}^p(\mathcal{X})$ as we have shown in the proof of Theorem~\ref{thm:goodness_of_fit_minimax_upper}. Thus, the alternative hypothesis is empty. The test that always accepts the null hypothesis therefore has zero Type I error, while the Type II error bound holds vacuously.\qed
\subsection{Proof of Theorem~\ref{thm:two_sample_minimax_lower}}\label{appendix:two_sample_minimax_lower}
For a positive integer $j$, denote $h_j\coloneqq \mathrm{diam}(\mathcal{X})\gamma^j/6$. We choose an $h_j$-separated set $\{x_i^{(j)}\}_{i=1}^{N_j}$ of size $N_j$, which exists by Lemma~\ref{lem:sep_cover}. That is,
\begin{align*}
    d(x_{i}^{(j)},x_{i^\prime}^{(j)})\geq h_j,
\end{align*}
holds for every $i\not=i^\prime$. Then, by Lemma~\ref{lem:grid}, the map $T_{h_j}$ embeds discrete distributions on $[N_j]$ to $\mathcal{P}(\mathcal{X})$ and satisfies
\begin{align*}
    W_p^p(T_{h_j}u,T_{h_j}v)\geq\frac{1}{2}h_j^p\lVert u-v\rVert_1.
\end{align*}
Now, suppose there exists an $\epsilon$-DP two-sample test $\phi$ with Type I and Type II errors at most $\alpha$ and $\beta$, respectively, for the following Wasserstein two-sample testing problem.
\begin{align*}
    H_0:P=Q\textrm{ vs }H_1:W_p^p(P,Q)\geq \mathrm{diam}^p(\mathcal{X})R.
\end{align*}
Next, let $A_1,\ldots,A_n\overset{\textrm{i.i.d.}}{\sim}u$ and $B_1,\ldots,B_n\overset{\textrm{i.i.d.}}{\sim}v$ for some $u,v\in\mathcal{P}([N_j])$ and consider the following $\epsilon$-DP two-sample testing problem on $[N_j]$.
\begin{align}\label{eq:reduced_l1_ts_test}
    H_0:u=v\textrm{ vs }H_1:\lVert u-v\rVert_1\geq 2h_j^{-p}\mathrm{diam}^p(\mathcal{X})R.
\end{align}
Consider a test $\phi(x_{A_1}^{(j)},\ldots,x_{A_n}^{(j)},x_{B_1}^{(j)},\ldots,x_{B_n}^{(j)})$ induced from $\phi$ for \eqref{eq:reduced_l1_ts_test}. The test is $\epsilon$-DP since the mappings $A_i\mapsto x_{A_i}^{(j)}$ and $B_i\mapsto x_{B_i}^{(j)}$ are deterministic and preserve adjacency. Moreover, Lemma~\ref{lem:grid} implies that its Type I and Type II errors are at most $\alpha$ and $\beta$, respectively. Using the fact that $2h_j^{-p}\mathrm{diam}^p(\mathcal{X})R$ cannot be smaller than the minimax separation radius for testing on $[N_j]$, we can show a lower bound for $R$.

We now derive a necessary lower bound on $R$. Set $J=J_{\mathrm{lower}}(R_{\mathrm{lower}}^{\mathrm{ts}})\geq1$. Then $R_{\mathrm{lower}}^{\mathrm{ts}}\leq 2\gamma^{Jp}$. Since the induced test solves \eqref{eq:reduced_l1_ts_test} at separation $2h_j^{-p}\mathrm{diam}^p(\mathcal X)R$, this separation must be at least the minimax separation radius of the corresponding finite-domain problem.
\begin{align*}
   2h_j^{-p}\mathrm{diam}^p(\mathcal{X})R&\geq C_4\left(\left(\frac{N_j^{1/4}}{\sqrt{n}}\vee\frac{N_j^{1/2}}{n^{3/4}}\vee\frac{N_j^{1/4}}{n^{3/4}\sqrt{\epsilon}}\vee \frac{N_j^{1/2}}{n\sqrt{\epsilon}}\vee\frac{1}{n\epsilon} \right)\wedge1\right),
\end{align*}
or, equivalently,
\begin{align*}
    R&\geq  2^{-1}6^{-p}\gamma^{jp}C_4\left(\left(\frac{N_j^{1/4}}{\sqrt{n}}\vee\frac{N_j^{1/2}}{n^{3/4}}\vee\frac{N_j^{1/4}}{n^{3/4}\sqrt{\epsilon}}\vee \frac{N_j^{1/2}}{n\sqrt{\epsilon}}\vee\frac{1}{n\epsilon} \right)\wedge 1\right)\\
    &\geq 2^{-1}6^{-p}\gamma^{jp}C_4\left(\frac{N_j^{1/4}}{\sqrt{n}}\vee\frac{N_j^{1/2}}{n^{3/4}}\vee\frac{N_j^{1/4}}{n^{3/4}\sqrt{\epsilon}}\vee \frac{N_j^{1/2}}{n\sqrt{\epsilon}}\vee\frac{1}{n\epsilon} \right)\wedge 2^{-1}6^{-p}C_4\gamma^{Jp}\\
    &\geq  2^{-1}6^{-p}\gamma^{jp}C_4\left(\frac{N_j^{1/4}}{\sqrt{n}}\vee\frac{N_j^{1/2}}{n^{3/4}}\vee\frac{N_j^{1/4}}{n^{3/4}\sqrt{\epsilon}}\vee \frac{N_j^{1/2}}{n\sqrt{\epsilon}}\vee\frac{1}{n\epsilon} \right)\wedge 2^{-2}6^{-p}C_4R_{\mathrm{lower}}^{\mathrm{ts}}.
\end{align*}
Taking the maximum over $j=1,\ldots,J$, we get{\small
\begin{align*}
    R&\geq  2^{-1}6^{-p}C_4\left(\left(\frac{M(p,1/4,J)}{\sqrt{n}}\vee\frac{M(p,1/2,J)}{n^{3/4}}\vee\frac{M(p,1/2,J)}{n\sqrt{\epsilon}}\vee \frac{M(p,1/4,J)}{n^{3/4}\sqrt{\epsilon}}\vee\frac{M(p,0,J)}{n\epsilon}\right)\wedge 2^{-1}R_{\mathrm{lower}}^{\mathrm{ts}}\right)\\
    &\geq 2^{-2}6^{-p}C_4R_{\mathrm{lower}}^{\mathrm{ts}},
\end{align*}}
where the last inequality holds by the definition of $R_{\mathrm{lower}}^{\mathrm{ts}}$. Therefore, for any $R>0$, there exists a tester for $W_p^p(P,Q)\geq \mathrm{diam}^p(\mathcal{X})R$ only if $R\geq 2^{-2}6^{-p}C_4R_{\mathrm{lower}}^{\mathrm{ts}}$.

\enlargethispage{\baselineskip}
\subsection{Proof of Corollary~\ref{cor:special_case}}\label{appendix:special_case}
\paragraph{1. Upper bound for goodness-of-fit testing.} We first prove the bounds for goodness-of-fit testing. For $R\in (0,e^{-e})$, define
\begin{align*}
    \Phi_{\mathrm{upper}}^{\mathrm{gof}}(R)&\coloneqq \begin{dcases}
        \frac{R^{-d/4p+1}}{\sqrt{n}}+\frac{R^{-d/4p+1}}{n^{3/4}\sqrt{\epsilon}}+\frac{R^{-d/2p+1}}{n\sqrt{\epsilon}}+\frac{1}{n\epsilon}&d>4p,\\[6pt]
        \frac{\log(1/R)\log\log(1/R)}{\sqrt{n}}+\frac{\log^{3/2} (1/R)\log\log(1/R)}{n^{3/4}\sqrt{\epsilon}}+\frac{R^{-1}}{n\sqrt{\epsilon}}+\frac{1}{n\epsilon}&d=4p,\\[6pt]
        \frac{1}{\sqrt{n}}+\frac{1}{n^{3/4}\sqrt{\epsilon}}+\frac{R^{-d/2p+1}}{n\sqrt{\epsilon}}+\frac{1}{n\epsilon}&2p<d<4p,\\[6pt]
        \frac{1}{\sqrt{n}}+\frac{1}{n^{3/4}\sqrt{\epsilon}}+\frac{\log^{3/2} (1/R)\log\log(1/R)}{n\sqrt{\epsilon}}+\frac{1}{n\epsilon}&d=2p,\\[6pt]
        \frac{1}{\sqrt{n}}+\frac{1}{n^{3/4}\sqrt{\epsilon}}+\frac{1}{n\epsilon}&d<2p.
    \end{dcases}
\end{align*}
Then there exists a constant $C>0$ depending only on $d$ and $p$ such that
\begin{align*}
    C^{-1}\Psi_{\mathrm{upper}}^{\mathrm{gof}}(R)\leq\Phi_{\mathrm{upper}}^{\mathrm{gof}}(R).
\end{align*}
For $n\geq2$, denote $\ell_n\coloneqq \log n\log\log(en)$. For $c_{\mathrm{upper}}^{\mathrm{gof}}>0$, define
\begin{align*}
g_{\mathrm{upper}}^{\mathrm{gof}}(n)\coloneqq \begin{cases}
c_{\mathrm{upper}}^{\mathrm{gof}}\left(n^{-2p/d}\vee(n\sqrt{\epsilon})^{-\left(1\wedge\frac{2p}{d}\right)}\vee(n^{3/4}\sqrt{\epsilon})^{-\left(1\wedge\frac{4p}{d}\right)}\vee(n\epsilon)^{-1}\right), & d>4p,\\[6pt]
c_{\mathrm{upper}}^{\mathrm{gof}}\left(n^{-1/2}\ell_n\vee(n\sqrt{\epsilon})^{-1/2}\vee(n\epsilon)^{-1}\right), & d=4p,\\[6pt]
c_{\mathrm{upper}}^{\mathrm{gof}}\left(n^{-1/2}\vee(n\sqrt{\epsilon})^{-\left(1\wedge\frac{2p}{d}\right)}\vee(n^{3/4}\sqrt{\epsilon})^{-\left(1\wedge\frac{4p}{d}\right)}\vee(n\epsilon)^{-1}\right), & d<4p.
\end{cases}
\end{align*}
We show that
\begin{align*}
(C_{\mathrm{upper}}^{\mathrm{gof}}\vee e^e\vee \gamma^{-p}/2)\mathrm{diam}^p([0,1]^d)(g_{\mathrm{upper}}^{\mathrm{gof}}(n)\wedge e^{-e}\wedge2\gamma^p)\geq \rho_{n,\epsilon}^{\mathrm{gof}}([0,1]^d,p,\alpha,\beta)
\end{align*}
for sufficiently large $c_{\mathrm{upper}}^{\mathrm{gof}}$. Note that, if $g_{\mathrm{upper}}^{\mathrm{gof}}(n)\geq e^{-e}\wedge2\gamma^p$ then
\begin{align*}
(C_{\mathrm{upper}}^{\mathrm{gof}}\vee e^e\vee \gamma^{-p}/2)\mathrm{diam}^p([0,1]^d)(g_{\mathrm{upper}}^{\mathrm{gof}}(n)\wedge e^{-e}\wedge 2\gamma^p)&\geq\mathrm{diam}^p([0,1]^d)\\
&\geq\rho_{n,\epsilon}^{\mathrm{gof}}([0,1]^d,p,\alpha,\beta).
\end{align*}
Thus, we consider the nontrivial regime $g_{\mathrm{upper}}^{\mathrm{gof}}(n)\in(0,e^{-e}\wedge2\gamma^p)$.

We first state an elementary fact that will be repeatedly used.
\begin{lemma}\label{lem:power_domination}
Let $a,g,R>0$ and $r\geq1$.
\begin{enumerate}
    \item If $g\geq ca^{1/r}$ for some $c>0$, then $ag^{-r+1}\leq\frac{g}{c^r}$.
    \item If $R\leq(Ca)^{1/r}$ for some $C>0$, then $CaR^{-r+1}\geq R$.
\end{enumerate}
\end{lemma}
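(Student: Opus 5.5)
Both parts are elementary manipulations of powers, and I would prove them directly from the hypotheses, using only that $r\geq1$ makes the exponent $-r+1\leq 0$.

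For part 1, assume $g\geq ca^{1/r}$ with $c>0$. Raising both sides to the $r$th power (both sides are positive) gives $g^r\geq c^r a$, that is, $a\leq g^r/c^r$. Multiplying this inequality by the positive quantity $g^{-r+1}$ yields
\begin{align*}
ag^{-r+1}\leq \frac{g^r}{c^r}g^{-r+1}=\frac{g}{c^r},
\end{align*}
which is the claim. Equality $r=1$ is included: then $g^{-r+1}=1$ and the statement reduces to $a\leq g/c$, which is the hypothesis itself.

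For part 2, assume $R\leq (Ca)^{1/r}$ with $C>0$. Raising to the $r$th power gives $R^r\leq Ca$. Multiplying by $R^{-r+1}>0$ gives $R=R^rR^{-r+1}\leq CaR^{-r+1}$, as required. Again $r=1$ is the trivial case $R\leq Ca$.

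There is no real obstacle here. The only point to check is that every quantity involved is strictly positive, so that raising to the power $r$ and multiplying by $g^{-r+1}$ or $R^{-r+1}$ preserve the direction of the inequalities; this is guaranteed by $a,g,R,c,C>0$. The condition $r\geq1$ is not needed for the algebra itself, which works for any $r>0$; it only records the regime in which the lemma will be applied. In that application, $a$ plays the role of a term such as $n^{-1}\epsilon^{-1/2}$ and $R^{-r+1}$ is the polynomial factor from $\Phi_{\mathrm{upper}}^{\mathrm{gof}}$; part 1 shows each term is dominated by the candidate rate $g$, and part 2 shows the reverse comparison in the lower-bound direction.
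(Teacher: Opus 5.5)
Your proposal is correct and matches the paper's proof exactly: raise the hypothesis to the $r$th power to get $a\leq g^r/c^r$ (resp.\ $R^r\leq Ca$), then multiply by the positive factor $g^{-r+1}$ (resp.\ $R^{-r+1}$). Your side remark that the algebra needs only $r>0$, not $r\geq1$, is also accurate.
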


\begin{proof}[Proof of Lemma~\ref{lem:power_domination}]
For the first claim, $g\geq ca^{1/r}$ implies $a\leq g^r/c^r$, and hence
\begin{align*}
ag^{-r+1}\leq\frac{g}{c^r}.
\end{align*}
For the second claim, $R\leq(Ca)^{1/r}$ implies $R^r\leq Ca$, and hence
\begin{align*}
CaR^{-r+1}\geq R.
\end{align*}
\end{proof}

The logarithmic terms at the boundary dimensions can also be directly controlled by $g_{\mathrm{upper}}^{\mathrm{gof}}(n)/c$.

\begin{lemma}\label{lem:log_domination_by_g}
For every $c_{\mathrm{upper}}^{\mathrm{gof}}\geq \max\left\{1,\ell_2^{-1}\right\}$, $n\geq2$, and $\epsilon>0$ satisfying $0<g_{\mathrm{upper}}^{\mathrm{gof}}(n)<e^{-e}$, the following inequalities hold:
\begin{alignat*}{2}
\frac{\log(1/g_{\mathrm{upper}}^{\mathrm{gof}}(n))\log\log(1/g_{\mathrm{upper}}^{\mathrm{gof}}(n))}{\sqrt{n}} &\leq \frac{1}{2c_{\mathrm{upper}}^{\mathrm{gof}}}g_{\mathrm{upper}}^{\mathrm{gof}}(n)&\qquad&\text{if } d=4p,\\[6pt]
\frac{\log^{3/2}(1/g_{\mathrm{upper}}^{\mathrm{gof}}(n))\log\log(1/g_{\mathrm{upper}}^{\mathrm{gof}}(n))}{n^{3/4}\sqrt{\epsilon}} &\leq \left(\frac{15}{e}\right)^{5/2}\frac{g_{\mathrm{upper}}^{\mathrm{gof}}(n)}{c_{\mathrm{upper}}^{\mathrm{gof}}}&\qquad&\text{if } d=4p,\\[6pt]
\frac{\log^{3/2}(1/g_{\mathrm{upper}}^{\mathrm{gof}}(n))\log\log(1/g_{\mathrm{upper}}^{\mathrm{gof}}(n))}{n\sqrt{\epsilon}} &\leq\left(\frac{15}{e}\right)^{5/2}\frac{g_{\mathrm{upper}}^{\mathrm{gof}}(n)}{c_{\mathrm{upper}}^{\mathrm{gof}}}&\qquad&\text{if } d=2p.
\end{alignat*}
\end{lemma}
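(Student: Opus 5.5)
The plan is to write $g\coloneqq g_{\mathrm{upper}}^{\mathrm{gof}}(n)$ and $c\coloneqq c_{\mathrm{upper}}^{\mathrm{gof}}$, and to use only two facts. First, $g$ is bounded below by $c$ times the relevant term in its definition. Second, $1/g$ is at most polynomial in the quantity that term involves. Since $g<e^{-e}$, we have $\log(1/g)>e$ and $\log\log(1/g)>1$, so every logarithmic expression is well defined and increasing in $1/g$.

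\emph{First inequality ($d=4p$).} From the definition, $g\geq c\,n^{-1/2}\ell_n$. Since $c\ell_n\geq c\ell_2\geq 1$, this gives $g\geq n^{-1/2}$, hence $\log(1/g)\leq \tfrac12\log n$. Using monotonicity of $x\mapsto x\log x$ on $x\geq e$,
\begin{align*}
\log(1/g)\log\log(1/g)\leq \tfrac12\log n\,\log\log n\leq \tfrac12\ell_n .
\end{align*}
Dividing by $\sqrt n$ and using $n^{-1/2}\ell_n\leq g/c$ yields the bound $g/(2c)$.

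\emph{Second inequality ($d=4p$).} Here we use $g\geq c(n\sqrt\epsilon)^{-1/2}$, i.e.\ $n\sqrt\epsilon\geq c^2/g^2$. Then
\begin{align*}
n^{3/4}\sqrt\epsilon\geq n^{3/4}\sqrt\epsilon\cdot n^{-1/4}\epsilon^{... }
\end{align*}
would be awkward, so it is cleaner to split as $n^{3/4}\sqrt\epsilon=(n\sqrt\epsilon)\,n^{-1/4}$ and bound the two factors separately. We have $n\sqrt\epsilon\geq c^2g^{-2}$, and from the first step $n^{-1/4}\geq$ is a lower bound we do not have; instead we use $n^{1/4}\leq$ ... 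The cleaner route is as follows. Write $n^{3/4}\sqrt\epsilon=(n\sqrt\epsilon)^{1/2}(n^{1/2}\sqrt\epsilon)^{1/2}$; this is also not helpful. So we use the term $(n\epsilon)^{-1}$ together with the term $n^{-1/2}$. Because $\sqrt{\epsilon}\,n^{3/4}=(n\epsilon)^{1/2}n^{1/4}$, the bounds $n\epsilon\geq c/g$ and $n^{1/4}\geq (c\ell_n/g)^{1/2}\geq (c/g)^{1/2}$ give
\begin{align*}
n^{3/4}\sqrt\epsilon\geq c\,g^{-1}\cdot g^{... }
\end{align*}
More precisely, $n^{3/4}\sqrt\epsilon\geq (c/g)^{1/2}(c/g)^{1/2}=c/g$. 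With $x=\log(1/g)\geq e$ the left-hand side is then at most $(g/c)\,x^{3/2}\log x$. This is not yet of order $g/c$, so this combination is too weak and must be strengthened: I would instead use $n^{1/4}\geq (c/g)^{2}$, which follows from $g\geq c n^{-1/2}$... which only gives $n^{1/4}\geq (c/g)^{1/2}$. The realistic fix is to absorb the polylog using spare power. For instance, $n\sqrt\epsilon\geq c^2g^{-2}$ gives a full extra factor of $g^{-1}$ compared with what is needed, and $n^{3/4}\sqrt\epsilon\geq (n\sqrt\epsilon)^{3/4}\epsilon^{1/8}$ reintroduces $\epsilon$. I would therefore combine the bounds via a weighted geometric mean, $n^{3/4}\sqrt\epsilon=(n\sqrt\epsilon)^{a}(n\epsilon)^{b}n^{c'}$, choosing exponents so that the resulting power of $1/g$ strictly exceeds $1$. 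The leftover factor $g^{\eta}x^{3/2}\log x$ is then bounded by $\sup_{x\geq e}e^{-\eta x}x^{5/2}$, and the choice $\eta=1/6$ produces $(15/e)^{5/2}$.

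\emph{Third inequality ($d=2p$).} Here $g\geq c(n\sqrt\epsilon)^{-1}$, so $n\sqrt\epsilon\geq c/g$. Since the claim itself carries the factor $(15/e)^{5/2}$, the exponent surplus must again come from another term, this time $(n^{3/4}\sqrt\epsilon)^{-1}$ together with $n^{-1/2}$, handled with the same geometric-mean and $\sup_x x^{5/2}e^{-\eta x}$ trick. The main obstacle is finding the exponents that produce a power of $1/g$ strictly above $1$ in the second and third inequalities; I expect this will force using both the $n^{-1/2}$-type term and a privacy term simultaneously.
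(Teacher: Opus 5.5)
Your first inequality is complete and matches the paper's argument: $c\ell_n\ge c\ell_2\ge 1$ gives $g\ge n^{-1/2}$, so $\log(1/g)\log\log(1/g)\le \tfrac12\ell_n$, and then $n^{-1/2}\ell_n\le g/c$ finishes it. For the second and third inequalities, however, the proposal stops at the decisive step. You say you would choose exponents in $n^{3/4}\sqrt{\epsilon}=(n\sqrt{\epsilon})^{a}(n\epsilon)^{b}n^{c'}$ so that the power of $1/g$ exceeds $1$, but you never exhibit them, and you yourself list this as the unresolved obstacle. The only lower bound you actually derive is $n^{3/4}\sqrt{\epsilon}\ge c/g$, which, as you note, leaves an unabsorbed factor $x^{3/2}\log x$. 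So neither bound is proved as written. The abandoned displays containing ``$\epsilon^{\ldots}$'' and ``$g^{\ldots}$'' are not valid steps and should be removed.

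The missing step is short, and it is the one the paper uses. Matching the exponents of $n$ and $\epsilon$ forces $b=\tfrac12-\tfrac a2$ and $c'=\tfrac14-\tfrac a2$, and the resulting power of $1/g$ is $1+\tfrac a2$, so any $a\in(0,\tfrac12]$ works. Take $a=b=\tfrac13$, $c'=\tfrac1{12}$, and use $n\sqrt{\epsilon}\ge c^2/g^2$, $n\epsilon\ge c/g$, $n^{1/2}\ge c\ell_n/g\ge 1/g$. Then
\begin{align*}
n^{3/4}\sqrt{\epsilon}=(n\sqrt{\epsilon})^{1/3}(n\epsilon)^{1/3}n^{1/12}\geq\Bigl(\frac{c}{g}\Bigr)^{2/3}\Bigl(\frac{c}{g}\Bigr)^{1/3}g^{-1/6}=\frac{c}{g}\,g^{-1/6}.
\end{align*}
With $x=\log(1/g)$, the left-hand side is therefore at most $(g/c)\,e^{-x/6}x^{5/2}\le(15/e)^{5/2}g/c$; this is why $\eta=\tfrac16$ reproduces the stated constant.

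This is the paper's bound $g/c\ge\bigl((n\sqrt{\epsilon})^{-1/2}\bigr)^{2/3}\bigl((n\epsilon)^{-1}\bigr)^{1/3}=n^{-2/3}\epsilon^{-1/2}$ with spare factor $n^{1/12}$. The paper absorbs the polylog via $\log^{5/2}(1/g)\le 2^{-5/2}\log^{5/2}n\le(15/e)^{5/2}n^{1/12}$, while you convert $n^{1/12}$ into $g^{-1/6}$. These are the same ingredients used in the opposite order.

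Contrary to your expectation, the $n^{-1/2}$ term is not needed here: $a=\tfrac12$, $b=\tfrac14$, $c'=0$ gives $n^{3/4}\sqrt{\epsilon}\ge(c/g)^{5/4}$ and the constant $(10/e)^{5/2}$. For the third inequality your choice of terms is right: $n\sqrt{\epsilon}=(n^{3/4}\sqrt{\epsilon})\,n^{1/4}\ge(c/g)^{3/2}$. Using $c\ge1$, this gives the constant $(5/e)^{5/2}\le(15/e)^{5/2}$.
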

Consider first any non-logarithmic term in $\Phi_{\mathrm{upper}}^{\mathrm{gof}}(g_{\mathrm{upper}}^{\mathrm{gof}}(n))$. By the definition of $g_{\mathrm{upper}}^{\mathrm{gof}}(n)$, such a term is of the form $ag_{\mathrm{upper}}^{\mathrm{gof}}(n)^{-r+1}$, where $r\geq1$, $a$ is a polynomial order of $n$, and $g_{\mathrm{upper}}^{\mathrm{gof}}(n)\geq c_{\mathrm{upper}}^{\mathrm{gof}}a^{1/r}$. Lemma~\ref{lem:power_domination} therefore shows that each such term is bounded above by $g_{\mathrm{upper}}^{\mathrm{gof}}(n)/c_{\mathrm{upper}}^{\mathrm{gof}}$. Since $\Phi_{\mathrm{upper}}^{\mathrm{gof}}(g_{\mathrm{upper}}^{\mathrm{gof}}(n))$ contains at most four non-logarithmic terms, their sum is bounded above by $4g_{\mathrm{upper}}^{\mathrm{gof}}(n)/c_{\mathrm{upper}}^{\mathrm{gof}}$.

The only remaining terms are the logarithmic terms occurring when $d=4p$ or $d=2p$. By Lemma~\ref{lem:log_domination_by_g}, their total contribution in $\Phi_{\mathrm{upper}}^{\mathrm{gof}}$ is bounded above by
\begin{align*}
\left(\frac{1}{2}+\left(\frac{15}{e}\right)^{5/2}\right)\frac{g_{\mathrm{upper}}^{\mathrm{gof}}(n)}{c_{\mathrm{upper}}^{\mathrm{gof}}}.
\end{align*}
Combining the two bounds yields
\begin{align*}
C^{-1}\Psi_{\mathrm{upper}}^{\mathrm{gof}}(g_{\mathrm{upper}}^{\mathrm{gof}}(n))\leq\left(\frac{9}{2}+\left(\frac{15}{e}\right)^{5/2}\right)\frac{g_{\mathrm{upper}}^{\mathrm{gof}}(n)}{c_{\mathrm{upper}}^{\mathrm{gof}}}.
\end{align*}
Therefore, by choosing $c_{\mathrm{upper}}^{\mathrm{gof}}\geq\max\left\{1,\ell_2^{-1},C\left(\frac{9}{2}+\left(\frac{15}{e}\right)^{5/2}\right)\right\}$ we have
\begin{align*}
\Psi_{\mathrm{upper}}^{\mathrm{gof}}(g_{\mathrm{upper}}^{\mathrm{gof}}(n))\leq g_{\mathrm{upper}}^{\mathrm{gof}}(n).
\end{align*}
Therefore, $C_{\mathrm{upper}}^{\mathrm{gof}}\mathrm{diam}^p([0,1]^d)g_{\mathrm{upper}}^{\mathrm{gof}}(n)\geq \rho_{n,\epsilon}^{\mathrm{gof}}([0,1]^d,p,\alpha,\beta)$ by Theorem~\ref{thm:goodness_of_fit_minimax_upper} since $g_{\mathrm{upper}}^{\mathrm{gof}}(n)\leq 2\gamma^p$.

\paragraph{2. Lower bound for goodness-of-fit testing.} Next, we show the lower bound.

For $c_{\mathrm{lower}}^{\mathrm{gof}}>0$, define
\begin{align*}
g_{\mathrm{lower}}^{\mathrm{gof}}(n)\coloneqq c_{\mathrm{lower}}^{\mathrm{gof}}\left(n^{-\left(\frac{1}{2}\wedge\frac{2p}{d}\right)}\vee(n\sqrt{\epsilon})^{-\left(1\wedge\frac{2p}{d}\right)}\vee(n^{3/4}\sqrt{\epsilon})^{-\left(1\wedge\frac{4p}{d}\right)}\vee(n\epsilon)^{-1}\right)\wedge2\gamma^p
\end{align*}
We show that $C_{\mathrm{lower}}^{\mathrm{gof}}\mathrm{diam}^p([0,1]^d)(g_{\mathrm{lower}}^{\mathrm{gof}}(n)\wedge 2\gamma^p)\leq \rho_{n,\epsilon}^{\mathrm{gof}}([0,1]^d,p,\alpha,\beta)$.

There exists a constant $C^\prime>0$ depending only on $d$ and $p$ such that
\begin{align*}
\Psi_{\mathrm{lower}}^{\mathrm{gof}}(R)\geq C^\prime\left(\frac{R^{-0\vee(d/4p-1)}}{\sqrt{n}}\vee\frac{R^{-0\vee(d/4p-1)}}{n^{3/4}\sqrt{\epsilon}}\vee\frac{R^{-0\vee(d/2p-1)}}{n\sqrt{\epsilon}}\vee\frac{1}{n\epsilon}\right).
\end{align*}
By Lemma~\ref{lem:power_domination}, if
\begin{align*}
x\leq (C^\prime n^{-1/2})^{1\wedge\frac{4p}{d}}\vee\left(\frac{C^\prime}{n^{3/4}\sqrt{\epsilon}}\right)^{1\wedge\frac{4p}{d}}\vee\left(\frac{C^\prime}{n\sqrt{\epsilon}}\right)^{1\wedge\frac{2p}{d}}\vee\frac{C^\prime}{n\epsilon},
\end{align*}
then
\begin{align*}
C^\prime\left(\frac{x^{-0\vee(d/4p-1)}}{\sqrt{n}}\vee\frac{x^{-0\vee(d/4p-1)}}{n^{3/4}\sqrt{\epsilon}}\vee\frac{x^{-0\vee(d/2p-1)}}{n\sqrt{\epsilon}}\vee\frac{1}{n\epsilon}\right)\geq x.
\end{align*}
The upper bound on $x$ can be rewritten as
\begin{align*}
x\leq (C^\prime)^{1\wedge\frac{4p}{d}}n^{-\left(\frac{1}{2}\wedge\frac{2p}{d}\right)}\vee(C^\prime)^{1\wedge\frac{4p}{d}}(n^{3/4}\sqrt{\epsilon})^{-\left(1\wedge\frac{4p}{d}\right)}\vee(C^\prime)^{1\wedge\frac{2p}{d}}(n\sqrt{\epsilon})^{-\left(1\wedge\frac{2p}{d}\right)}\vee C^\prime(n\epsilon)^{-1}.
\end{align*}
Therefore, by choosing $c_{\mathrm{lower}}^{\mathrm{gof}}\coloneqq \min\left\{(C^\prime)^{1\wedge\frac{4p}{d}},(C^\prime)^{1\wedge\frac{2p}{d}},C^\prime\right\}$ we have
\begin{align*}
g_{\mathrm{lower}}^{\mathrm{gof}}(n)\leq (C^\prime n^{-1/2})^{1\wedge\frac{4p}{d}}\vee\left(\frac{C^\prime}{n^{3/4}\sqrt{\epsilon}}\right)^{1\wedge\frac{4p}{d}}\vee\left(\frac{C^\prime}{n\sqrt{\epsilon}}\right)^{1\wedge\frac{2p}{d}}\vee\frac{C^\prime}{n\epsilon}.
\end{align*}
Consequently,
\begin{align*}
\Psi_{\mathrm{lower}}^{\mathrm{gof}}(g_{\mathrm{lower}}^{\mathrm{gof}}(n))\geq g_{\mathrm{lower}}^{\mathrm{gof}}(n).
\end{align*}
By Theorem~\ref{thm:goodness_of_fit_minimax_lower}, we have $C_{\mathrm{lower}}^{\mathrm{gof}}\mathrm{diam}^p([0,1]^d)g_{\mathrm{lower}}^{\mathrm{gof}}(n)\leq \rho_{n,\epsilon}^{\mathrm{gof}}([0,1]^d,p,\alpha,\beta)$ since $g_{\mathrm{lower}}^{\mathrm{gof}}(n)\leq 2\gamma^p$.

\paragraph{3. Upper bound for the two-sample testing.} Next, we show the bounds for two-sample testing. For $R\in(0,e^{-e})$, define
\begin{align*}
&\Phi_{\mathrm{upper}}^{\mathrm{ts}}(R)\\
&\quad\coloneqq \begin{dcases} \frac{R^{-d/4p+1}}{\sqrt{n}}+\frac{R^{-d/2p+1}}{n^{3/4}}+\frac{R^{-d/4p+1}}{n^{3/4}\sqrt{\epsilon}}+\frac{R^{-d/2p+1}}{n\sqrt{\epsilon}}+\frac{1}{n\epsilon}&d>4p,\\[6pt] \frac{\log(1/R)\log\log(1/R)}{\sqrt{n}}+\frac{R^{-d/2p+1}}{n^{3/4}}+\frac{\log^{3/2} (1/R)\log\log(1/R)}{n^{3/4}\sqrt{\epsilon}}+\frac{R^{-1}}{n\sqrt{\epsilon}}+\frac{1}{n\epsilon}&d=4p,\\[6pt] \frac{1}{\sqrt{n}}+\frac{R^{-d/2p+1}}{n^{3/4}}+\frac{1}{n^{3/4}\sqrt{\epsilon}}+\frac{R^{-d/2p+1}}{n\sqrt{\epsilon}}+\frac{1}{n\epsilon}&2p<d<4p,\\[6pt] \frac{1}{\sqrt{n}}+\frac{\log(1/R)\log\log(1/R)}{n^{3/4}}+\frac{1}{n^{3/4}\sqrt{\epsilon}}+\frac{\log^{3/2} (1/R)\log\log(1/R)}{n\sqrt{\epsilon}}+\frac{1}{n\epsilon}&d=2p,\\[6pt] \frac{1}{\sqrt{n}}+\frac{1}{n^{3/4}}+\frac{1}{n^{3/4}\sqrt{\epsilon}}+\frac{1}{n\epsilon}&d<2p. \end{dcases} \end{align*}
Then there exists a constant $C>0$ depending only on $d$ and $p$ such that \begin{align*} C^{-1}\Psi_{\mathrm{upper}}^{\mathrm{ts}}(R)\leq\Phi_{\mathrm{upper}}^{\mathrm{ts}}(R). \end{align*}

For $c_{\mathrm{upper}}^{\mathrm{ts}}>0$, define
\begin{align*}
g_{\mathrm{upper}}^{\mathrm{ts}}(n)\coloneqq c_{\mathrm{upper}}^{\mathrm{ts}}(n^{-\left(\frac{1}{2}\wedge\frac{3p}{2d}\right)}\vee(n\sqrt{\epsilon})^{-\left(1\wedge\frac{2p}{d}\right)}\vee(n^{3/4}\sqrt{\epsilon})^{-\left(1\wedge\frac{4p}{d}\right)}\vee(n\epsilon)^{-1}).
\end{align*}
We show that
\begin{align*}
\left(C_{\mathrm{upper}}^{\mathrm{ts}}\vee e^e\vee \gamma^{-p}/2\right)\mathrm{diam}^p([0,1]^d)\left(g_{\mathrm{upper}}^{\mathrm{ts}}(n)\wedge (e^{-e}\wedge 2\gamma^p)\right)\geq\rho_{n,\epsilon}^{\mathrm{ts}}([0,1]^d,p,\alpha,\beta).
\end{align*}
If $g_{\mathrm{upper}}^{\mathrm{ts}}(n)\geq e^{-e}\wedge 2\cdot 3^{-p}$, the desired result follows trivially since
\begin{align*}
\left(C_{\mathrm{upper}}^{\mathrm{ts}}\vee e^e\vee \gamma^{-p}/2\right)\mathrm{diam}^p([0,1]^d)\left(g_{\mathrm{upper}}^{\mathrm{ts}}(n)\wedge (e^{-e}\wedge 2\gamma^p)\right)&\geq \mathrm{diam}^p([0,1]^d)\\
&\geq\rho_{n,\epsilon}^{\mathrm{ts}}([0,1]^d,p,\alpha,\beta).
\end{align*}
We therefore consider the nontrivial regime $g_{\mathrm{upper}}^{\mathrm{ts}}(n)\in(0,e^{-e}\wedge 2\gamma^p)$.

The following lemma controls the logarithmic terms appearing at the boundary dimensions.
\begin{lemma}\label{lem:ts_log_domination}
Suppose that $c_{\mathrm{upper}}^{\mathrm{ts}}\geq1$, $n\geq2$, $\epsilon>0$, and $0<g_{\mathrm{upper}}^{\mathrm{ts}}(n)<e^{-e}$. Then
\begin{align*}
\frac{\log(1/g_{\mathrm{upper}}^{\mathrm{ts}}(n))\log\log(1/g_{\mathrm{upper}}^{\mathrm{ts}}(n))}{\sqrt{n}} &\leq \left(\frac{15}{e}\right)^{5/2}\frac{g_{\mathrm{upper}}^{\mathrm{ts}}(n)}{c_{\mathrm{upper}}^{\mathrm{ts}}}\qquad\textrm{if }d=4p,\\
\frac{\log^{3/2}(1/g_{\mathrm{upper}}^{\mathrm{ts}}(n))\log\log(1/g_{\mathrm{upper}}^{\mathrm{ts}}(n))}{n^{3/4}\sqrt{\epsilon}} &\leq \left(\frac{15}{e}\right)^{5/2}\frac{g_{\mathrm{upper}}^{\mathrm{ts}}(n)}{c_{\mathrm{upper}}^{\mathrm{ts}}}\qquad\textrm{if }d=4p,\\
\frac{\log(1/g_{\mathrm{upper}}^{\mathrm{ts}}(n))\log\log(1/g_{\mathrm{upper}}^{\mathrm{ts}}(n))}{n^{3/4}} &\leq \left(\frac{15}{e}\right)^{5/2}\frac{g_{\mathrm{upper}}^{\mathrm{ts}}(n)}{c_{\mathrm{upper}}^{\mathrm{ts}}}\qquad\textrm{if }d=2p,\\
\frac{\log^{3/2}(1/g_{\mathrm{upper}}^{\mathrm{ts}}(n))\log\log(1/g_{\mathrm{upper}}^{\mathrm{ts}}(n))}{n\sqrt{\epsilon}} &\leq \left(\frac{15}{e}\right)^{5/2}\frac{g_{\mathrm{upper}}^{\mathrm{ts}}(n)}{c_{\mathrm{upper}}^{\mathrm{ts}}}\qquad\textrm{if }d=2p.
\end{align*}
\end{lemma}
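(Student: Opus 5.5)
The approach is to reduce each inequality to an elementary bound on a function of one variable. Write $g\coloneqq g_{\mathrm{upper}}^{\mathrm{ts}}(n)$ and $c\coloneqq c_{\mathrm{upper}}^{\mathrm{ts}}$, and set $x\coloneqq \log(1/g)$. The hypothesis $g<e^{-e}$ gives $x>e$, so $\log x>1$. In each case I would use the definition of $g$ to pick out one term that bounds $g$ from below. That bound ties the polynomial prefactor in the inequality to a power of $g/c$.

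\emph{Case $d=4p$.} Here $\frac12\wedge\frac{3p}{2d}=\frac38$ and $1\wedge\frac{4p}{d}=1$. Hence $g\geq c\,n^{-3/8}$ and $g\geq c\,(n^{3/4}\sqrt{\epsilon})^{-1}$.
\begin{itemize}
\item For the first inequality, the bound $n\geq (c/g)^{8/3}$ gives $1/\sqrt n\leq (g/c)^{4/3}$. It then suffices to show $x\log x\, g^{1/3}\leq (15/e)^{5/2}$. Since $c\geq 1$ we have $(g/c)^{4/3}\leq (g/c)\,g^{1/3}$, so this is enough. Substituting $g^{1/3}=e^{-x/3}$, I need $\sup_{x>e} x\log x\,e^{-x/3}\leq (15/e)^{5/2}$.
\item For the second inequality, $1/(n^{3/4}\sqrt\epsilon)\leq g/c$ directly. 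This leaves $x^{3/2}\log x$ unbounded, which is a problem.
\end{itemize}

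So I would not use the privacy term alone in the second inequality. I would combine the two lower bounds: $n^{3/4}\sqrt\epsilon = n^{3/4}\sqrt{\epsilon}$ is bounded below by $c/g$, and separately $n\geq (c/g)^{8/3}$. The difficulty is that the $\epsilon$-term alone cannot absorb a logarithm. I would examine whether $\epsilon$ can be eliminated through the $(n\sqrt\epsilon)^{-1/2}$ term, using $g\geq c(n\sqrt\epsilon)^{-1/2}$, i.e.\ $\sqrt\epsilon\geq c^2/(n g^2)$. Then
\[
\frac{1}{n^{3/4}\sqrt\epsilon}\leq \frac{n^{1/4} g^2}{c^2}.
\]
This is still bad for large $n$. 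So I would interpolate between the two bounds via the geometric mean $\min(a,b)\leq a^{\theta}b^{1-\theta}$. The goal is a small extra power of $g$, namely $(g/c)\,g^{\eta}$, which then absorbs $x^{3/2}\log x$ through $\sup_{x} x^{3/2}\log x\, e^{-\eta x}$. The constant $(15/e)^{5/2}$ suggests $\eta=2/15$ or similar, with exponent $5/2$. A bound of the form $x^{5/2}e^{-\eta x}\leq (5/(2e\eta))^{5/2}$, using $\log x\leq x$, would give exactly $(15/e)^{5/2}$ when $\eta=1/6$.

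\emph{Case $d=2p$.} The exponents are $\frac12\wedge\frac34=\frac12$, then $1\wedge 1=1$ for $n\sqrt\epsilon$, and $1\wedge 2=1$. The relevant lower bounds are $g\geq c\,n^{-1/2}$ and $g\geq c\,(n\sqrt\epsilon)^{-1}$.
\begin{itemize}
\item For the third inequality, $1/n^{3/4}\leq (g/c)^{3/2}\leq (g/c)\,g^{1/2}$. It remains to bound $\sup_{x>e}x\log x\,e^{-x/2}$.
\item For the fourth inequality, I would again interpolate. Note that $1/(n\sqrt\epsilon)\leq g/c$ and that $n^{-1/2}\leq g/c$. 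Also $\frac{1}{n\sqrt\epsilon}\leq \frac{1}{n^{3/4}\sqrt\epsilon}\leq g/c$, which contributes nothing new. The key is that $\frac{1}{n\sqrt{\epsilon}}=\frac{1}{n^{3/4}\sqrt\epsilon}\cdot n^{-1/4}\leq (g/c)\,(g/c)^{1/2}$. This gives the extra factor $g^{1/2}$, and the sup of $x^{5/2}e^{-x/2}$ is $(5/e)^{5/2}\leq (15/e)^{5/2}$.
\end{itemize}

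Returning to the second inequality with the same trick: there is no spare factor of $n$. Instead I would use $g\geq c\,n^{-3/8}$ together with the bound coming from the term $(n\sqrt\epsilon)^{-1/2}$. Combining the two, and possibly using $\epsilon$-free bounds when $\epsilon$ is large, I expect to obtain an extra power $g^{1/6}$. The constant then comes out as $\sup x^{5/2}e^{-x/6}=(15/e)^{5/2}$, which matches the stated constant.

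The main obstacle is this second inequality: locating the right combination of terms that yields a power $g^{1+1/6}$. The remaining inequalities are calculus on $x^{k}e^{-\eta x}$.
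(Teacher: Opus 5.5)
Your arguments for the first, third and fourth inequalities are correct. You convert the spare factor ($n^{-1/8}$ or $n^{-1/4}$) into a power of $g$ and then maximize $x^{k}e^{-\eta x}$. This is equivalent to the paper's route, which bounds the logarithmic factor by $(15/e)^{5/2}n^{1/12}$ using $\log(1/g)\le\frac12\log n$ and then extracts a factor $n^{-1/12}$ from the polynomial prefactor.

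The gap is the second inequality. You leave it open, and the route you sketch cannot succeed. It uses only the lower bounds $g/c\ge n^{-3/8}$, $g/c\ge (n\sqrt\epsilon)^{-1/2}$ and $g/c\ge (n^{3/4}\sqrt\epsilon)^{-1}$, and these do not imply the inequality. Take $c=1$, $\epsilon=1/n$ and $g=n^{-1/4}$. All three bounds hold, and $g<e^{-e}$ for large $n$. Yet the second inequality would then require $x^{3/2}\log x\le (15/e)^{5/2}$ with $x=\frac14\log n\to\infty$. Likewise, interpolating $\frac{1}{n^{3/4}\sqrt\epsilon}\le g/c$ with $\frac{1}{n^{3/4}\sqrt\epsilon}\le n^{1/4}g^2/c^2$ always leaves a positive power of $n$. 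That is the wrong direction, since the definition of $g$ gives you only lower bounds on $n$. What excludes the example is the term $(n\epsilon)^{-1}$ in $g$, which your proposal never uses.

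The missing step is a weighted geometric mean of the two pure-privacy terms:
\[
\frac{g}{c}\ \ge\ (n\sqrt\epsilon)^{-1/2}\vee(n\epsilon)^{-1}\ \ge\ \bigl((n\sqrt\epsilon)^{-1/2}\bigr)^{2/3}\bigl((n\epsilon)^{-1}\bigr)^{1/3}=\frac{1}{n^{2/3}\sqrt\epsilon}.
\]
This gives $\frac{1}{n^{3/4}\sqrt\epsilon}\le \frac{g}{c}\,n^{-1/12}$. Since $g\ge c\,n^{-3/8}\ge n^{-1/2}$, you have $n^{-1/12}\le g^{1/6}=e^{-x/6}$. Then
\[
x^{3/2}\log x\,e^{-x/6}\le x^{5/2}e^{-x/6}\le (15/e)^{5/2},
\]
which is exactly the exponent $\eta=1/6$ you inferred from the constant.
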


We now bound all non-logarithmic terms simultaneously. For convenience, denote $q_1\coloneqq 1\wedge\frac{4p}{d}$ and $q_2\coloneqq 1\wedge\frac{2p}{d}$. Also, denote
\begin{align*}
a_1=\frac{1}{\sqrt{n}},\qquad a_2=\frac{1}{n^{3/4}},\qquad a_3=\frac{1}{n^{3/4}\sqrt{\epsilon}},\qquad a_4=\frac{1}{n\sqrt{\epsilon}},\qquad a_5=\frac{1}{n\epsilon}.
\end{align*}
Then
\begin{align*}
g_{\mathrm{upper}}^{\mathrm{ts}}(n)\geq c_{\mathrm{upper}}^{\mathrm{ts}}(a_1^{q_1}\vee a_2^{q_2}\vee a_3^{q_1}\vee a_4^{q_2}\vee a_5).
\end{align*}
By Lemma~\ref{lem:power_domination}, every non-logarithmic term appearing in $\Phi_{\mathrm{upper}}^{\mathrm{ts}}(g_{\mathrm{upper}}^{\mathrm{ts}}(n))$ is bounded above by $g_{\mathrm{upper}}^{\mathrm{ts}}(n)/c_{\mathrm{upper}}^{\mathrm{ts}}$.

Denote
\begin{align*}
D_{\mathrm{ts}}=5+\left(\frac{6}{e}\right)^2+2\left(\frac{15}{e}\right)^{5/2}.
\end{align*}
Since $\Phi_{\mathrm{upper}}^{\mathrm{ts}}(R)$ contains at most five non-logarithmic terms and its logarithmic terms are controlled by Lemma~\ref{lem:ts_log_domination}, we obtain
\begin{align*}
\Phi_{\mathrm{upper}}^{\mathrm{ts}}(g_{\mathrm{upper}}^{\mathrm{ts}}(n))\leq\frac{D_{\mathrm{ts}}}{c_{\mathrm{upper}}^{\mathrm{ts}}}g_{\mathrm{upper}}^{\mathrm{ts}}(n).
\end{align*}
Since
\begin{align*}
C^{-1}\Psi_{\mathrm{upper}}^{\mathrm{ts}}(g_{\mathrm{upper}}^{\mathrm{ts}}(n))\leq\Phi_{\mathrm{upper}}^{\mathrm{ts}}(g_{\mathrm{upper}}^{\mathrm{ts}}(n)),
\end{align*}
we have
\begin{align*}
\Psi_{\mathrm{upper}}^{\mathrm{ts}}(g_{\mathrm{upper}}^{\mathrm{ts}}(n))\leq\frac{CD_{\mathrm{ts}}}{c_{\mathrm{upper}}^{\mathrm{ts}}}g_{\mathrm{upper}}^{\mathrm{ts}}(n).
\end{align*}
Therefore, choosing $c_{\mathrm{upper}}^{\mathrm{ts}}\geq\max\left\{1,CD_{\mathrm{ts}}\right\}$ gives
\begin{align*}
\Psi_{\mathrm{upper}}^{\mathrm{ts}}(g_{\mathrm{upper}}^{\mathrm{ts}}(n))\leq g_{\mathrm{upper}}^{\mathrm{ts}}(n).
\end{align*}
Therefore, $C_{\mathrm{upper}}^{\mathrm{ts}}\mathrm{diam}^p([0,1]^d)g_{\mathrm{upper}}^{\mathrm{ts}}(n)\geq \rho_{n,\epsilon}^{\mathrm{ts}}([0,1]^d,p,\alpha,\beta)$, by Theorem~\ref{thm:two_sample_minimax_upper} since $g_{\mathrm{upper}}^{\mathrm{ts}}(n)\leq 2\gamma^p$.

\paragraph{4. Lower bound for the two-sample testing.} Next, we show the lower bound. There exists a constant $C^\prime>0$ depending only on $d$ and $p$ such that
\begin{align*}
\Psi_{\mathrm{lower}}^{\mathrm{ts}}(R)\geq C^\prime\left(\frac{R^{-0\vee(d/4p-1)}}{\sqrt{n}}\vee\frac{R^{-0\vee(d/2p-1)}}{n^{3/4}}\vee\frac{R^{-0\vee(d/4p-1)}}{n^{3/4}\sqrt{\epsilon}}\vee\frac{R^{-0\vee(d/2p-1)}}{n\sqrt{\epsilon}}\vee\frac{1}{n\epsilon}\right).
\end{align*}
Define
\begin{align*}
g_{\mathrm{lower}}^{\mathrm{ts}}(n)\coloneqq c_{\mathrm{lower}}^{\mathrm{ts}}(n^{-\left(\frac{1}{2}\wedge\frac{3p}{2d}\right)}\vee(n\sqrt{\epsilon})^{-\left(1\wedge\frac{2p}{d}\right)}\vee(n^{3/4}\sqrt{\epsilon})^{-\left(1\wedge\frac{4p}{d}\right)}\vee(n\epsilon)^{-1})\wedge 2\gamma^p.
\end{align*}
We show that $C_{\mathrm{lower}}^{\mathrm{ts}}\mathrm{diam}^p([0,1]^d)g_{\mathrm{lower}}^{\mathrm{ts}}(n)\leq \rho_{n,\epsilon}^{\mathrm{ts}}([0,1]^d,p,\alpha,\beta)$. Choose
\begin{align*}
c_{\mathrm{lower}}^{\mathrm{ts}}=\min\left\{(C^\prime)^{q_1},(C^\prime)^{q_2},C^\prime\right\}.
\end{align*}
Then
\begin{align*}
g_{\mathrm{lower}}^{\mathrm{ts}}(n)\leq(C^\prime a_1)^{q_1}\vee(C^\prime a_2)^{q_2}\vee(C^\prime a_3)^{q_1}\vee(C^\prime a_4)^{q_2}\vee C^\prime a_5.
\end{align*}
Applying the second part of Lemma~\ref{lem:power_domination} to the term attaining the maximum on the right-hand side gives
\begin{align*}
C^\prime\left(a_1g_{\mathrm{lower}}^{\mathrm{ts}}(n)^{1-1/q_1}\vee a_2g_{\mathrm{lower}}^{\mathrm{ts}}(n)^{1-1/q_2}\vee a_3g_{\mathrm{lower}}^{\mathrm{ts}}(n)^{1-1/q_1}\vee a_4g_{\mathrm{lower}}^{\mathrm{ts}}(n)^{1-1/q_2}\vee a_5\right)\geq g_{\mathrm{lower}}^{\mathrm{ts}}(n).
\end{align*}
Since $1/q_1-1=0\vee(d/(4p)-1)$ and $1/q_2-1=0\vee(d/(2p)-1)$, it follows that
\begin{align*}
g_{\mathrm{lower}}^{\mathrm{ts}}(n)\leq\Psi_{\mathrm{lower}}^{\mathrm{ts}}(g_{\mathrm{lower}}^{\mathrm{ts}}(n)).
\end{align*}
Therefore, $C_{\mathrm{lower}}^{\mathrm{ts}}\mathrm{diam}^p([0,1]^d)g_{\mathrm{lower}}^{\mathrm{ts}}(n)\leq \rho_{n,\epsilon}^{\mathrm{ts}}([0,1]^d,p,\alpha,\beta)$ by Theorem~\ref{thm:two_sample_minimax_lower} since $g_{\mathrm{lower}}^{\mathrm{ts}}(n)\leq 2\gamma^p$.

\subsection{Proof of Lemma~\ref{lem:trunc_Wp}}\label{appendix:trunc_Wp}
We use the following facts, which hold for any $\pi \in \Pi(P,Q)$.
\begin{align}
    \int_{B(L)\times B(L)^c}d^p(x,y)d\pi(x,y)&= \sum_{l\geq 1}\int_{B(L)}\int_{B((l+1)L)\cap B(lL)^c}d^p(x,y)d\pi(x,y)\notag\\
    &\leq\sum_{l\geq 1}\int_{B(L)}\int_{B((l+1)L)\cap B(lL)^c}(l+2)^pL^pd\pi(x,y)\notag\\
    &\leq\sum_{l\geq 1}(l+2)^pL^pQ(B(lL)^c)\notag\\
    &\leq\sum_{l\geq 1}(l+2)^pL^p\left(c_1e^{-c_2lL}\right)\notag\\
    &\leq\sum_{l\geq 1}3^{pl}L^p\left(c_1e^{-c_2lL}\right)\notag\\
    &= \frac{c_1(3L)^pe^{-c_2L}}{1-3^pe^{-c_2L}}\label{eq:tail_approx1},
\end{align}
since $l+2\leq 3^l$. Also,
\begin{align}
    \int_{B(L)^c\times B(L)^c}d^p(x,y)d\pi(x,y)&= \sum_{k,l\geq 1}\int_{B((k+1)L)\cap B(kL)^c}\int_{B((l+1)L)\cap B(lL)^c}d^p(x,y)d\pi(x,y)\notag\\
    &\leq\frac{2c_1(6L)^pe^{-c_2L}}{1-3^pe^{-c_2L}}\label{eq:tail_approx2},
\end{align}
since
\begin{align*}
    &\sum_{k\geq l\geq 1}\int_{B((k+1)L)\cap B(kL)^c}\int_{B((l+1)L)\cap B(lL)^c}d^p(x,y)d\pi(x,y)\\
    &\leq\sum_{k\geq 1}\int_{B((k+1)L)\cap B(kL)^c}\int_{B((k+1)L)\cap B(L)^c}d^p(x,y)d\pi(x,y)\\
    &\leq\sum_{k\geq 1}\int_{B((k+1)L)\cap B(kL)^c}\int_{B((k+1)L)\cap B(L)^c}(2k+2)^pL^pd\pi(x,y)\\
    &\leq\sum_{k\geq 1}(2k+2)^pL^p P(B(kL)^c)\\
    &\leq\sum_{k\geq 1}(2k+2)^pL^p \left(c_1e^{-c_2kL}\right)\\
    &\leq\sum_{k\geq 1}3^{pk}(2L)^p \left(c_1e^{-c_2kL}\right)\\
    &= \frac{c_1(6L)^pe^{-c_2L}}{1-3^pe^{-c_2L}}.
\end{align*}
We show the bound by applying \eqref{eq:tail_approx1} and~\eqref{eq:tail_approx2}.
\begin{align*}
    W_p^p(P,Q)&= \inf_{\pi\in\Pi(P,Q)}\int_{\mathcal{X}\times\mathcal{X}}d^p(x,y)d\pi(x,y)\\
    &\leq\inf_{\pi\in\Pi(P,Q)}\int_{B(L)\times B(L)}d^p(x,y)d\pi(x,y)+\int_{B(L)\times B(L)^c}d^p(x,y)d\pi(x,y) \\
    &+\int_{B(L)^c\times B(L)}d^p(x,y)d\pi(x,y)
    +\int_{B(L)^c\times B(L)^c}d^p(x,y)d\pi(x,y)\\
    &\leq\inf_{\pi\in\Pi(P,Q)}\int_{B(L)\times B(L)}d^p(x,y)d\pi(x,y)+\frac{2c_1(3L)^pe^{-c_2L}}{1-3^pe^{-c_2L}}+\frac{2c_1(6L)^pe^{-c_2L}}{1-3^pe^{-c_2L}}\\
    &\leq P(B(L))Q(B(L))\inf_{\pi_L\in\Pi(P_L,Q_L)}\int_{B(L)\times B(L)}d^p(x,y)d\pi_L(x,y)+\frac{4c_1(6L)^pe^{-c_2L}}{1-3^pe^{-c_2L}}\\
    &= P(B(L))Q(B(L))W_p^p(P_L,Q_L)+\frac{4c_1(6L)^pe^{-c_2L}}{1-3^pe^{-c_2L}}.
\end{align*}
The last inequality holds since for every $\pi_L\in\Pi(P_L,Q_L)$, the measure $P(B(L))Q(B(L))\pi_L$ can be extended to a coupling $\pi\in\Pi(P,Q)$ without assigning any additional mass to $B(L)\times B(L)$. For example, given $\pi_L$, define
\begin{align*}
\pi(A)&=P(B(L))Q(B(L))\pi_L(A\cap (B(L)\times B(L)))+P\otimes Q(A\cap (B(L)\times B(L)^c))\\
&+P\otimes Q(A\cap (B(L)^c\times B(L)))+P\otimes Q(A\cap (B(L)^c\times B(L)^c)).
\end{align*}
Then for a Borel subset $A_1\subset\mathbb{R}^d$,
\begin{align*}
    \pi(A_1\times\mathbb{R}^d)&=P(B(L))Q(B(L))\pi_L((A_1\cap B(L))\times B(L))+P(A_1\cap B(L))Q(B(L)^c)\\
&+P(A_1\cap B(L)^c) Q(B(L))+P(A_1\cap B(L)^c)Q(B(L)^c).
\end{align*}
Since
\begin{align*}
    P(B(L))Q(B(L))\pi_L((A_1\cap B(L))\times B(L))&=P(B(L))Q(B(L))P_L(A_1\cap B(L))\\
    &=P(A_1\cap B(L))Q(B(L))\\
    P(A_1\cap B(L))Q(B(L)^c)+P(A_1\cap B(L)^c)Q(B(L)^c)&=P(A_1)Q(B(L)^c),
\end{align*}
we have
\begin{align*}
    \pi(A_1\times\mathbb{R}^d)&=P(A_1\cap B(L))Q(B(L))+P(A_1)Q(B(L)^c)+P(A_1\cap B(L)^c) Q(B(L))=P(A_1).
\end{align*}
Similarly, $\pi(\mathbb{R}^d\times A_1)=Q(A_1)$ so $\pi\in\Pi(P,Q)$.
\subsection{Proof of Theorem~\ref{thm:minimax_unbounded}}\label{appendix:minimax_unbounded}
For the proof, we consider the goodness-of-fit testing. An analogous argument proves the result for two-sample testing as well. 

\paragraph{Upper bound.} Choose $L=O(\log n)$ so that
\begin{align*}
    \frac{4c_1(6L)^pe^{-c_2L}}{1-3^pe^{-c_2L}}&\leq\rho_{n,\epsilon}^{\mathrm{gof}}(B(L),p,\alpha/2,\beta/2),\\
    nc_1e^{-c_2L}&\leq (\alpha\wedge\beta)/2.
\end{align*}
For every $P_{0L}\in \mathcal{P}(B(L))$, there exists an $\epsilon$-DP goodness-of-fit tester $\phi_{P_{0L}}$ on $B(L)$ achieving Type I and Type II errors at most $\alpha/2$ and $\beta/2$ respectively for the separation radius $2\rho_{n,\epsilon}^{\mathrm{gof}}(B(L),p,\alpha/2,\beta/2)$. Given $P_0$, we define the $\epsilon$-DP goodness-of-fit tester $\phi_{P_0}$ as follows:
\begin{align*}
    \phi_{P_0}(X_1,\ldots,X_n)=\phi_{P_{0L}}(\mathrm{clip}(X_1;L),\ldots,\mathrm{clip}(X_n;L)).
\end{align*}
Since clipping is applied separately to each observation and preserves adjacency, $\phi_{P_0}$ is $\epsilon$-DP. We show that $\phi_{P_0}$ can test
\begin{align*}
    H_0:P=P_0\textrm{ vs }H_1:W_p^p(P,P_0)\geq3\rho_{n,\epsilon}^{\mathrm{gof}}(B(L),p,\alpha/2,\beta/2),
\end{align*}
with Type I and Type II errors at most $\alpha$ and $\beta$, respectively.

Let $E$ be an event that no clipping occurs:
\begin{align*}
    E\coloneqq \{X_i\in B(L)\textrm{ for }i=1,\ldots,n\}.
\end{align*}
Conditional on $E$, $X_1,\ldots,X_n$ remain independent and each has distribution $P_L$. Also, $\mathbb{P}(E^c)\leq (\alpha\wedge\beta)/2$ by the choice of $L$.

Under $H_0$, $P_L=P_{0L}$, so $\phi_{P_0}$ achieves Type I error at most $\alpha/2$ given $E$. Thus, the Type I error of $\phi_{P_0}$ is
\begin{align*}
    \mathbb{P}(\phi_{P_0}(X_1,\ldots,X_n)=1|H_0)&= \mathbb{P}(\phi_{P_0}(X_1,\ldots,X_n)=1|H_0,E)\mathbb{P}(E)+\mathbb{P}(\phi_{P_0}(X_1,\ldots,X_n)=1,E^c|H_0)\\
    &\leq\alpha/2+\mathbb{P}(E^c)\\
    &\leq\alpha.
\end{align*}
Under the alternative hypothesis, Lemma~\ref{lem:trunc_Wp} implies
\begin{align*}
    W_p^p(P_L,P_{0L})&\geq P(B(L))P_0(B(L))W_p^p(P_L,P_{0L})\\
    &\geq W_p^p(P,P_{0})- \frac{4c_1(6L)^pe^{-c_2L}}{1-3^pe^{-c_2L}}\\
    &\geq 2\rho_{n,\epsilon}^{\mathrm{gof}}(B(L),p,\alpha/2,\beta/2).
\end{align*}
Thus, $\phi_{P_{0L}}$ achieves Type II error at most $\beta/2$ given $E$. The same decomposition shows that the unconditional Type II error is at most $\beta$. Therefore,
\begin{align*}
    \rho_{n,\epsilon}^{\mathrm{gof}}(\mathbb{R}^d,p,c_1,c_2,\alpha,\beta)\lesssim\rho_{n,\epsilon}^{\mathrm{gof}}(B(L),p,\alpha/2,\beta/2).
\end{align*}
Finally, since $B(L)\subset[-L,L]^d$, a goodness-of-fit tester on $[-L,L]^d$ can also be applied to distributions supported on $B(L)$. Hence,
\begin{align*}
\rho_{n,\epsilon}^{\mathrm{gof}}(B(L),p,\alpha/2,\beta/2)\leq \rho_{n,\epsilon}^{\mathrm{gof}}([-L,L]^d,p,\alpha/2,\beta/2).
\end{align*}
By scaling and translation,
\begin{align*}
\rho_{n,\epsilon}^{\mathrm{gof}}([-L,L]^d,p,\alpha/2,\beta/2)=(2L)^p\rho_{n,\epsilon}^{\mathrm{gof}}([0,1]^d,p,\alpha/2,\beta/2).
\end{align*}
Since $L=O(\log n)$, the desired upper bound follows.

\paragraph{Lower bound.} We show the lower bound for goodness-of-fit testing. To show the bound for exponentially decaying distributions, we use a slightly modified version of Lemma~\ref{lem:grid}. Denote $\boldsymbol{0}_d\coloneqq (0,\ldots,0)\in\mathbb{R}^d$.
\begin{lemma}\label{lem:grid_2}
Let $k$ be a positive integer, and let $x_1,\ldots,x_k$ satisfy $x_k=\boldsymbol{0}_d$ and
\begin{align*}
    d(x_i,x_{i^\prime})\geq h,\quad i\not=i^\prime.
\end{align*}
For $u\in\mathcal{P}([k-1])$, define
\begin{align*}
    S_hu\coloneqq \frac{1}{2}\delta_{\boldsymbol{0}_d}+\frac{1}{2}\sum_{i=1}^{k-1}u_i\delta_{x_i}.
\end{align*}
Then for all $u,v\in\mathcal{P}([k-1])$,
\begin{align*}
    W_p^p(S_hu,S_hv)\geq \frac{1}{4}h^p\lVert u-v\rVert_1.
\end{align*}
\end{lemma}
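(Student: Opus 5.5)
The plan is to imitate the proof of Lemma~\ref{lem:grid}, using the fact that $S_hu$ and $S_hv$ share the same atom $\tfrac12\delta_{\boldsymbol{0}_d}$. First observe that $S_hu$ is exactly $T_h$ applied to a discrete distribution on $[k]$. Define $\tilde u\in\mathcal{P}([k])$ by $\tilde u_i\coloneqq u_i/2$ for $i\le k-1$ and $\tilde u_k\coloneqq 1/2$, and define $\tilde v$ analogously. Since $x_k=\boldsymbol{0}_d$, we have $S_hu=T_h\tilde u$ and $S_hv=T_h\tilde v$, where $T_h$ is built from the points $x_1,\ldots,x_k$. These points are $h$-separated by hypothesis, so the assumptions of Lemma~\ref{lem:grid} hold.

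Applying Lemma~\ref{lem:grid} gives
\begin{align*}
W_p^p(S_hu,S_hv)=W_p^p(T_h\tilde u,T_h\tilde v)\geq \frac12 h^p\lVert \tilde u-\tilde v\rVert_1 .
\end{align*}
It remains to compute $\lVert\tilde u-\tilde v\rVert_1$. The $k$th coordinates agree, since both equal $1/2$. For $i\le k-1$, the coordinates differ by $|u_i-v_i|/2$. Summing, $\lVert\tilde u-\tilde v\rVert_1=\tfrac12\lVert u-v\rVert_1$, and therefore
\begin{align*}
W_p^p(S_hu,S_hv)\geq \frac14 h^p\lVert u-v\rVert_1 .
\end{align*}

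Every step is routine. The only point needing care is checking that the extended vectors $\tilde u,\tilde v$ are genuine probability vectors on $[k]$, which is immediate, and that the separation hypothesis also covers pairs involving $x_k=\boldsymbol{0}_d$, which is part of the assumption.
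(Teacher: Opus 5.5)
Your proposal is correct and is essentially the paper's argument: the paper omits the proof as ``almost identical to that of Lemma~\ref{lem:grid}''. Your embedding $u\mapsto\tilde u$ with $\tilde u_k=1/2$ gives $S_hu=T_h\tilde u$ and $\lVert\tilde u-\tilde v\rVert_1=\tfrac12\lVert u-v\rVert_1$, so it derives the statement directly from Lemma~\ref{lem:grid} instead of repeating its coupling argument, with the separation of each $x_i$ from $x_k$ being exactly the hypothesis you need.
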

The proof is omitted, as it is almost identical to that of Lemma~\ref{lem:grid}.

Denote $r_0=c_2^{-1}\log(2c_1)$ and $B\coloneqq B(r_0)\backslash B(r_0/2)$. For a positive integer $j$, denote $h_j^B\coloneqq \mathrm{diam}(B)\gamma^j/6$. We choose an $h_j^B$-separated set $\{x_i^{(j)}\}_{i=1}^{N_j^B}$ of size $N_j^B\coloneqq \mathcal{N}(B,\mathrm{diam}(B)\gamma^j/3)$ in $B$, which exists by Lemma~\ref{lem:sep_cover}. That is,
\begin{align*}
    \lVert x_i^{(j)}-x_{i^\prime}^{(j)}\rVert_1\geq h_j^B
\end{align*}
holds for every $i\neq i^\prime$. Let $x_{N_j^B+1}^{(j)}=\mathbf{0}\in[0,1]^d$. Then
\begin{align*}
    \lVert x_i^{(j)}-x_{N_j^B+1}^{(j)}\rVert_1\geq \frac{r_0}{2}\geq \frac{\mathrm{diam}(B)}{4}>h_j^B.
\end{align*}
Thus, by Lemma~\ref{lem:grid_2}, for $u,v\in\mathcal{P}([N_j^B])$, the following holds.
\begin{align*}
    W_p^p(S_{h_j^B}u,S_{h_j^B}v)\geq\frac{1}{4}h_j^{Bp}\lVert u-v\rVert_1.
\end{align*}
We show that $S_{h_j^B}$ embeds $\mathcal{P}([N_j^B])$ into $\mathcal{P}(\mathbb{R}^d,c_1,c_2)$. For $u\in\mathcal{P}([N_j^B])$, suppose $r\leq r_0$. Then,
\begin{align*}
    S_{h_j^B}u(B(r)^c)\leq \frac{1}{2}=c_1e^{-c_2r_0}\leq c_1e^{-c_2r}.
\end{align*}
If $r\geq r_0$ then $S_{h_j^B}u(B(r_0)^c)=0$ since the support of $S_{h_j^B}u$ is contained in $B(r_0)$. Thus, $S_{h_j^B}u\in\mathcal{P}(\mathbb{R}^d,c_1,c_2)$.

Now, let $u_0$ be a uniform distribution on $[N_j^B]$. For $R>0$, suppose there exists an $\epsilon$-DP goodness-of-fit test $\phi$ with Type I and Type II errors at most $\alpha$ and $\beta$, respectively, for the following Wasserstein goodness-of-fit testing problem.
\begin{align*}
    H_0:P=S_{h_j^B}u_0\textrm{ vs }H_1:W_p^p(S_{h_j^B}u,S_{h_j^B}u_0)\geq R.
\end{align*}
Next, let $Z_1,\ldots,Z_n\overset{\textrm{i.i.d.}}{\sim}u$ for some $u\in\mathcal{P}([N_j^B])$ and consider the following $\epsilon$-DP goodness-of-fit testing problem on $[N_j^B]$.
\begin{align}\label{eq:reduced_l1_gof_test_unbounded}
    H_0:u=u_0\textrm{ vs }H_1:\lVert u-u_0\rVert_1\geq 4h_j^{B-p}R.
\end{align}
Next, generate $A_1,\ldots,A_n\overset{\textrm{i.i.d.}}{\sim}\mathrm{Bernoulli}(1/2)$ independently from $Z_1,\ldots,Z_n$ and define
\begin{align*}
Y_i\coloneqq \begin{cases}
x_{Z_i}^{(j)},&A_i=1,\\[6pt]
\boldsymbol{0}_d,&A_i=0.
\end{cases}
\end{align*}
Then $Y_1,\ldots,Y_n$ are i.i.d. from $S_{h_j^B}u$. Moreover, the randomized test $\phi(Y_1,\ldots,Y_n)$ is $\epsilon$-DP as a function of $Z_1,\ldots,Z_n$, since, conditional on $A_1,\ldots,A_n$, neighboring input datasets are mapped to neighboring or identical datasets in $\mathbb{R}^d$. By Lemma~\ref{lem:grid_2}, $\phi(Y_1,\ldots,Y_n)$ is a test with Type I and Type II errors at most $\alpha$ and $\beta$ for \eqref{eq:reduced_l1_gof_test_unbounded}. Using the fact that the minimax radius cannot exceed the induced separation radius $4h_j^{B-p}R$, we can show a lower bound for $R$. Since $B$ is a fixed $d$-dimensional annulus with nonempty interior, $N_j^B\asymp\gamma^{-jd}$, with constants depending only on $d,c_1$, and $c_2$. Therefore, applying the finite-domain $\ell_1$ lower bound and repeating the optimization over $j$ used in the proof of Theorem~\ref{thm:goodness_of_fit_minimax_lower} gives
\begin{align*}
\rho_{n,\epsilon}^{\mathrm{gof}}(\mathbb{R}^d,p,c_1,c_2,\alpha,\beta)\gtrsim\left( n^{-\left(\frac{1}{2}\wedge\frac{2p}{d}\right)}\vee (n\sqrt{\epsilon})^{-\left(1\wedge \frac{2p}{d}\right)}\vee(n^{3/4}\sqrt{\epsilon})^{-\left(1\wedge \frac{4p}{d}\right)}\vee(n\epsilon)^{-1}\right)\wedge1,
\end{align*}
up to a constant factor depending only on $d,p,c_1,c_2,\alpha$, and $\beta$.

\subsection{Proof of Lemma~\ref{lem:log_domination_by_g}}
Note that the following holds whenever $d=4p$ or $d=2p$.
\begin{align*}
    g_{\mathrm{upper}}^{\mathrm{gof}}(n)\geq\frac{c_{\mathrm{upper}}^{\mathrm{gof}}}{\sqrt{n}}\geq\frac{1}{\sqrt{n}}.
\end{align*}
Thus, $\log(1/g_{\mathrm{upper}}^{\mathrm{gof}}(n))\leq\frac{1}{2}\log n\leq \log(en)$ holds. This implies
\begin{align}
    \log(1/g_{\mathrm{upper}}^{\mathrm{gof}}(n))\log\log(1/g_{\mathrm{upper}}^{\mathrm{gof}}(n))&\leq \log n\log\log (en)=\ell_n\label{eq:1}\\
    \log^{3/2}(1/g_{\mathrm{upper}}^{\mathrm{gof}}(n))\log\log(1/g_{\mathrm{upper}}^{\mathrm{gof}}(n))&\leq\left(\frac{15}{e}\right)^{5/2}n^{1/12}\label{eq:2}.
\end{align}
The second inequality can be shown as follows.
\begin{align*}
\log^{3/2}(1/g_{\mathrm{upper}}^{\mathrm{gof}}(n))\log\log(1/g_{\mathrm{upper}}^{\mathrm{gof}}(n))&\leq\log^{5/2}(1/g_{\mathrm{upper}}^{\mathrm{gof}}(n))\\
&\leq\frac{1}{2^{5/2}}\log^{5/2} n\\
&\leq\left(\frac{15}{e}\right)^{5/2}n^{1/12},
\end{align*}
where the first inequality holds since $0\leq\log x\leq x$ if $x\geq 1$, the second inequality holds since $\log (1/g_{\mathrm{upper}}^{\mathrm{gof}}(n))\leq \frac{1}{2}\log n$, and the last inequality holds since $\sup_{x>0}x^{5/2}e^{-x/12}=30^{5/2}e^{-5/2}$.

\paragraph{First inequality.} Suppose first that $d=4p$. Since $c_{\mathrm{upper}}^{\mathrm{gof}}\geq\ell_2^{-1}$ and $\ell_n\geq\ell_2$,
\begin{align*}
g_{\mathrm{upper}}^{\mathrm{gof}}(n)\geq c_{\mathrm{upper}}^{\mathrm{gof}}\frac{\ell_n}{\sqrt{n}}
\end{align*}
holds. Also,
\begin{align*}
\frac{\log(1/g_{\mathrm{upper}}^{\mathrm{gof}}(n))\log\log(1/g_{\mathrm{upper}}^{\mathrm{gof}}(n))}{\sqrt{n}}&\leq\frac{\ell_n}{2\sqrt{n}}\leq\frac{g_{\mathrm{upper}}^{\mathrm{gof}}(n)}{2c_{\mathrm{upper}}^{\mathrm{gof}}}
\end{align*}
by \eqref{eq:1}.

\paragraph{Second inequality.} By the definition of $g_{\mathrm{upper}}^{\mathrm{gof}}(n)$,
\begin{align*}
\frac{g_{\mathrm{upper}}^{\mathrm{gof}}(n)}{c_{\mathrm{upper}}^{\mathrm{gof}}}\geq(n\sqrt{\epsilon})^{-1/2}\vee(n\epsilon)^{-1}\geq\left((n\sqrt{\epsilon})^{-1/2}\right)^{2/3}\left((n\epsilon)^{-1}\right)^{1/3}=\frac{1}{n^{2/3}\sqrt{\epsilon}}
\end{align*}
holds. Thus,
\begin{align*}
\frac{1}{n^{3/4}\sqrt{\epsilon}}\leq\frac{g_{\mathrm{upper}}^{\mathrm{gof}}(n)}{c_{\mathrm{upper}}^{\mathrm{gof}}n^{1/12}}.
\end{align*}
Then
\begin{align*}
    \frac{\log^{3/2}(1/g_{\mathrm{upper}}^{\mathrm{gof}}(n))\log\log(1/g_{\mathrm{upper}}^{\mathrm{gof}}(n)) }{n^{3/4}\sqrt{\epsilon}}\leq\left( \frac{g_{\mathrm{upper}}^{\mathrm{gof}}(n)}{c_{\mathrm{upper}}^{\mathrm{gof}}n^{1/12}}\right)\left(\frac{15}{e}\right)^{5/2}n^{1/12}\leq\left(\frac{15}{e}\right)^{5/2} \frac{g_{\mathrm{upper}}^{\mathrm{gof}}(n)}{c_{\mathrm{upper}}^{\mathrm{gof}}},
\end{align*}
by \eqref{eq:2}.

\paragraph{Third inequality.} Suppose $d=2p$. By the definition of $g_{\mathrm{upper}}^{\mathrm{gof}}(n)$,
\begin{align*}
\frac{1}{n^{3/4}\sqrt{\epsilon}}\leq\frac{g_{\mathrm{upper}}^{\mathrm{gof}}(n)}{c_{\mathrm{upper}}^{\mathrm{gof}}}
\end{align*}
holds. Thus,
\begin{align*}
\frac{1}{n\sqrt{\epsilon}}\leq\frac{g_{\mathrm{upper}}^{\mathrm{gof}}(n)}{c_{\mathrm{upper}}^{\mathrm{gof}}n^{1/4}}.
\end{align*}
Then
\begin{align*}
    \frac{ \log^{3/2}(1/g_{\mathrm{upper}}^{\mathrm{gof}}(n))\log\log(1/g_{\mathrm{upper}}^{\mathrm{gof}}(n))}{n\sqrt{\epsilon}}\leq\left(\frac{g_{\mathrm{upper}}^{\mathrm{gof}}(n)}{c_{\mathrm{upper}}^{\mathrm{gof}}n^{1/4}}\right)\left(\frac{15}{e}\right)^{5/2}n^{1/12}\leq\left(\frac{15}{e}\right)^{5/2} \frac{g_{\mathrm{upper}}^{\mathrm{gof}}(n)}{c_{\mathrm{upper}}^{\mathrm{gof}}},
\end{align*}
by \eqref{eq:2}. \qed

\subsection{Proof of Lemma~\ref{lem:ts_log_domination}}
By the definition of $g_{\mathrm{upper}}^{\mathrm{ts}}(n)$,
\begin{align*}
g_{\mathrm{upper}}^{\mathrm{ts}}(n)\geq c_{\mathrm{upper}}^{\mathrm{ts}}n^{-3/8}\qquad\textrm{if }d=4p,
\end{align*}
and
\begin{align*}
g_{\mathrm{upper}}^{\mathrm{ts}}(n)\geq c_{\mathrm{upper}}^{\mathrm{ts}}n^{-1/2}\qquad\textrm{if }d=2p.
\end{align*}
Since $c_{\mathrm{upper}}^{\mathrm{ts}}\geq1$, in either case,
\begin{align*}
\log(1/g_{\mathrm{upper}}^{\mathrm{ts}}(n))\leq\frac{1}{2}\log n.
\end{align*}
Moreover, since $g_{\mathrm{upper}}^{\mathrm{ts}}(n)<e^{-e}$, we have
$\log(1/g_{\mathrm{upper}}^{\mathrm{ts}}(n))>e$. Hence, by the same argument
used in the proof of Lemma~\ref{lem:log_domination_by_g},
\begin{align*}
\log^{3/2}(1/g_{\mathrm{upper}}^{\mathrm{ts}}(n))
\log\log(1/g_{\mathrm{upper}}^{\mathrm{ts}}(n))
&\leq
\log^{5/2}(1/g_{\mathrm{upper}}^{\mathrm{ts}}(n))\\
&\leq
\frac{1}{2^{5/2}}\log^{5/2}n\\
&\leq
\left(\frac{15}{e}\right)^{5/2}n^{1/12}.
\end{align*}
Also, since $\log(1/g_{\mathrm{upper}}^{\mathrm{ts}}(n))>1$,
\begin{align*}
\log(1/g_{\mathrm{upper}}^{\mathrm{ts}}(n))
\log\log(1/g_{\mathrm{upper}}^{\mathrm{ts}}(n))
\leq
\log^{3/2}(1/g_{\mathrm{upper}}^{\mathrm{ts}}(n))
\log\log(1/g_{\mathrm{upper}}^{\mathrm{ts}}(n)).
\end{align*}
Therefore,
\begin{align}
\log(1/g_{\mathrm{upper}}^{\mathrm{ts}}(n))
\log\log(1/g_{\mathrm{upper}}^{\mathrm{ts}}(n))
&\leq\left(\frac{15}{e}\right)^{5/2}n^{1/12},
\label{eq:ts_log_bound_1}\\
\log^{3/2}(1/g_{\mathrm{upper}}^{\mathrm{ts}}(n))
\log\log(1/g_{\mathrm{upper}}^{\mathrm{ts}}(n))
&\leq\left(\frac{15}{e}\right)^{5/2}n^{1/12}.
\label{eq:ts_log_bound_2}
\end{align}
Suppose first that $d=4p$. Since
\begin{align*}
\frac{1}{\sqrt{n}}\leq\frac{g_{\mathrm{upper}}^{\mathrm{ts}}(n)}{c_{\mathrm{upper}}^{\mathrm{ts}}n^{1/8}}\leq\frac{g_{\mathrm{upper}}^{\mathrm{ts}}(n)}{c_{\mathrm{upper}}^{\mathrm{ts}}n^{1/12}},
\end{align*}
the first inequality follows from \eqref{eq:ts_log_bound_1}. Also, by the definition of $g_{\mathrm{upper}}^{\mathrm{ts}}(n)$,
\begin{align*}
\frac{g_{\mathrm{upper}}^{\mathrm{ts}}(n)}{c_{\mathrm{upper}}^{\mathrm{ts}}}\geq(n\sqrt{\epsilon})^{-1/2}\vee(n\epsilon)^{-1}\geq\left((n\sqrt{\epsilon})^{-1/2}\right)^{2/3}\left((n\epsilon)^{-1}\right)^{1/3}=\frac{1}{n^{2/3}\sqrt{\epsilon}},
\end{align*}
and hence
\begin{align*}
\frac{1}{n^{3/4}\sqrt{\epsilon}}\leq\frac{g_{\mathrm{upper}}^{\mathrm{ts}}(n)}{c_{\mathrm{upper}}^{\mathrm{ts}}n^{1/12}}.
\end{align*}
The second inequality follows from \eqref{eq:ts_log_bound_2}.

Suppose next that $d=2p$. We have
\begin{align*}
\frac{1}{n^{3/4}}\leq\frac{g_{\mathrm{upper}}^{\mathrm{ts}}(n)}{c_{\mathrm{upper}}^{\mathrm{ts}}n^{1/4}}\leq\frac{g_{\mathrm{upper}}^{\mathrm{ts}}(n)}{c_{\mathrm{upper}}^{\mathrm{ts}}n^{1/12}},
\end{align*}
so the third inequality follows from \eqref{eq:ts_log_bound_1}. Finally, since
\begin{align*}
\frac{g_{\mathrm{upper}}^{\mathrm{ts}}(n)}{c_{\mathrm{upper}}^{\mathrm{ts}}}\geq\frac{1}{n^{3/4}\sqrt{\epsilon}},
\end{align*}
we have
\begin{align*}
\frac{1}{n\sqrt{\epsilon}}\leq\frac{g_{\mathrm{upper}}^{\mathrm{ts}}(n)}{c_{\mathrm{upper}}^{\mathrm{ts}}n^{1/4}}\leq\frac{g_{\mathrm{upper}}^{\mathrm{ts}}(n)}{c_{\mathrm{upper}}^{\mathrm{ts}}n^{1/12}},
\end{align*}
and the fourth inequality follows from \eqref{eq:ts_log_bound_2}. \qed

\section{Proof of Theorem~\ref{thm:unavoidable_log}}\label{appendix:unavoidable_log}
In this subsection, we characterize the minimax separation radius $\rho_n^{\mathrm{gof}}([0,1]^d,B,\alpha,\beta)$ for goodness-of-fit testing over $[0,1]^d$. Let $\mathbb{T}^d$ be a $d$-dimensional torus, a quotient space of $\mathbb{R}^d$ with the equivalence relation $x\sim y\iff x-y\in\mathbb{Z}^d$. We define Wasserstein-1 distance on $\mathbb{T}^d$ as follows.
\begin{definition}[Wasserstein-1 distance on $\mathbb{T}^d$]
    Let $P,Q\in\mathcal{P}(\mathbb{T}^d)$. Then
    \begin{align*}
        \widetilde{W}_1(P,Q)\coloneqq\inf_{\pi\in\Pi(P,Q)}\int_{\mathbb{T}^d\times\mathbb{T}^d}d_{\mathbb{T}^d}(x,y)d\pi(x,y),
    \end{align*}
    where the distance function $d_{\mathbb{T}^d}$ on $\mathbb{T}^d$ is defined as
    \begin{align*}
        d_{\mathbb{T}^d}(x,y)\coloneqq\inf_{m\in \mathbb{Z}^d}\lVert x-y-m\rVert_2.
    \end{align*}
\end{definition}
Before the proof, we show that the Wasserstein-1 distance on $[0,1]^d$ is equivalent to the Wasserstein distance on $\mathbb{T}^d$. Thus, Wasserstein-1 goodness-of-fit testing on $[0,1]^d$ is equivalent to Wasserstein-1 goodness-of-fit testing on $\mathbb{T}^d$. Define an embedding $i:[0,1]^d\rightarrow\mathbb{T}^d$ by
\begin{align*}
    i(x)=x/3.
\end{align*}
Then
\begin{align*}
    \lVert x-y\rVert_1&\leq \sqrt{d}\lVert x-y\rVert_2\\
    &\leq3\sqrt{d}\lVert x/3-y/3\rVert_2\\
    &=3\sqrt{d}\inf_{m\in\mathbb{Z}^d}\lVert x/3-y/3-m\rVert_2\\
    &=3\sqrt{d}d_{\mathbb{T}^d}(i(x),i(y)).
\end{align*}
Also,
\begin{align*}
    d_{\mathbb{T}^d}(i(x),i(y))&=\inf_{m\in\mathbb{Z}^d}\lVert x/3-y/3-m\rVert_2\\
    &\leq \lVert x/3-y/3\rVert_2\\
    &\leq \lVert x/3-y/3\rVert_1\\
    &=\lVert x-y\rVert_1/3.
\end{align*}
Thus, for $P,Q\in\mathcal{P}_B([0,1]^d)$,
\begin{align*}
   3\widetilde{W}_1(\widetilde{P},\widetilde{Q})\leq  W_1(P,Q)\leq 3\sqrt{d}\widetilde{W}_1(\widetilde{P},\widetilde{Q}),
\end{align*}
where $\widetilde{P}$ and $\widetilde{Q}$ are distributions of $i(P)$ and $i(Q)$. Thus, two Wasserstein distances are equivalent. Denote
\begin{align*}
    \mathcal{P}_B(\mathbb{T}^d)&\coloneqq \left\{P\in\mathcal{P}(\mathbb{T}^d):\frac{dP}{d\lambda_{\mathbb{T}^d}}\leq B\right\}\\
    \mathcal{P}_{1}^{\mathrm{gof}}(P_0,\mathbb{T}^d,B,\rho)&\coloneqq\mathcal{P}_B(\mathbb{T}^d)\cap \{P\in\mathcal{P}(\mathbb{T}^d):\widetilde{W}_1(P,P_0)\geq \rho\},
\end{align*}
where $\lambda_{\mathbb{T}^d}$ is the Lebesgue measure on $\mathbb{T}^d$. Define the minimax separation radius of Wasserstein-1 testing on $\mathbb{T}^d$ as follows.
\begin{definition}
    Let $\phi_{P_0}:(\mathbb{T}^d)^n\rightarrow\{0,1\}$ be a goodness-of-fit testing procedure for \eqref{eq:ht_gof}. Given Type I and Type II error levels $\alpha,\beta\in(0,1/3]$, density upper bound $B$, and sample size $n$, the separation radius of $\phi_{P_0}$ is defined as
\begin{align*}
    \rho_{n}^{\mathrm{gof}}(\phi_{P_0},\mathbb{T}^d,B,\alpha,\beta)\coloneqq \inf\left\{\rho>0:\mathbb{E}_{P_0}[\phi_{P_0}]\leq\alpha,\sup_{P\in\mathcal{P}_{1}^{\mathrm{gof}}(P_0,\mathbb{T}^d,B,\rho)}\mathbb{E}_P[1-\phi_{P_0}]\leq\beta\right\},
\end{align*}
The minimax separation radius for goodness-of-fit testing is then defined as
\begin{align*} \rho_{n}^{\mathrm{gof}}(\mathbb{T}^d,B,\alpha,\beta)\coloneqq \sup_{P_0\in\mathcal{P}_B(\mathbb{T}^d)}\inf_{\phi_{P_0}}\rho_{n}^{\mathrm{gof}}(\phi_{P_0},\mathbb{T}^d,B,\alpha,\beta).\end{align*}
\end{definition}
Note that $\widetilde{P}\in \mathcal{P}_{3^dB}(\mathbb{T}^d)$. Thus,
\begin{align*}
    \rho_n^{\mathrm{gof}}(\mathbb{T}^d,3^dB,\alpha,\beta)\asymp\rho_n^{\mathrm{gof}}([0,1]^d,B,\alpha,\beta).
\end{align*}
Therefore, it suffices to develop a goodness-of-fit test on $\mathbb{T}^d$ for distributions with bounded densities. 

On $\mathbb{T}^d$, Fourier-analytic tools can be used to derive upper and lower bounds on the Wasserstein-1 distance (Lemmas~\ref{lem:W1_ub} and~\ref{lem:W1_lb}), following \citet{Bobkov2021simple}. Intuitively, these bounds relate the Wasserstein-1 distance to negative-order Sobolev norms, under which perturbations across multiple resolutions can accumulate, giving rise to the $\log^{1/4} (en)$ factor in the case of $d=4$. Using these bounds, we first establish the upper bound on $\rho_n^{\mathrm{gof}}(\mathbb{T}^d,B,\alpha,\beta)$ by constructing a Wasserstein-1 test. For the matching minimax lower bound, we construct distributions obtained by perturbing the uniform distribution across multiple resolutions.

We define Sobolev spaces, homogeneous Sobolev spaces, and negative-order Sobolev spaces.
\begin{definition}[Sobolev spaces and their norms on $\mathbb{T}^d$ and $\mathbb{R}^d$]
Let $1<r<\infty$, $r'=r/(r-1)$, and
$\mathcal X\in\{\mathbb{T}^d,\mathbb{R}^d\}$.
We define the following Sobolev spaces.
\begin{itemize}
    \item For $s\in\{1,2\}$, define
    \begin{align*}
        W^{s,r}(\mathcal X)
        &\coloneqq  
        \left\{
            f\in L^r(\mathcal X):
            \sum_{\lVert\beta\rVert_1\leq s}
            \|D^\beta f\|_{L^r(\mathcal X)}<\infty
        \right\},
    \end{align*}
    where $D^\beta\coloneqq \partial_{x_1}^{\beta_1}\cdots\partial_{x_d}^{\beta_d}$ equipped with the norm
    \begin{align*}
        \|f\|_{W^{s,r}(\mathcal X)}
        &\coloneqq  
        \sum_{\lVert\beta\rVert_1\leq s}
        \|D^\beta f\|_{L^r(\mathcal X)}.
    \end{align*}

    \item We define the homogeneous Sobolev space by
    \begin{align*}
        \dot W^{1,r}(\mathcal X)
        &\coloneqq  
        \left\{
            [f]:
            f\in W_{\mathrm{loc}}^{1,r}(\mathcal X),
            \ \nabla f\in L^r(\mathcal X;\mathbb R^d)
        \right\},
    \end{align*}
    where $[f]\coloneqq  \{f+c:c\in\mathbb R\}$ denotes
    the equivalence class of $f$ modulo constants,
    and $W_{\mathrm{loc}}^{1,r}(\mathcal X)$ denotes
    the space of locally $W^{1,r}$ functions.
    The space is equipped with the norm
    \begin{align*}
        \|[f]\|_{\dot W^{1,r}(\mathcal X)}
        &\coloneqq  
        \|\nabla f\|_{L^r(\mathcal X)}\\
        &\coloneqq  
        \left(
            \int_{\mathcal X}
            \|\nabla f(x)\|_2^r\,dx
        \right)^{1/r}.
\end{align*}
    \item The corresponding negative-order Sobolev
    spaces are defined as the dual spaces
    \begin{align*}
        W^{-1,r}(\mathcal X)
        &\coloneqq  
        \left(W^{1,r'}(\mathcal X)\right)^*,\\
        \dot W^{-1,r}(\mathcal X)
        &\coloneqq  
        \left(\dot W^{1,r'}(\mathcal X)\right)^*.
    \end{align*}
    Their norms are defined by
    \begin{align*}
        \|f\|_{W^{-1,r}(\mathcal X)}
        &\coloneqq  
        \sup_{\substack{
            g\in W^{1,r'}(\mathcal X)\\
            \|g\|_{W^{1,r'}(\mathcal X)}\leq1
        }}
        |\langle f,g\rangle|,\\
        \|f\|_{\dot W^{-1,r}(\mathcal X)}
        &\coloneqq  
        \sup_{\substack{
            [g]\in\dot W^{1,r'}(\mathcal X)\\
            \|[g]\|_{\dot W^{1,r'}(\mathcal X)}\leq1
        }}
        |\langle f,[g]\rangle|.
    \end{align*}
\end{itemize}
\end{definition}
For $t>0$, let $\gamma_t$ be a probability measure supported on $\mathbb{T}^d$ with density
\begin{align*}
    p_t(x)\coloneqq\sum_{m\in\mathbb{Z}^d}e^{2\pi i\langle m,x\rangle-\lVert m\rVert_2^2t}.
\end{align*}
Also, for $P\in\mathcal{P}(\mathbb{T}^d)$, denote the Gaussian smoothed distribution $P_t$ of $P$ as follows: for every Borel subset $A$ of $\mathbb{T}^d$,
\begin{align*}
P_t(A)\coloneqq\int_{\mathbb{T}^d}\int_{\mathbb{T}^d}\mathbf{1}(x+y\in A)dP(x)d\gamma_t(y).
\end{align*}
Then Wasserstein-1 distances are bounded using negative-order Sobolev norms.
\begin{lemma}[Proposition 2 of \cite{Bobkov2021simple}]\label{lem:W1_ub}
    Given two probability distributions $P$ and $Q$ on $\mathbb{T}^d$ the following holds for any $t>0$.
    \begin{align*}
        \widetilde{W}_1(P,Q)\leq\lVert P_t-Q_t\rVert_{\dot{W}^{-1,2}(\mathbb{T}^d)}+2\sqrt{2dt}.
    \end{align*}
\end{lemma}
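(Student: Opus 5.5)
We plan to prove Lemma~\ref{lem:W1_ub} in two steps. First we compare $P$ with its heat-smoothed version $P_t$. Then we bound $\widetilde{W}_1(P_t,Q_t)$ by the homogeneous negative Sobolev norm through the Kantorovich--Rubinstein duality.

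\textbf{Smoothing step.} By the triangle inequality,
\begin{align*}
\widetilde{W}_1(P,Q)\leq \widetilde{W}_1(P,P_t)+\widetilde{W}_1(P_t,Q_t)+\widetilde{W}_1(Q_t,Q).
\end{align*}
To bound $\widetilde{W}_1(P,P_t)$, we use the coupling $(X,X+Y)$ with $X\sim P$ and $Y\sim\gamma_t$ independent. This gives
\begin{align*}
\widetilde{W}_1(P,P_t)\leq \mathbb{E}\, d_{\mathbb{T}^d}(0,Y).
\end{align*}
The density $p_t$ is the periodization of a centered Gaussian on $\mathbb{R}^d$ with covariance $\frac{t}{2\pi^2}I_d$. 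Indeed, by Poisson summation, $\sum_m e^{2\pi i\langle m,x\rangle-\|m\|^2 t}=\sum_k (\pi/t)^{d/2}e^{-\pi^2\|x-k\|^2/t}$. So $Y$ is the image of such a Gaussian $G$, and
\begin{align*}
\mathbb{E}\,d_{\mathbb{T}^d}(0,Y)\leq \mathbb{E}\|G\|_2\leq \sqrt{dt/(2\pi^2)}\leq \sqrt{2dt}.
\end{align*}
The same bound holds for $Q$, which accounts for the term $2\sqrt{2dt}$ with room to spare.

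\textbf{Duality step.} By Kantorovich--Rubinstein,
\begin{align*}
\widetilde{W}_1(P_t,Q_t)=\sup_{\mathrm{Lip}(f)\leq 1}\int f\,d(P_t-Q_t).
\end{align*}
A $1$-Lipschitz $f$ on $\mathbb{T}^d$ lies in $W^{1,\infty}$ with $\|\nabla f\|_2\leq 1$ a.e. Since $\mathbb{T}^d$ has unit volume, this gives
\begin{align*}
\|[f]\|_{\dot W^{1,2}(\mathbb{T}^d)}=\Big(\int\|\nabla f\|_2^2\Big)^{1/2}\leq 1.
\end{align*}
The signed measure $P_t-Q_t$ has a smooth density with zero total mass. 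Hence $\int f\,d(P_t-Q_t)$ depends only on the class $[f]$, and
\begin{align*}
\int f\,d(P_t-Q_t)=\langle P_t-Q_t,[f]\rangle\leq \|P_t-Q_t\|_{\dot W^{-1,2}(\mathbb{T}^d)}.
\end{align*}
Taking the supremum over $f$ and combining with the smoothing step gives the lemma.

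\textbf{Main obstacle.} The delicate point is justifying that pairing a Lipschitz class against the smooth mean-zero density is well defined and bounded by the dual norm as defined in the paper. This needs $W^{1,\infty}\subset W^{1,2}_{\mathrm{loc}}$ by Rademacher's theorem, together with the mean-zero property, so that constants are annihilated.
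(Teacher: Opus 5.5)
Your proof is correct and is essentially the standard argument behind the cited Proposition~2 of Bobkov--Ledoux (the paper itself gives no proof). You use the triangle inequality with the coupling $(X,X+Y)$, $Y\sim\gamma_t$, and your identification of $\gamma_t$ as a periodized $N(0,\tfrac{t}{2\pi^2}I_d)$ even gives the sharper remainder $2\sqrt{dt/(2\pi^2)}$; this is followed by Kantorovich--Rubinstein duality and the fact that a $1$-Lipschitz $f$ has $\|[f]\|_{\dot W^{1,2}(\mathbb{T}^d)}\le 1$. The only cosmetic difference is that the cited source, to my recollection, writes the duality step via Parseval and Cauchy--Schwarz on Fourier coefficients, which yields the Fourier form \eqref{eq:W1_ub} directly. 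You instead pair against the smooth mean-zero density through the abstract dual-norm definition, and the two are equivalent.
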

We note that the negative-order Sobolev norm can be represented as a weighted $\ell_2$ discrepancy between Fourier coefficients \citep{Bobkov2021simple}.
\begin{align}\label{eq:W1_ub}
\widetilde{W}_1(P,Q)\leq\left(\sum_{m\neq\mathbf{0}_d}\frac{e^{-2\lVert m\rVert_2^2t}}{\lVert m\rVert_2^2}\left|\mathbb{E}_P[e^{2\pi i\langle m,X\rangle}]-\mathbb{E}_Q[e^{2\pi i\langle m,X\rangle}]\right|^2\right)^{1/2}+2\sqrt{2dt}.
\end{align}
Also, Wasserstein-1 distances can be lower bounded as follows.
\begin{lemma}\label{lem:W1_lb}
    Let $P\neq Q\in\mathcal{P}(\mathbb{T}^d)$ be measures with densities $p,q\in L^\infty([0,1)^d)$. Then
\begin{align}\label{eq:W1_lb}
     \widetilde{W}_1(P,Q)\gtrsim \frac{\lVert P-Q\rVert_{\dot{W}^{-1,2}(\mathbb{T}^d)}^3}{\lVert P-Q\rVert_{\dot{W}^{-1,4}(\mathbb{T}^d)}^2},
\end{align}
up to a constant factor depending on $d$.
\end{lemma}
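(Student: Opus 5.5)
We will prove Lemma~\ref{lem:W1_lb} by Kantorovich--Rubinstein duality, testing $P-Q$ against a single well-chosen Lipschitz function built from the solution of a Poisson equation. Write $\mu\coloneqq P-Q$, which has a bounded density $f=p-q$ with $\int_{\mathbb{T}^d}f=0$. Let $u$ solve $-\Delta u=f$ on $\mathbb{T}^d$ with mean zero; its Fourier coefficients are $\hat u(m)=\hat f(m)/(4\pi^2\lVert m\rVert_2^2)$. By Plancherel, $\lVert \mu\rVert_{\dot W^{-1,2}}\asymp\lVert\nabla u\rVert_{L^2}$ and $\lVert\mu\rVert_{\dot W^{-1,2}}^2\asymp\langle f,u\rangle$. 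Since $\nabla u$ is a gradient with $-\mathrm{div}\,\nabla u=f$, it attains the dual norm, so $\lVert\mu\rVert_{\dot W^{-1,r}}\asymp\lVert\nabla u\rVert_{L^r}$ for $1<r<\infty$. For $r=4$ we will justify this by $L^r$-boundedness of the Riesz transforms $\nabla(-\Delta)^{-1}\mathrm{div}$ on the torus; the constants depend only on $d$.

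Next we truncate $u$ so that it becomes Lipschitz. Fix a level $\lambda>0$. The natural candidate is $g=\Phi_\lambda(u)$, but truncating $u$ does not control $\nabla u$. Instead we truncate the gradient field. The most concrete route is to take $g_\lambda\coloneqq u_\lambda$, the solution of a problem whose gradient is bounded by $\lambda$. I would try the following first: smooth $u$ by the heat semigroup, $u_t=e^{t\Delta}u$, and use the Lipschitz function $g=u_t/\lVert\nabla u_t\rVert_\infty$. This is too lossy, however. The better option is a layer-cake argument: write $\langle f,u\rangle=\int\lvert\nabla u\rvert^2$ and split it into $\{\lvert\nabla u\rvert\le\lambda\}$ and its complement. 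By Chebyshev, the complement contributes at most $\int\lvert\nabla u\rvert^4/\lambda^2$.

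The key step is to produce a $\lambda$-Lipschitz $g$ with
\begin{align*}
\langle f,g\rangle\ \ge\ \int_{\mathbb{T}^d}\lvert\nabla u\rvert^2-\frac{C}{\lambda^2}\int_{\mathbb{T}^d}\lvert\nabla u\rvert^4 .
\end{align*}
For this I would use the Lipschitz truncation lemma (Acerbi--Fusco type). There is a $C\lambda$-Lipschitz function $g$ equal to $u$ outside the set $E_\lambda=\{M(\lvert\nabla u\rvert)>\lambda\}$, where $M$ is the Hardy--Littlewood maximal function. Then
\begin{align*}
\langle f,u-g\rangle=\int\nabla u\cdot\nabla(u-g)\le\int_{E_\lambda}\bigl(\lvert\nabla u\rvert^2+C\lambda\lvert\nabla u\rvert\bigr),
\end{align*}
and the weak/strong $L^4$ bounds for $M$ give $\lvert E_\lambda\rvert\lesssim\lambda^{-4}\lVert\nabla u\rVert_4^4$. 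Hölder's inequality then bounds the error by $\lesssim\lambda^{-2}\lVert\nabla u\rVert_4^4$. Kantorovich--Rubinstein yields
\begin{align*}
\widetilde W_1(P,Q)\ge\frac{\langle f,g\rangle}{C\lambda}\gtrsim\frac{A-C'B/\lambda^2}{\lambda},\qquad A=\lVert\nabla u\rVert_2^2,\ B=\lVert\nabla u\rVert_4^4.
\end{align*}
Choosing $\lambda^2=2C'B/A$ gives $\widetilde W_1\gtrsim A^{3/2}/B^{1/2}$, which equals $\lVert\mu\rVert_{\dot W^{-1,2}}^3/\lVert\mu\rVert_{\dot W^{-1,4}}^2$, as claimed. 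The $L^\infty$ density assumption ensures that $u\in W^{2,r}$ for all $r$, so every quantity is finite and the integrations by parts are legitimate.

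The main obstacle is the Lipschitz truncation step. One must check that the truncation lemma holds on $\mathbb{T}^d$, which is done by working periodically on $\mathbb{R}^d$ or locally. One must also check that the error term involves only $\lVert\nabla u\rVert_4$ and not higher norms. A fallback avoids the lemma. Define $g$ by composing $u$ with a cutoff along the flow, or simply take $g=\psi_\lambda(u)$ with $\psi_\lambda$ a smooth clip; since $u\in W^{2,r}$, its gradient is controlled where $u$ is moderate, but I expect this to lose the sharp exponents. So I would commit to the maximal-function truncation.
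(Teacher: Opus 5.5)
Your proposal is correct and follows essentially the same route as the paper: Kantorovich--Rubinstein duality tested against a maximal-function Lipschitz truncation of the Poisson potential (the paper cites this as Lemma~\ref{lem:Lusin_approx}), H\"older on the exceptional set, the choice of Lipschitz constant $\asymp\sqrt{m_4/m_2}$ (your $\sqrt{B/A}$), and the Riesz-transform bound $\lVert\nabla h\rVert_{L^4}\lesssim\lVert p-q\rVert_{\dot W^{-1,4}}$ (taken from Lemma~\ref{lem:moment}). The only difference is that you bound the exceptional set with the $L^4$ maximal inequality, $|E_\lambda|\lesssim\lambda^{-4}m_4$, whereas the paper uses the $L^2$ bound $\lesssim\lambda^{-2}m_2$; at the optimal $\lambda$ both give an error of order $m_2$, so nothing changes.
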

\subsection{Upper bound}
We construct a goodness-of-fit test based on the Fourier representation of the negative-order Sobolev norm in \eqref{eq:W1_ub}. The argument consists of three steps. First, after smoothing at scale \(t>0\), \eqref{eq:W1_ub} bounds the Wasserstein distance by a weighted \(\ell_2\) discrepancy between the Fourier coefficients of \(P\) and \(P_0\), up to a smoothing error of order \(\sqrt t\). Hence, if \(\widetilde W_1(P,P_0)\) is sufficiently large relative to \(\sqrt t\), the corresponding weighted Fourier discrepancy must also be large. We estimate the squared weighted discrepancy by a second-order U-statistic. Then the sum of squared weights contributes to the variance of the U-statistics and the separation radius. As the following lemma shows, the sum of squared weights grows logarithmically for $d=4$, yielding the $\log^{1/4}n $ factor.
\begin{lemma}\label{lem:weight_sum}
There exists a constant $c_{\mathrm{thres}}$ depending on $d$ such that, for $t\in (0,1)$,
    \begin{align*}
        c_{\mathrm{thres}}^{-1}\sum_{m\neq\mathbf{0}_d,m\in\mathbb{Z}^d}\frac{e^{-2\lVert m\rVert_2^2t}}{\lVert m\rVert_2^{4}}\leq \begin{cases}
          1,&\textrm{ if }d<4\\
           \log\frac{e}{t},&\textrm{ if }d=4\\
           t^{-(d-4)/2},&\textrm{ if }d>4.
        \end{cases}
    \end{align*}
\end{lemma}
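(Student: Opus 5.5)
We will bound the lattice sum $S(t)\coloneqq\sum_{m\in\mathbb{Z}^d\setminus\{\mathbf{0}_d\}}e^{-2\lVert m\rVert_2^2t}\lVert m\rVert_2^{-4}$ by grouping lattice points into dyadic shells. For $k\geq0$ let $A_k\coloneqq\{m\in\mathbb{Z}^d:2^k\leq\lVert m\rVert_2<2^{k+1}\}$; every nonzero lattice point lies in exactly one $A_k$, since $\lVert m\rVert_2\geq1$. A standard volume (packing) argument gives $|A_k|\leq C_d2^{kd}$: the unit cubes centred at points of $A_k$ are disjoint and contained in the ball of radius $2^{k+1}+\sqrt{d}$. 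On $A_k$ we have $\lVert m\rVert_2^{-4}\leq 2^{-4k}$ and $e^{-2\lVert m\rVert_2^2t}\leq e^{-2^{2k+1}t}$. Hence
\begin{align*}
S(t)\leq C_d\sum_{k\geq0}2^{k(d-4)}e^{-2^{2k+1}t}.
\end{align*}

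We then split according to the sign of $d-4$.

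For $d<4$ we drop the exponential factor. The remaining geometric series $\sum_k2^{-k(4-d)}$ converges to a constant depending only on $d$.

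For $d=4$ each term is $e^{-2^{2k+1}t}\leq1$. Let $K\coloneqq\lceil\tfrac12\log_2(1/t)\rceil$. The indices $k\leq K$ contribute at most $K+1\lesssim\log(e/t)$, using $t<1$. For $k>K$ we have $2^{2k+1}t\geq 2^{2(k-K)+1}$, so the tail is bounded by $\sum_{j\geq1}e^{-2^{2j+1}}$, an absolute constant. Combining the two ranges gives $S(t)\lesssim\log(e/t)$.

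For $d>4$ we use the same threshold $K$. The indices $k\leq K$ form a geometric series dominated by its last term, giving at most $C2^{K(d-4)}\lesssim t^{-(d-4)/2}$. For $k>K$ we write $2^{k(d-4)}=2^{K(d-4)}2^{(k-K)(d-4)}$. The tail is then at most $t^{-(d-4)/2}$ times $\sum_{j\geq1}2^{j(d-4)}e^{-2^{2j+1}}$, which is a constant depending only on $d$.

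Taking $c_{\mathrm{thres}}$ to be the largest of the constants produced in the three cases proves the lemma. The only step requiring any care is the tail at and above the critical scale $2^k\approx t^{-1/2}$: there one must check that the super-exponential decay of $e^{-2^{2j+1}}$ beats the growth $2^{j(d-4)}$. This is immediate, so none of the steps is a real obstacle. Alternatively, one could compare the sum with the integral $\int_{\lVert x\rVert_2\geq1}e^{-2\lVert x\rVert_2^2t}\lVert x\rVert_2^{-4}\,dx$ in polar coordinates, but the shell argument avoids handling lattice-to-integral comparison errors.
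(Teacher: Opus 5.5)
Your proof is correct, but it takes a different route from the paper. The paper uses the alternative you mention at the end. It attaches to each nonzero $m$ a half-unit cube $Q_m$ lying toward the origin. On $Q_m$ it bounds the summand $e^{-2\lVert x\rVert_2^2t}\lVert x\rVert_2^{-4}$ from below by a $d$-dependent constant times its value at $m$; this is where $t<1$ enters, via $e^{-dt/2}\geq e^{-d/2}$. This bounds the lattice sum by a constant times $\int_{\lVert x\rVert_2\geq 1/2}e^{-2\lVert x\rVert_2^2t}\lVert x\rVert_2^{-4}\,dx$. After polar coordinates and the substitution $u=2r^2t$, the integral becomes the prefactor $(2t)^{-(d-4)/2}$ times the incomplete gamma integral $\int_{t/2}^\infty e^{-u}u^{(d-4)/2-1}\,du$. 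That integral is bounded by a constant for $d>4$, by $\log(e/t)$ up to constants for $d=4$, and by a multiple of $t^{-(4-d)/2}$ for $d<4$, which cancels the prefactor.

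Your dyadic-shell argument instead uses the crude lattice count $|A_k|\lesssim 2^{kd}$ and a one-dimensional dyadic series, splitting at the critical scale $2^k\approx t^{-1/2}$. It is more elementary: there are no polar coordinates, no gamma functions, and no cell-by-cell comparison of the summand, and each case is a direct geometric-series estimate. The paper's route gives explicit constants and one formula covering all $d$. It also mirrors the cube-to-integral comparison reused in the proof of Lemma~\ref{lem:remainder}. Since only the order in $t$ matters here, either argument suffices.
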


We construct a test as follows. For a given null distribution $P_0\in\mathcal{P}_B(\mathbb{T}^d)$ and $X_1,\ldots,X_n\overset{\textrm{i.i.d.}}{\sim}P$ for some $P\in\mathcal{P}_B(\mathbb{T}^d)$, denote
\begin{align*}
\varphi_{0,m}&\coloneqq \mathbb{E}_{P_0}[e^{2\pi i\langle m,X\rangle}]\\  
w_m&\coloneqq\frac{e^{-2\lVert m\rVert_2^2t}}{\lVert m\rVert_2^2}\\
A_M&\coloneqq \{m\in\mathbb{Z}^d:m\neq\mathbf{0}_d,\lVert m\rVert_2\leq M\}\\
Z_{m,l}&\coloneqq e^{2\pi i\langle m,X_l\rangle}-\varphi_{0,m}.
\end{align*}
Given a threshold $\rho_{\mathrm{thres}}$, define the test as follows.
\begin{align*}
    \phi_{P_0}(X_1,\ldots,X_n)&\coloneqq \mathbf{1}\left(T_{P_0}(X_1,\ldots,X_n)\geq \rho_{\mathrm{thres}}\right)\\
    T_{P_0}(X_1,\ldots,X_n)&\coloneqq  \sum_{m\in A_M}\frac{w_m}{n(n-1)}\sum_{l\neq l^\prime} Z_{m,l}\overline{Z_{m,l^\prime}}.
\end{align*}
For convenience, denote
\begin{align*}
\mu_m&\coloneqq\mathbb{E}_P[Z_{m,1}]\\
    \varphi_m&\coloneqq \mathbb{E}_{P}[e^{2\pi i\langle m,X\rangle}].
\end{align*}
Then the following holds.
\begin{lemma}\label{lem:W1_minimax_ub}
Set $M,t$, and $\rho_{\mathrm{thres}}$ so that they satisfy
    \begin{align*}
        \rho_{\mathrm{thres}}&\geq\left(\sqrt{\frac{4B\sum_{m\neq\mathbf{0}_d}w_m^2 }{\alpha\wedge\beta}}\vee \frac{4B}{\alpha\wedge\beta}\right)\frac{1}{n}\\
        t&\leq\frac{\rho_{\mathrm{thres}}}{d}\wedge1\\
        M&\geq \frac{C_\textrm{remain}}{\rho_{\mathrm{thres}}}\vee \left(\frac{d-1}{t}+\sqrt{d}\right)
    \end{align*}
where $c_{\mathrm{thres}}$ and $C_\textrm{remain}$ are constants depending on $d$ in Lemma~\ref{lem:weight_sum} and Lemma~\ref{lem:remainder}, respectively. Then, for $n\geq2$, $\phi_{P_0}$ achieves Type I and Type II errors at most $\alpha$ and $\beta$ for the following Wasserstein goodness-of-fit testing problem:
\begin{align*}
    H_0:P=P_0\textrm{ vs }H_1:\widetilde{W}_1(P,P_0)\geq 4\sqrt{2\rho_{\mathrm{thres}}}.
\end{align*}
\end{lemma}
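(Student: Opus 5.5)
The argument is a standard mean--variance analysis of the $U$-statistic $T_{P_0}$, combined with Chebyshev's inequality and the Fourier upper bound \eqref{eq:W1_ub}.

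\textbf{Mean and Type I error.} Since the $Z_{m,l}$ are i.i.d.\ over $l$, we have $\mathbb{E}_P[T_{P_0}]=\sum_{m\in A_M}w_m|\mu_m|^2$, where $\mu_m=\varphi_m-\varphi_{0,m}$. Under $H_0$ this mean is $0$. For the variance, I will use the Hoeffding decomposition:
\begin{align*}
\mathrm{Var}(T_{P_0})\lesssim \frac{1}{n}\,\mathrm{Var}\Bigl(\sum_{m\in A_M} w_m\,\mathrm{Re}\bigl(Z_{m,1}\overline{\mu_m}\bigr)\Bigr)+\frac{1}{n^2}\,\mathbb{E}\Bigl|\sum_{m\in A_M} w_m Z_{m,1}\overline{Z_{m,2}}\Bigr|^2 .
\end{align*}
The degenerate (second) term is handled through the bounded density. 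Writing $Z_{m,1}=e_m(X_1)-\varphi_{0,m}$ with $e_m(x)=e^{2\pi i\langle m,x\rangle}$, the key inequality is $\mathbb{E}_P|\sum_m a_m e_m(X)|^2\le B\sum_m|a_m|^2$, which follows from Parseval because $dP/d\lambda\le B$. Conditioning on $X_2$ and applying this inequality twice, together with the centering terms, gives $\mathbb{E}|\sum w_m Z_{m,1}\overline{Z_{m,2}}|^2\lesssim B^2\sum_m w_m^2$. Under $H_0$, Chebyshev's inequality then yields $\mathbb{P}(T\ge\rho_{\mathrm{thres}})\le B^2\sum w_m^2/(n^2\rho_{\mathrm{thres}}^2)$. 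This is at most $\alpha$ by the first condition on $\rho_{\mathrm{thres}}$, after adjusting the constants so that the factor $B$ versus $B^2$ is absorbed. I expect the constants in the stated threshold to match once the variance bound is written carefully.

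\textbf{Type II error.} First I relate the separation to the mean. Lemma~\ref{lem:W1_ub}, in the form \eqref{eq:W1_ub} with $t\le\rho_{\mathrm{thres}}/d$, gives $2\sqrt{2dt}\le 2\sqrt{2\rho_{\mathrm{thres}}}$. Hence $\widetilde W_1\ge 4\sqrt{2\rho_{\mathrm{thres}}}$ forces $\sum_{m\ne0}w_m|\mu_m|^2\ge 8\rho_{\mathrm{thres}}$.

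The tail $m\notin A_M$ must be controlled next, and this is where Lemma~\ref{lem:remainder} (with constant $C_{\mathrm{remain}}$) is invoked. Since $|\mu_m|\le 2$, the tail is $\sum_{\|m\|>M}4e^{-2\|m\|^2t}/\|m\|^2$. The condition $M\ge (d-1)/t+\sqrt d$ places us in the regime where the Gaussian factor decays monotonically, so that a lattice sum to integral comparison applies. The condition $M\ge C_{\mathrm{remain}}/\rho_{\mathrm{thres}}$ then makes the tail at most $\rho_{\mathrm{thres}}$. Therefore $\mathbb{E}T=\sum_{A_M}w_m|\mu_m|^2\ge 7\rho_{\mathrm{thres}}$, and so $\mathbb{E}T-\rho_{\mathrm{thres}}\ge \tfrac{6}{7}\mathbb{E}T$.

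\textbf{Main obstacle.} The hardest step is controlling the linear term $\frac1n\mathrm{Var}(\sum w_m\mathrm{Re}(Z_{m,1}\overline{\mu_m}))$ against $(\mathbb{E}T)^2$. Using the same Parseval bound with $a_m=w_m\mu_m$ gives a bound of $\frac{B}{n}\sum w_m^2|\mu_m|^2$. Since $w_m\le1$, this is at most $\frac{B}{n}\sum w_m|\mu_m|^2=\frac{B}{n}\mathbb{E}T$. Chebyshev's inequality then gives a Type II error bounded by
\begin{align*}
\frac{\mathrm{Var}(T)}{(\tfrac{6}{7}\mathbb{E}T)^2}\lesssim \frac{B}{n\,\mathbb{E}T}+\frac{B^2\sum w_m^2}{n^2(\mathbb{E}T)^2}.
\end{align*}
Both terms are at most $\beta$, using $\mathbb{E}T\ge 7\rho_{\mathrm{thres}}$ together with the two parts of the lower bound on $\rho_{\mathrm{thres}}$. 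The $4B/(\alpha\wedge\beta)$ part handles the linear term, and the $\sqrt{\cdot}$ part handles the degenerate term.
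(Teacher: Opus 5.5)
Your route matches the paper's. You compute $\mathbb{E}_P[T_{P_0}]=\sum_{m\in A_M}w_m|\mu_m|^2$, bound the variance using $dP/d\lambda\le B$ together with Parseval, apply Chebyshev under both hypotheses, and pass from $\widetilde W_1$ to the Fourier discrepancy through \eqref{eq:W1_ub} and Lemma~\ref{lem:remainder}. The paper obtains the variance from an exact covariance formula plus the matrix bound $\Sigma_P\preceq BI$; your Hoeffding/conditioning route is equivalent.

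\textbf{The step that fails.} You bound the degenerate term by $B^2\sum_m w_m^2$ and propose to ``absorb'' $B$ versus $B^2$ into constants. That is not allowed. $B$ is a free parameter, and the hypothesis on $\rho_{\mathrm{thres}}$ is linear in $B$ with explicit numerical constants. With $B^2$, Chebyshev only gives a Type I error of order $B(\alpha\wedge\beta)$, which exceeds $\alpha$ once $B$ is large. The same problem affects the degenerate part of your Type II bound.

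\textbf{How to repair it.} The fix lies within your own method.
\begin{enumerate}
\item Use the centered variables $Y_{m,l}\coloneqq Z_{m,l}-\mu_m$ in the degenerate kernel; these coincide with $Z_{m,l}$ under $H_0$.
\item Condition on $X_2$ and use the density bound once, for the $X_1$-expectation. This gives $B\sum_m w_m^2|Y_{m,2}|^2$.
\item Bound $\mathbb{E}|Y_{m,2}|^2=1-|\varphi_m|^2\le 1$ directly, with no second use of the density bound.
\end{enumerate}
This yields $\mathbb{E}\bigl|\sum_m w_mY_{m,1}\overline{Y_{m,2}}\bigr|^2\le B\sum_m w_m^2$. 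The cross terms with the linear part vanish, so you recover exactly the bound of Lemma~\ref{lem:T}:
$\mathrm{Var}_P(T_{P_0})\le\frac{2B}{n(n-1)}\sum_{A_M}w_m^2+\frac{4B}{n}\sum_{A_M}w_m^2|\mu_m|^2$.
With this, the Type I step closes with the stated constants, using $n^2\le 2n(n-1)$ for $n\ge 2$.

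\textbf{A smaller slip in the tail.} Since $|\mu_m|^2\le 4$, Lemma~\ref{lem:remainder} with $M\ge C_{\mathrm{remain}}/\rho_{\mathrm{thres}}$ bounds the tail by $4\rho_{\mathrm{thres}}$, not $\rho_{\mathrm{thres}}$. You therefore only get $\mathbb{E}T\ge 4\rho_{\mathrm{thres}}$, as in the paper, rather than $7\rho_{\mathrm{thres}}$. This still suffices. Using $\mathbb{E}T-\rho_{\mathrm{thres}}\ge\frac34\mathbb{E}T$, $w_m\le 1$, and the corrected variance bound, Chebyshev gives a Type II error at most $\frac{16}{9}\bigl(\frac{1}{16}+\frac{1}{4}\bigr)(\alpha\wedge\beta)=\frac59(\alpha\wedge\beta)\le\beta$. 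Note that it is the sum of your two Chebyshev contributions, not each one separately, that must be at most $\beta$.
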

We use Lemma~\ref{lem:W1_minimax_ub} to establish the upper bound on
$\rho_n^{\mathrm{gof}}(\mathbb{T}^d,B,\alpha,\beta)$.

By Lemma~\ref{lem:weight_sum}, the squared
weights satisfy
\begin{align*}
    c_{\mathrm{thres}}^{-1}\sum_{m\neq\mathbf{0}_d}w_m^2
    \leq
    \begin{cases}
        1, & d<4,\\
        \log\frac{e}{t}, & d=4,\\
        t^{-(d-4)/2}, & d>4.
    \end{cases}
\end{align*}
We first show the bound holds for sufficiently large $n$. Suppose $d<4$. Set
\begin{align*}
    \rho_{\mathrm{thres}}&=\left(\sqrt{\frac{4Bc_{\mathrm{thres}}}{\alpha\wedge\beta}}
    \vee
    \frac{4B}{\alpha\wedge\beta}\right)\frac{1}{n}\\
    t&=\frac{\rho_{\mathrm{thres}}}{d}\\
    M&= \left\lceil
    \frac{C_{\textrm{remain}}}{\rho_{\mathrm{thres}}}
    \vee
    \left(
    \frac{d(d-1)}{\rho_{\mathrm{thres}}}+\sqrt d
    \right)
    \right\rceil.
\end{align*}
For all sufficiently large $n$, we have $\rho_{\mathrm{thres}}\leq d$, and hence $t=\rho_{\mathrm{thres}}/d\leq 1$. Therefore, Lemma~\ref{lem:W1_minimax_ub} gives a test with Type I and Type II errors at most $\alpha$ and $\beta$, respectively, whenever
\begin{align*}
    \widetilde W_1(P,P_0)
    \gtrsim n^{-1/2}.
\end{align*}
Thus,
\begin{align*}
    \rho_n^{\mathrm{gof}}(\mathbb T^d,B,\alpha,\beta)
    \lesssim n^{-1/2}
\end{align*}
for all sufficiently large $n$.

Next, suppose $d=4$. Set
\begin{align*}
     \rho_{\mathrm{thres}}&=\left(\sqrt{\frac{4Bc_{\mathrm{thres}}}{\alpha\wedge\beta}}\vee   \frac{4B}{\alpha\wedge\beta}\right)\frac{\sqrt{\log(en)}}{n}\\
    t&=\frac{\rho_{\mathrm{thres}}}{4}\\
    M&= \left\lceil
    \frac{C_{\textrm{remain}}}{\rho_{\mathrm{thres}}}
    \vee
    \left(
    \frac{12}{\rho_{\mathrm{thres}}}+2
    \right)
    \right\rceil.
\end{align*}
For all sufficiently large $n$, $\rho_{\mathrm{thres}}\leq4$. Also,
\begin{align*}
    \log\frac{e}{t}
    =
    \log\frac{4e}{\rho_{\mathrm{thres}}}
    =\log\left(\frac{4en}{\sqrt{\log(en)}}
    \right)-\log \left(\sqrt{\frac{4Bc_{\mathrm{thres}}}{\alpha\wedge\beta}}
    \vee
    \frac{4B}{\alpha\wedge\beta}\right)   \leq \log(en)\leq 2\log n,
\end{align*}
since $\frac{4B}{\alpha\wedge\beta}>\frac{4}{\alpha\wedge\beta}>4$. Therefore,
Lemma~\ref{lem:W1_minimax_ub} yields
\begin{align*}
    \rho_n^{\mathrm{gof}}(\mathbb T^4,B,\alpha,\beta)
    \lesssim
    \sqrt{\rho_{\mathrm{thres}}}
    \asymp
    \frac{\log^{1/4}n }{\sqrt n}
\end{align*}
for all sufficiently large $n$.

Finally, suppose $d>4$. Set
\begin{align*}
     \rho_{\mathrm{thres}}&=Kn^{-4/d}\\
    t&=\frac{\rho_{\mathrm{thres}}}{d}\\
    M&= \left\lceil
    \frac{C_{\textrm{remain}}}{\rho_{\mathrm{thres}}}
    \vee
    \left(
    \frac{d(d-1)}{\rho_{\mathrm{thres}}}+\sqrt{d}
    \right)
    \right\rceil,
\end{align*}
for some $K>0$. For all sufficiently large $n$, $\rho_{\mathrm{thres}}\leq d$. Also,
\begin{align*}
\frac{1}{n}\sqrt{\frac{4B\sum_{m\neq\mathbf{0}_d} w_m^2}{\alpha\wedge\beta}}&\leq\frac{1}{n}
\sqrt{\frac{4Bc_{\mathrm{thres}}t^{-(d-4)/2}}{\alpha\wedge\beta}}\\
&=\frac{1}{n}\sqrt{\frac{4d^{d/2-2}Bc_{\mathrm{thres}}\rho_{\mathrm{thres}}^{-(d-4)/2}}{\alpha\wedge\beta}}\\
&=\rho_{\mathrm{thres}}\sqrt{\frac{4d^{d/2-2}Bc_{\mathrm{thres}}}{\alpha\wedge\beta}}K^{-d/4}.
\end{align*}
Thus, choosing $K\geq \frac{4B}{\alpha\wedge\beta}\vee\left(\frac{4d^{d/2-2}Bc_{\mathrm{thres}}}{\alpha\wedge\beta}\right)^{2/d}$, implies
\begin{align*}
    \rho_{\mathrm{thres}}\geq \left(\sqrt{\frac{4B\sum_{m\neq\mathbf{0}_d}w_m^2}{\alpha\wedge\beta}}\vee\frac{4B}{\alpha\wedge\beta}\right)\frac{1}{n}.
\end{align*}
Applying Lemma~\ref{lem:W1_minimax_ub}, we obtain
\begin{align*}
    \rho_n^{\mathrm{gof}}(\mathbb T^d,B,\alpha,\beta)
    \lesssim
    \sqrt{\rho_{\mathrm{thres}}}
    \asymp n^{-2/d}
\end{align*}
for all sufficiently large $n$.

It remains to consider $e\leq n<n_0$. Define
\begin{align*}
    r_d(n)\coloneqq\begin{cases}
        n^{-1/2},&d<4\\
        n^{-1/2}\log^{1/4}n ,&d=4\\
        n^{-2/d},&d>4.
    \end{cases}
\end{align*}
Since $r_d(n)>0$ for every $n\geq e$,
\begin{align*}
\frac{\mathrm{diam}(\mathbb{T}^d)}{\min_{e\leq n^\prime<n_0}r_d(n^\prime)} r_d(n)\geq \mathrm{diam}(\mathbb{T}^d).
\end{align*}
Since $\sup_{P,Q\in\mathcal{P}(\mathbb{T}^d)}\widetilde{W}_1(P,Q)\leq \mathrm{diam}(\mathbb{T}^d)$, for $n\geq e$,
\begin{align*}
\frac{\mathrm{diam}(\mathbb{T}^d)}{\min_{e\leq n^\prime<n_0}r_d(n^\prime)}r_d(n)\geq \rho_n^{\mathrm{gof}}(\mathbb{T}^d,B,\alpha,\beta).
\end{align*}
Combining this with the preceding bound for $n\geq n_0$ yields
\begin{align*}
    \rho_n^{\mathrm{gof}}(\mathbb T^d,B,\alpha,\beta)
    \lesssim
    \begin{cases}
        n^{-1/2}, & d<4,\\
        n^{-1/2}\log^{1/4}n , & d=4,\\
        n^{-2/d}, & d>4
    \end{cases}
\end{align*}
for every $n\geq e$.
\subsection{Proof of Lemma~\ref{lem:W1_minimax_ub}}
We bound the Type I and Type II errors using Chebyshev's inequality. To this end, we use the following lemma on the expectation and variance of $T_{P_0}$.
\begin{lemma}\label{lem:T}
For $P\in\mathcal{P}_B(\mathbb{T}^d)$,
    \begin{align*}
    \mathbb{E}_P[T_{P_0}]&=\sum_{m\in A_M}w_m|\mu_m|^2\\    \mathrm{Var}_P(T_{P_0})&\leq\frac{2B}{n(n-1)}\sum_{m\in A_M}w_m^2+\frac{4B}{n}\sum_{m\in A_M}w_m^2|\mu_{m}|^2.
    \end{align*}
\end{lemma}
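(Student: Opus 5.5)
The mean and variance bounds both follow from a Hoeffding-type decomposition of $T_{P_0}$ around the true characteristic function $\varphi_m$ of $P$ rather than the null one. The bounded-density assumption $P\in\mathcal{P}_B(\mathbb{T}^d)$ then enters only through the inequality $\mathbb{E}_P|g(X)|^2\leq B\lVert g\rVert_{L^2(\mathbb{T}^d)}^2$, combined with Parseval for the orthonormal system $e_m(x)\coloneqq e^{2\pi i\langle m,x\rangle}$.

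\textbf{Expectation.} For $l\neq l'$, $X_l$ and $X_{l'}$ are independent, so $\mathbb{E}_P[Z_{m,l}\overline{Z_{m,l'}}]=\mu_m\overline{\mu_m}=|\mu_m|^2$. Averaging over the $n(n-1)$ ordered pairs and summing over $m\in A_M$ gives the first identity.

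\textbf{Decomposition.} Write $Y_{m,l}\coloneqq e_m(X_l)-\varphi_m$, which is centered under $P$, so that $Z_{m,l}=Y_{m,l}+\mu_m$. Expanding,
\begin{align*}
T_{P_0}=\sum_{m\in A_M}w_m|\mu_m|^2+\underbrace{\sum_{m\in A_M}\frac{w_m}{n(n-1)}\sum_{l\neq l'}Y_{m,l}\overline{Y_{m,l'}}}_{U}+\underbrace{\frac{1}{n}\sum_{l=1}^n\sum_{m\in A_M}w_m\bigl(\overline{\mu_m}Y_{m,l}+\mu_m\overline{Y_{m,l}}\bigr)}_{L}.
\end{align*}
The statistic is real because $A_M$ is symmetric under $m\mapsto -m$, $w_{-m}=w_m$, and $Z_{-m,l}=\overline{Z_{m,l}}$. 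Since $U$ is a degenerate U-statistic and $L$ is a centered linear statistic, every cross term $\mathbb{E}[Y\overline{Y}Y]$ with one index unpaired vanishes, so $\mathrm{Cov}(U,L)=0$. Hence $\mathrm{Var}(T_{P_0})=\mathrm{Var}(U)+\mathrm{Var}(L)$.

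\textbf{Degenerate part.} By degeneracy only the paired terms survive, giving
\[
\mathrm{Var}(U)\leq\frac{2}{n(n-1)}\,\mathbb{E}\Bigl|\sum_m w_mY_m(X)\overline{Y_m(X')}\Bigr|^2
\]
with $X,X'$ i.i.d.\ from $P$. Fix $X$ and set $g_X(y)\coloneqq\sum_m w_mY_m(X)e_{-m}(y)$. Then $\sum_m w_mY_m(X)\overline{Y_m(y)}=g_X(y)-\mathbb{E}_P g_X$, so the inner expectation over $X'$ is $\mathrm{Var}_P(g_X)$. Using the density bound and Parseval,
\[
\mathrm{Var}_P(g_X)\leq\mathbb{E}_P|g_X|^2\leq B\lVert g_X\rVert_{L^2}^2=B\sum_m w_m^2|Y_m(X)|^2.
\]
Finally $\mathbb{E}|Y_m(X)|^2=1-|\varphi_m|^2\leq 1$, which gives the term $\frac{2B}{n(n-1)}\sum_{m\in A_M} w_m^2$.

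\textbf{Linear part.} The summands of $L$ are i.i.d., and $\overline{\mu_m}Y_{m,l}+\mu_m\overline{Y_{m,l}}=2\,\mathrm{Re}(\overline{\mu_m}Y_{m,l})$, so
\[
\mathrm{Var}(L)\leq\frac{4}{n}\,\mathbb{E}\Bigl|\sum_m w_m\overline{\mu_m}Y_m(X)\Bigr|^2\leq\frac{4}{n}\,\mathbb{E}_P\Bigl|\sum_m w_m\overline{\mu_m}e_m(X)\Bigr|^2,
\]
where the second step holds because the variance is at most the uncentered second moment. Applying the density bound and Parseval once more gives at most $\frac{4B}{n}\sum_{m\in A_M} w_m^2|\mu_m|^2$.

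Adding the two variance bounds proves the lemma. The main obstacle is the degenerate term, specifically obtaining the sharp constant $B$ rather than a constant such as $4$. Centering at $\varphi_m$ rather than $\varphi_{0,m}$ is what makes this work: it turns the inner expectation into a variance, which can be dropped to a second moment and then controlled by the $L^2$ norm through the density bound. Centering at $\varphi_{0,m}$ would leave an extra constant term that the density bound does not absorb.
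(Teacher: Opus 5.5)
Your proposal is correct and follows essentially the paper's route. You use the same centering $Z_{m,l}=Y_{m,l}+\mu_m$ into a degenerate U-statistic part and a linear part with zero cross-covariance, and the same key fact that a density bounded by $B$ inflates the second moment of a frequency-$0$-free trigonometric polynomial by at most $B$ times its Parseval norm. The only difference is in the degenerate term: you condition on one observation and apply the density bound to $g_X$, while the paper writes the same quantity as $\lVert W^{1/2}\Sigma_PW^{1/2}\rVert_F^2$ and uses $\Sigma_P\preceq BI$ inside a trace.
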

\paragraph{Type I error.} Assume $P=P_0$. By Chebyshev's inequality, the following holds.
\begin{align*}
\mathbb{P}\left(T_{P_0}\geq \mathbb{E}_{P_0}[T_{P_0}]+\sqrt{\frac{\mathrm{Var}_{P_0}\left(T_{P_0}\right)}{\alpha\wedge\beta}}\middle|H_0\right)\leq \alpha.
\end{align*}
$\mathbb{E}_{P_0}[T_{P_0}]=0$ and by Lemma~\ref{lem:T}, the following holds.
\begin{align*}
    \mathbb{E}_{P_0}[T_{P_0}]+\sqrt{\frac{\mathrm{Var}_{P_0}\left(T_{P_0}\right)}{\alpha\wedge\beta}}&\leq \sqrt{\frac{2B}{n(n-1)(\alpha\wedge\beta)}\sum_{m\in A_M}w_m^2}\\&\leq \frac{1}{n}    \sqrt{\frac{4B}{\alpha\wedge\beta}\sum_{m\in A_M}w_m^2}\\
    &\leq \frac{1}{n}\sqrt{\frac{4B\sum_{m\neq\mathbf{0}_d}w_m^2}{\alpha\wedge\beta}}
\end{align*}
since $n^2\leq 2n(n-1)$. Thus,
\begin{align*}
    \mathbb{E}_{P_0}[T_{P_0}]+\sqrt{\frac{\mathrm{Var}_{P_0}\left(T_{P_0}\right)}{\alpha\wedge\beta}}\leq \sqrt{\frac{4B\sum_{m\neq\mathbf{0}_d}w_m^2}{\alpha\wedge\beta}}\frac{1}{n}\leq \rho_{\mathrm{thres}}.
\end{align*}
Therefore, the Type I error is at most $\alpha$:
\begin{align*}
    \mathbb{P}\left(T_{P_0}\geq \rho_{\mathrm{thres}}\middle| P_0\right)&\leq\mathbb{P}\left(T_{P_0}\geq \mathbb{E}_{P_0}[T_{P_0}]+\sqrt{\frac{\mathrm{Var}_{P_0}\left(T_{P_0}\right)}{\alpha\wedge\beta}}\middle|H_0\right)\\
    &\leq \alpha.
\end{align*}
\paragraph{Type II error.}  Assume $\widetilde{W}_1(P,P_0)\geq 4\sqrt{2\rho_{\mathrm{thres}}}$. We first show that
\begin{align*}
    \sum_{m\in A_M}w_m|\mu_m|^2\geq 4\rho_{\mathrm{thres}}.
\end{align*}
By \eqref{eq:W1_ub}, the following holds.
\begin{align*}
    \left(\sum_{m\neq\mathbf{0}_d}w_m|\mu_m|^2\right)^{1/2}&\geq \widetilde{W}_1(P,P_0)-2\sqrt{2dt}\\
    &\geq 4\sqrt{2\rho_{\mathrm{thres}}}-2\sqrt{2dt}\\
    &\geq 2\sqrt{2\rho_{\mathrm{thres}}},
\end{align*}
where the last inequality holds since $t\leq \frac{\rho_{\mathrm{thres}}}{d}$. By Lemma~\ref{lem:remainder},
\begin{align*}
    \sum_{\lVert m\rVert_2>M}w_m|\mu_m|^2\leq 4\sum_{\lVert m\rVert_2>M}w_m\leq\frac{4C_{\textrm{remain}}}{M},
\end{align*}
where the first inequality holds since
\begin{align*}
    |\mu_m|=|\varphi_m-\varphi_{0,m}|\leq |\varphi_m|+|\varphi_{0,m}|\leq2.
\end{align*}
Thus,
\begin{align*}
    \sum_{m\in A_M}w_m|\mu_m|^2&\geq \sum_{m\neq\mathbf{0}_d}w_m|\mu_m|^2-\sum_{\lVert m\rVert_2>M}w_m|\mu_m|^2\\
    &\geq 8\rho_{\mathrm{thres}}-\frac{4C_{\textrm{remain}}}{M}\\
    &\geq 4\rho_{\mathrm{thres}}.
\end{align*}
Now we can bound the Type II error. By Chebyshev's inequality,
\begin{align*}
    \mathbb{P}\left(T_{P_0}\leq \mathbb{E}_P[T_{P_0}]-\sqrt{\frac{\mathrm{Var}_{P}\left(T_{P_0}\right)}{\alpha\wedge\beta}}\middle|H_1\right)\leq \beta.
\end{align*}
By Lemma~\ref{lem:T}, the variance term is bounded as follows. 
\begin{align*}
\sqrt{\frac{\mathrm{Var}_P(T_{P_0})}{\alpha\wedge\beta}}&\leq\sqrt{\frac{2B}{n(n-1)(\alpha\wedge\beta)}\sum_{m\in A_M}w_m^2+\frac{4B}{n(\alpha\wedge\beta)}\sum_{m\in A_M}w_m|\mu_m|^2}\\
&\leq \sqrt{\frac{2B}{n(n-1)(\alpha\wedge\beta)}\sum_{m\in A_M}w_m^2}+\sqrt{\frac{4B}{n(\alpha\wedge\beta)}\sum_{m\in A_M}w_m|\mu_m|^2}\\
&\leq \frac{1}{n}\sqrt{\frac{4B}{(\alpha\wedge\beta)}\sum_{m\neq\mathbf{0}_d}w_m^2}+\sqrt{\frac{4B}{n(\alpha\wedge\beta)}\sum_{m\in A_M}w_m|\mu_m|^2}.
\end{align*}
Note that
\begin{align*}
    \sum_{m\in A_M}w_m|\mu_m|^2\geq 4\rho_{\mathrm{thres}}\geq \frac{16B}{n(\alpha\wedge\beta)}.
\end{align*}
Thus, the second term can be bounded as follows:
\begin{align*}
    \sqrt{\frac{4B}{n(\alpha\wedge\beta)}\sum_{m\in A_M}w_m|\mu_m|^2}&\leq\sqrt{\left(\frac{1}{4}\sum_{m\in A_M}w_m|\mu_m|^2\right)\left(\sum_{m\in A_M}w_m|\mu_m|^2\right)}.\\
    &=\frac{1}{2}\sum_{m\in A_M}w_m|\mu_m|^2.
\end{align*}
Also,
\begin{align*}
\frac{1}{n}\sqrt{\frac{4B\sum_{m\neq\mathbf{0}_d}w_m^2}{(\alpha\wedge\beta)}}\leq  \rho_{\mathrm{thres}}.
\end{align*}
Since $\mathbb{E}_P[T_{P_0}]=\sum_{m\in A_M}w_m|\mu_m|^2$,
\begin{align*}
    \mathbb{E}_P[T_{P_0}]-\sqrt{\frac{\mathrm{Var}_{P}\left(T_{P_0}\right)}{\alpha\wedge\beta}}&= \sum_{m\in A_M}w_m|\mu_m|^2-\sqrt{\frac{\mathrm{Var}_{P}\left(T_{P_0}\right)}{\alpha\wedge\beta}}\\
    &\geq \frac{1}{2}\sum_{m\in A_M}w_m|\mu_m|^2-\rho_{\mathrm{thres}}\\
    &\geq 2\rho_{\mathrm{thres}}-\rho_{\mathrm{thres}}\\
    &=\rho_{\mathrm{thres}}.
\end{align*}
Therefore, the Type II error is at most $\beta$:
\begin{align*}
   \mathbb{P}\left(T_{P_0}\leq \rho_{\mathrm{thres}}\middle|H_1\right)&\leq  \mathbb{P}\left(T_{P_0}\leq \mathbb{E}_P[T_{P_0}]-\sqrt{\frac{\mathrm{Var}_{P}\left(T_{P_0}\right)}{\alpha\wedge\beta}}\middle|H_1\right)\\
   &\leq \beta.
\end{align*}
\subsection{Proof of Lemma~\ref{lem:T}}
We first derive the expectation of $T_{P_0}$.
\paragraph{Expectation of $T_{P_0}$.} If $l\neq l^\prime$ then
\begin{align*}
    \mathbb E[Z_{m,l}\overline{Z_{m,l'}}]=\mathbb E[Z_{m,l}]\mathbb E[\overline{Z_{m,l'}}]=|\mu_m|^2.
\end{align*}
Next, we derive the upper bound of the variance of $T_{P_0}$.
\paragraph{Variance of $T_{P_0}$.}
By Lemmas~\ref{lem:cov_emp_error} and~\ref{lem:weighted_sum_to_weight_sum}, the variance of $T_{P_0}$ can be represented as follows.
\begin{align*}
    \mathrm{Var}(T_{P_0})&=\frac{1}{n(n-1)}\sum_{m,m^\prime\in A_M}w_mw_{m^\prime}\left(|\varphi_{m+m^\prime}-\varphi_m\varphi_{m^\prime}|^2+|\varphi_{m-m^\prime}-\varphi_m\overline{\varphi_{m^\prime}}|^2\right)\\
&+\frac{2}{n}\sum_{m,m^\prime\in A_M}w_mw_{m^\prime}\mathrm{Re}\left[\left(\varphi_{m+m^\prime}-\varphi_m\varphi_{m^\prime}\right)\overline{\mu_m\mu_{m^\prime}}\right]\\
&+\frac{2}{n}\sum_{m,m^\prime\in A_M}w_mw_{m^\prime}\mathrm{Re}\left[\left(\varphi_{m-m^\prime}-\varphi_m\overline{\varphi_{m^\prime}}\right)\overline{\mu_m}\mu_{m^\prime}\right]\\
    &=\frac{2}{n(n-1)}\sum_{m,m^\prime\in A_M}w_mw_{m^\prime}|\varphi_{m-m^\prime}-\varphi_m\overline{\varphi_{m^\prime}}|^2\\
    &+\frac{4}{n}\sum_{m,m^\prime\in A_M}w_mw_{m^\prime}\mathrm{Re}\left[\left(\varphi_{m-m^\prime}-\varphi_m\overline{\varphi_{m^\prime}}\right)\overline{\mu_m}\mu_{m^\prime}\right]\\
    &\leq \frac{2B}{n(n-1)}\sum_{m\in A_M}w_m^2+\frac{4B}{n}\sum_{m\in A_M}w_m^2|\mu_{m}|^2,
\end{align*}
where the second equality holds since $m\in A_M\iff -m\in A_M$ and $w_m=w_{-m}$.
\subsection{Proof of Lemma~\ref{lem:cov_emp_error}}
\begin{lemma}\label{lem:cov_emp_error}
\begin{align*}
    \mathrm{Cov}_P\left(\sum_{l\neq l^\prime} Z_{m,l}\overline{Z_{m,l^\prime}},\sum_{j\neq j^\prime} Z_{m^\prime,j}\overline{Z_{m^\prime,j^\prime}}\right)&=n(n-1)\left(\left|\varphi_{m+m^\prime}-\varphi_m\varphi_{m^\prime}\right|^2+\left|\varphi_{m-m^\prime}-\varphi_m\overline{\varphi_{m^\prime}}\right|^2\right)\\
&+2n(n-1)^2\mathrm{Re}\left[\left(\varphi_{m+m^\prime}-\varphi_m\varphi_{m^\prime}\right)\overline{\mu_m\mu_{m^\prime}}\right]\\
&+2n(n-1)^2\mathrm{Re}\left[\left(\varphi_{m-m^\prime}-\varphi_m\overline{\varphi_{m^\prime}}\right)\overline{\mu_m}\mu_{m^\prime}\right].
\end{align*}
\end{lemma}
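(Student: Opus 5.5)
The plan is to expand both double sums, use bilinearity of the covariance, and sort the index quadruples $(l,l',j,j')$ (with $l\neq l'$ and $j\neq j'$) by how many indices the pairs $\{l,l'\}$ and $\{j,j'\}$ share. Throughout, $\mathrm{Cov}(U,V)\coloneqq\mathbb{E}[U\overline V]-\mathbb{E}[U]\overline{\mathbb{E}[V]}$.

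First I will write each summand in centered form. Set
\begin{align*}
Y_{m,l}\coloneqq Z_{m,l}-\mu_m=e^{2\pi i\langle m,X_l\rangle}-\varphi_m,
\end{align*}
so that $\mathbb{E}[Y_{m,l}]=0$. Using $\mathbb{E}[e^{2\pi i\langle m+m^\prime,X\rangle}]=\varphi_{m+m^\prime}$ and $\mathbb{E}[e^{2\pi i\langle m-m^\prime,X\rangle}]=\varphi_{m-m^\prime}$, the two second moments that will appear are
\begin{align*}
a\coloneqq\mathbb{E}[Y_{m,1}Y_{m^\prime,1}]=\varphi_{m+m^\prime}-\varphi_m\varphi_{m^\prime},\qquad b\coloneqq\mathbb{E}[Y_{m,1}\overline{Y_{m^\prime,1}}]=\varphi_{m-m^\prime}-\varphi_m\overline{\varphi_{m^\prime}}.
\end{align*}
Consequently $\mathbb{E}[Z_{m,1}\overline{Z_{m^\prime,1}}]=b+\mu_m\overline{\mu_{m^\prime}}$ and $\mathbb{E}[Z_{m,1}Z_{m^\prime,1}]=a+\mu_m\mu_{m^\prime}$.

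Next I sort the quadruples.
\begin{itemize}
\item If $\{l,l'\}\cap\{j,j'\}=\emptyset$, the two products are independent, so the covariance vanishes.
\item If both indices are shared, there are two sub-cases, each occurring for $n(n-1)$ ordered pairs.
 \begin{itemize}
 \item For $(j,j')=(l,l')$, independence across $l$ and $l'$ factorizes the mixed moment. The covariance is
 \begin{align*}
 |b+\mu_m\overline{\mu_{m^\prime}}|^2-|\mu_m\mu_{m^\prime}|^2=|b|^2+2\mathrm{Re}[b\,\overline{\mu_m}\mu_{m^\prime}].
 \end{align*}
 \item For $(j,j')=(l',l)$, the same factorization gives
 \begin{align*}
 |a+\mu_m\mu_{m^\prime}|^2-|\mu_m\mu_{m^\prime}|^2=|a|^2+2\mathrm{Re}[a\,\overline{\mu_m\mu_{m^\prime}}].
 \end{align*}
 \end{itemize}
\item If exactly one index is shared, there are four configurations ($j=l$, $j=l'$, $j'=l$, $j'=l'$), each with $n(n-1)(n-2)$ ordered triples of distinct indices. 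Here only the shared variable is non-degenerate. For example, with $j=l$ the covariance is
 \begin{align*}
 \mathbb{E}[Z_{m,l}\overline{Z_{m^\prime,l}}]\,\overline{\mu_m}\mu_{m^\prime}-|\mu_m\mu_{m^\prime}|^2=b\,\overline{\mu_m}\mu_{m^\prime}.
 \end{align*}
 With $j'=l'$ the covariance is the complex conjugate of this. The configurations $j=l'$ and $j'=l$ produce $a\,\overline{\mu_m\mu_{m^\prime}}$ and its conjugate. Pairing conjugates, the four configurations contribute $2\mathrm{Re}[a\,\overline{\mu_m\mu_{m^\prime}}]+2\mathrm{Re}[b\,\overline{\mu_m}\mu_{m^\prime}]$ per triple.
\end{itemize}

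Finally I sum the cases. The $|a|^2+|b|^2$ terms carry coefficient $n(n-1)$. The $\mathrm{Re}$ terms carry coefficient $2n(n-1)+2n(n-1)(n-2)=2n(n-1)^2$. Substituting $a$ and $b$ gives exactly the claimed identity.

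The main obstacle is bookkeeping in the one-shared-index case. I need to check which of the four configurations yields $a$ versus $b$, and that each is paired with its conjugate so the result is real. To do this I will write out one configuration in full and obtain the others by the symmetries $l\leftrightarrow l'$ and conjugation, rather than computing each separately.
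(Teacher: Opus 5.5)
Your proof is correct, and I checked each configuration value and count, but you take a different route from the paper.

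The paper first applies the Hoeffding decomposition $\sum_{l\neq l'}Z_{m,l}\overline{Z_{m,l'}}=A_m+L_m+n(n-1)|\mu_m|^2$. Here the degenerate part is $A_m=\sum_{l\neq l'}Y_{m,l}\overline{Y_{m,l'}}$ and the linear part is $L_m=(n-1)\sum_{l=1}^n(\overline{\mu_m}Y_{m,l}+\mu_m\overline{Y_{m,l}})$. The cross covariances $\mathrm{Cov}(A_m,L_{m'})$ and $\mathrm{Cov}(L_m,A_{m'})$ vanish, so only two computations remain. In $\mathrm{Cov}(A_m,A_{m'})$ only fully overlapping index quadruples survive, giving $n(n-1)(|a|^2+|b|^2)$ in your notation. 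The term $\mathrm{Cov}(L_m,L_{m'})$ is a covariance of i.i.d.\ sums and yields the coefficient $n(n-1)^2$ on the $\mathrm{Re}$ terms in one step.

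You instead keep the products uncentered and classify all quadruples by overlap. The $\mathrm{Re}$ terms then come from two sources: the full-overlap case with weight $2n(n-1)$ and the single-overlap case with weight $2n(n-1)(n-2)$. The coefficient $2n(n-1)^2$ appears only after you merge them.

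The paper's route exposes the orthogonal split into a second-order degenerate part and a first-order linear part. That split is exactly the structure behind the two-term variance bound in Lemma~\ref{lem:T}. Your route needs no decomposition, at the cost of more bookkeeping.

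You should state one point explicitly. Each double sum is real, because the $(l,l')$ and $(l',l)$ summands are complex conjugates. Hence your Hermitian convention $\mathbb{E}[U\overline{V}]-\mathbb{E}[U]\overline{\mathbb{E}[V]}$, applied term by term via sesquilinearity, agrees with the ordinary covariance in the statement.
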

\begin{proof}
Define
\begin{align*}
Y_{m,l}\coloneqq Z_{m,l}-\mu_m=e^{2\pi i\langle m,X_l\rangle}-\varphi_m,
\end{align*}
so that $\mathbb{E}_P[Y_{m,l}]=0$. Also, let
\begin{align*}
A_m\coloneqq \sum_{l\neq l^\prime}Y_{m,l}\overline{Y_{m,l^\prime}},\qquad L_m\coloneqq (n-1)\sum_{l=1}^n\left(\overline{\mu_m}Y_{m,l}+\mu_m\overline{Y_{m,l}}\right).
\end{align*}
Then
\begin{align*}
\sum_{l\neq l^\prime}Z_{m,l}\overline{Z_{m,l^\prime}}=A_m+L_m+n(n-1)|\mu_m|^2.
\end{align*}
Then
\begin{align*}
    \mathrm{Cov}(Y_{m,l}\overline{Y_{m,l^\prime}},\overline{\mu_m}Y_{m,l}+\mu_m \overline{Y_{m,l}})&=\mathbb{E}[Y_{m,l}\overline{Y_{m,l^\prime}}(\overline{\mu_m}Y_{m,l}+\mu_m \overline{Y_{m,l}})]\\
    &=\mathbb{E}[Y_{m,l}(\overline{\mu_m}Y_{m,l}+\mu_m \overline{Y_{m,l}})]\mathbb{E}[\overline{Y_{m,l^\prime}}]\\
    &=0.
\end{align*}
Thus,
\begin{align*}
\mathrm{Cov}_P(A_m,L_{m^\prime})=\mathrm{Cov}_P(L_m,A_{m^\prime})=0.
\end{align*}
Hence
\begin{align*}
\mathrm{Cov}_P\left(\sum_{l\neq l^\prime}Z_{m,l}\overline{Z_{m,l^\prime}},\sum_{j\neq j^\prime}Z_{m^\prime,j}\overline{Z_{m^\prime,j^\prime}}\right)=\mathrm{Cov}_P(A_m,A_{m^\prime})+\mathrm{Cov}_P(L_m,L_{m^\prime}).
\end{align*}

We begin by computing $\mathrm{Cov}_P(A_m,A_{m^\prime})$. Since $\mathbb{E}_P[A_m]=0$, the only nonzero terms in $\mathbb{E}_P[A_mA_{m^\prime}]$ are those for which either $(j,j^\prime)=(l,l^\prime)$ or $(j,j^\prime)=(l^\prime,l)$. Therefore,
\begin{align*}
\mathrm{Cov}_P(A_m,A_{m^\prime})=n(n-1)\left(\left|\mathbb{E}_P[Y_{m,1}Y_{m^\prime,1}]\right|^2+\left|\mathbb{E}_P[Y_{m,1}\overline{Y_{m^\prime,1}}]\right|^2\right).
\end{align*}
Moreover,
\begin{align*}
\mathbb{E}_P[Y_{m,1}Y_{m^\prime,1}]&=\varphi_{m+m^\prime}-\varphi_m\varphi_{m^\prime},\\
\mathbb{E}_P[Y_{m,1}\overline{Y_{m^\prime,1}}]&=\varphi_{m-m^\prime}-\varphi_m\overline{\varphi_{m^\prime}}.
\end{align*}
Thus,
\begin{align*}
\mathrm{Cov}_P(A_m,A_{m^\prime})=n(n-1)\left(\left|\varphi_{m+m^\prime}-\varphi_m\varphi_{m^\prime}\right|^2+\left|\varphi_{m-m^\prime}-\varphi_m\overline{\varphi_{m^\prime}}\right|^2\right).
\end{align*}

Next, by independence across the sample index,
\begin{align*}
\mathrm{Cov}_P(L_m,L_{m^\prime})=n(n-1)^2\mathbb{E}_P\left[\left(\overline{\mu_m}Y_{m,1}+\mu_m\overline{Y_{m,1}}\right)\left(\overline{\mu_{m^\prime}}Y_{m^\prime,1}+\mu_{m^\prime}\overline{Y_{m^\prime,1}}\right)\right].
\end{align*}
Expanding the expectation and pairing conjugate terms gives
\begin{align*}
\mathrm{Cov}_P(L_m,L_{m^\prime})={}&2n(n-1)^2\mathrm{Re}\left[\left(\varphi_{m+m^\prime}-\varphi_m\varphi_{m^\prime}\right)\overline{\mu_m\mu_{m^\prime}}\right]\\
&+2n(n-1)^2\mathrm{Re}\left[\left(\varphi_{m-m^\prime}-\varphi_m\overline{\varphi_{m^\prime}}\right)\overline{\mu_m}\mu_{m^\prime}\right].
\end{align*}
Combining the two displays proves the stated identity.
\end{proof}
\subsection{Proof of Lemma~\ref{lem:weighted_sum_to_weight_sum}}
\begin{lemma}\label{lem:weighted_sum_to_weight_sum}
\begin{align*}
\sum_{m,m^\prime\in A_M}w_mw_{m^\prime}|\varphi_{m-m^\prime}-\varphi_m\overline{\varphi_{m^\prime}}|^2&\leq B\sum_{m\in A_M}w_m^2\\
\sum_{m,m^\prime\in A_M}w_mw_{m^\prime}\mathrm{Re}\left[\left(\varphi_{m-m^\prime}-\varphi_m\overline{\varphi_{m^\prime}}\right)\overline{\mu_m}\mu_{m^\prime}\right]&\leq B\sum_{m\in A_M}w_m^2|\mu_{m}|^2.
\end{align*}
\end{lemma}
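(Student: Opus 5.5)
The idea is to treat the matrix $K=(K_{m,m'})_{m,m'\in A_M}$ with entries
\begin{align*}
K_{m,m'}\coloneqq \varphi_{m-m'}-\varphi_m\overline{\varphi_{m'}}
\end{align*}
as a covariance matrix. Writing $e_m(x)\coloneqq e^{2\pi i\langle m,x\rangle}$, we have $K_{m,m'}=\mathbb{E}_P[e_m(X)\overline{e_{m'}(X)}]-\mathbb{E}_P[e_m(X)]\overline{\mathbb{E}_P[e_{m'}(X)]}=\mathrm{Cov}_P(e_m(X),e_{m'}(X))$. Hence $K$ is Hermitian and positive semidefinite. The plan is to show the operator inequality $0\preceq K\preceq B I$ and then read off both claimed bounds from it.

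\textbf{Step 1 ($K\preceq BI$).} Fix any complex vector $c=(c_m)_{m\in A_M}$ and set $f\coloneqq\sum_m \overline{c_m}\, e_m$. Then
\begin{align*}
\sum_{m,m'}c_m\overline{c_{m'}}\,\overline{K_{m,m'}}=\mathrm{Var}_P(f(X))\leq \mathbb{E}_P|f(X)|^2\leq B\int_{\mathbb{T}^d}|f|^2\,d\lambda_{\mathbb{T}^d}=B\sum_m|c_m|^2 .
\end{align*}
The second inequality uses the density bound $dP/d\lambda_{\mathbb{T}^d}\leq B$, and the final equality is Parseval, since $\{e_m\}$ is orthonormal in $L^2(\mathbb{T}^d)$. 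Replacing $c$ by $\bar c$ gives the same bound for the form $\sum_{m,m'}\overline{c_m}K_{m,m'}c_{m'}$. So $K\preceq BI$, and likewise $\overline{K}=K^{T}\preceq BI$. In addition, each diagonal entry satisfies $K_{m,m}=1-|\varphi_m|^2\leq 1$.

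\textbf{Step 2 (second inequality).} Put $c_m\coloneqq w_m\mu_m$. Then
\begin{align*}
\sum_{m,m'}w_mw_{m'}K_{m,m'}\overline{\mu_m}\mu_{m'}=c^*Kc .
\end{align*}
This quantity is real and lies in $[0,B\|c\|_2^2]=[0,B\sum_m w_m^2|\mu_m|^2]$. Its real part is itself, which proves the claim.

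\textbf{Step 3 (first inequality).} Let $W\coloneqq\mathrm{diag}(w_m)$, with $w_m\geq 0$. Because $K$ is Hermitian,
\begin{align*}
\sum_{m,m'}w_mw_{m'}|K_{m,m'}|^2=\|W^{1/2}KW^{1/2}\|_F^2=\mathrm{tr}(WKWK)=\mathrm{tr}\bigl(K^{1/2}WKWK^{1/2}\bigr).
\end{align*}
By Step 1, $WKW\preceq B W^2$. Conjugating by $K^{1/2}$ preserves this order, so
\begin{align*}
\mathrm{tr}\bigl(K^{1/2}WKWK^{1/2}\bigr)\leq B\,\mathrm{tr}(K^{1/2}W^2K^{1/2})=B\,\mathrm{tr}(W^2K)=B\sum_m w_m^2K_{m,m}\leq B\sum_m w_m^2 .
\end{align*}
The last step uses $K_{m,m}\leq 1$ from Step 1.

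The main obstacle is Step 3. A direct entrywise approach, bounding $|K_{m,m'}|$ termwise, loses the structure and would give a bound of order $(\sum_m w_m)^2$, which is too large. The key is to rewrite the weighted sum of squared moduli as a trace and apply the operator bound $K\preceq BI$ to only one of the two copies of $K$, while the diagonal bound $K_{m,m}\leq 1$ controls the other.
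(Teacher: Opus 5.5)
Your proof is correct and is essentially the paper's argument. It uses the same covariance-matrix bound $K\preceq BI$ from the density bound and Parseval, the same test vector $c_m=w_m\mu_m$ for the second inequality (the paper's $f=\sum_m w_m\overline{\mu_m}e^{2\pi i\langle m,x\rangle}$), and the same trace step $\mathrm{tr}(K^{1/2}WKWK^{1/2})\leq B\,\mathrm{tr}(W^2K)$ together with $K_{m,m}\leq 1$. The only differences are cosmetic: you bound $\mathrm{Var}_P(f)\leq\mathbb{E}_P|f|^2$ instead of using $\mathrm{Var}_P\leq B\,\mathrm{Var}_{\mathrm{Unif}(\mathbb{T}^d)}$ (equivalent here since $\mathbf{0}_d\notin A_M$), and your remark about replacing $c$ by $\bar c$ is unnecessary, since your first display already equals the real number $c^*Kc$.
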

\begin{proof}
For the proof, we use the fact that $\mathrm{Var}_P(f(X))\leq B\mathrm{Var}_{\mathrm{Unif}(\mathbb{T}^d)}(f(X))$. This can be shown as follows.
    \begin{align*}
        \mathrm{Var}_P(f(X))&=\inf_{a\in\mathbb{C}}\mathbb{E}_P[|f(X)-a|^2]\\
        &\leq\inf_{a\in\mathbb{C}}\mathbb{E}_{\mathrm{Unif}(\mathbb{T}^d)}[B|f(X)-a|^2]\\
        &=B\mathrm{Var}_{\mathrm{Unif}(\mathbb{T}^d)}(f(X)).
    \end{align*}
\paragraph{First inequality.} Let $\Sigma_P$ be the covariance matrix of $(e^{2\pi i\langle m,X\rangle})_{m\in A_M}$. We first show that $\Sigma_P\preceq BI$.  For $v\in\mathbb{C}^{A_M}$, let $f(X)=\sum_{m\in A_M}\overline{v_m} e^{2\pi i\langle m,X\rangle}$. Then
\begin{align*}
    v^\ast\Sigma_Pv&=\mathrm{Var}_P(f(X))\\
    &\leq B\mathrm{Var}_{\mathrm{Unif(\mathbb{T}^d)}}(f(X))\\
    &\leq B\sum_{m\in A_M}|v_m|^2\\
    &\leq B\lVert v\rVert_2^2.
\end{align*}
Thus, $\Sigma_P\preceq BI$.

Next, we bound the weighted sum. Let $W=\mathrm{diag}(w_m)_{m\in A_M}$. Since $[\Sigma_P]_{m,m^\prime}=\mathrm{Cov}(e^{2\pi i\langle m,X\rangle},e^{2\pi i\langle m^\prime,X\rangle})=\varphi_{m-m^\prime}-\varphi_m\overline{\varphi_{m^\prime}}$, the following holds.
\begin{align*}
    \sum_{m,m^\prime\in A_M}w_mw_{m^\prime}|\varphi_{m-m^\prime}-\varphi_m\overline{\varphi_{m^\prime}}|^2&=\lVert W^{1/2}\Sigma_P W^{1/2}\rVert_F^2\\
    &=\mathrm{tr}\left(W^{1/2}\Sigma_PW\Sigma_PW^{1/2}\right)\\
    &=\mathrm{tr}\left(\Sigma_P^{1/2}W\Sigma_PW\Sigma_P^{1/2}\right)\\
    &\leq B\mathrm{tr}\left(\Sigma_P^{1/2}W^2\Sigma_P^{1/2}\right)\\
    &=B\mathrm{tr}\left(W\Sigma_P W\right)\\
    &=B\sum_{m\in A_M}w_m^2(\Sigma_P)_{mm}\\
    &=B\sum_{m\in A_M}w_m^2(1-|\varphi_m|^2)\\
    &\leq B\sum_{m\in A_M}w_m^2,
\end{align*}
where we used the fact that $\mathrm{tr}(A)\leq\mathrm{tr}(B)$ if $A\preceq B$.
\paragraph{Second inequality.} This time, we set $f(X)=\sum_{m\in A_M}w_m\overline{\mu_m}e^{2\pi i\langle m,X\rangle}$. Then
\begin{align*}
    \sum_{m,m^\prime\in A_M}w_mw_{m^\prime}\mathrm{Re}\left[\left(\varphi_{m-m^\prime}-\varphi_m\overline{\varphi_{m^\prime}}\right)\overline{\mu_m}\mu_{m^\prime}\right]&=\mathrm{Var}_P\left(f(X)\right)\\
    &\leq B\mathrm{Var}_{\mathrm{Unif}(\mathbb{T}^d)}\left(f(X)\right)\\
    &=B\sum_{m\in A_M}w_m^2|\mu_{m}|^2.
\end{align*}
\end{proof}
\subsection{Proof of Lemma~\ref{lem:weight_sum}}
For $a\in \mathbb{Z}$, define $I_a\coloneqq [a-1/2,a]$ if $a\geq 0$ and $I_a\coloneqq  [a,a+1/2]$ if $a<0$. Let $Q_m\coloneqq I_{m_1}\times\cdots\times I_{m_d}$. Let $m\neq \mathbf{0}_d$. Then $x\in Q_m$ implies
\begin{align*}
    \lVert x\rVert_2^2\leq \lVert m\rVert_2^2+\frac{d}{4}\leq\frac{d+4}{4}\lVert m\rVert_2^2.
\end{align*}
Thus,
\begin{align*}
    \frac{e^{-2\lVert x\rVert_2^2t}}{\lVert x\rVert_2^{4}}&\geq \frac{e^{-2dt/4}e^{-2\lVert m\rVert_2^2t}}{\left(\frac{d+4}{4}\lVert m\rVert_2^2\right)^{2}}\geq\frac{e^{-2d/4}}{(d/4+1)^{2}}\frac{e^{-2\lVert m\rVert_2^2t}}{\lVert m\rVert_2^{4}}.
\end{align*}
Then
\begin{align*}
    \sum_{m\neq\mathbf{0}_d}\frac{e^{-2\lVert m\rVert_2^2t}}{\lVert m\rVert_2^{4}}&\leq2^de^{2d/4}(d/4+1)^2\sum_{m\neq\mathbf{0}_d}\int_{Q_m}\frac{e^{-2\lVert x\rVert_2^2t}}{\lVert x\rVert_2^{4}}dx \\
    &\leq2^de^{2d/4}(d/4+1)^2\int_{\lVert x\rVert_2\geq1/2}\frac{e^{-2\lVert x\rVert_2^2t}}{\lVert x\rVert_2^{4}}dx\\
    &=2^de^{2d/4}(d/4+1)^2\mathrm{Vol}(S^{d-1})\int_{1/2}^\infty\frac{e^{-2r^2t}}{r^{4}}r^{d-1}dr\\
    &=2^de^{2d/4}(d/4+1)^2\mathrm{Vol}(S^{d-1})\int_{t/2}^\infty e^{-u}\left(\frac{u}{2t}\right)^{(d-5)/2}\frac{du}{2\sqrt{2tu}}\\
    &=2^{d-1}(2t)^{-(d-4)/2}e^{2d/4}(d/4+1)^2\mathrm{Vol}(S^{d-1})\int_{t/2}^\infty e^{-u}u^{(d-4)/2-1}du.
\end{align*}
If $d> 4$ then
\begin{align*}
    \int_{t/2}^\infty e^{-u}u^{(d-4)/2-1}du\leq\int_{0}^\infty e^{-u}u^{(d-4)/2-1}du=\Gamma\left(\frac{d-4}{2}\right).
\end{align*}
If $d<4$ then
\begin{align*}
    \int_{t/2}^\infty e^{-u}u^{(d-4)/2-1}du&=\int_{t/2}^{1/2} e^{-u}u^{(d-4)/2-1}du+\int_{1/2}^\infty e^{-u}u^{(d-4)/2-1}du\\
    &\leq\int_{t/2}^{1/2} u^{(d-4)/2-1}du+(1/2)^{(d-4)/2}\int_{1/2}^\infty e^{-u}du\\
    &\leq \frac{(t/2)^{-(4-d)/2}}{(4-d)/2}+(1/2)^{(d-4)/2}e^{-1/2}\\
    &\lesssim t^{-(4-d)/2}.
\end{align*}
If $d=4$ then
\begin{align*}
\int_{t/2}^\infty e^{-u}u^{(d-4)/2-1}du&=\int_{t/2}^\infty e^{-u}u^{-1}du\\
&= \int_{t/2}^{1/2} e^{-u}u^{-1}du+\int_{1/2}^\infty e^{-u}u^{-1}du\\
&\leq \int_{t/2}^{1/2} u^{-1}du+\int_{1/2}^\infty e^{-u}du\\
&=\log\frac{1}{t}+e^{-1/2}.
\end{align*}
Thus,
\begin{align*}
 \int_{t/2}^\infty e^{-u}u^{(d-4)/2-1}du\lesssim\log\frac{e}{t}.
\end{align*}
\subsection{Proof of Lemma~\ref{lem:remainder}}
\begin{lemma}\label{lem:remainder}
For a given $t\in (0,1)$, there exists a constant $C_\textrm{remain}$ depending on $d$ such that if $M\geq \frac{d-1}{2t}+\sqrt{d}$ then 
    \begin{align*}
        \sum_{\lVert m\rVert_2>M}w_m\leq\frac{C_{\textrm{remain}}}{M}.
    \end{align*}
\end{lemma}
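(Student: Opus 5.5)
We need to bound $\sum_{\|m\|_2>M} w_m$ with $w_m=e^{-2\|m\|_2^2t}/\|m\|_2^2$. The plan is to follow the proof of Lemma~\ref{lem:weight_sum}: compare the lattice sum to an integral, pass to polar coordinates, and bound the resulting one-dimensional tail integral of a Gaussian-type function.

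\textbf{Step 1: lattice sum to integral.} For $m\in\mathbb{Z}^d$, take the unit cube $Q_m'\coloneqq m+[-1/2,1/2]^d$ (or the half-cubes $Q_m$ used before). For $x\in Q_m'$ we have $\|x-m\|_2\le\sqrt d/2$. The function $r\mapsto e^{-2r^2t}/r^2$ is decreasing, so it suffices to bound $w_m$ by its value at a point of smaller radius. Every $x\in Q_m'$ satisfies $\|x\|_2\le\|m\|_2+\sqrt d/2$, hence
\begin{align*}
w_m\le \int_{Q_m'}\frac{e^{-2\|x\|_2^2t}}{\|x\|_2^2}\,dx\cdot \sup_{x\in Q_m'}\frac{e^{-2\|m\|_2^2t}\|x\|_2^2}{\|m\|_2^2e^{-2\|x\|_2^2t}}.
\end{align*}
The ratio of the $\|\cdot\|^{-2}$ factors is bounded by a constant, since $\|m\|_2>M\ge\sqrt d$. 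The exponential ratio is $e^{2t(\|x\|^2-\|m\|^2)}\le e^{2t(\sqrt d\|m\|_2+d/4)}$. This is not uniformly bounded when $t\|m\|_2$ is large, so a naive comparison fails.

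\textbf{Step 1, repaired.} To avoid this, dominate $w_m$ by a function that decays slower. Specifically, bound $e^{-2r^2t}\le e^{-2rMt}$ for $r\ge M$, or more simply drop part of the Gaussian. This is where the hypothesis $M\ge (d-1)/(2t)+\sqrt d$ enters. The function $g(r)=r^{d-3}e^{-2r^2t}$ (the radial integrand including the volume factor) is decreasing for $r\ge\sqrt{(d-3)/(4t)}$, and the hypothesis guarantees $M-\sqrt d$ lies in this decreasing region. Under this condition the integrand $e^{-2\|x\|^2 t}/\|x\|^2$ over the shell, weighted by $r^{d-1}$, is monotone beyond $M-\sqrt d$. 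A cube-by-cube comparison applied to shells then gives
\begin{align*}
\sum_{\|m\|_2>M}w_m\lesssim \int_{M-\sqrt d}^\infty r^{d-3}e^{-2r^2t}\,dr.
\end{align*}
Concretely, I would group lattice points by the integer shell $k<\|m\|_2\le k+1$. The number of lattice points in each shell is $\lesssim k^{d-1}$ for $k\ge\sqrt d$, by a volume argument with cubes. This gives $\sum_{\|m\|_2>M}w_m\lesssim\sum_{k\ge\lfloor M\rfloor}k^{d-3}e^{-2k^2t}$. Monotonicity of $k^{d-3}e^{-2k^2t}$ on $k\ge M-1$ (from the hypothesis) then lets the sum be bounded by the integral from $M-1$.

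\textbf{Step 2: the tail integral.} Bound $\int_{M-1}^\infty r^{d-3}e^{-2r^2t}\,dr$. Using $r^{d-3}\le r^{d-2}/(M-1)$ for $r\ge M-1$, this is at most $(M-1)^{-1}\int_0^\infty r^{d-2}e^{-2r^2t}\,dr=(M-1)^{-1}c_d\,t^{-(d-1)/2}$. That has an unwanted factor $t^{-(d-1)/2}$, so a better route is to use the Gaussian tail directly. Integrating by parts, or using $r\,e^{-2r^2t}$ as the derivative of $-e^{-2r^2t}/(4t)$, gives
\begin{align*}
\int_{a}^\infty r^{d-3}e^{-2r^2t}\,dr\lesssim \frac{a^{d-4}}{t}e^{-2a^2t},
\end{align*}
valid once $a\gtrsim\sqrt{d/t}$. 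This is ensured by $a\ge (d-1)/(2t)$ with $t<1$, which dominates $\sqrt{d/t}$. Finally, with $a\asymp M\ge 1/t$, the quantity $a^{d-4}t^{-1}e^{-2a^2t}\le a^{d-3}e^{-2a}\cdot(1/(at))$, which is $\lesssim a^{d-3}e^{-2a}$. This in turn is $\le C_d/a\lesssim C_d/M$. Together these yield the constant $C_{\mathrm{remain}}$ depending only on $d$.

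\textbf{Main obstacle.} The main difficulty is the lattice-to-integral comparison in Step 1. It must be done uniformly in $t$, which requires the monotone region guaranteed by $M\ge (d-1)/(2t)+\sqrt d$. I would handle it with the shell counting described above, rather than with the cube comparison used in the proof of Lemma~\ref{lem:weight_sum}.
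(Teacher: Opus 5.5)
\textbf{Gap at $d=1$.} The last step of your Step~2 fails when $d=1$. To turn $a^{d-4}t^{-1}e^{-2a^2t}$ into $C_d/a$ you need $at\gtrsim 1$, which you justify by ``$M\ge 1/t$''. The hypothesis $M\ge (d-1)/(2t)+\sqrt d$ only gives $Mt\ge (d-1)/2$, and for $d=1$ this is vacuous: it just says $M\ge 1$. With $d=1$ and $a$ fixed, your bound $\frac{a^{-3}}{4t}e^{-2a^2t}$ tends to infinity as $t\to0$. So the chain does not produce a $t$-independent $C_{\mathrm{remain}}/M$. For $d=2$ you get $Mt\ge 1/2$, which suffices up to constants, and $d\ge3$ gives $at\ge1$. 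The case $d=1$ is trivial once you drop the exponential ($\sum_{|m|>M}m^{-2}\lesssim 1/M$), but it must be handled, since the lemma is used for every $d<4$ in the proof of Theorem~\ref{thm:unavoidable_log}. There is also an off-by-one in your sum-to-integral step. With $K=\lfloor M\rfloor$, a decreasing $g$ only gives $\sum_{k\ge K}g(k)\le\int_{K-1}^\infty g$, not the integral from $M-1$. For $d\le 2$ the lower limit $K-1$ can be $0$, where $r^{d-3}$ is not integrable, so you should use $g(K)+\int_K^\infty g$ instead.

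\textbf{Comparison with the paper.} The paper's proof is shorter and uniform in $d$ because it uses the hypothesis on $M$ for a pointwise estimate, not for monotonicity. For $r\ge (d-1)/(2t)$ one has $(d-1)\log r-2r^2t\le (d-1)r-2r^2t\le 0$, that is, $r^{d-3}e^{-2r^2t}\le r^{-2}$. The radial tail is then at most $\int_{M-\sqrt d/2}^\infty r^{-2}\,dr\lesssim 1/M$, with no Gaussian-tail analysis. The obstacle that made you abandon the cube comparison in Step~1 also disappears with the half-cubes $Q_m$ of Lemma~\ref{lem:weight_sum}, which lie on the origin side of $m$. On $Q_m$ one has $\lVert x\rVert_2^2\le\lVert m\rVert_2^2+d/4$, so the exponential ratio is at most $e^{dt/2}\le e^{d/2}$ uniformly in $t\in(0,1)$. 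Only the symmetric cube $m+[-1/2,1/2]^d$ produces the growth you computed. Your shell counting is still a valid substitute for the cube comparison. Combined with the same pointwise estimate, which applies since $\lfloor M\rfloor\ge (d-1)/(2t)$, it gives $\sum_{\lVert m\rVert_2>M}w_m\lesssim_d\sum_{k\ge\lfloor M\rfloor}k^{-2}\lesssim 1/M$ directly, with no monotonicity, integration by parts, or case split on $d$.
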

\begin{proof}
We first show that there exists a constant $C_{\textrm{remain}}>0$ such that if $r\geq \frac{d-1}{2t}$ then
\begin{align*}
r^{d-3}e^{-2r^2t}\leq \frac{1}{r^2}.
\end{align*}
We use the fact that $\log r\leq r-1$ if $r\geq 1$.
\begin{align*}
    \log \left(r^{d-1}e^{-2r^2t}\right)&=(d-1)\log r-2r^2t\\
    &\leq (d-1)(r-1)-2r^2t\\
    &\leq (d-1)r-2r^2t\\
    &\leq 0.
\end{align*}
For $a\in \mathbb{Z}$, define $I_a\coloneqq [a-1/2,a]$ if $a\geq 0$ and $I_a\coloneqq  [a,a+1/2]$ if $a<0$. Let $Q_m\coloneqq I_{m_1}\times\cdots\times I_{m_d}$. If $\lVert m\rVert_2>M$ then by the triangle inequality,
\begin{align*}
    Q_m\subset \{x\in\mathbb{R}^d:\lVert x-m\rVert_2\leq \sqrt{d}/2\}\subset \{x\in\mathbb{R}^d:\lVert x\rVert_2\geq M-\sqrt{d}/2\}.
\end{align*}
Thus,
\begin{align*}
\sum_{\lVert m\rVert_2>M}w_m&\leq2^d\sum_{\lVert m\rVert_2> M}\int_{Q_m}\frac{e^{-2\lVert x\rVert_2^2t}}{\lVert x\rVert_2^{2}}dx \\
    &\leq2^d\int_{\lVert x\rVert_2\geq M-\sqrt{d}/2}\frac{e^{-2\lVert x\rVert_2^2t}}{\lVert x\rVert_2^{2}}dx\\
    &=2^d\mathrm{Vol}(S^{d-1})\int_{M-\sqrt{d}/2}^\infty\frac{e^{-2r^2t}}{r^{2}}r^{d-1}dr\\
    &\leq2^d\mathrm{Vol}(S^{d-1})\int_{M-\sqrt{d}/2}^\infty\frac{1}{r^2}dr\\
    &\leq\frac{2^d\mathrm{Vol}(S^{d-1})}{M-\sqrt{d}/2}\\
    &\leq\frac{2^{d+1}\mathrm{Vol}(S^{d-1})}{M}
\end{align*}
where the second inequality holds since $M-\sqrt{d}/2\geq \frac{d-1}{2t}$.
\end{proof}

\subsection{Lower bound}
We derive the minimax lower bound by constructing a family of distributions supported on $\mathbb{T}^d$ using an orthonormal Daubechies wavelet basis. We first introduce multiresolution approximations of $L^2(\mathbb{R}^d)$ and then recall the construction and properties of Daubechies wavelets.
\begin{definition}[{\citealp{meyer1992wavelets}}]\label{def:MA}
An $r$-regular multiresolution approximation of $L^2(\mathbb{R}^d)$ is an increasing sequence $V_j$, $j \in \mathbb{Z}$, of closed linear subspaces of $L^2(\mathbb{R}^d)$ with the following properties:
\begin{enumerate}
\item $\bigcap_{j=-\infty}^{\infty} V_j = \{0\},\qquad \bigcup_{j=-\infty}^{\infty} V_j \text{ is dense in } L^2(\mathbb{R}^d)$
\item $f(x) \in V_j\iff f(2x) \in V_{j+1}, \qquad \text{for all } f \in L^2(\mathbb{R}^d),\ j \in \mathbb{Z}$
\item $f(x) \in V_0\iff f(x-k) \in V_0 \qquad\text{for all } f \in L^2(\mathbb{R}^d),\ k \in \mathbb{Z}^d$
\item There exists a function $g(x) \in V_0$ such that $\{g(x-k):k\in\mathbb{Z}^d\}$ is a Riesz basis of $V_0$, i.e., $\mathrm{span}\{g(x-k):k\in\mathbb{Z}^d\}$ is dense in $V_0$ and, for every real sequence $\{\alpha_k\}_{k\in\mathbb{Z}^d}$,
\begin{align*}
    \sum_{k}|\alpha_k|^2\asymp \left\lVert\sum_{k\in\mathbb{Z}^d}\alpha_kg(x-k)\right\rVert_{L^2(\mathbb{R}^d)}^2,
\end{align*}
up to constant factors.
\item For every positive integer $l$, and every multi-index $\beta=(\beta_1,\ldots,\beta_d)$ such that $\lVert\beta\rVert_1\leq r$, there exists a constant $C_l$ depending on $l$ such that
\begin{align*}
    |D^\beta g(x)|\leq C_l(1+\lVert x\rVert_2)^{-l}.
\end{align*}
\end{enumerate}
\end{definition}
A multiresolution approximation provides a dyadic decomposition of the function space by resolution level. Let $W_j$ denote an orthogonal complement of $V_{j}\subset V_{j+1}$. That is, $V_{j+1}=V_{j}\oplus W_j$. The space $W_j$ contains the components at scale $j$, whereas $V_j$ contains components at coarser scales. Then we can decompose a given function in $L^2(\mathbb{R}^d)$ by projecting onto $V_0$ and $W_j$s and decomposing the projected components by the basis of each projection space. The collection of these basis functions is called a wavelet basis. Thus, wavelets decompose a given function according to multiresolution levels. Among them, we use Daubechies wavelets to construct the family of probability densities.
\begin{definition}[{Daubechies wavelet on $\mathbb{R}^d$; \citealp{Daubechies1988Orthonormal,meyer1992wavelets}}]\label{def:Daubechies_wavelet}
For $r>0$, let $\psi^0,\psi^1:\mathbb{R}\to\mathbb{R}$ be functions satisfying the following.
\begin{enumerate}
\item There exists a universal constant $C>0$ such that $\psi^0$ and $\psi^1$ are supported on $[-Cr,Cr]$ and $\left|\frac{d^k}{dx^k}\psi^0\right|,\left|\frac{d^k}{dx^k}\psi^1\right|\leq C_N(1+|x|)^{-N}$ for every $N\geq1$ and $0\leq k\leq r$.
\item There exists a constant $C_r$ depending on $r$ such that $\lVert\psi^0\rVert_\infty,\lVert\psi^1\rVert_\infty\leq C_r$.
\item $\int x^k\psi^1(x)dx=0$ for $0\leq k\leq r$.
\end{enumerate}
The $r$-regular Daubechies wavelets on $\mathbb{R}^d$ are the orthonormal functions $\{\psi^\epsilon_{j,k}\}_{(\epsilon,j,k)\in\Lambda}$ with $\Lambda\coloneqq  \{(\epsilon,j,k):\epsilon\in\{0,1\}^d\backslash\{\mathbf{0}_d\},j\geq0,k\in\mathbb{Z}^d\}\cup\{(\mathbf{0}_d,0,k):k\in\mathbb{Z}^d\}$ defined as
\begin{align*}
\psi_{j,k}^\epsilon(x)=2^{jd/2}\prod_{i=1}^d\psi^{\epsilon_i}(2^jx_i-k_i).
\end{align*}
\end{definition}
It is known that there exists an $r$-regular multiresolution approximation $\{V_j,j\in\mathbb{Z}\}$ of $L^2(\mathbb{R}^d)$ such that $V_0=\mathrm{span}\{\psi^{\mathbf{0}_d}_{0,k}:k\in\mathbb{Z}^d\}$ and $W_j=\mathrm{span}\{\psi^{\epsilon}_{j,k}:\epsilon\in\{0,1\}^d\backslash\{\mathbf{0}_d\},k\in\mathbb{Z}^d\}$ \citep{Daubechies1988Orthonormal,meyer1992wavelets}.

Daubechies wavelets have two properties that are particularly useful for our construction. First, their compact support allows us to construct perturbations supported inside \([0,1)^d\), which can be identified with densities on \(\mathbb T^d\). Second, sufficiently regular wavelet bases provide square-function characterizations of negative-order Sobolev norms (see Proposition~\ref{prop:hom_L_ps}). This enables us to control simultaneously the \(\dot W^{-1,2}\) and \(\dot W^{-1,4}\) norms of a multiscale perturbation directly through its wavelet coefficients. Combined with Lemma~\ref{lem:W1_lb}, this yields a family of alternatives that are statistically indistinguishable from the null while remaining well separated in $\widetilde{W}_1$.

Denote
\begin{align*}
    S_1(J)&\coloneqq\sum_{j=1}^J2^{(d-4)(j+1)}\\
    S_{1/2}(J)&\coloneqq\sum_{j=1}^J2^{(d-2)j/2}.
\end{align*}
Based on Daubechies wavelets, we can construct a family of distributions satisfying the following.
\begin{lemma}\label{lem:hard_family}
    Let $P_0=\mathrm{Unif}(\mathbb{T}^d)$ and $p_0$ be its density. For a positive integer $n$ and $J\geq \lceil \log_2(8C+4)\rceil$ and $\alpha,\beta\in(0,1/3]$, suppose
    \begin{align*}
\frac{(8C+2)^dC_2^d\log^{1/4}(1+(1-\alpha-\beta)^2) S_{1/2}(J)S_1^{-1/4}(J)}{2\sqrt{n}}\leq (B-1)\wedge1.
    \end{align*}
   Then there exists an index set $\mathcal{V}$ and a family of distributions $\{P_v\}_{v\in\mathcal{V}}\subset\mathcal{P}_{B}(\mathbb{T}^d)$ with densities $\{p_v\}_{v\in\mathcal{V}}$ such that
    \begin{enumerate}
        \item $\lVert P_0^{\otimes n}-\overline{P}_n\rVert_{\mathrm{TV}}< 1-\alpha-\beta$ where $\overline{P}_n\coloneqq  \frac{1}{|\mathcal{V}|}\sum_{v\in\mathcal{V}}P_v^{\otimes n}$.
        \item $\lVert p_0-p_v\rVert_{\dot{W}^{-1,2}}\gtrsim \rho_{n,J}\gtrsim \lVert p_0-p_v\rVert_{\dot{W}^{-1,4}}$, up to a constant factor depending only on $d,B,\alpha$, and $\beta$, where $\rho_{n,J}$ is defined by
        \begin{align*}
            \rho_{n,J}=\frac{S_1^{1/4}(J)\log^{1/4} (1+(1-\alpha-\beta)^2)}{\sqrt{n}}.
        \end{align*}
    \end{enumerate}
\end{lemma}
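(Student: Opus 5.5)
We construct the family by random-sign perturbations of the uniform density. The perturbations live on all dyadic levels $1\le j\le J$ at once, and their amplitudes are chosen so that every level contributes equally to the $\dot W^{-1,2}$ norm. Fix the $r$-regular Daubechies wavelets of Definition~\ref{def:Daubechies_wavelet} with $r\ge 2$ and a fixed $\epsilon\in\{0,1\}^d\setminus\{\mathbf{0}_d\}$. For each level $j$, let $K_j\subset\mathbb{Z}^d$ be the translates $k$ for which $\psi^\epsilon_{j,k}$ is supported inside $[0,1)^d$ and the supports are pairwise disjoint. Using the support bound $[-Cr,Cr]$, we keep $k$ on a sublattice of spacing $\asymp 8C+4$; this is why $J\ge\lceil\log_2(8C+4)\rceil$ is needed, and it gives $|K_j|\asymp 2^{jd}/(8C+2)^d$. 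Take $\mathcal V=\{\pm1\}^{\bigcup_j K_j}$ and set
\[
p_v=1+\sum_{j=1}^J a_j\sum_{k\in K_j}v_{j,k}\psi^\epsilon_{j,k}.
\]
Each perturbation has zero mean because $\int\psi^1=0$, so $p_v$ integrates to one. The amplitudes are
\[
a_j=\lambda\,2^{-jd/2}\,2^{(d-4)j/4}\,S_1^{-1/4}(J)\,n^{-1/2},
\qquad \lambda\asymp\log^{1/4}(1+(1-\alpha-\beta)^2).
\]
By Definition~\ref{def:Daubechies_wavelet}(2), $\lVert a_j\psi^\epsilon_{j,k}\rVert_\infty\le C_r a_j 2^{jd/2}$. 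Disjointness within a level and summation over levels then give $\lVert p_v-1\rVert_\infty\lesssim \lambda S_{1/2}(J)S_1^{-1/4}(J)/\sqrt n$. The hypothesis makes this at most $(B-1)\wedge 1$, so $0\le p_v\le B$ and $P_v\in\mathcal P_B(\mathbb T^d)$.

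\emph{Indistinguishability (item 1).} We use the chi-square bound for mixtures, $\lVert P_0^{\otimes n}-\overline P_n\rVert_{\mathrm{TV}}^2\le \chi^2(\overline P_n,P_0^{\otimes n})$, with
\[
1+\chi^2(\overline P_n,P_0^{\otimes n})=\mathbb{E}_{v,v'}\bigl(1+\langle p_v-1,p_{v'}-1\rangle\bigr)^n .
\]
The wavelets are orthonormal and the base density is $1$, so $\langle p_v-1,p_{v'}-1\rangle=\sum_{j,k}a_j^2v_{j,k}v'_{j,k}$. Applying $1+x\le e^x$ and Hoeffding's lemma to the independent Rademacher products $v_{j,k}v'_{j,k}$ gives
\[
1+\chi^2\le \exp\Bigl(\tfrac{n^2}{2}\sum_j|K_j|a_j^4\Bigr).
\]
With the chosen $a_j$, $n^2\sum_j|K_j|a_j^4\asymp\lambda^4 S_1^{-1}(J)\sum_j 2^{(d-4)j}\lesssim\lambda^4$. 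The precise constants in $\lambda$ are tuned so that the $\chi^2$ divergence is at most $(1-\alpha-\beta)^2$ up to strict inequality, which yields $\lVert P_0^{\otimes n}-\overline P_n\rVert_{\mathrm{TV}}<1-\alpha-\beta$.

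\emph{Sobolev norms (item 2).} We invoke the wavelet square-function characterization of $\dot W^{-1,r}$ (Proposition~\ref{prop:hom_L_ps}), valid since $r\ge2$ provides enough regularity and vanishing moments:
\[
\lVert f\rVert_{\dot W^{-1,r}}\asymp\Bigl\lVert\Bigl(\sum_{j,k}2^{-2j}|c_{j,k}|^2 2^{jd}\mathbf 1_{Q_{j,k}}\Bigr)^{1/2}\Bigr\rVert_{L^r}.
\]
Our perturbations are compactly supported in $[0,1)^d$ and periodized, so we will pass between $\mathbb R^d$ and $\mathbb T^d$ by testing against periodic functions restricted to the cube.
\begin{itemize}
\item For $r=2$ the characterization gives
\[
\lVert p_v-p_0\rVert_{\dot W^{-1,2}}^2\asymp\sum_j 2^{-2j}|K_j|a_j^2\asymp\lambda^2 S_1^{-1/2}(J)n^{-1}\sum_j 2^{(d-4)j/2}.
\]
\item For $r=4$, the per-level supports are disjoint and each level covers a fixed fraction of the cube, so the square function is essentially constant: $\sum_j 2^{-2j}a_j^2 2^{jd}\asymp\lambda^2 S_1^{-1/2}n^{-1}\sum_j2^{(d-4)j/2}$. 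Hence both norms are of the same order.
\end{itemize}
The bookkeeping must be arranged to produce exactly $\rho_{n,J}=S_1^{1/4}(J)\lambda/\sqrt n$. Note that $\sum_j 2^{(d-4)j/2}$ is comparable to $S_1^{1/2}(J)$ only when $d\le 4$; in the case $d=4$ both sums equal $J$. I expect this to be the main obstacle. If the exponents do not match as written, I would re-tune $a_j$ so that $n^2\sum|K_j|a_j^4\asymp\lambda^4$ and $\sum 2^{-2j}|K_j|a_j^2\asymp\rho_{n,J}^2$ hold simultaneously, and then take an upper bound on the $\dot W^{-1,4}$ norm via the pointwise square-function estimate. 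The same step also carries the care needed to justify the $\mathbb R^d$ characterization on $\mathbb T^d$.
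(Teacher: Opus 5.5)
\textbf{There is a genuine gap: the amplitude choice.} Your $a_j$ simplifies to $a_j=\lambda S_1^{-1/4}(J)n^{-1/2}2^{-(d+4)j/4}$. With $|K_j|\asymp 2^{jd}$, neither displayed computation follows from this formula. In fact $n^2\sum_j|K_j|a_j^4\asymp\lambda^4S_1^{-1}(J)\sum_j2^{-4j}$ and $\sum_j4^{-j}|K_j|a_j^2\asymp\lambda^2n^{-1}S_1^{-1/2}(J)\sum_j2^{(d-8)j/2}$.

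At $d=4$, where $S_1(J)=J$, this gives $\lVert p_v-p_0\rVert_{\dot W^{-1,2}}\asymp\lambda J^{-1/4}n^{-1/2}$. That is a factor $J^{1/2}$ below $\rho_{n,J}=\lambda J^{1/4}n^{-1/2}$, and the construction also falls short for every $d>4$. So the lower bound in item 2 fails exactly in the critical case the lemma exists for, while the $\chi^2$ budget is almost entirely unused. Your fallback (``re-tune $a_j$'') names the right two constraints but not their solution, and that solution is the whole content of the lemma. Your stated design principle, that every level contributes equally to $\dot W^{-1,2}$, is correct only at $d=4$.

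\textbf{The fix is $a_j\propto 2^{-j}$.} The paper takes $\delta_j=c_{\alpha,\beta}2^{-(j+1)}S_1^{-1/4}(J)n^{-1/2}$ for $j_0\le j\le J$, with $c_{\alpha,\beta}=\log^{1/4}(1+(1-\alpha-\beta)^2)$. It uses all $k\in K_j$ with overlaps at most $(8C+2)^d$; your disjoint sublattice also works. With this choice:
\begin{itemize}
\item Each level contributes in the same proportion $2^{(d-4)j}$ to both $\sum_j|K_j|\delta_j^4$ and $\sum_j4^{-j}|K_j|\delta_j^2$.
\item Using $|K_j|\le2^{d(j+1)}$, the $\chi^2$ exponent is at most $c_{\alpha,\beta}^4$.
\item $\sum_j4^{-j}|K_j|\delta_j^2\gtrsim c_{\alpha,\beta}^2S_1^{1/2}(J)/n$.
\item The square function is pointwise $\lesssim\sum_j4^{-j}2^{jd}\delta_j^2\asymp c_{\alpha,\beta}^2S_1^{1/2}(J)/n$, which gives the $\dot W^{-1,4}$ upper bound.
\item $\sum_j2^{jd/2}\delta_j=c_{\alpha,\beta}S_{1/2}(J)S_1^{-1/4}(J)/(2\sqrt n)$. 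This is exactly why $S_{1/2}(J)$ appears in the hypothesis, and the hypothesis is calibrated for this $\delta_j$.
\end{itemize}
The target is essentially forced. By Cauchy--Schwarz, $\sum_j4^{-j}|K_j|a_j^2\le(\sum_j|K_j|a_j^4)^{1/2}(\sum_j|K_j|16^{-j})^{1/2}\asymp(\sum_j|K_j|a_j^4)^{1/2}S_1^{1/2}(J)$. Hence $\rho_{n,J}^2$ is precisely the largest value compatible with the $\chi^2$ budget, and it is attained when $a_j^2\propto4^{-j}$.

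\textbf{A secondary point: the $\mathbb T^d$ lower bound.} Testing against periodized functions only gives $\lVert f\rVert_{\dot W^{-1,r}(\mathbb T^d)}\lesssim\lVert f\rVert_{\dot W^{-1,r}(\mathbb R^d)}$. That suffices for the $r=4$ upper bound, but not for the $r=2$ lower bound on $\mathbb T^d$. For that, use the explicit test function $g_v=\sum_j4^{-j}\delta_j\sum_kv_{j,k}\psi^{\mathbf{1}_d}_{j,k}$, which is supported in the cube. Then $\langle p_v-p_0,g_v\rangle=\sum_j4^{-j}\delta_j^2|K_j|$, and $\lVert g_v\rVert_{\dot W^{1,2}}^2\lesssim\sum_j4^{-j}\delta_j^2|K_j|$ by the wavelet characterization of $W^{1,2}(\mathbb R^d)$.
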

Now we are ready to derive the lower bound of $\rho_n^{\mathrm{gof}}(\mathbb{T}^d,B,\alpha,\beta)$. Lemma~\ref{lem:hard_family}(1) implies that the distributions are statistically indistinguishable. That is, no test can achieve Type I and Type II errors at most $\alpha$ and $\beta$, respectively, for the testing problem
\begin{align}\label{eq:hard_testing}
    H_0:P=P_0\textrm{ vs }H_1:P=P_v\textrm{ for some }v\in\mathcal{V}.
\end{align}
Suppose, for the sake of contradiction, that such a test $\phi$ exists. Then
\begin{align*}
    P_0^{\otimes n}(\phi(X_1,\ldots,X_n)=0)&\geq1-\alpha.
\end{align*}
Moreover, for every $v \in \mathcal V$,
\begin{align*}
    P_v^{\otimes n}(\phi(X_1,\ldots,X_n)=0)&\leq \beta.
\end{align*}
Averaging the latter inequality over $v \in \mathcal V$ yields
\begin{align*}
   \overline{P}_n(\phi(X_1,\ldots,X_n)=0)&\leq \beta.
\end{align*}
Consequently,
\begin{align*}
    \lVert P_0^{\otimes n}-\overline{P}_n\rVert_{\mathrm{TV}}&=\sup_{A}P_0^{\otimes n}(A)-\overline{P}_n(A)\\
    &\geq 1-\alpha-\beta,
\end{align*}
contradicting Lemma~\ref{lem:hard_family}(1).

Next, Lemma~\ref{lem:hard_family}(2), combined with Lemma~\ref{lem:W1_lb}, implies that
\begin{align*}
    \widetilde{W}_1(P_v,P_0)\gtrsim \rho_{n,J},
\end{align*}
for every $v\in\mathcal{V}$. Now, we assume that there exists a test that achieves Type I and Type II errors at most $\alpha$ and $\beta$, respectively, for the testing problem
\begin{align*}
    H_0:P=P_0\textrm{ vs }H_1:\widetilde{W}_1(P,P_0)\geq\min_{v\in\mathcal{V}}\widetilde{W}_1(P_v,P_0).
\end{align*}
Then the test will also achieve Type I and Type II errors at most $\alpha$ and $\beta$ for \eqref{eq:hard_testing} as well, which is a contradiction. Thus,
\begin{align*}
    \rho_n^{\mathrm{gof}}(\mathbb{T}^d,B,\alpha,\beta)\geq \min_{v\in\mathcal{V}}\widetilde{W}_1(P_v,P_0)\gtrsim \rho_{n,J},
\end{align*}
if the condition of Lemma~\ref{lem:hard_family} holds. Therefore, it is enough to show that there exists $J$ such that
\begin{align*}
J&\geq\lceil \log_2(8C+4)\rceil\\
\frac{(8C+2)^dC_2^dc_{\alpha,\beta}S_{1/2}(J)S_1^{-1/4}(J)}{2\sqrt{n}}&\leq (B-1)\wedge1\\
\rho_{n,J}&\gtrsim \begin{cases}
        n^{-1/2},&d<4\\
        n^{-1/2}\log^{1/4}n ,&d=4\\
        n^{-2/d},&d>4.
    \end{cases}
\end{align*}
We first show the statement when $n$ is sufficiently large.

\paragraph{Case $d<4$.} Set $J=\lceil\log_2(8C+4)\rceil$. Then $S_{1/2}(J)$ and $S_1(J)$ are constants. Thus, the condition of Lemma~\ref{lem:hard_family}
\begin{align*}
\frac{(8C+2)^dC_2^dc_{\alpha,\beta}S_{1/2}(J)S_1^{-1/4}(J)}{2\sqrt{n}}\leq (B-1)\wedge1
\end{align*}
holds if $n$ is sufficiently large. Thus,
\begin{align*}
\rho_{n,J}\asymp n^{-1/2}.
\end{align*}
\paragraph{Case $d=4$.} In this case,
\begin{align*}
    S_1(J)&=J\\
    S_{1/2}(J)&=\sum_{j=1}^J2^j\leq 2^{J+1}.
\end{align*}
Set $J=\left\lfloor \frac12\log_2 (4n)\right\rfloor$. Then
\begin{align*}
    \frac{S_{1/2}(J)S_1^{-1/4}(J)}{\sqrt n}
    \leq
    \frac{2^{J+1}}{\sqrt n J^{1/4}}
    \leq 4J^{-1/4}.
\end{align*}
Thus,
\begin{align*}
    \frac{(8C+2)^dC_2^dc_{\alpha,\beta}S_{1/2}(J)S_1^{-1/4}(J)}{2\sqrt{n}}\leq \frac{2(8C+2)^dC_2^dc_{\alpha,\beta}}{\left\lfloor \frac12\log_2 (4n)\right\rfloor^{1/4}}\leq (B-1)\wedge1
\end{align*}
holds if $n$ is sufficiently large. Also,
\begin{align*}
J=\left\lfloor \frac12\log_2 (4n)\right\rfloor\geq\lceil \log_2(8C+4)\rceil
\end{align*}
holds if $n$ is sufficiently large. Since
$J\asymp\log n$, we obtain
\begin{align*}
    \rho_{n,J}\asymp\frac{J^{1/4}}{\sqrt n}
    \asymp
    \frac{\log_2^{1/4}(4n)}{\sqrt n}\asymp\frac{\log^{1/4}n }{\sqrt n}.
\end{align*}
\paragraph{Case $d>4$.} In this case
\begin{align*}
    S_1(J)&\asymp 2^{(d-4)J},\\
    S_{1/2}(J)&\asymp 2^{(d-2)J/2},
\end{align*}
and hence
\begin{align*}
    S_{1/2}(J)S_1^{-1/4}(J)
    \asymp 2^{dJ/4}.
\end{align*}
For $K>0$, set $J=\left\lfloor \frac{2}{d}\log_2 n\right\rfloor-K$. Then
\begin{align*}
    \frac{S_{1/2}(J)S_1^{-1/4}(J)}{\sqrt n}
    \asymp
    \frac{2^{dJ/4}}{\sqrt n}\asymp 2^{-Kd/4}.
\end{align*}
Thus, the conditions
\begin{align*}
J&\geq\lceil \log_2(8C+4)\rceil\\
\frac{(8C+2)^dC_2^dc_{\alpha,\beta}S_{1/2}(J)S_1^{-1/4}(J)}{2\sqrt{n}}&\leq (B-1)\wedge1,
\end{align*}
hold if $n$ and $K$ are sufficiently large. Plugging $J=\left\lfloor \frac{2}{d}\log_2 n\right\rfloor-K$, we obtain
\begin{align*}
    \rho_{n,J}\asymp\frac{S_1^{1/4}(J)}{\sqrt{n}}\asymp n^{-2/d}.
\end{align*}

Let $n_0$ be the positive integer such that the statement holds if $n\geq n_0$. It remains to consider the case when $n< n_0$. We use the fact that the minimax separation radius is non-increasing in the sample size: indeed, any test based on $n$ observations can be used with more observations by ignoring the additional samples. Hence, for $n<n_0$,
\begin{align*}
    \rho_n^{\mathrm{gof}}(\mathbb T^d,B,\alpha,\beta)
    \geq
    \rho_{n_0}^{\mathrm{gof}}(\mathbb T^d,B,\alpha,\beta).
\end{align*}
Define
\begin{align*}
    r_d(n)\coloneqq
    \begin{cases}
        n^{-1/2}, & d<4,\\
        n^{-1/2}\log^{1/4}n , & d=4,\\
        n^{-2/d}, & d>4.
    \end{cases}
\end{align*}
Since $r_d(n)>0$ for every $n\geq e$,
\begin{align*}
\rho_{n_0}^{\mathrm{gof}}(\mathbb T^d,B,\alpha,\beta)&\gtrsim r_d(n_0)\\
&\geq\frac{r_d(n_0)}{\max_{e\leq n^\prime<n_0}r_d(n^\prime)} r_d(n)\\
&\gtrsim r_d(n).
\end{align*}
This yields, for
all $n\geq e$,
\begin{align*}
    \rho_n^{\mathrm{gof}}(\mathbb T^d,B,\alpha,\beta)
    \gtrsim
    \begin{cases}
        n^{-1/2}, & d<4,\\
        n^{-1/2}\log^{1/4}n , & d=4,\\
        n^{-2/d}, & d>4.
    \end{cases}
\end{align*}
\subsection{Proof of Lemma~\ref{lem:hard_family}}
\subsubsection{Construction of the distributions $P_v$}
We use 2-regular Daubechies wavelets on $\mathbb{R}^d$. By Definition~\ref{def:Daubechies_wavelet}(1), $\psi^0$ and $\psi^1$ are supported on $[-2C,2C]$. For $j\geq0$, denote $K_j\coloneqq  \left\{k\in\mathbb{Z}^d:2^{-j}([-2C,2C]^d+k)\subset [0,1)^d\right\}$. Then $\psi^{\epsilon}_{j,k}$ is supported on $[0,1)^d$ if $k\in K_j$.

Denote $j_0\coloneqq  \lceil \log_2(8C+4)\rceil$. For $J\geq j_0$, denote $I\coloneqq  \left\{(j,k):j_0\leq j\leq J,k\in K_j\right\}$. Then for $v\in\{-1,+1\}^I$ and $\delta_1,\ldots,\delta_J>0$, define a perturbed density $p_v$ as follows.
\begin{align*}
    p_v(x)\coloneqq  p_0(x)+\sum_{j=j_0}^J\delta_j\sum_{k\in K_j}v_{j,k}\psi^{\mathbf{1}_d}_{j,k}(x).
\end{align*}
Note that $\int_{\mathbb{R}}\psi^1(x)dx=0$ by Definition~\ref{def:Daubechies_wavelet}(3). Thus,
\begin{align*}
    \int_{[0,1)^d}\psi^{\mathbf{1}_d}_{j,k}(x)dx&=\int_{\mathbb{R}^d}\psi^{\mathbf{1}_d}_{j,k}(x)dx\\
    &=2^{jd/2}\int_{\mathbb{R}^d}\psi^1(2^jx_1-k_1)\cdots\psi^1(2^jx_d-k_d)dx_1\cdots dx_d\\
    &=2^{jd/2}\left(\int_{\mathbb{R}}\psi^1(2^jx_1-k_1)dx_1\right)\cdots\left(\int_{\mathbb{R}}\psi^1(2^jx_d-k_d) dx_d\right)\\
    &=0.
\end{align*}
For $j=1,\ldots,J$, we set
\begin{align*}
    \delta_j=\frac{c_{\alpha,\beta}}{\sqrt{n}2^{j+1}}S_1^{-1/4}(J)
\end{align*}
where $c_{\alpha,\beta}=\log^{1/4} (1+(1-\alpha-\beta)^2)$.

Next, we show $p_v(x)\in [0,B]$. For $x\in[0,1)^d$, the following holds.
\begin{align*}
    |p_v(x)-1|&\leq\sum_{j=j_0}^J\delta_j\sum_{k\in K_j}\left|\psi^{\mathbf{1}_d}_{j,k}(x)\right|\\
    &\leq \sum_{j=j_0}^J\delta_j\left\lVert \psi^{\mathbf{1}_d}_{j,k}\right\rVert_{\infty}\left|\left\{k\in\mathbb{Z}^d:x\in\mathrm{supp}\left(\psi^{\mathbf{1}_d}_{j,k}\right)\right\}\right|.
\end{align*}
We use the following lemma to bound the cardinality of the set $\left\{k\in\mathbb{Z}^d:x\in \mathrm{supp}\left(\psi^{\mathbf{1}_d}_{j,k}\right)\right\}$.
\begin{lemma}\label{lem:overlap}
    $\mathrm{supp}\left(\psi^{\mathbf{1}_d}_{j,k}\right)\cap \mathrm{supp}\left(\psi^{\mathbf{1}_d}_{j,k^\prime}\right) \neq \emptyset$ only if $k-k^\prime\in [-4C,4C]^d$. Thus, for $j\in\mathbb{N}$ and $x\in [0,1)^d$, the number of overlaps $\left|\left\{k\in\mathbb{Z}^d:x\in \mathrm{supp}\left(\psi^{\mathbf{1}_d}_{j,k}\right)\right\}\right|\leq (8C+2)^d$.
\end{lemma}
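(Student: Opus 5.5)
The plan is to reduce the support of $\psi^{\mathbf{1}_d}_{j,k}$ to an explicit cube and then count lattice points. By Definition~\ref{def:Daubechies_wavelet}(1), with the 2-regular choice used in the construction, $\psi^1$ is supported on $[-2C,2C]$. Since
\begin{align*}
\psi^{\mathbf{1}_d}_{j,k}(x)=2^{jd/2}\prod_{i=1}^d\psi^1(2^jx_i-k_i),
\end{align*}
the function vanishes unless $2^jx_i-k_i\in[-2C,2C]$ for every $i$. Hence
\begin{align*}
\mathrm{supp}\bigl(\psi^{\mathbf{1}_d}_{j,k}\bigr)\subset 2^{-j}\bigl([-2C,2C]^d+k\bigr).
\end{align*}
I will work with this containing cube throughout. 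Both claims only need a necessary condition for a point to lie in the support, so the inclusion is all that is required.

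For the first claim, I suppose $x$ lies in both supports. Then for each coordinate $i$ we have $|2^jx_i-k_i|\le 2C$ and $|2^jx_i-k'_i|\le 2C$. The triangle inequality gives $|k_i-k'_i|\le 4C$, that is, $k-k'\in[-4C,4C]^d$.

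For the counting claim, I fix $x\in[0,1)^d$. Any $k$ with $x\in\mathrm{supp}(\psi^{\mathbf{1}_d}_{j,k})$ satisfies $k_i\in[2^jx_i-2C,\,2^jx_i+2C]$ for each $i$. This is an interval of length $4C$, so it contains at most $\lfloor 4C\rfloor+1\le 8C+2$ integers. Multiplying over the coordinates bounds the number of such $k$ by $(8C+2)^d$. Alternatively, one can fix one such $k'$ and apply the first claim, which gives at most $(8C+1)^d$ choices and the same bound.

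No step here is a real obstacle. The only care needed is to use the support interval $[-2C,2C]$ for the 2-regular wavelet, as in the construction, rather than $[-Cr,Cr]$ for general $r$, and to count integer points in a closed interval correctly. The generous constant $8C+2$ absorbs any slack, so that even the cruder bound $4C+1$ per coordinate, or the bound obtained through the pairwise claim, stays within $(8C+2)^d$.
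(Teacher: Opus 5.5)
Your proposal is correct and follows the paper's proof: the same inclusion $\mathrm{supp}(\psi^{\mathbf{1}_d}_{j,k})\subset 2^{-j}([-2C,2C]^d+k)$ followed by the coordinatewise triangle inequality. The paper leaves the counting step implicit (it follows from the pairwise claim, giving at most $(8C+1)^d$), and your direct per-coordinate count makes it explicit, with the sharper bound $(4C+1)^d$.
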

Thus,
\begin{align*}
   |p_v(x)-1|&\leq (8C+2)^d\sum_{j=j_0}^J\delta_j\left\lVert \psi^{\mathbf{1}_d}_{j,k}\right\rVert_{\infty}\\
    &\leq(8C+2)^dC_2^d\sum_{j=j_0}^J2^{dj/2}\delta_j\\
    &=(8C+2)^dC_2^d\sum_{j=j_0}^J\frac{2^{dj/2}c_{\alpha,\beta}}{\sqrt{n}2^{j+1}}S_1^{-1/4}(J)\\
    &\leq\frac{(8C+2)^dC_2^dc_{\alpha,\beta}S_{1/2}(J)S_1^{-1/4}(J)}{2\sqrt{n}},
\end{align*}
where the second inequality holds by Definition~\ref{def:Daubechies_wavelet}(2) and the definition of $\psi^{\epsilon}_{j,k}$. By the condition,
\begin{align*}
\frac{(8C+2)^dC_2^dc_{\alpha,\beta}S_{1/2}(J)S_1^{-1/4}(J)}{2\sqrt{n}}\leq (B-1)\wedge1.
\end{align*}
Thus, $p_v$ is a probability density supported on $[0,1)^d$ upper bounded by $B$. Therefore, we can identify $p_v$ with a probability density in $\mathcal{P}_B(\mathbb{T}^d)$.

\subsubsection{Proof of Lemma~\ref{lem:hard_family}(1)}
We first bound the chi-square divergence between $P_0^{\otimes n}$ and $\overline{P}_n$. The chi-square divergence is defined as $\chi^2(P||Q)=\mathbb{E}_Q\left[\left(\frac{dP}{dQ}-1\right)^2\right]$.
\begin{align*}
    1+\chi^2(\overline{P}_n||P_0^{\otimes n})&=1+\int \left(\frac{d\overline{P}_n}{dP_0^{\otimes n}}-1\right)^2dP_0^{\otimes n}\\
    &=\int \left(\frac{d\overline{P}_n}{dP_0^{\otimes n}}\right)^2dP_0^{\otimes n}\\
    &=\mathbb{E}_0\left[\left(\frac{d\overline{P}_n}{dP_0^{\otimes n}}\right)^2\right]\\
    &=\mathbb{E}_{v}\left[\mathbb{E}_{v^\prime}\left[\mathbb{E}_0\left[\left(\frac{dP_v^{\otimes n}}{dP_0^{\otimes n}}\right)\left(\frac{dP_{v^\prime}^{\otimes n}}{dP_0^{\otimes n}}\right)\right]\right]\right].
\end{align*}
We use the fact that
\begin{align*}
    \mathbb{E}_0\left[\left(\frac{dP_v^{\otimes n}}{dP_0^{\otimes n}}\right)\left(\frac{dP_{v^\prime}^{\otimes n}}{dP_0^{\otimes n}}\right)\right]&=\mathbb{E}_0\left[\left(\frac{dP_v}{dP_0}\right)\left(\frac{dP_{v^\prime}}{dP_0}\right)\right]^n\\
    &=\left(1+\sum_{j=j_0}^J\delta_j^2\sum_{k\in K_j}v_{j,k}v_{j,k}^\prime\right)^n,
\end{align*}
since $\psi^\epsilon_{j,k}$ are orthogonal to each other and $\int \psi^1(x)dx=0$. Then
\begin{align*}
    \mathbb{E}_{v}\left[\mathbb{E}_{v^\prime}\left[\mathbb{E}_0\left[\left(\frac{dP_v^{\otimes n}}{dP_0^{\otimes n}}\right)\left(\frac{dP_{v^\prime}^{\otimes n}}{dP_0^{\otimes n}}\right)\right]\right]\right]&=\mathbb{E}_{v}\left[\mathbb{E}_{v^\prime}\left[\left(1+\sum_{j=j_0}^J\delta_j^2\sum_{k\in K_j}v_{j,k}v_{j,k}^\prime\right)^n\right]\right]\\
    &\leq \mathbb{E}_{v}\left[\mathbb{E}_{v^\prime}\left[\mathrm{exp}\left(n\left(\sum_{j=j_0}^J\delta_j^2\sum_{k\in K_j}v_{j,k}v_{j,k}^\prime\right)\right)\right]\right]\\
    &=\prod_{j=j_0}^J\prod_{k\in K_j}\mathbb{E}_v\left[\mathbb{E}_{v^\prime}\left[\mathrm{exp}\left(n\left(\delta_j^2v_{j,k}v_{j,k}^\prime\right)\right)\right]\right]\\
    &=\prod_{j=j_0}^J\prod_{k\in K_j}\mathbb{E}_v\left[\mathrm{cosh}\left(n\delta_j^2v_{j,k}\right)\right]\\
    &\leq \prod_{j=j_0}^J\prod_{k\in K_j}\mathrm{exp}\left(n^2\delta_j^4\right)\\
    &=\mathrm{exp}\left(\sum_{j=j_0}^J|K_j|n^2\delta_j^4\right)
\end{align*}
where the first inequality holds since $1+x\leq e^{x}$ and the second inequality holds since $\cosh x\leq e^{x^2}$. Also, the cardinality of $K_j$ is bounded by the following lemma.
\begin{lemma}\label{lem:size_Kj}
For $j\in\mathbb{N}$, if $j\geq \log_2(8C+4)$ then $2^{d(j-1)}\leq |K_j|\leq 2^{d(j+1)}$.
\end{lemma}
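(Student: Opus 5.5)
We need to prove Lemma~\ref{lem:size_Kj}: for $j\geq\log_2(8C+4)$, $2^{d(j-1)}\leq|K_j|\leq 2^{d(j+1)}$. Here $K_j=\{k\in\mathbb{Z}^d:2^{-j}([-2C,2C]^d+k)\subset[0,1)^d\}$.

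The plan is to reduce to one dimension. The defining condition is a product condition, so $K_j=K_j^{(1)}\times\cdots\times K_j^{(1)}$ ($d$ copies), where
\begin{align*}
K_j^{(1)}\coloneqq\{k\in\mathbb{Z}:2^{-j}([-2C,2C]+k)\subset[0,1)\}.
\end{align*}
Hence $|K_j|=|K_j^{(1)}|^d$, and it suffices to show $2^{j-1}\leq|K_j^{(1)}|\leq 2^{j+1}$.

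Next, make $K_j^{(1)}$ explicit. The condition reads $k-2C\geq0$ and $k+2C<2^j$. Thus $K_j^{(1)}$ consists of the integers in the half-open interval $[2C,\,2^j-2C)$, whose length is $2^j-4C$.

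For the upper bound, the number of integers in a half-open interval of length $\ell$ is at most $\lceil\ell\rceil\leq\ell+1$. This gives $|K_j^{(1)}|\leq 2^j-4C+1\leq 2^{j+1}$, using $2^j\geq1$ and $C\geq0$.

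For the lower bound, the count is at least $\ell-1=2^j-4C-1$. We need $2^j-4C-1\geq 2^{j-1}$, i.e.\ $2^{j-1}\geq 4C+1$, i.e.\ $2^j\geq 8C+2$. The hypothesis gives $2^j\geq 8C+4>8C+2$, so the bound holds, with room to spare.

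Raising both one-dimensional bounds to the $d$-th power gives the claim. The only step needing care is the off-by-one integer counting on the half-open interval, and the hypothesis on $j$ is exactly what guarantees the interval is long enough for the lower bound.
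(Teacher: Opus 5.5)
Your proof is correct and follows essentially the same route as the paper. Both arguments identify $K_j$ with the integer points satisfying $2C\le k_i<2^j-2C$ in each coordinate and count them; the paper does this directly in $d$ dimensions with the cruder estimates $(2^j-4C-2)^d\le|K_j|\le(2^j+4C+2)^d$, while your coordinatewise reduction gives slightly sharper off-by-one counts and shows the upper bound needs no assumption on $j$.
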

Plugging $\delta_j=\frac{c_{\alpha,\beta}}{\sqrt{n}2^{j+1}}S_1^{-1/4}(J)$ with $c_{\alpha,\beta}=\log^{1/4} (1+(1-\alpha-\beta)^2)$ gives
\begin{align*}
    \mathrm{exp}\left(\sum_{j=j_0}^J|K_j|n^2\delta_j^4\right)\leq \mathrm{exp}\left(c_{\alpha,\beta}^4\right)=1+(1-\alpha-\beta)^2.
\end{align*}
Therefore,
\begin{align*}
\lVert P_0^{\otimes n}-\overline{P}_n\rVert_{\mathrm{TV}}\leq\sqrt{\frac{1}{2}\chi^2(\overline{P}_n||P_0^{\otimes n})}< 1-\alpha-\beta.
\end{align*}
\subsubsection{Proof of Lemma~\ref{lem:hard_family}(2)}
\paragraph{Lower bound on the $\dot{W}^{-1,2}$ norm.} We use the following lemma to establish the lower bound.
\begin{lemma}\label{lem:Hs_to_L2}Let $f\in W^{1,2}(\mathbb{R}^d)$ and $V_j$, $j\in\mathbb{Z}$ be an $r(>1)$-regular multiresolution approximation of $L^2(\mathbb{R}^d)$. Let $\{\phi_k\}$ be an orthonormal basis of $V_0$ and for $j\geq0$, let $\{\psi_{j,k}\}$ be an orthonormal basis of $W_j$. Then up to a universal constant factor, the following holds.
\begin{align}\label{eq:sobolev_norm_decomp}
    \lVert f\rVert_{W^{1,2}(\mathbb{R}^d)}^2\asymp \sum_{k}|\langle f,\phi_k\rangle|^2+\sum_{j\geq 0}\sum_{k}4^{j}|\langle f,\psi_{j,k}\rangle|^2.
\end{align}
\end{lemma}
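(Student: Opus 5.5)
The plan is the standard wavelet characterization of Sobolev spaces: transfer the Fourier description of $W^{1,2}(\mathbb{R}^d)$ to wavelet coefficients, using the frequency localization of the multiresolution approximation. Plancherel gives
\begin{align*}
\lVert f\rVert_{W^{1,2}(\mathbb{R}^d)}^2\asymp \int_{\mathbb{R}^d}(1+\lVert\xi\rVert_2^2)|\hat f(\xi)|^2d\xi .
\end{align*}
The equivalence between the $W^{1,2}$ norm, defined as a sum of $L^2$ norms of derivatives of order at most one, and the $\ell_2$ version differs only by a constant depending on $d$. Write $f=P_0f+\sum_{j\geq0}Q_jf$, where $P_0$ is the orthogonal projection onto $V_0$ and $Q_j$ the projection onto $W_j$. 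Orthonormality gives $\lVert P_0f\rVert_2^2=\sum_k|\langle f,\phi_k\rangle|^2$ and $\lVert Q_jf\rVert_2^2=\sum_k|\langle f,\psi_{j,k}\rangle|^2$. It therefore suffices to prove
\begin{align*}
\lVert f\rVert_{W^{1,2}}^2\asymp\lVert P_0f\rVert_2^2+\sum_{j\geq0}4^j\lVert Q_jf\rVert_2^2 .
\end{align*}

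For the direction $\gtrsim$, I would use the vanishing moments and regularity guaranteed by $r$-regularity, namely condition~5 of Definition~\ref{def:MA} with $r>1$. The wavelets in $W_j$ then satisfy a Bernstein-type inequality in reverse, a Jackson estimate: for $g\in W_j$,
\begin{align*}
\lVert g\rVert_2\lesssim 2^{-j}\lVert\nabla g\rVert_2 .
\end{align*}
Dually, $\lVert Q_jf\rVert_2\lesssim 2^{-j}\lVert \nabla f\rVert_{2}$ holds locally in frequency. To make this summable rather than merely bounded at each level, I would prove an almost-orthogonality estimate. If $\chi_l$ is a Littlewood--Paley piece of $f$ at frequency $\sim2^l$, then
\begin{align*}
\lVert Q_j\chi_l\rVert_2\lesssim 2^{-\eta|j-l|}\lVert\chi_l\rVert_2
\end{align*}
for some $\eta>1$. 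This comes from the decay of $\hat\psi$ near zero, order at least $r+1$ by the moments, and its decay at infinity, from smoothness. Schur's test applied to the matrix $2^{j-l}2^{-\eta|j-l|}$ then gives
\begin{align*}
\sum_j4^j\lVert Q_jf\rVert_2^2\lesssim\sum_l4^l\lVert\chi_l\rVert_2^2\asymp\lVert\nabla f\rVert_2^2+\lVert f\rVert_2^2 .
\end{align*}
The $V_0$ term is bounded trivially by $\lVert f\rVert_2^2$.

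For the direction $\lesssim$, I would bound $\lVert\nabla f\rVert_2$ by the same almost-orthogonality, now run the other way. A function $g\in W_j$ satisfies the Bernstein inequality $\lVert\nabla g\rVert_2\lesssim2^j\lVert g\rVert_2$, which uses the derivative bounds in condition~5. Pieces from different levels are nearly orthogonal in $\dot W^{1,2}$: the Gram matrix $\langle\nabla Q_jf,\nabla Q_{j'}f\rangle$ is controlled by $2^{j+j'}2^{-\eta|j-j'|}\lVert Q_jf\rVert\lVert Q_{j'}f\rVert$, again from the moment and smoothness decay. Schur's test then gives
\begin{align*}
\lVert\nabla f\rVert_2^2\lesssim\sum_j4^j\lVert Q_jf\rVert_2^2+\lVert P_0f\rVert_2^2 .
\end{align*}
Adding the identity $\lVert f\rVert_2^2=\lVert P_0f\rVert_2^2+\sum_j\lVert Q_jf\rVert_2^2$ finishes the proof.

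The main obstacle is the almost-orthogonality decay with exponent $\eta>1$. It needs $r\geq1$ vanishing moments together with $C^1$-type smoothness, and I would extract both from the Fourier transform of the scaling function and its $2\pi\mathbb{Z}^d$-periodization. Alternatively, since this is classical (Meyer, \emph{Wavelets and Operators}, Ch.~2, Thm.~8 on Sobolev spaces $H^s$, $|s|<r$), I would cite that result for $s=1$ and only verify that our normalization of $W^{1,2}$ agrees with $H^1$ up to constants.
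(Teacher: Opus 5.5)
Your reduction via orthonormality to $\lVert P_0f\rVert_2^2+\sum_{j\geq0}4^j\lVert Q_jf\rVert_2^2$, followed by Meyer's characterization of $H^s$ at $s=1$, is exactly the paper's proof: it cites Meyer's Theorem~2.8 for the projection-level equivalence and then applies Parseval in $V_0$ and in each $W_j$. Your Littlewood--Paley/Schur sketch is a sound proof of that cited theorem, with one correction to your last paragraph: bounding $\lVert\nabla f\rVert_2$ needs more than $C^1$ smoothness, because the Gram estimate comes from $\langle\nabla Q_jf,\nabla Q_{j'}f\rangle=-\langle\Delta Q_jf,Q_{j'}f\rangle$ together with $\lVert\Delta Q_jf\rVert_2\lesssim4^j\lVert Q_jf\rVert_2$ for $j\leq j'$ (this gives $\eta=1$, and any $\eta>0$ suffices there), which uses second derivatives, i.e.\ $r\geq2$, matching Meyer's range $|s|<r$ and the lemma's hypothesis $r>1$; the vanishing moments are needed only in the other direction, and only for $j>l$, since $\lVert Q_j\chi_l\rVert_2\leq\lVert\chi_l\rVert_2$ already handles $j<l$.
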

For a given $v\in\{-1,+1\}^I$, define a function $g_v$ as follows.
\begin{align*}
    g_v=\sum_{j=j_0}^J4^{-j}\delta_j\sum_{k\in K_j}v_{j,k}\psi^{\mathbf{1}_d}_{j,k}
\end{align*}
Since the support of $g_v$ is contained in $[0,1)^d$, we can identify $g_v$ with an element of $\dot{W}^{1,2}(\mathbb{T}^d)$. Then, by the definition of $\dot{W}^{-1,2}(\mathbb{T}^d)$, we obtain the following lower bound.
\begin{align*}
    \lVert p_v-p_0\rVert_{\dot{W}^{-1,2}(\mathbb{T}^d)}&=\sup_{g\in \dot{W}^{1,2}(\mathbb{T}^d),\lVert g\rVert_{\dot{W}^{1,2}(\mathbb{T}^d)}\leq 1}\langle p_v-p_0,g\rangle_{L^2(\mathbb{T}^d)}\\
    &\geq\frac{\langle p_v-p_0,g_v\rangle_{L^2(\mathbb{T}^d)}}{\lVert g_v\rVert_{\dot{W}^{1,2}(\mathbb{T}^d)}}\\
    &=\frac{\langle p_v-p_0,g_v\rangle_{L^2(\mathbb{R}^d)}}{\lVert g_v\rVert_{\dot{W}^{1,2}(\mathbb{R}^d)}}.
\end{align*}
Recall that
\begin{align*}
    p_v-p_0=\sum_{j=j_0}^J\delta_j\sum_{k\in K_j}v_{j,k}\psi_{j,k}^{\mathbf{1}_d}.
\end{align*}
Since the wavelets form an orthonormal basis, the numerator is given as follows.
\begin{align*}
    \langle p_v-p_0,g_v\rangle_{L^2(\mathbb{R}^d)}=\sum_{j=j_0}^J4^{-j}\delta_j^2|K_j|.
\end{align*}
Also, Lemma~\ref{lem:Hs_to_L2} gives
\begin{align*}
    \lVert g_v\rVert_{W^{1,2}(\mathbb{R}^d)}^2\asymp \sum_{j=j_0}^J4^{-j}\delta_j^2|K_j|.
\end{align*}
Since $\lVert g_v\rVert_{W^{1,2}(\mathbb{R}^d)}^2\geq \lVert g_v\rVert_{\dot{W}^{1,2}(\mathbb{R}^d)}^2$, we obtain the following.
\begin{align*}
    \frac{\langle p_v-p_0,g_v\rangle_{L^2(\mathbb{R}^d)}}{\lVert g_v\rVert_{\dot{W}^{1,2}(\mathbb{R}^d)}}&\gtrsim \sqrt{\sum_{j=j_0}^J4^{-j}\delta_j^2|K_j|}\\
    &\geq\sqrt{\sum_{j=j_0}^J4^{-j}\delta_j^22^{d(j-1)}}\\
    &\asymp \frac{c_{\alpha,\beta}S_1^{1/4}(J)}{\sqrt{n}}
\end{align*}
as $|K_j|\geq 2^{d(j-1)}$ by Lemma~\ref{lem:size_Kj} and $\delta_j\asymp\frac{c_{\alpha,\beta}}{\sqrt{n}2^{j+1}}S_1^{-1/4}(J)$.

\paragraph{Upper bound on the $\dot{W}^{-1,4}$ norm.} We use the following lemma.
\begin{lemma}\label{lem:Lp_norm_wavelet}
Let $\{\psi_{j,k}^\epsilon\}$ be the Daubechies wavelet of $L^\infty(\mathbb{R}^d)$, and let $f\in L^{\infty}(\mathbb{R}^d)$ have the wavelet expression
\begin{align*}
f=\sum_{j=0}^\infty\sum_{k\in K_j}\sum_{\epsilon\neq\mathbf{0}_d}\gamma_{j,k}^\epsilon\psi_{j,k}^\epsilon.
\end{align*}
Suppose that $\int_{[0,1)^d}f(x)dx=0$ and $\sum_{\epsilon\neq\mathbf{0}_d}|\langle f,\psi_{j,k}^\epsilon\rangle|^2\leq a_j^2$ for every $j\geq0$ and $k\in K_j$, for some nonnegative sequence $\{a_j\}_{j\geq0}$. Then
\begin{align*}
\lVert f\rVert_{\dot{W}^{-1,4}(\mathbb{T}^d)}^4\lesssim\left(\sum_{j=0}^\infty a_j^22^{jd}4^{-j}\right)^2.
\end{align*}
\end{lemma}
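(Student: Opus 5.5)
The plan is to pass from the $\dot W^{-1,4}$ norm to a wavelet square function by duality with $\dot W^{1,4/3}$, and then bound the resulting $L^4$ norm of that square function. Since $\int_{[0,1)^d} f=0$, the pairing $\langle f,g\rangle$ does not depend on the constant chosen to represent $[g]$. The first step is to invoke the Littlewood--Paley/wavelet characterization of negative-order Sobolev spaces, in the form the paper refers to as Proposition~\ref{prop:hom_L_ps}, for sufficiently regular Daubechies wavelets on $\mathbb{R}^d$:
\begin{align*}
\lVert f\rVert_{\dot W^{-1,4}}\asymp\Bigl\lVert\Bigl(\sum_{j,k,\epsilon}4^{-j}|\gamma_{j,k}^\epsilon|^2 2^{jd}\mathbf{1}_{Q_{j,k}}\Bigr)^{1/2}\Bigr\rVert_{L^4}.
\end{align*}
Here $Q_{j,k}$ is the dyadic cube attached to $\psi_{j,k}$ (equivalently, one can use $|\psi^\epsilon_{j,k}|^2$, since $|\psi^\epsilon_{j,k}|^2\lesssim 2^{jd}$ on a dilate of $Q_{j,k}$). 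The same characterization should transfer to $\mathbb{T}^d$, because $f$ is supported in $[0,1)^d$ and uses only indices $k\in K_j$. To justify the transfer, take a test function $g$ on $\mathbb{T}^d$ with $\lVert\nabla g\rVert_{L^{4/3}(\mathbb{T}^d)}\le 1$ and mean zero, multiply it by a smooth cutoff equal to $1$ on $[0,1)^d$ and supported in $(-1,2)^d$, and compare the $\mathbb{R}^d$ and torus norms using Poincar\'e on the torus. This shows $\lVert f\rVert_{\dot W^{-1,4}(\mathbb{T}^d)}\lesssim\lVert f\rVert_{W^{-1,4}(\mathbb{R}^d)}$, and the inhomogeneous norm on $\mathbb{R}^d$ is characterized by the same square function. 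The scaling coefficients play no role: $f$ has no $\epsilon=\mathbf{0}_d$ component by hypothesis, and the levels start at $j\ge0$.

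The second step is to bound the square function using the hypothesis $\sum_\epsilon|\gamma^\epsilon_{j,k}|^2\le a_j^2$. For each $x$, the number of $k\in K_j$ with $x$ in the support is at most $(8C+2)^d$ by Lemma~\ref{lem:overlap}, and these supports lie in $[0,1)^d$. Hence pointwise
\begin{align*}
S(f)(x)^2\coloneqq\sum_{j}\sum_{k,\epsilon}4^{-j}2^{jd}|\gamma^\epsilon_{j,k}|^2\mathbf{1}_{\mathrm{supp}\,\psi_{j,k}}(x)\lesssim\sum_{j\ge0}a_j^2 2^{jd}4^{-j}\,\mathbf{1}_{[0,1)^d}(x).
\end{align*}
Raising to the square and integrating over the unit cube gives $\lVert S(f)\rVert_{L^4}^4\lesssim\bigl(\sum_j a_j^2 2^{jd}4^{-j}\bigr)^2$, which is exactly the claim.

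The main obstacle is the first step: making the square-function upper bound for $\dot W^{-1,4}$ rigorous with our 2-regular compactly supported wavelets, and transferring it from $\mathbb{R}^d$ to the torus. I would prove the needed direction directly by duality. Write $\langle f,g\rangle=\sum\gamma^\epsilon_{j,k}\langle g,\psi^\epsilon_{j,k}\rangle$. Use the vanishing moment of $\psi^1$ to get $|\langle g,\psi^\epsilon_{j,k}\rangle|\lesssim 2^{-j}2^{-jd/2}\int_{\widetilde Q_{j,k}}|\nabla g|$, where $\widetilde Q_{j,k}$ is a fixed dilate of the support, by a Poincar\'e inequality on that dilate. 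Then apply Cauchy--Schwarz over $(j,k)$ pointwise, which yields
\begin{align*}
|\langle f,g\rangle|\lesssim\int S(f)\,\Bigl(\sum_j |M_j\nabla g|^2\Bigr)^{1/2},
\end{align*}
where $M_j$ are local averages at scale $2^{-j}$. Finally, use H\"older with exponents $4$ and $4/3$, together with the Fefferman--Stein vector-valued maximal inequality or the Littlewood--Paley square-function bound for $\nabla g$ in $L^{4/3}$, which controls the last factor by $\lVert\nabla g\rVert_{L^{4/3}}$. If that route proves delicate, I would cite \citet{meyer1992wavelets} or \citet{Triebel1983Theory} for the characterization of $\dot F^{-1}_{4,2}=\dot W^{-1,4}$.
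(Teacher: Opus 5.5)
Your main line is the paper's proof. You cite Meyer's wavelet square-function characterization (Proposition~\ref{prop:hom_L_ps}). You bound the square function pointwise by $\sum_j a_j^2 2^{jd}4^{-j}\mathbf{1}_{[0,1)^d}$, since every active index $k\in K_j$ lives inside $[0,1)^d$. You then transfer to $\mathbb{T}^d$ by testing against a cut-off of the periodic extension of $g-\overline{g}$ and using Poincar\'e.

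Your variations are harmless. The paper uses the cubes $Q(j,k)$, which are disjoint at each level, so Lemma~\ref{lem:overlap} is not needed. Comparing with the inhomogeneous $W^{-1,4}(\mathbb{R}^d)$ works equally well, and it even avoids the paper's tacit use that the homogeneous coefficients at levels $j<0$ vanish.

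The self-contained duality argument you offer for the first step, however, fails as written. After Cauchy--Schwarz, your second factor is $\bigl(\sum_j|M_j\nabla g|^2\bigr)^{1/2}$, where $M_j$ averages $|\nabla g|$ over cubes of side $\sim 2^{-j}$ near $x$.
\begin{itemize}
\item For almost every $x$ with $\nabla g(x)\neq0$, one has $M_j|\nabla g|(x)\to|\nabla g(x)|$. So the series diverges, and this factor is infinite for every nonconstant $g$.
\item Even when only the scales $j_0\leq j\leq J$ are active, as in Lemma~\ref{lem:hard_family}, the factor is of order $J^{1/2}\lVert\nabla g\rVert_{L^{4/3}}$. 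That loss would destroy the $\log^{1/4}n$ gain at $d=4$.
\item Fefferman--Stein cannot help, because its right-hand side would be $\lVert(\sum_j|\nabla g|^2)^{1/2}\rVert_{L^{4/3}}=\infty$.
\item Littlewood--Paley bounds need operators with cancellation, and averages of $|\nabla g|$ have none.
\end{itemize}
The culprit is the estimate $|\langle g,\psi^\epsilon_{j,k}\rangle|\lesssim 2^{-j}2^{-jd/2}|\widetilde Q_{j,k}|^{-1}\int_{\widetilde Q_{j,k}}|\nabla g|$. (It must be an average, not an integral, for the scaling to be right.) Taking absolute values of $\nabla g$ discards exactly the cancellation that makes the sum over scales finite.

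To repair it, move one derivative onto the wavelet while keeping cancellation.
\begin{itemize}
\item Since $\int\psi^1=\int x\psi^1=0$, you can write $\psi^1=\theta'$ with $\theta$ compactly supported and of mean zero.
\item Then for $\epsilon_i=1$ you get $\langle g,\psi^\epsilon_{j,k}\rangle=-2^{-j}\langle\partial_i g,\Theta^\epsilon_{j,k}\rangle$. Here $\Theta^\epsilon_{j,k}$ is a compactly supported, mean-zero molecule with the same $2^{jd/2}$ normalization as $\psi^\epsilon_{j,k}$.
\item The needed input becomes the genuine Calder\'on--Zygmund square-function estimate $\bigl\lVert\bigl(\sum_{j,k}|\langle h,\Theta^\epsilon_{j,k}\rangle|^2 2^{jd}\mathbf{1}_{Q(j,k)}\bigr)^{1/2}\bigr\rVert_{L^{4/3}}\lesssim\lVert h\rVert_{L^{4/3}}$.
\end{itemize}
Alternatively, simply rely on the citation, as the paper does.
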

Since $\sum_{\epsilon\neq\mathbf{0}_d}|\langle p_v-p_0,\psi_{j,k}^\epsilon\rangle|^2=\delta_j^2$, we have
\begin{align*}
    \lVert p_v-p_0\rVert_{\dot{W}^{-1,4}(\mathbb{T}^d)}^4&\lesssim \left(\sum_{j=j_0}^J \delta_j^22^{dj}4^{-j}\right)^2 \\
    &\asymp \frac{c_{\alpha,\beta}^4S_1(J)}{n^2}.
\end{align*}
\subsection{Proof of Lemma~\ref{lem:W1_lb}}
We adapt the lower-bound argument from Section 6 of \cite{Bobkov2021simple}. Let $f\coloneqq p-q$, and for $m\in\mathbb{Z}^d$, define its Fourier coefficient by
\begin{align*}
    \widehat f(m)
    \coloneqq \int_{\mathbb{T}^d} f(x)e^{2\pi i\langle m,x\rangle}dx= \mathbb{E}_P[e^{2\pi i\langle m,X\rangle}]- \mathbb{E}_Q[e^{2\pi i\langle m,X\rangle}].
\end{align*}
Define $h$ by the following multiple Fourier series:
\begin{align*}
h(x)\coloneqq  \frac{1}{4\pi^2}\sum_{m\neq\mathbf{0}_d}\frac{\widehat f(m)}{\lVert m\rVert_2^2}e^{-2\pi i\langle m,x\rangle},
\end{align*}
in $W^{2,2}(\mathbb{T}^d)$. Since $f\in L^2(\mathbb{T}^d)$, the series converges in $W^{2,2}(\mathbb{T}^d)$, so $h$ is well-defined. Then
\begin{align*}
    \Delta h(x)&=-\sum_{m\neq\mathbf{0}_d}\widehat f(m)e^{-2\pi i\langle m,x\rangle}.
\end{align*}
Also, by the Fourier inversion,
\begin{align*}
    p-q&=\sum_{m}\widehat f(m)e^{-2\pi i\langle m,x\rangle}\\
    &=\sum_{m\neq\mathbf{0}_d}\widehat f(m)e^{-2\pi i\langle m,x\rangle}
\end{align*}
in $L^2(\mathbb{T}^d)$ since $\widehat{f}(\mathbf{0}_d)=\mathbb{E}_P[e^{2\pi i\langle \mathbf{0}_d,X\rangle}]-\mathbb{E}_Q[e^{2\pi i\langle \mathbf{0}_d,X\rangle}]=0$. Consequently, if $\lVert\nabla v\rVert_{L^2}<\infty$ then
\begin{align*}
\int_{\mathbb{T}^d}v(dP-dQ)=-\int_{\mathbb{T}^d}v\Delta h dx=\int_{\mathbb{T}^d}\langle \nabla v,\nabla  h\rangle dx
\end{align*}
holds. Thus, by the Kantorovich-Rubinstein duality, we have the following:
\begin{align*}
    \widetilde{W}_1(P,Q)&=\sup_{\lVert u\rVert_{\mathrm{Lip}}\leq 1}\int_{\mathbb{T}^d}u(dP-dQ)\\
    &=\sup_{\lVert u\rVert_{\mathrm{Lip}}\leq 1}\int_{\mathbb{T}^d}\langle\nabla u,\nabla h\rangle dx.
\end{align*}
By choosing an appropriate Lipschitz function, we can derive the bound. We choose the Lipschitz function guaranteed by the following lemma.
\begin{lemma}[{\citealp[Lemma~5.1]{ambrosio2019pde}}]\label{lem:Lusin_approx}
There exists $K_d>0$, depending only on $d$, such that for every $f\in W^{1,2}(\mathbb{T}^d)$ and $\alpha>0$, one can find a function $u_\alpha:\mathbb{T}^d\to\mathbb{R}$ that is $\alpha$-Lipschitz with respect to $d_{\mathbb{T}^d}$ and satisfies
\begin{align*}
\lambda(\{f\neq u_\alpha\})\leq\frac{K_d}{\alpha^2}\int_{\mathbb{T}^d}\|\nabla f(x)\|_2^2\,dx.
\end{align*}
\end{lemma}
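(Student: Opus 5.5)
The plan is the classical maximal-function proof of Lusin-type Lipschitz approximation, adapted to $\mathbb{T}^d$. Fix $f\in W^{1,2}(\mathbb{T}^d)$ and $\alpha>0$. Let $g\coloneqq\|\nabla f\|_2\in L^2(\mathbb{T}^d)$, and let
\begin{align*}
Mg(x)\coloneqq\sup_{0<r\leq 1/2}\frac{1}{\lambda(B(x,r))}\int_{B(x,r)}g\,d\lambda
\end{align*}
be the local Hardy--Littlewood maximal function, with balls taken in $d_{\mathbb{T}^d}$. For $r\leq 1/2$ these balls are isometric to Euclidean balls (or, for $r$ close to $1/2$, to balls intersected with a fundamental domain, still doubling with constants depending only on $d$). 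Hence $\mathbb{T}^d$ with Lebesgue measure is a doubling metric measure space on this range of scales. The Hardy--Littlewood theorem then gives $\|Mg\|_{L^2}\leq A_d\|g\|_{L^2}$.

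The key step, and the one I expect to be the main obstacle, is the pointwise estimate
\begin{align*}
|f(x)-f(y)|\leq C_d\,d_{\mathbb{T}^d}(x,y)\bigl(Mg(x)+Mg(y)\bigr)
\end{align*}
for all Lebesgue points $x,y$ of $f$. I would prove it by telescoping. Put $r=d_{\mathbb{T}^d}(x,y)$ and first assume $r\leq 1/4$. Write $f(x)-f_{B(x,r)}=\sum_{k\geq0}\bigl(f_{B(x,2^{-k-1}r)}-f_{B(x,2^{-k}r)}\bigr)$. Each increment is bounded, via the $(1,1)$-Poincaré inequality on a Euclidean ball of radius $2^{-k}r$, by $C_d2^{-k}r$ times the average of $g$ over that ball, hence by $C_d2^{-k}rMg(x)$. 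Summing gives $|f(x)-f_{B(x,r)}|\lesssim rMg(x)$, and the same holds at $y$. Finally compare $f_{B(x,r)}$ and $f_{B(y,r)}$ through the common ball $B(x,2r)$, again using Poincaré and doubling. Lifting to $\mathbb{R}^d$ near $x$ lets us use the Euclidean Poincaré inequality, since the balls involved have radius at most $1/2$. For $1/4<r\leq\mathrm{diam}(\mathbb{T}^d)$, connect $x$ to $y$ by a chain of boundedly many (depending on $d$) intermediate points at mutual distance at most $1/4$. This costs only a $d$-dependent constant, because the maximal function at the intermediate points can be avoided by comparing ball averages directly to $g$'s $L^1$ norm, which is bounded by $\|Mg\|_\infty$ at $x$ on scale $1/2$.

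Next, set $E\coloneqq\{x:\ x \text{ a Lebesgue point of } f,\ Mg(x)\leq \alpha/(2C_d)\}$. By the pointwise estimate, $f|_E$ is $\alpha$-Lipschitz. If $E=\emptyset$, take $u_\alpha\equiv0$. If $E\neq\emptyset$, extend by McShane,
\begin{align*}
u_\alpha(x)\coloneqq\inf_{y\in E}\bigl(f(y)+\alpha\,d_{\mathbb{T}^d}(x,y)\bigr).
\end{align*}
This is finite, $\alpha$-Lipschitz on the whole metric space $\mathbb{T}^d$, and agrees with $f$ on $E$.

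Hence $\{f\neq u_\alpha\}\subset E^c$, up to the null set of non-Lebesgue points. Chebyshev's inequality and the $L^2$ bound for $M$ then give
\begin{align*}
\lambda(\{f\neq u_\alpha\})\leq\lambda\bigl(\{Mg>\alpha/(2C_d)\}\bigr)\leq\frac{4C_d^2A_d^2}{\alpha^2}\int_{\mathbb{T}^d}\|\nabla f\|_2^2\,dx.
\end{align*}
This is the claim with $K_d=4C_d^2A_d^2$. In the case $E=\emptyset$, the same inequality shows $1=\lambda(E^c)$ is bounded by the right-hand side, so the trivial choice $u_\alpha\equiv0$ also satisfies the bound.
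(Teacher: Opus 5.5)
The paper gives no proof of this lemma and simply quotes it from Ambrosio--Stra--Trevisan. Your strategy (pointwise maximal-function estimate, McShane extension, then Chebyshev with the $L^2$ maximal inequality) is the standard argument behind such statements. Your short-range step is also fine. For $r\le 1/2$ the covering map is injective on Euclidean balls of radius $r$, so $\lambda(B(x,r))=\omega_d r^d$ and the Euclidean $(1,1)$-Poincar\'e inequality transfers. Your telescoping then gives the estimate when $d_{\mathbb{T}^d}(x,y)\le 1/4$.

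\textbf{The gap is the long-range case.} With $Mg$ truncated at radius $1/2$, you justify that $\|g\|_{L^1(\mathbb{T}^d)}$ is controlled by the maximal function at $x$ on scale $1/2$. This fails for $d\ge 2$, because $B(x,1/2)$ has measure $\omega_d2^{-d}<1$ and $g$ may vanish on it. For $d\ge 5$ the pointwise inequality itself is false. Take $x=0$ and $y=\frac12(1,\ldots,1)$, so $d_{\mathbb{T}^d}(x,y)=\sqrt d/2>1$. Set $f(z)=\min\{1,\max\{0,(d_{\mathbb{T}^d}(z,x)-\frac12)/(\frac{\sqrt d}{2}-1)\}\}$. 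This $f$ is Lipschitz, vanishes on $B(x,1/2)$ and equals $1$ on $B(y,1/2)$. Hence $g=0$ on both balls and $Mg(x)=Mg(y)=0$, while $|f(x)-f(y)|=1$. So $x,y\in E$ for every $\alpha>0$, and $f|_E$ is not $\alpha$-Lipschitz once $\alpha<2/\sqrt d$. Your McShane step then no longer reproduces $f$ on $E$.

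\textbf{The repair is short.} Let the radii in $Mg$ range over $(0,\sqrt d/2]$. Since $\lambda(B(x,r))\asymp_d\min\{r^d,1\}$ for all $r>0$, the torus is doubling at every scale, so $\|Mg\|_{L^2}\le A_d\|g\|_{L^2}$ still holds. Now $\int_{\mathbb{T}^d}g\le Mg(x)$ for every $x$, because the ball of radius $\sqrt d/2$ is $\mathbb{T}^d$ minus one point. Suppose $d_{\mathbb{T}^d}(x,y)>1/4$ and write $\bar f=\int_{\mathbb{T}^d}f$. Bound $|f(x)-f(y)|$ by $|f(x)-f_{B(x,1/8)}|+|f_{B(x,1/8)}-\bar f|+|\bar f-f_{B(y,1/8)}|+|f_{B(y,1/8)}-f(y)|$. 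The outer terms are handled by your telescoping. The middle terms are at most $\lambda(B(\cdot,1/8))^{-1}\|f-\bar f\|_{L^1(\mathbb{T}^d)}\lesssim_d\|g\|_{L^1(\mathbb{T}^d)}$ by the Poincar\'e--Wirtinger inequality on $\mathbb{T}^d$. Since $d_{\mathbb{T}^d}(x,y)>1/4$, this yields $|f(x)-f(y)|\lesssim_d d_{\mathbb{T}^d}(x,y)\,(Mg(x)+Mg(y))$, and no chain of intermediate points is needed.

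Alternatively, keep the truncated $Mg$ and add $\|g\|_{L^1(\mathbb{T}^d)}$ to the right-hand side of the pointwise bound. If $\|g\|_{L^1}\gtrsim_d\alpha$, then Cauchy--Schwarz gives $\alpha^{-2}\int g^2\gtrsim_d 1$, so $u_\alpha\equiv 0$ already works for $K_d$ large. With either fix, the rest of your argument goes through unchanged.
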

Since $h\in W^{2,2}(\mathbb{T}^d)\subset W^{1,2}(\mathbb{T}^d)$, Lemma~\ref{lem:Lusin_approx} can be applied. Thus, there exists an $\alpha$-Lipschitz function $u_\alpha:\mathbb{T}^d\rightarrow\mathbb{R}$ and a constant $K$ depending on $d$ such that
\begin{align*}
    \lambda (\{h\neq u_\alpha\})\leq \frac{K}{\alpha^2}\int_{[0,1)^d}\lVert\nabla h\rVert_2^2dx.
\end{align*}
Since $h,u_\alpha\in W^{1,2}(\mathbb T^d)$, the locality property of weak derivatives \citep[see, e.g.,][Theorem 4.4]{evans2015measure} implies that $\nabla h=\nabla u_\alpha$ almost everywhere on ${h=u_\alpha}$. Then we have
\begin{align*}
    \alpha \widetilde{W}_1(P,Q)&\geq  \int_{\mathbb{T}^d} u_\alpha(dP-dQ)\\
    &=\int_{\mathbb{T}^d}\langle \nabla u_\alpha,\nabla h\rangle dx\\
    &=\int_{\mathbb{T}^d}\lVert \nabla h\rVert_2^2 dx-\int_{\{h\neq u_\alpha\}}\langle \nabla h ,\nabla h-\nabla u_\alpha\rangle dx.
\end{align*}
Also,
\begin{align*}
    &\left|\int_{\{h\neq u_\alpha\}}\langle \nabla h ,\nabla h-\nabla u_\alpha\rangle dx\right|\\
    &\leq \int_{\{h\neq u_\alpha\}}\langle \nabla h ,\nabla h\rangle dx+\int_{\{h\neq u_\alpha\}}|\langle \nabla h ,\nabla u_\alpha\rangle| dx\\
    &\leq \int_{\{h\neq u_\alpha\}}\lVert\nabla h\rVert_2^2dx+\alpha\int_{\{h\neq u_\alpha\}}\lVert\nabla h\rVert_2dx\\
    &\leq \lambda^{1/2}(\{h\neq u_\alpha\})\left(\int_{\{h\neq u_\alpha\}}\lVert\nabla h\rVert_2^4dx\right)^{1/2}+\alpha\lambda^{3/4}(\{h\neq u_\alpha\})\left(\int_{\{h\neq u_\alpha\}}\lVert\nabla h\rVert_2^4dx\right)^{1/4}.
\end{align*}
Now denote
\begin{align*}
    m_2&\coloneqq  \int_{[0,1)^d}\lVert\nabla h\rVert_2^2dx\\
    m_4&\coloneqq  \int_{[0,1)^d}\lVert\nabla h\rVert_2^4dx.
\end{align*}
Then
\begin{align*}
    \left|\int_{\{h\neq u_\alpha\}}\langle \nabla h ,\nabla h-\nabla u_\alpha\rangle dx\right|\leq K^{1/2}\alpha^{-1}m_2^{1/2}m_4^{1/2}+K^{3/4}\alpha^{-1/2}m_2^{3/4}m_4^{1/4}.
\end{align*}
Thus,
\begin{align*}
\alpha \widetilde{W}_1(P,Q)\geq m_2-K^{1/2}\alpha^{-1}m_2^{1/2}m_4^{1/2}-K^{3/4}\alpha^{-1/2}m_2^{3/4}m_4^{1/4}.
\end{align*}
Set $\alpha=M\sqrt{m_4/m_2}$ with $M=4\max\{K^{1/2},K^{3/2}\}$. Then
\begin{align*}
    \alpha \widetilde{W}_1(P,Q)&\geq  m_2-K^{1/2}\alpha^{-1}m_2^{1/2}m_4^{1/2}-K^{3/4}\alpha^{-1/2}m_2^{3/4}m_4^{1/4}\\
    &=m_2\left(1-\frac{K^{1/2}}{M}-\frac{K^{3/4}}{\sqrt{M}}\right)\\
    &\geq \frac{1}{4}m_2.
\end{align*}
Therefore,
\begin{align*}
   \widetilde{W}_1(P,Q)\geq \frac{m_2^{3/2}}{16\max\{K^{1/2},K^{3/2}\}m_4^{1/2}}.
\end{align*}
It remains to bound $m_2$ and $m_4$ in terms of the corresponding negative-order Sobolev norms. We first show $m_2=\lVert p-q\rVert_{\dot{W}^{-1,2}(\mathbb{T}^d)}^2$.
\begin{align*}
        \lVert p-q\rVert_{\dot{W}^{-1,2}(\mathbb{T}^d)}&=\sup\left\{\left|\int_{\mathbb{T}^d} (p-q)(x)g(x)dx\right|:\lVert g\rVert_{\dot{W}^{1,2}(\mathbb{T}^d)}\leq 1\right\}\\
        &\leq \sup\left\{\left|\int_{\mathbb{T}^d} \langle \nabla h,\nabla g\rangle dx\right|:\lVert g\rVert_{\dot{W}^{1,2}(\mathbb{T}^d)}\leq 1\right\}\\
        &\leq \left(\int_{\mathbb{T}^d}\lVert \nabla h\rVert_2^2dx\right)^{1/2}.
\end{align*}
Also,
\begin{align*}
        \lVert p-q\rVert_{\dot{W}^{-1,2}(\mathbb{T}^d)}&=\sup\left\{\left|\int_{\mathbb{T}^d} (p-q)(x)g(x)dx\right|:\lVert g\rVert_{\dot{W}^{1,2}(\mathbb{T}^d)}\leq 1\right\}\\
        &\geq \left|\int_{\mathbb{T}^d} (p-q)(x)\frac{h}{\lVert h\rVert_{\dot{W}^{1,2}(\mathbb{T}^d)}}dx\right|\\
        &=\left|\int_{\mathbb{T}^d} \left\langle\nabla h,\frac{\nabla h}{\lVert h\rVert_{\dot{W}^{1,2}(\mathbb{T}^d)}}\right\rangle dx\right|\\
        &=\lVert h\rVert_{\dot{W}^{1,2}(\mathbb{T}^d)}\\
        &=\left(\int_{\mathbb{T}^d}\lVert \nabla h\rVert_2^2dx\right)^{1/2}.
\end{align*}
Next, we show $m_4\lesssim\lVert p-q\rVert_{\dot{W}^{-1,4}(\mathbb{T}^d)}^4$ up to a constant factor depending on $d$ using the following lemma.
\begin{lemma}[{Torus case; \citealp[Lemma~1.7]{Iacobelli2024stability}}]\label{lem:moment}
Suppose that $p_1,p_2\in L^\infty(\mathbb{T}^d)$ are probability densities. For $i=1,2$, let $h_i$ satisfy $-\Delta h_i=p_i-1$. Then, for every $1<r<\infty$, there exists a constant $B_{d,r}>0$ depending only on $d$ and $r$ such that
\begin{align*}
\|\nabla h_1-\nabla h_2\|_{L^r(\mathbb{T}^d)}\leq B_{d,r}\|p_1-p_2\|_{\dot W^{-1,r}(\mathbb{T}^d)}
\end{align*}
holds.
\end{lemma}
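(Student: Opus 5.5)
The plan is a duality argument: the $L^r$ norm of a gradient is tested against vector fields in $L^{r'}$, and only the gradient part of such a field matters. Set $h\coloneqq h_1-h_2$ and $f\coloneqq p_1-p_2$. Then $-\Delta h=f$ on $\mathbb{T}^d$ and $\int_{\mathbb{T}^d}f\,dx=0$. We may normalize $h$ to have mean zero, since only $\nabla h$ enters the statement. As in the proof of Lemma~\ref{lem:W1_lb}, $h$ is given by the Fourier series with coefficients $\widehat f(m)/(4\pi^2\lVert m\rVert_2^2)$ for $m\neq\mathbf{0}_d$. Because $f\in L^\infty(\mathbb{T}^d)\subset L^2(\mathbb{T}^d)$, we have $h\in W^{2,2}(\mathbb{T}^d)$, and elliptic regularity gives $h\in W^{2,q}$ for every $q<\infty$. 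In particular $\nabla h\in L^r$, so the integrations by parts below are legitimate, first for trigonometric polynomials and then by density.

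Fix $r$ and write $\lVert\nabla h\rVert_{L^r}=\sup\{\int\langle\nabla h,G\rangle\,dx:\ \lVert G\rVert_{L^{r'}(\mathbb{T}^d;\mathbb{R}^d)}\le1\}$; it suffices to take $G$ a smooth trigonometric polynomial field. Decompose $G=\Pi G+(G-\Pi G)$, where $\Pi$ is the Helmholtz (gradient) projection. On Fourier coefficients, for $m\neq\mathbf{0}_d$,
\[
\widehat{\Pi G}(m)=\frac{m\,\langle m,\widehat G(m)\rangle}{\lVert m\rVert_2^2},
\]
and $\widehat{\Pi G}(\mathbf{0}_d)=0$. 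Thus $\Pi G=\nabla g$ for a scalar function $g$. The remainder $G-\Pi G$ is divergence-free, and the constant mode $\widehat{G}(\mathbf 0_d)$ pairs with $\widehat{\nabla h}(\mathbf 0_d)=0$. Integrating by parts, the remainder is orthogonal to $\nabla h$, so
\[
\int\langle\nabla h,G\rangle\,dx=\int\langle\nabla h,\nabla g\rangle\,dx=\int(-\Delta h)\,g\,dx=\int f g\,dx=\langle f,[g]\rangle .
\]
By the definition of the dual norm, $\langle f,[g]\rangle\le\lVert f\rVert_{\dot W^{-1,r}(\mathbb{T}^d)}\lVert\nabla g\rVert_{L^{r'}}$. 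Hence the whole claim reduces to the bound $\lVert\Pi G\rVert_{L^{r'}}\le B_{d,r}\lVert G\rVert_{L^{r'}}$, and taking the supremum over $G$ then finishes the proof.

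The main obstacle is this $L^{r'}$-boundedness of $\Pi$ on the torus for $1<r'<\infty$. The multipliers $m_im_j/\lVert m\rVert_2^2$ are compositions of periodic Riesz transforms. I would first prove the bound on $\mathbb{R}^d$: the symbol $\xi_i\xi_j/\lVert\xi\rVert_2^2$ is homogeneous of degree $0$ and smooth off the origin, so it is an $L^q$ multiplier by the Calder\'on--Zygmund or Mikhlin theorem. I would then transfer the bound to $\mathbb{T}^d$ by de Leeuw's transference theorem. This requires the symbol to be continuous at integer points; the only problem is the origin, where the periodic multiplier is defined to be $0$. That is handled by first subtracting the mean of $G$, which changes neither $\Pi G$ nor the pairing with $\nabla h$. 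Alternatively, the Marcinkiewicz multiplier theorem on $\mathbb{T}^d$ applies directly, since the symbol satisfies the required dyadic-block variation conditions. In either case the constant depends only on $d$ and $r$, as the statement requires.
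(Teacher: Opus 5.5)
Your proof is correct. The paper gives no proof of its own here and simply imports the estimate from \citet{Iacobelli2024stability}, whose argument rests on the Helmholtz--Weyl decomposition: duality, discarding the divergence-free part of the test field, and the $L^{r'}$-boundedness of the gradient projection. So you have essentially reconstructed the cited proof.

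One step needs tightening: the transference. The standard form of de Leeuw's theorem needs a symbol on $\mathbb{R}^d$ that is continuous at every lattice point, and subtracting the mean of $G$ does not by itself make $\xi_i\xi_j/\lVert\xi\rVert_2^2$ continuous at the origin. Apply the theorem instead to $(1-\chi(\xi))\,\xi_i\xi_j/\lVert\xi\rVert_2^2$, where $\chi$ is a smooth bump equal to $1$ near the origin and supported in $\{\lVert\xi\rVert_2<1/2\}$. This symbol still satisfies Mikhlin's condition and restricts on $\mathbb{Z}^d$ to exactly your periodic symbol, including the value $0$ at the origin. Alternatively, use the Marcinkiewicz route you already mention.
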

Set
\begin{align*}
U_1(x)&\coloneqq  \frac{1}{4\pi^2}\sum_{m\neq\mathbf{0}_d}\frac{\mathbb E_P[e^{2\pi i\langle m,X\rangle}]}{\lVert m\rVert_2^2}e^{-2\pi i\langle m,x\rangle},\\
U_2(x)&\coloneqq  \frac{1}{4\pi^2}\sum_{m\neq\mathbf{0}_d}\frac{\mathbb E_Q[e^{2\pi i\langle m,X\rangle}]}{\lVert m\rVert_2^2}e^{-2\pi i\langle m,x\rangle}.
\end{align*}
Then
\begin{align*}
    \Delta U_1&=-\sum_{m\neq\mathbf{0}_d}\mathbb E_P[e^{2\pi i\langle m,X\rangle}]e^{-2\pi i\langle m,x\rangle}=1-p\\
    \Delta U_2&=-\sum_{m\neq\mathbf{0}_d}\mathbb E_Q[e^{2\pi i\langle m,X\rangle}]e^{-2\pi i\langle m,x\rangle}=1-q.
\end{align*}
Thus, by Lemma~\ref{lem:moment}, there exists a constant $c_d$ depending on $d$ such that
\begin{align*}
\left(\int_{\mathbb{T}^d}\lVert \nabla h(x)\rVert_2^4dx\right)^{1/4}\leq c_d\lVert p-q\rVert_{\dot{W}^{-1,4}(\mathbb{T}^d)}.
\end{align*}
Therefore,
\begin{align*}
     \widetilde{W}_1(P,Q)\gtrsim \frac{\lVert P-Q\rVert_{\dot{W}^{-1,2}(\mathbb{T}^d)}^3}{\lVert P-Q\rVert_{\dot{W}^{-1,4}(\mathbb{T}^d)}^2}.
\end{align*}
\subsection{Proof of Lemma~\ref{lem:overlap}}
\begin{proof}[Proof of Lemma~\ref{lem:overlap}]
The support of $\psi^1$ is contained in $[-2C,2C]$. Thus, $\mathrm{supp}\left(\psi^{\mathbf{1}_d}_{j,k}\right)$ is contained in $2^{-j}([-2C,2C]^d+k)$. Also, $\mathrm{supp}\left(\psi^{\mathbf{1}_d}_{j,k^\prime}\right)$ is contained in $2^{-j}([-2C,2C]^d+k^\prime)$. Thus, $\mathrm{supp}\left(\psi^{\mathbf{1}_d}_{j,k}\right)\cap \mathrm{supp}\left(\psi^{\mathbf{1}_d}_{j,k^\prime}\right)\neq\emptyset$ only if
\begin{align*}
    2^{-j}([-2C,2C]^d+k) \cap 2^{-j}([-2C,2C]^d+k^\prime)\neq\emptyset.
\end{align*}
This implies $k-k^\prime\in [-4C,4C]^d$. 
\end{proof}
\subsection{Proof of Lemma~\ref{lem:size_Kj}}
The cardinality of $K_j$ is bounded as follows.
    \begin{align*}
        |K_j|&=|\{k\in\mathbb{Z}^d:2^{-j}([-2C,2C]^d+k)\subset [0,1)^d\}|\\
        &=|\{k\in\mathbb{Z}^d:([-2C,2C]^d+k)\subset [0,2^j)^d\}|\\
        &\leq |\{k\in\mathbb{Z}^d:k\in [-2C,2^j+2C)^d\}|\\
        &\leq (2^j+4C+2)^d.
    \end{align*}
If $j\geq \log_2(8C+4)$ then $4C+2\leq 2^{j-1}$ so $(2^j+4C+2)^d\leq 2^{d(j+1)}$. Also,
\begin{align*}
    |K_j|&=|\{k\in\mathbb{Z}^d:([-2C,2C]^d+k)\subset [0,2^j)^d\}|\\
        &\geq|\{k\in\mathbb{Z}^d:k\in [2C,2^j-2C)^d\}|\\
        &\geq(2^j-4C-2)^d\\
        &\geq2^{d(j-1)}.
\end{align*}
\subsection{Proof of Lemma~\ref{lem:Hs_to_L2}}
We use the following proposition. Denote the orthogonal projection from $L^2(\mathbb{R}^d)$ onto $V_0$ and $W_j$ as $E_0$ and $D_j$, respectively.
\begin{proposition}[{\citealp[Theorem~2.8]{meyer1992wavelets}}]\label{prop:sobolev_norm_decomp}
Let $V_j$, $j\in\mathbb{Z}$ be an $r(>1)$-regular multiresolution approximation of $L^2(\mathbb{R}^d)$. If $f\in W^{1,2}(\mathbb{R}^d)$, the following holds up to a universal constant factor.
\begin{align*}
    \lVert f\rVert_{W^{1,2}(\mathbb{R}^d)}\asymp \lVert E_0f\rVert_{L^2(\mathbb{R}^d)}+\sqrt{\sum_{j=0}^\infty 4^{j}\lVert D_jf\rVert_{L^2(\mathbb{R}^d)}^2}.
\end{align*}
\end{proposition}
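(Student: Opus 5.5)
The plan is to compare both sides with a Littlewood--Paley decomposition and show that the wavelet projections $D_j$ are almost diagonal with respect to it. Write $\|f\|_{W^{1,2}(\mathbb{R}^d)}^2\asymp\|f\|_{L^2}^2+\|\nabla f\|_{L^2}^2$. Fix a smooth Littlewood--Paley partition $f=S_0f+\sum_{l\geq1}\Delta_lf$, where $\widehat{\Delta_lf}$ is supported in $|\xi|\asymp2^l$. Plancherel gives
\begin{align*}
\|f\|_{W^{1,2}}^2\asymp\|S_0f\|_{L^2}^2+\sum_{l\geq1}4^l\|\Delta_lf\|_{L^2}^2 .
\end{align*}
It then suffices to prove two almost-orthogonality bounds, \eqref{eq:AO1} and \eqref{eq:AO2} below, with some $\eta>0$; the terms involving $E_0$ and $S_0$ are handled the same way with $j=0$ or $l=0$.

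\textbf{Tools from the regularity of the MRA.} Two inequalities are used throughout.
\begin{enumerate}
\item Bernstein: for $g\in V_j$ one has $\|\nabla g\|_{L^2}\lesssim2^j\|g\|_{L^2}$. This follows from the decay bounds on $D^\beta g$ in Definition~\ref{def:MA}(5), the Riesz basis property, and scaling by $2^j$.
\item Vanishing moments: the $r$-regular wavelets have $\int x^\beta\psi^\epsilon=0$ for $\|\beta\|_1\leq1$ (we may take $r\geq2$, as in our use of $2$-regular Daubechies wavelets). Taylor expansion of a smooth $g$ on the support of $\psi_{j,k}$ then gives $|\langle g,\psi_{j,k}\rangle|\lesssim2^{-2j}$ times a local $L^2$ norm of $\nabla^2g$. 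Summing over $k$ using bounded overlap of supports yields $\|D_jg\|_{L^2}\lesssim4^{-j}\|\nabla^2g\|_{L^2}$.
\end{enumerate}

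\textbf{Upper bound on the wavelet side.} Here the target is
\begin{equation}\label{eq:AO1}
2^j\|D_j\Delta_lf\|_{L^2}\lesssim2^{-\eta|j-l|}\,2^l\|\Delta_lf\|_{L^2}.
\end{equation}
\begin{itemize}
\item Case $l\leq j$: apply the moment estimate to $g=\Delta_lf$ and use $\|\nabla^2\Delta_lf\|\lesssim4^l\|\Delta_lf\|$. This gives $2^j\|D_j\Delta_lf\|\lesssim2^{-(j-l)}2^l\|\Delta_lf\|$.
\item Case $l>j$: use duality. For $g\in W_j$, write $\langle\Delta_lf,g\rangle=\langle\Delta_lf,\widetilde\Delta_lg\rangle$ with a fattened multiplier $\widetilde\Delta_l$. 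Then $\|\widetilde\Delta_lg\|\lesssim2^{-l}\|\nabla g\|\lesssim2^{j-l}\|g\|$ by Bernstein. Hence $2^j\|D_j\Delta_lf\|\lesssim2^{2(j-l)}2^l\|\Delta_lf\|$.
\end{itemize}
Summing over $l$ with Cauchy--Schwarz, Schur's test then bounds $\sum_j4^j\|D_jf\|^2+\|E_0f\|^2$ by $\|f\|_{W^{1,2}}^2$.

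\textbf{Lower bound on the wavelet side.} Write $f=E_0f+\sum_jD_jf$ in $L^2$ and expand $\|\nabla f\|_{L^2}^2$ as a double sum over scales. Here the target is, for $j\leq j'$,
\begin{equation}\label{eq:AO2}
|\langle\nabla D_jf,\nabla D_{j'}f\rangle|\lesssim2^{-\eta(j'-j)}\,2^j\|D_jf\|\,2^{j'}\|D_{j'}f\|.
\end{equation}
To get this, integrate by parts to obtain $-\langle\Delta D_jf,D_{j'}f\rangle$. Then expand $D_{j'}f$ in wavelets at scale $j'$ and use the vanishing moments of these fine wavelets against the smooth coarse function $\Delta D_jf$. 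This requires third derivatives of elements of $V_j$, with Bernstein bounds $\lesssim8^j$. That is available if $r\geq3$; otherwise use one vanishing moment only, which still gives a factor $2^{-(j'-j)}$ relative to the trivial bound. Schur's test again yields $\|\nabla f\|^2\lesssim\|E_0f\|^2+\sum_j4^j\|D_jf\|^2$. The $L^2$ part is immediate from orthogonality.

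The main obstacle is the pair of cross-scale decay estimates \eqref{eq:AO1} and \eqref{eq:AO2}. Both hinge on tracking exactly how much smoothness and how many vanishing moments the $r$-regular MRA provides, together with the bounded-overlap summation over $k$. I would first carry these out for compactly supported (Daubechies) wavelets, where localization is exact, and then remark that the rapid decay in Definition~\ref{def:MA}(5) suffices in general.
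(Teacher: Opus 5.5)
The paper does not prove this proposition. It imports it from Meyer's Theorem~2.8, which is stated there for every $|s|<r$ and every $r$-regular multiresolution approximation, and uses it only through Lemma~\ref{lem:Hs_to_L2}, for compactly supported $2$-regular Daubechies wavelets.

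You instead give a self-contained proof for $s=1$. The direct inequality comes from almost-orthogonality of $D_j$ against Littlewood--Paley blocks, and the reverse one from a Schur bound on the Gram matrix $\langle\nabla D_jf,\nabla D_{j'}f\rangle$. This is the standard mechanism behind such wavelet characterizations, and your scale bookkeeping in (AO1) and (AO2) is right.

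What your route buys is transparency about hypotheses. The direct inequality uses only first-order Bernstein bounds and vanishing moments of order at most one. The assumption $r>1$ is needed only for $\|\Delta D_jf\|_{L^2}\lesssim4^j\|D_jf\|_{L^2}$ in (AO2). Your argument also shows that the constants depend on $d$ and on the MRA, so ``universal'' in the statement is too strong. What the citation buys is full generality without the technical work you defer.

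Three points should be tightened.

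\begin{enumerate}
\item Pointwise Taylor expansion with integral remainder does not by itself give $|\langle g,\psi_{j,k}\rangle|\lesssim4^{-j}\|\nabla^2g\|_{L^2(Q)}$ with a local $L^2$ norm. After the change of variables you are left with $\int_0^1t^{-d/2}\,dt$, which diverges for $d\geq2$. Instead, take a cube $Q$ of side $\asymp2^{-j}$ containing the support, and subtract the affine $P$ obtained by applying the Poincar\'e inequality twice on $Q$. This gives $\|g-P\|_{L^2(Q)}\lesssim4^{-j}\|\nabla^2g\|_{L^2(Q)}$, and Cauchy--Schwarz finishes the estimate.

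\item For $r=2$, the fallback in (AO2) is simply $|\langle\Delta D_jf,D_{j'}f\rangle|\le\|\Delta D_jf\|\,\|D_{j'}f\|\lesssim4^j\|D_jf\|\,\|D_{j'}f\|$. This already yields the factor $2^{-(j'-j)}$. No moment of the fine wavelets can be tested against $\Delta D_jf$ without third derivatives, and none is needed.

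\item Wavelets of a general $r$-regular MRA need not be compactly supported. The cube-by-cube moment estimate therefore has to be redone with rapidly decaying tails, or on the Fourier side using that $\widehat{\psi^\epsilon}$ vanishes to second order at $0$. As written, your argument covers the paper's application but not the proposition in its stated generality.
\end{enumerate}
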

Now suppose $\{\phi_k\}$ is an orthonormal basis of $V_0$ and $\{\psi_{j,k}\}$ is an orthonormal basis of $W_j$ for $j\in\mathbb{Z}$. Then Proposition~\ref{prop:sobolev_norm_decomp} implies the following.
\begin{align*}
    \lVert f\rVert_{W^{1,2}(\mathbb{R}^d)}^2\asymp \sum_{k}|\langle f,\phi_k\rangle|^2+\sum_{j\geq 0}\sum_{k}4^{j}|\langle f,\psi_{j,k}\rangle|^2.
\end{align*}
\subsection{Proof of Lemma~\ref{lem:Lp_norm_wavelet}}
Setting $s=-1$ in Theorem 3 in Chapter 6.2 of \cite{meyer1992wavelets} gives the following proposition.
\begin{proposition}\label{prop:hom_L_ps}
Let $\{V_j\}_{j\in\mathbb{Z}}$ be an $r(>1)$-regular multiresolution approximation of $L^2(\mathbb{R}^d)$, and let $\{\psi_{j,k}^\epsilon\}_{j\in\mathbb{Z},\,k\in\mathbb{Z}^d,\,\epsilon\in\{0,1\}^d\backslash\{\mathbf{0}_d\}}$ be the corresponding Daubechies wavelet. For $j\in\mathbb{Z}$ and $k\in\mathbb{Z}^d$, define $Q(j,k)\coloneqq  2^{-j}([0,1)^d+k)$. Then, for every $1<q<\infty$ and $f\in L^2(\mathbb{R}^d)$, the following holds.
\begin{align*}
\lVert f\rVert_{\dot{W}^{-1,q}(\mathbb{R}^d)}\asymp \left\lVert \left(\sum_{j\in\mathbb{Z}}\sum_{k\in\mathbb{Z}^d}\left(\sum_{\epsilon\neq\mathbf{0}_d}|\langle f,\psi_{j,k}^\epsilon\rangle|^2\right)2^{jd}4^{-j}\mathbf{1}_{Q(j,k)}\right)^{1/2}\right\rVert_{L^q(\mathbb{R}^d)}.
\end{align*}
\end{proposition}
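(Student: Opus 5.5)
The plan is to derive Proposition~\ref{prop:hom_L_ps} from the wavelet characterization of homogeneous Triebel--Lizorkin spaces, specialized to smoothness $-1$. First I identify $\dot W^{-1,q}(\mathbb{R}^d)$, defined in the paper as the dual of $\dot W^{1,q'}(\mathbb{R}^d)$, with the homogeneous Triebel--Lizorkin space $\dot F^{-1}_{q,2}(\mathbb{R}^d)$.

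To make this identification, I use two standard facts for $1<q'<\infty$.
\begin{enumerate}
\item $\dot W^{1,q'}(\mathbb{R}^d)\cong \dot F^{1}_{q',2}(\mathbb{R}^d)$ modulo constants. This follows because $\|\nabla g\|_{L^{q'}}\asymp \|(-\Delta)^{1/2}g\|_{L^{q'}}$ by the $L^{q'}$-boundedness of the Riesz transforms, combined with the Littlewood--Paley square-function theorem.
\item $(\dot F^{1}_{q',2})^*\cong \dot F^{-1}_{q,2}$, with respect to the $L^2$ pairing on a dense class.
\end{enumerate}
Restricting to $f\in L^2(\mathbb{R}^d)$ ensures that $\langle f,g\rangle$ is defined for $g$ in a dense subclass, for instance $g$ in the Schwartz class with Fourier transform vanishing near the origin. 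The norm $\|f\|_{\dot W^{-1,q}}$ is then the supremum over that class.

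Second, I invoke the wavelet characterization, which is the content of Theorem~3 in Chapter~6.2 of \cite{meyer1992wavelets}. If the wavelets come from an $r$-regular multiresolution approximation with $r>|s|$, then the full homogeneous system $\{\psi^\epsilon_{j,k}\}_{j\in\mathbb{Z},k\in\mathbb{Z}^d,\epsilon\neq\mathbf{0}_d}$ is an unconditional basis of $\dot F^{s}_{q,2}$. Moreover,
\begin{align*}
\|f\|_{\dot F^{s}_{q,2}}\asymp\Bigl\|\Bigl(\sum_{j,k,\epsilon}|\langle f,\psi^\epsilon_{j,k}\rangle|^2\,2^{jd}\,4^{js}\,\mathbf{1}_{Q(j,k)}\Bigr)^{1/2}\Bigr\|_{L^q}.
\end{align*}
The factor $2^{jd}$ arises because $\psi^\epsilon_{j,k}$ is $L^2$-normalized. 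Consequently $2^{jd/2}|\langle f,\psi^\epsilon_{j,k}\rangle|$ is the $L^\infty$-normalized coefficient, and its square is paired with the indicator $\mathbf{1}_{Q(j,k)}$. Setting $s=-1$ produces exactly the weight $2^{jd}4^{-j}$ in the statement. The regularity requirement $r>1$ matches the hypothesis.

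The main obstacle is bookkeeping rather than new analysis. Three points need checking.
\begin{enumerate}
\item The characterization must be the homogeneous one, with the sum over all $j\in\mathbb{Z}$. This differs from the inhomogeneous basis of Definition~\ref{def:Daubechies_wavelet}, which uses $V_0$ together with $j\ge 0$. I would note that the multiresolution approximation in Definition~\ref{def:MA} provides the spaces $W_j$ for every $j\in\mathbb{Z}$, and $L^2=\bigoplus_{j\in\mathbb{Z}}W_j$ by property~1.
\item The dual norm over classes modulo constants must agree with the $\dot F^{-1}_{q,2}$ norm for $L^2$ functions. For this I would pair $f$ against finite wavelet sums $g$, since $\langle f,g\rangle=\sum \langle f,\psi\rangle\langle\psi,g\rangle$. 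I would then use the duality between the sequence spaces $\dot f^{-1}_{q,2}$ and $\dot f^{1}_{q',2}$, which follows from H\"older's inequality and the Fefferman--Stein vector-valued maximal inequality. This gives both directions of $\asymp$ with constants depending only on $d$, $q$, and the wavelet.
\item Once these points are in place, the proposition follows by substitution.
\end{enumerate}
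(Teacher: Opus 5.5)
Your proposal is correct and follows essentially the same route as the paper. Both invoke Meyer's Theorem~3 in Chapter~6.2 at $s=-1$ (regularity $r>|s|=1$) and identify the homogeneous order-$(-1)$ potential space $\dot L^{q,-1}=\dot F^{-1}_{q,2}$ with the dual of $\dot W^{1,q'}(\mathbb{R}^d)$. The paper simply cites Triebel (Theorem~5.2.3.1) for that identification, whereas you sketch it via Riesz transforms, Littlewood--Paley theory and $\dot F^{1}_{q',2}$--$\dot F^{-1}_{q,2}$ duality, or directly via sequence-space duality with Fefferman--Stein; this is slightly more careful, since the paper defines $\dot W^{-1,q}$ as a dual space.
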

While the original theorem is stated for the the homogeneous potential-space norm, $\dot{L}^{q,s}(\mathbb{R}^d)$ norm, it is known that the norm is equivalent to $\dot{W}^{s,q}(\mathbb{R}^d)$ norm when $s$ is an integer \citep[e.g., Theorem 5.2.3.1][]{Triebel1983Theory}.

For $\psi_{j,k}^\epsilon$, we use the 2-regular Daubechies wavelets. Proposition~\ref{prop:hom_L_ps} can be applied to $f$ since
\begin{align*}
    \int_{\mathbb{R}^d}|f(x)|^2dx=\int_{[0,1)^d}|f(x)|^2dx\leq \lVert f\rVert_{L^{\infty}(\mathbb{R}^d)}^2<\infty,
\end{align*}
so $f\in L^2(\mathbb{R}^d)$. Next, note that, for each fixed $j$, the cubes $\{Q(j,k):k\in\mathbb{Z}^d\}$ are pairwise disjoint. Since $k\in K_j$ implies $[-2C,2C]^d+k\subset [0,2^j)^d$, we have $Q(j,k)=2^{-j}([0,1)^d+k)\subset [0,1)^d$. Hence,
\begin{align*}
\sum_{k\in K_j}\left(\sum_{\epsilon\neq\mathbf{0}_d}|\langle f,\psi_{j,k}^\epsilon\rangle|^2\right)2^{jd}4^{-j}\mathbf{1}_{Q(j,k)}(x)\leq a_j^22^{jd}4^{-j}\mathbf{1}_{[0,1)^d}(x).
\end{align*}
Applying the preceding theorem with $s=-1$ and $q=4$ yields
\begin{align*}
\lVert f\rVert_{\dot{W}^{-1,4}(\mathbb{R}^d)}^4\lesssim\int_{\mathbb{R}^d}\left(\sum_{j=0}^\infty a_j^22^{jd}4^{-j}\mathbf{1}_{[0,1)^d}(x)\right)^2dx=\left(\sum_{j=0}^\infty a_j^22^{jd}4^{-j}\right)^2.
\end{align*}
Next, we show that $\lVert f\rVert_{\dot{W}^{-1,4}(\mathbb{T}^d)}\lesssim \lVert f\rVert_{\dot{W}^{-1,4}(\mathbb{R}^d)}$. Recall that
\begin{align*}
\lVert f\rVert_{W^{-1,r}(\mathbb{T}^d)}&\coloneqq  \sup_{\substack{           [g]\in\dot W^{1,r'}(\mathbb{T}^d)\\            \|[g]\|_{\dot W^{1,r'}(\mathbb{T}^d)}\leq1}}
|\langle f,[g]\rangle|.
\end{align*}
For $g\in \dot{W}^{1,4/3}(\mathbb{T}^d)$ such that $\lVert g\rVert_{\dot{W}^{1,4/3}(\mathbb{T}^d)}=1$, denote $\overline{g}\coloneqq  \int_{\mathbb{T}^d} gdx$ and let $g^\prime$ be a periodic extension of $g$ on $\mathbb{R}^d$. Let $\chi:\mathbb{R}^d\rightarrow\mathbb{R}$ be a bump function of bounded support such that $\chi\equiv1$ on $[0,1]^d$. Then
\begin{align*}
|\langle f,[g]\rangle|&=\int_{\mathbb{T}^d}fg\\&=\int_{\mathbb{T}^d}f(g-\overline{g})\\
    &=\int_{\mathbb{R}^d}f\chi(g^\prime-\overline{g}),
\end{align*}
where the first equality holds since $\int_{\mathbb{T}^d} f=0$. Note that $\lVert \chi(g^\prime-\overline{g})\rVert_{\dot{W}^{1,4/3}(\mathbb{R}^d)}<\infty$. This can be shown as follows.
\begin{align*}
    \lVert \chi(g^\prime-\overline{g})\rVert_{\dot{W}^{1,4/3}(\mathbb{R}^d)}&=\lVert \nabla(\chi(g^\prime-\overline{g}))\rVert_{L^{4/3}(\mathbb{R}^d)}\\
    &=\lVert(g^\prime-\overline{g})\nabla\chi+\chi\nabla g^\prime\rVert_{L^{4/3}(\mathbb{R}^d)}.
\end{align*}
Then
\begin{align*}
    \lVert(g^\prime-\overline{g})\nabla\chi\rVert_{L^{4/3}(\mathbb{R}^d)}&\lesssim\lVert g-\overline{g}\rVert_{L^{4/3}(\mathbb{T}^d)}\\
    &\lesssim\lVert\nabla g\rVert_{L^{4/3}(\mathbb{T}^d)}\\
    &\lesssim1.
\end{align*}
The first inequality holds by the periodicity of $g^\prime$ and the boundedness of the support of $\chi$. The second inequality follows from the Poincar\'e inequality \citep[see, e.g.,][Theorem 5.8.1.1]{evans2010partial}. Similarly,
\begin{align*}
    \lVert\chi\nabla g^\prime\rVert_{L^{4/3}(\mathbb{R}^d)}&\lesssim\lVert\nabla g\rVert_{L^{4/3}(\mathbb{T}^d)}\\
    &\lesssim1
\end{align*}
holds. Thus,
\begin{align*}
    \lVert f\rVert_{\dot{W}^{-1,4}(\mathbb{R}^d)}\gtrsim \int_{\mathbb{R}^d}f\chi(g^\prime-\overline{g})=\int_{\mathbb{T}^d}fg.
\end{align*}
Taking the supremum over $\{g\in\dot{W}^{1,4/3}(\mathbb{T}^d):\lVert g\rVert_{\dot{W}^{1,4/3}(\mathbb{T}^d)}=1\}$, we obtain
\begin{align*}
    \lVert f\rVert_{\dot{W}^{-1,4}(\mathbb{R}^d)}\gtrsim\lVert f\rVert_{\dot{W}^{-1,4}(\mathbb{T}^d)}.
\end{align*}

\end{document}